\setlist[enumerate,1]{label={(\alph*)}}
\setlist[enumerate,2]{label={(\roman*)}}
\newif\ifdraft
\newtheorem{thm}{Theorem}[section]
\newtheorem{prop}[thm]{Proposition}
\newtheorem{lem}[thm]{Lemma}
\newtheorem{clm}[thm]{Claim}
\Crefname{clm}{Claim}{Claims}
\theoremstyle{definition}
\newtheorem{definition}[thm]{Definition}
\newtheorem*{notation*}{Notation}
\theoremstyle{remark}
\newtheorem{rmk}[thm]{Remark}
\newtheorem{ex}[thm]{Example}
\newtheorem{obs}[thm]{Observation}
\newtheorem{algorithm}[thm]{Algorithm}
\newcommand{\ignore}[1]{}
\newcommand{\R}{\mathbb R}
\newcommand{\N}{\mathbb N}
\newcommand{\Prob}{{\mathbb{P}}}
\newcommand{\Z}{\mathbb Z}
\newcommand{\mB}{{\mathcal{B}}}
\newcommand{\mA}{{\mathcal{A}}}
\newcommand{\mC}{{\mathcal{C}}}
\newcommand{\mD}{{\mathcal{D}}}
\newcommand{\mF}{{\mathcal{F}}}
\newcommand{\mP}{{\mathcal{P}}}
\newcommand{\mQ}{{\mathcal{Q}}}
\newcommand{\mR}{{\mathcal{R}}}
\newcommand{\mT}{{\mathcal{T}}}
\newcommand{\mU}{{\mathcal{U}}}
\newcommand{\E}{{\mathbb{E}}}
\newcommand{\tg}{{\tilde{g}}}
\newcommand{\ta}{{\tilde{a}}}
\newcommand{\tA}{{\tilde{A}}}
\newcommand{\tC}{{\tilde{C}}}
\newcommand{\tQ}{{\tilde{Q}}}
\newcommand{\tR}{{\tilde{R}}}
\newcommand{\tZ}{{\tilde{Z}}}
\newcommand{\tgamma}{{\tilde{\gamma}}}
\newcommand{\oone}{{o \left(1\right)}}
\newcommand{\oneoone}{{\left( 1 \pm \oone \right)}}
\newcommand{\termdefine}[1]{\textbf{#1}}
\newcommand{\norm}[1]{{\lVert {#1} \rVert}}
\newcommand{\distNoParam}{{d_{\diamond}}}
\newcommand{\dist}[2]{{d_{\diamond} \left( {#1},{#2} \right) }}
\newcommand{\distbwNoParam}{{d_{\mathbf{BW}}}}
\newcommand{\distbw}[2]{{\distbwNoParam \left( {#1},{#2} \right) }}
\newcommand{\Queenons}{{\Gamma}}
\newcommand{\interval}{{[-1/2,1/2]}}
\newcommand{\given}{{\mid}}
\newcommand{\upperBoundConstant}{{1.94}}
\newcommand{\lowerBoundConstant}{{1.9449}}
\newcommand{\distPlus}[1]{{\overline{{#1}}^+}}
\newcommand{\distMinus}[1]{{\overline{{#1}}^-}}
\newcommand{\BWdistPlus}[1]{{\hat{{#1}}^+}}
\newcommand{\BWdistMinus}[1]{{\hat{{#1}}^-}}
\newcommand{\BW}{\mathbf{BW}}
\newcommand{\bw}[1]{{\BW \left( {#1} \right)}}
\begin{document}
\title{The number of $n$-queens configurations}

\author{Michael Simkin}
\address{Harvard University Center of Mathematical Sciences and Applications, Cambridge, MA, USA.}
\email{msimkin@cmsa.fas.harvard.edu}

\begin{abstract}
	The $n$-queens problem is to determine $\mathcal{Q}(n)$, the number of ways to place $n$ mutually non-threatening queens on an $n \times n$ board. We show that there exists a constant $\alpha = 1.942 \pm 3 \times 10^{-3}$ such that $\mathcal{Q}(n) = ((1 \pm o(1))ne^{-\alpha})^n$. The constant $\alpha$ is characterized as the solution to a convex optimization problem in $\mathcal{P}([-1/2,1/2]^2)$, the space of Borel probability measures on the square.
	
	The chief innovation is the introduction of limit objects for $n$-queens configurations, which we call \textit{queenons}. These form a convex set in $\mathcal{P}([-1/2,1/2]^2)$. We define an entropy function that counts the number of $n$-queens configurations that approximate a given queenon. The upper bound uses the entropy method of Radhakrishnan and Linial--Luria. For the lower bound we describe a randomized algorithm that constructs a configuration near a prespecified queenon and whose entropy matches that found in the upper bound. The enumeration of $n$-queens configurations is then obtained by maximizing the (concave) entropy function in the space of queenons.
	
	Along the way we prove a large deviations principle for $n$-queens configurations that can be used to study their typical structure.
\end{abstract}

\maketitle

\pagestyle{plain}

\section{Introduction}

An $n$-queens configuration is a placement of $n$ mutually non-threatening queens on an $n \times n$ chessboard. As queens attack along rows, columns, and diagonals, this is equivalent to an order-$n$ permutation matrix in which the sum of each diagonal is at most $1$. The $n$-queens problem is to determine $\mQ(n)$, the number of such configurations. In this paper we prove the following result on the asymptotics of $\mQ(n)$.

\begin{thm}\label{thm:main theorem first statement}
	There exists a constant $\upperBoundConstant < \alpha < \lowerBoundConstant$ such that
	\[
	\lim_{n \to \infty} \frac{\mQ(n)^{1/n}}{n} = e^{-\alpha}.
	\]
\end{thm}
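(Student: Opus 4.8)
The plan is to pass to limit objects, reduce the enumeration to a concave optimization over those objects, and then bound the optimum numerically.

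\textbf{Limit objects.} To each $n$-queens configuration $\sigma$ I attach the empirical measure $\mu_\sigma\in\mP(\interval^2)$ placing mass $1/n$ at the (rescaled) center of each occupied cell. Every $\mu_\sigma$ has both coordinate marginals within $O(1/n)$ of uniform on $\interval$, and its push-forwards under the diagonal maps $(x,y)\mapsto x\pm y$ are, up to $O(1/n)$, absolutely continuous of density at most $1$, since each of the two families of diagonals meets each queen at most once. I would define a \emph{queenon} to be a $Q\in\mP(\interval^2)$ with uniform coordinate marginals and both diagonal push-forwards absolutely continuous of density $\le1$, call this set $\Queenons$, and check that $\Queenons$ is convex and weak-$*$ compact (hence $\distNoParam$-compact) and that every $\mu_\sigma$ is $\oone$-close to $\Queenons$, so configurations may be partitioned according to which queenon they approximate.

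\textbf{Entropy and the reduction.} For $Q\in\Queenons$ and $\epsilon>0$ let $\mQ(n;Q,\epsilon)$ count the configurations $\sigma$ with $\dist{\mu_\sigma}{Q}<\epsilon$. I would show that the double limit
\[
\mathrm{Ent}(Q):=\lim_{\epsilon\to0}\ \lim_{n\to\infty}\frac1n\Bigl(\ln\mQ(n;Q,\epsilon)-n\ln n\Bigr)
\]
exists, is concave and upper semicontinuous in $Q$, and equals an explicit local functional of the density of $Q$ that trades the permuton-type entropy against the exponential cost of enforcing the diagonal constraints. Since $\Queenons$ is compact, $\alpha:=-\max_{Q\in\Queenons}\mathrm{Ent}(Q)$ is well defined and attained; summing $\mQ(n;Q,\epsilon)$ over an $\epsilon$-net of $\Queenons$ (of size independent of $n$) and letting $\epsilon\to0$ then gives $\lim_n\mQ(n)^{1/n}/n=e^{-\alpha}$, so the theorem reduces to the counting lemma defining $\mathrm{Ent}$, with matching upper and lower bounds, together with the numerical estimate $\upperBoundConstant<\alpha<\lowerBoundConstant$.

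\textbf{Upper bound (entropy method).} Take $\sigma$ uniform, so $\ln\mQ(n)=H(\sigma)$. Reveal the queens in a uniformly random order $c_1,\dots,c_n$; by the chain rule $H(\sigma)=\sum_t H(c_t\mid c_1,\dots,c_{t-1})$, and $H(c_t\mid\cdot)\le\ln(\#\text{available cells at step }t)$, a cell being available if it shares no row, column, or diagonal with $c_1,\dots,c_{t-1}$. Averaging over the order and over the prefix, the summed log-count of available cells is controlled up to $o(n)$ by the empirical queenon of $\sigma$; conditioning (by the partition above) on $\mu_\sigma$ being $\epsilon$-close to a fixed $Q$ yields $H(\sigma\mid\mu_\sigma\approx Q)\le n\ln n+\mathrm{Ent}(Q)\,n+o_\epsilon(n)\le n\ln n-\alpha n+o_\epsilon(n)$. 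To obtain $\alpha>\upperBoundConstant$ I would bound the concave program $\max_Q\mathrm{Ent}(Q)$ from above by exhibiting a feasible dual potential (Lagrange multipliers for the marginal and diagonal constraints), the resulting numerical inequality being checked by the accompanying code.

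\textbf{Lower bound (randomized construction), and the main obstacle.} Fix a near-optimal queenon $Q_0$ with $\mathrm{Ent}(Q_0)\ge-\lowerBoundConstant$ and build a configuration column by column: when treating column $j$, pick its row uniformly among the cells of a window prescribed by $Q_0$ that are still conflict-free. Via the differential-equation / self-correcting method, show that the profile of available cells — per column and along each diagonal — concentrates around a deterministic trajectory, so that with high probability the algorithm never gets stuck, reaches the last column, and outputs a configuration $\oone$-close to $Q_0$, and that the number of distinct outputs — the exponential of the sum over steps of the logarithms of the numbers of choices — is $\bigl(ne^{\mathrm{Ent}(Q_0)+\oone}\bigr)^n$; hence $\mQ(n)\ge\mQ(n;Q_0,\epsilon)\ge(ne^{-\lowerBoundConstant+\oone})^n$ and $\alpha<\lowerBoundConstant$. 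I expect this step to be the crux: proving the random-greedy process tracks its trajectory tightly enough that both the failure probability and the entropy deficit stay $o(n)$ in the exponent, and that the per-step choice counts reproduce the variational entropy above; constructing $Q_0$ and certifying $\mathrm{Ent}(Q_0)\ge-\lowerBoundConstant$ together with the dual certificate for the upper bound is the remaining, computer-assisted, analytic work. Finally the large-deviations principle falls out: $\Prob[\mu_\sigma\approx Q]=\mQ(n;Q,\epsilon)/\mQ(n)=\exp\bigl((\mathrm{Ent}(Q)+\alpha+\oone)n\bigr)$, an LDP with rate function $-\alpha-\mathrm{Ent}(\cdot)$ on $\Queenons$.
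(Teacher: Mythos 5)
Your overall architecture matches the paper's: limit objects with sub-uniform diagonal marginals, a concave entropy functional, an entropy-method upper bound and a randomized-construction lower bound for the number of configurations near a fixed limit object, a compactness/net argument to pass to $\mQ(n)$, and a computer-assisted dual certificate plus an explicit near-optimizer for the numerics. However, there are two genuine gaps at exactly the places the paper has to work hardest. First, in the upper bound you assert that the summed log-count of available cells is controlled, up to $o(n)$, by the empirical queenon alone. This does not follow: the number of cells in a small box that are unthreatened by diagonals is not a function of the total occupied plus- and minus-diagonal measures, because a plus-diagonal and a minus-diagonal intersect only when they have the same parity (``color''). If the occupied plus-diagonals through a box are mostly of one parity and the occupied minus-diagonals mostly of the other, the available count exceeds the ``independent'' prediction, so the per-step entropy bound you write down is not valid as stated. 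The paper repairs this by introducing BW-decompositions (tracking black and white queens separately), proving the corresponding bound for each decomposition, and then using concavity and symmetry of the resulting functional $G^N$ to show the equitable decomposition dominates, which recovers $H_q^N$; some such device is needed, and your sketch has none.

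Second, in the lower bound you claim that a column-by-column greedy guided by $Q_0$ ``never gets stuck, reaches the last column'' with high probability. Trajectory-tracking arguments of the differential-equation type only control such processes up to time $n-o(n)$; near the end the error terms blow up and one cannot conclude that all $n$ columns are completed, which is precisely why the paper runs its random phase only to time $T=n-n^{1-1/K^2}$ and then completes the partial configuration with a separate absorption phase (switching one queen for two to absorb each uncovered row--column pair), after proving that the partial configuration is $\Omega(n)$-absorbing. Making absorbers plentiful also forces a preliminary perturbation of the target queenon (mixing in a measure with density bounded below and diagonal marginals bounded away from full), and the completion step requires an overcounting bound (at most $n^{O(n-T)}$ preimages per completed configuration) so that the entropy gained in the random phase survives. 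Without a completion mechanism and these accompanying estimates, your lower bound $\mQ(n;Q_0,\epsilon)\ge (ne^{-\lowerBoundConstant+\oone})^n$ is not established.
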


Previously, the best known bounds were
\[
e^{-1.58} > \limsup_{n\to\infty} \frac{\mQ(n)^{1/n}}{n} \geq \liminf_{n\to\infty} \frac{\mQ(n)^{1/n}}{n} \geq e^{-3},
\]
the upper bound due to Luria \cite{luria2017new} and the lower bound proved independently by Luria and the author \cite{luria2021lower} and Bowtell and Keevash \cite{bowtell2021n}. Before these, the best upper bound was the trivial $\mQ(n) \leq n!$ and the best lower bounds held only for infinite families of natural numbers $n$ (cf.\ \cite{rivin1994n}), whereas the only bound for all $n$ was $\mQ(n) = \Omega(1)$. We note, however, that \cite{zhang2009counting}, which is a physics paper, used Monte Carlo simulations to empirically estimate $\log \left( \frac{1}{n} \mQ(n)^{1/n} \right) \approx - 1.944000$. Previously, Benoit Cloitre \cite[Sequence A000170]{oeis} conjectured that $\log \left( \frac{1}{n} \mQ(n)^{1/n} \right) \approx -1.940$. Theorem \ref{thm:main theorem first statement} justifies these claims. For more history and an extensive list of open problems, we refer the reader to Bell and Stevens's survey \cite{bell2009survey}.

Our methods also allow us to study the typical structure of $n$-queens configurations. To state the main result in this vein we introduce some notation. Let $\mR$ be the collection of subsets of the plane with the form
\[
\left\{ (x,y) \in \interval^2 : a_1 \leq x+y \leq b_1, a_2 \leq y-x \leq b_2 \right\}
\]
for $a_1,a_2,b_1,b_2 \in [-1,1]$. (We use the square $\interval^2$ rather than $[0,1]^2$ because it better respects the natural symmetries of the problem.) Let $\gamma_1,\gamma_2$ be two finite Borel measures on $\interval^2$. We define the distance between $\gamma_1$ and $\gamma_2$ by
\[
\dist{\gamma_1}{\gamma_2} = \sup \left\{ \left| \gamma_1(\alpha) - \gamma_2(\alpha) \right| : \alpha \in \mR  \right\}.
\]
Let $q$ be an $n$-queens configuration, which we view as a subset of $[n]^2$. Define the step function $g_q:\interval^2 \to \R$ by $g_q \equiv n$ on every square $((i-1)/n-1/2,i/n-1/2) \times ((j-1)/n-1/2,j/n-1/2)$ such that $(i,j) \in q$ and $g_q \equiv 0$ elsewhere. Let $\gamma_q$ be the probability measure with density function $g_q$. Our main structural result is the following.

\begin{figure}
	\includegraphics[width=\textwidth*7/8]{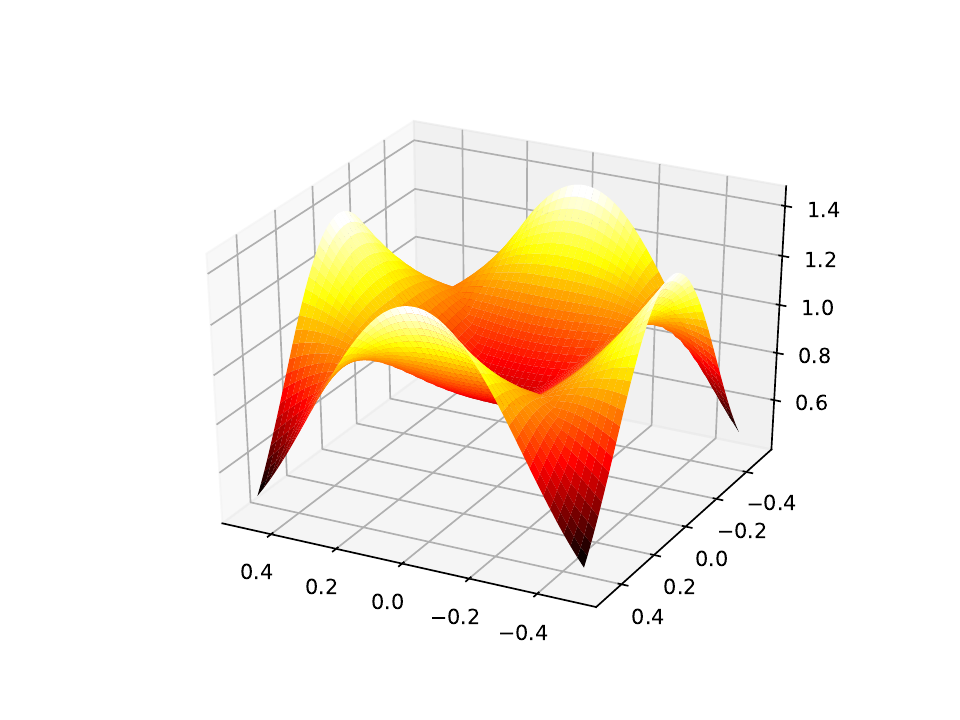}
	\caption{The density function of $\gamma^*$. This is the distribution of queens in a typical $n$-queens configuration.}\label{fig:3d_opt_50}
	\centering
\end{figure}

\begin{thm}\label{thm:main structural}
	There exists a Borel probability measure $\gamma^*$ on $\interval^2$ such that the following holds: Let $\varepsilon > 0$ be fixed and let $q$ be a uniformly random $n$-queens configuration. W.h.p.\footnote{We say that a sequence of events parameterized by $n$ occurs \termdefine{with high probability} (\termdefine{w.h.p.}) if the probability of its occurrence tends to $1$ as $n \to \infty$.} $\dist{\gamma_q}{\gamma^*} < \varepsilon$. Moreover, $\Prob \left[\dist{\gamma_q}{\gamma^*} \geq \varepsilon \right] \leq \exp \left( - \Omega_\varepsilon \left(n\right) \right)$.
\end{thm}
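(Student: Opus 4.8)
The measure $\gamma^*$ will be the unique maximizer of an entropy functional over the space of \emph{queenons}, and Theorem~\ref{thm:main structural} will be deduced from a large deviations principle for $n$-queens configurations together with this uniqueness. Three ingredients are needed, the first two forming the technical core of the paper. (i) One introduces the space $\mQ \subseteq \mP(\interval^2)$ of queenons: the Borel probability measures on the square whose $x$- and $y$-marginals are uniform and whose pushforwards under $(x,y) \mapsto x+y$ and $(x,y) \mapsto y-x$ are absolutely continuous with density at most $1$. These are linear constraints, so $\mQ$ is convex; one further checks that it is compact in the topology induced by $\distNoParam$ (which on $\mQ$ agrees with the topology of weak convergence), that $\dist{\gamma_q}{\mQ} = o(1)$ uniformly over all $n$-queens configurations $q$, and that conversely every queenon is a $\distNoParam$-limit of measures $\gamma_q$. (ii) One attaches to each $\gamma \in \mQ$ an entropy $\mathrm{Ent}(\gamma) \in [-\infty,0]$, given by an explicit concave integral functional of the density of $\gamma$, and proves the large deviations principle: for every $\gamma \in \mQ$, with $N_n(\gamma,\varepsilon) = \#\{q : \dist{\gamma_q}{\gamma} < \varepsilon\}$,
\[
\lim_{\varepsilon \to 0}\ \lim_{n \to \infty}\ \tfrac1n \log \frac{N_n(\gamma,\varepsilon)}{n^n} \;=\; \mathrm{Ent}(\gamma),
\]
the inner limit being read as the common value approached by $\liminf_n$ and $\limsup_n$ as $\varepsilon \to 0$. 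The upper bound here comes from the entropy method (reveal the queens column by column and bound the conditional entropies in terms of $\gamma$), the lower bound from a randomized algorithm that builds a configuration approximating $\gamma$. A covering argument over a finite $\distNoParam$-net of $\mQ$, using upper semicontinuity of $\mathrm{Ent}$, then upgrades this to $\tfrac1n \log \bigl(\mQ(n)/n^n\bigr) \to \max_{\mQ} \mathrm{Ent} =: -\alpha$ (which is Theorem~\ref{thm:main theorem first statement}) and, for every closed $F \subseteq \mQ$ and every sequence $\eta_n \to 0$, to the uniform bound $\#\{q : \dist{\gamma_q}{F} \le \eta_n\} \le n^n e^{(\sup_F \mathrm{Ent} + o(1))\,n}$. (iii) One proves that $\mathrm{Ent}$ is \emph{strictly} concave on $\mQ$; it then has a unique maximizer, and $\gamma^*$ is \emph{defined} to be that maximizer, so that $\mathrm{Ent}(\gamma^*) = -\alpha$.

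\textbf{Deriving the concentration.} Fix $\varepsilon > 0$ and set $F_\varepsilon = \{\gamma \in \mQ : \dist{\gamma}{\gamma^*} \ge \varepsilon/2\}$, a closed --- hence, by compactness of $\mQ$, compact --- subset of $\mQ$ with $\gamma^* \notin F_\varepsilon$. By upper semicontinuity $\mathrm{Ent}$ attains its supremum over $F_\varepsilon$, necessarily away from $\gamma^*$, so strict concavity gives $\sup_{F_\varepsilon} \mathrm{Ent} < \mathrm{Ent}(\gamma^*) = -\alpha$; set $\delta := -\alpha - \sup_{F_\varepsilon} \mathrm{Ent} > 0$. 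Let $\eta_n := \sup_q \dist{\gamma_q}{\mQ} = o(1)$. For $n$ large enough that $\eta_n < \varepsilon/2$, any configuration $q$ with $\dist{\gamma_q}{\gamma^*} \ge \varepsilon$ has its nearest queenon at $\distNoParam$-distance $\ge \varepsilon - \eta_n > \varepsilon/2$ from $\gamma^*$, hence in $F_\varepsilon$, so that $\dist{\gamma_q}{F_\varepsilon} \le \eta_n$; by the uniform bound of (ii) the number of such $q$ is at most $n^n e^{(-\alpha - \delta + o(1))\,n}$. Since $\mQ(n) = n^n e^{(-\alpha + o(1))\,n}$,
\[
\Prob\!\left[\dist{\gamma_q}{\gamma^*} \ge \varepsilon\right] \;\le\; \frac{n^n e^{(-\alpha - \delta + o(1))\,n}}{n^n e^{(-\alpha + o(1))\,n}} \;=\; e^{-(\delta + o(1))\,n} \;\longrightarrow\; 0 ,
\]
which is the assertion of the theorem.

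\textbf{The main difficulty.} Nearly all of the work is in ingredient (ii): identifying the correct entropy functional and proving its two matching bounds. The lower bound --- a randomized algorithm producing a configuration with prescribed limiting density, together with an entropy analysis of the algorithm --- is the analogue, for the diagonal constraints, of the random greedy processes used to enumerate combinatorial designs, and controlling its behaviour near the edges of the board is the delicate part; the entropy-method upper bound requires expressing the column-by-column conditional entropies in terms of $\gamma$. For Theorem~\ref{thm:main structural} by itself, however, the decisive extra input is the \emph{strictness} of the concavity in (iii): mere concavity would only localize $\gamma_q$ near the set of entropy maximizers, which a priori need not be a single point. Strictness is expected to follow because $\mathrm{Ent}$ is built from a strictly concave scalar function of the density, whence $\mathrm{Ent}\bigl(\tfrac12(\gamma_1 + \gamma_2)\bigr) > \tfrac12\bigl(\mathrm{Ent}(\gamma_1) + \mathrm{Ent}(\gamma_2)\bigr)$ whenever $\gamma_1 \neq \gamma_2$; the point requiring care is establishing this comparison in enough generality near the maximizer, e.g.\ ruling out that the maximizer is not absolutely continuous or lies on the boundary of the region where $\mathrm{Ent}$ is finite.
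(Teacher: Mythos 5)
Your proposal is correct and follows essentially the same route as the paper: a compact convex space of queenons, a strictly concave and upper semicontinuous entropy functional whose unique maximizer defines $\gamma^*$, a two-sided counting theorem upgraded by compactness to a large deviations principle, and then the exponential gap between $\sup$ of the entropy off a ball around $\gamma^*$ and its maximum. The only cosmetic difference is that in the paper $\gamma_q$ is exactly a queenon (so no $\eta_n$-buffer is needed), and the worry you raise about strictness of concavity is handled directly there via strict convexity of KL divergence, with non-absolutely-continuous measures simply assigned entropy $-\infty$.
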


Both the constant $\alpha$ from Theorem \ref{thm:main theorem first statement} and the measure $\gamma^*$ are characterized as the solution to a concave optimization problem defined in Section \ref{sec:queenons}. For a visualization of $\gamma^*$ see Figure \ref{fig:3d_opt_50}.

\subsection{Designs, entropy, and randomized algorithms}

We view $n$-queens configurations as an example of a \textit{combinatorial design}. The last quarter century has seen several breakthroughs related to the construction, enumeration, and analysis of designs. These include the Radhakrishnan \textit{entropy method} \cite{radhakrishnan1997entropy}, which was extended by Linial and Luria \cite{linial2013upper,linial2014upper} to give upper bounds on the number of designs; the \textit{R\"odl nibble} \cite{rodl1985packing} and \textit{random greedy algorithms} \cite{spencer1995asymptotic}, used to construct approximate designs; and, more recently, completion methods such as \textit{randomized algebraic constructions} \cite{keevash2014existence} and \textit{iterative absorption} \cite{glock2016existence}, used to complete approximate designs. We also mention the emerging \textit{limit theory} of combinatorial designs \cite{garbe2020limits,cooper2020quasirandom}, which draws on the theory of graphons \cite{lovasz2012large}, and from which this paper draws inspiration.

These methods are powerful enough to enumerate many classes of designs. In particular, the combination of random greedy algorithms and completion \cite{keevash2018counting,keevash2018existence} often yields lower bounds that match the upper bounds obtained with the entropy method. Nevertheless, the $n$-queens problem has remained challenging for two reasons. The first is the asymmetry of the constraints: Since the diagonals vary in length from $1$ to $n$, some board positions are more ``threatened'' than others. This makes the analysis of nibble-style arguments difficult. Additionally, the constraints are not regular: In a complete configuration, some diagonals contain a queen and some do not. This creates difficulties for the entropy method.

To overcome these challenges we define limit objects for $n$-queens configurations, which we call \textit{queenons}. We give their precise definition in Section \ref{sec:queenons}. For the current discussion it suffices to think of these as Borel probability measures on $\interval^2$. To count $n$-queens configurations we take the following approach. Rather than attempting to estimate $\mQ(n)$ directly, we fix a queenon $\gamma$, a parameter $\varepsilon > 0$ and set ourselves the easier task of estimating $|B_n(\gamma,\varepsilon)|$, where $B_n(\gamma,\varepsilon)$ is the set of $n$-queens configurations $q$ satisfying $\dist{\gamma_q}{\gamma} < \varepsilon$.

For the upper bound we use the entropy method: We choose $q \in B_n(\gamma,\varepsilon)$ uniformly at random and reveal its queens in a random order. The knowledge that $q$ is close to $\gamma$ allows us to obtain tight bounds on the entropy of each step in this process, which in turn gives a tight upper bound on $|B_n(\gamma,\varepsilon)|$ in terms of a ``queenon entropy'' function $H_q$.

For the lower bound we design a randomized algorithm that constructs an element of $B_n(\gamma,\varepsilon)$ by placing one queen at a time on the board. The algorithm has the additional property that the entropy of each step matches the entropy of the corresponding step in the upper bound. Very roughly, in each step of the algorithm we first choose a small area of the board according to the distribution $\gamma$. We then place a queen in a uniformly random position from that area subject to the constraint that it does not conflict with previously placed queens. We show that w.h.p.\ this algorithm places $n-o(n)$ queens on the board and, furthermore, w.h.p.\ the outcome of the algorithm is close to a complete configuration. Since the entropy of this process matches the entropy in the upper bound we obtain a matching lower bound on $|B_n(\gamma,\varepsilon)|$.

Notably, we do not use a simple random greedy algorithm for the lower bound. Instead, we use queenons as a ``bridge'' between the entropy method on the one side and a randomized construction on the other. Thus, the upper and lower bounds are two sides of the same coin: each follows from estimating the entropy of a process in which a configuration is constructed one queen at a time.

After finding tight bounds for $|B_n(\gamma,\varepsilon)|$ we use a compactness argument to reduce estimating $\mQ(n)$ to maximizing the (concave) entropy function $H_q$ over the (convex) space of queenons.

The rest of this paper is organized as follows. At the end of this section we introduce notation. In Section \ref{sec:queenons} we define queenons and their entropy function $H_q$. We state an enumeration theorem (Theorem \ref{thm:precise bounds}) which we use to prove a large deviations principle (Theorem \ref{thm:LDP}). We then use Theorem \ref{thm:LDP} to prove Theorem \ref{thm:main structural}. In Section \ref{sec:useful calculations} we collect useful claims. In Section \ref{sec:upper bound} we prove the upper bound in Theorem \ref{thm:precise bounds} and we prove the lower bound in Section \ref{sec:lower bound}. These two sections can be read independently of each other. In Section \ref{sec:optimizing H_q} we bound the optimal value of $H_q$, which ultimately implies Theorem \ref{thm:main theorem first statement}. We close with a few comments and open problems in Section \ref{sec:conclusion}.

\subsection{Notation}\label{sec:notation}

We introduce here some notation and definitions that we use throughout the paper. For the reader's convenience, many of the symbols that have a ``global'' scope (including some that are defined in later sections) are collected in notation tables in \cref{app:notation}.

For $n \in \N$ we write $[n] = \{1,2,\ldots,n\}$. For $a,b \in \R$ we use $a \pm b$ to denote a quantity in the interval $[a-|b|,a+|b|]$.

Let $n \in \N$. A \termdefine{row} in $[n]^2$ is a set of the form $\{(1,y),(2,y),\ldots,(n,y)\}$ and a \termdefine{column} is a set of the form $\{(x,1),(x,2),\ldots,(x,n)\}$. For $c \in \Z$, \termdefine{plus-diagonal $c$} is the set $\{(x,y) \in [n]^2:x+y=c\}$ and \termdefine{minus-diagonal $c$} is the set $\{(x,y) \in [n]^2 : y-x = c\}$. The term ``diagonal'' refers to a diagonal of either type.

A \termdefine{partial $n$-queens configuration} is a set $Q \subseteq [n]^2$ containing at most one element in each row, column, and diagonal. We say that $(x,y) \in [n]^2$ is \termdefine{available} in $Q$ if it does not share a row, column, or diagonal with any element of $Q$. We denote the set of such positions by $\mA_Q$.

Throughout the paper, unless stated otherwise, all asymptotics are as $n \to \infty$ and other parameters fixed. In general, we will assume that $n$ is sufficiently large for asymptotic inequalities to hold. For example, we may write $n^2 > 10n$ without explicitly requiring $n>10$.

\subsection{Partitions of $\interval^2$, $[n]^2$, and $[-1,1]$}\label{sec:partitions}

Although $n$-queens configurations are discrete objects, in this paper we consider their limits as analytic objects. The following notation is useful when moving from the discrete set $[n]^2$ to the continuous set $\interval^2$. Let $n \in \N$ and let $i,j \in [n]$. Define
\[
\sigma_{i,j}^n \coloneqq ((i-1)/n - 1/2,i/n - 1/2) \times ((j-1)/n - 1/2,j/n - 1/2).
\]

For $N \in \N$ let $I_N$ be the division of $\interval^2$ into squares and half-squares of the form
\[
\{ (x,y) \in \interval^2 : -1 + \frac{i-1}{N} \leq x+y \leq -1 + \frac{i}{N}, -1 + \frac{j-1}{N} \leq y-x \leq -1 + \frac{j}{N} \}
\]
for $i,j \in [2N]$ (see Figure \ref{fig:lattice}). Note that these sets are $\ell_1$-balls of radius $1/(2N)$ (intersected with $\interval^2$). We denote the squares in $I_N$ by $S_N$ and the half-squares by $T_N$. For $\alpha \in I_N$ we write $|\alpha|$ for its area (so that $|\alpha| = 1/(2N^2)$ if $\alpha \in S_N$ and $|\alpha| = 1/(4N^2)$ if $\alpha \in T_N$).

\begin{figure}
	\centering
	\begin{subfigure}{0.45\textwidth}
		\centering
		\includegraphics[width=0.9\textwidth]{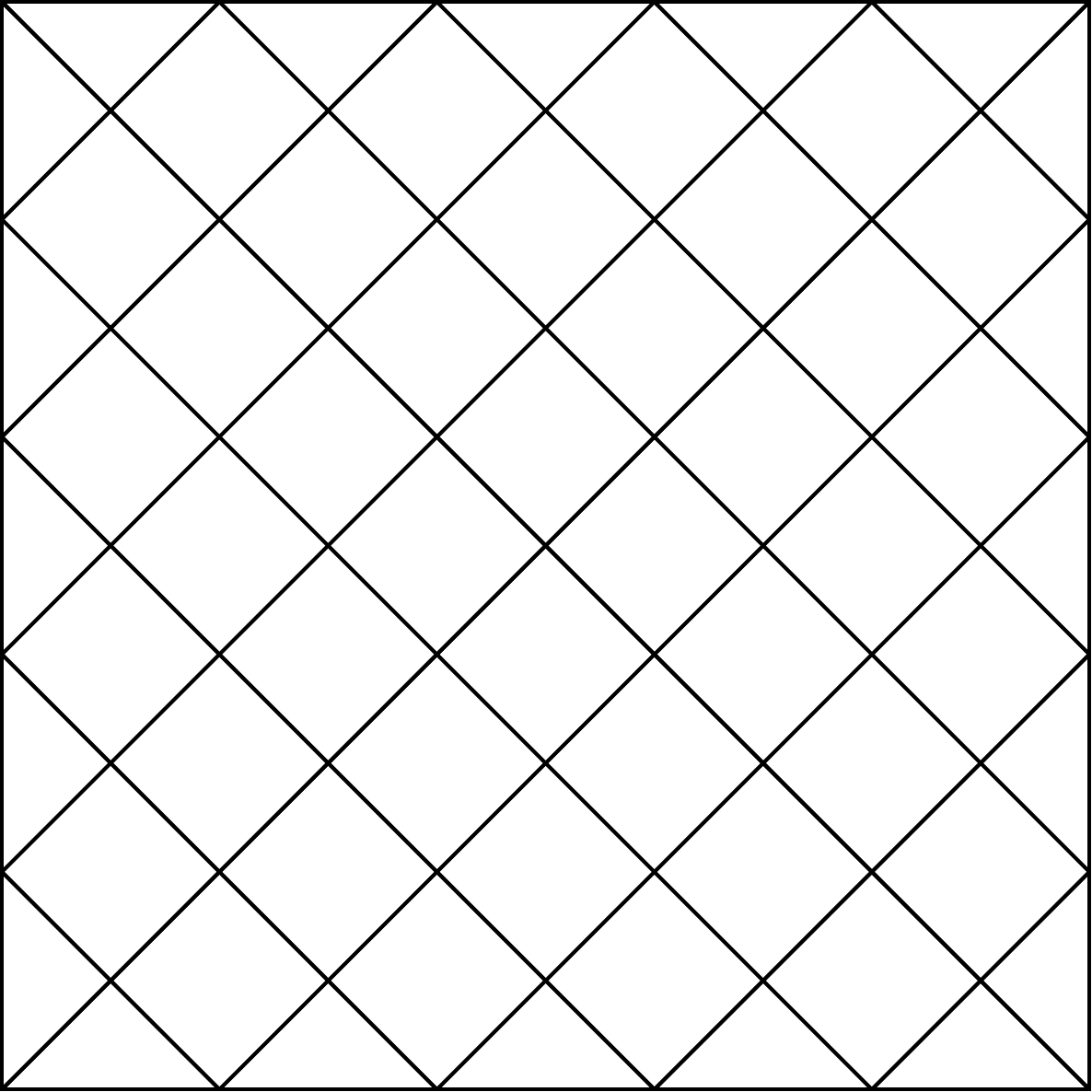}
	\end{subfigure}
	\begin{subfigure}{0.45\textwidth}
		\centering
		\includegraphics[width=0.9\textwidth]{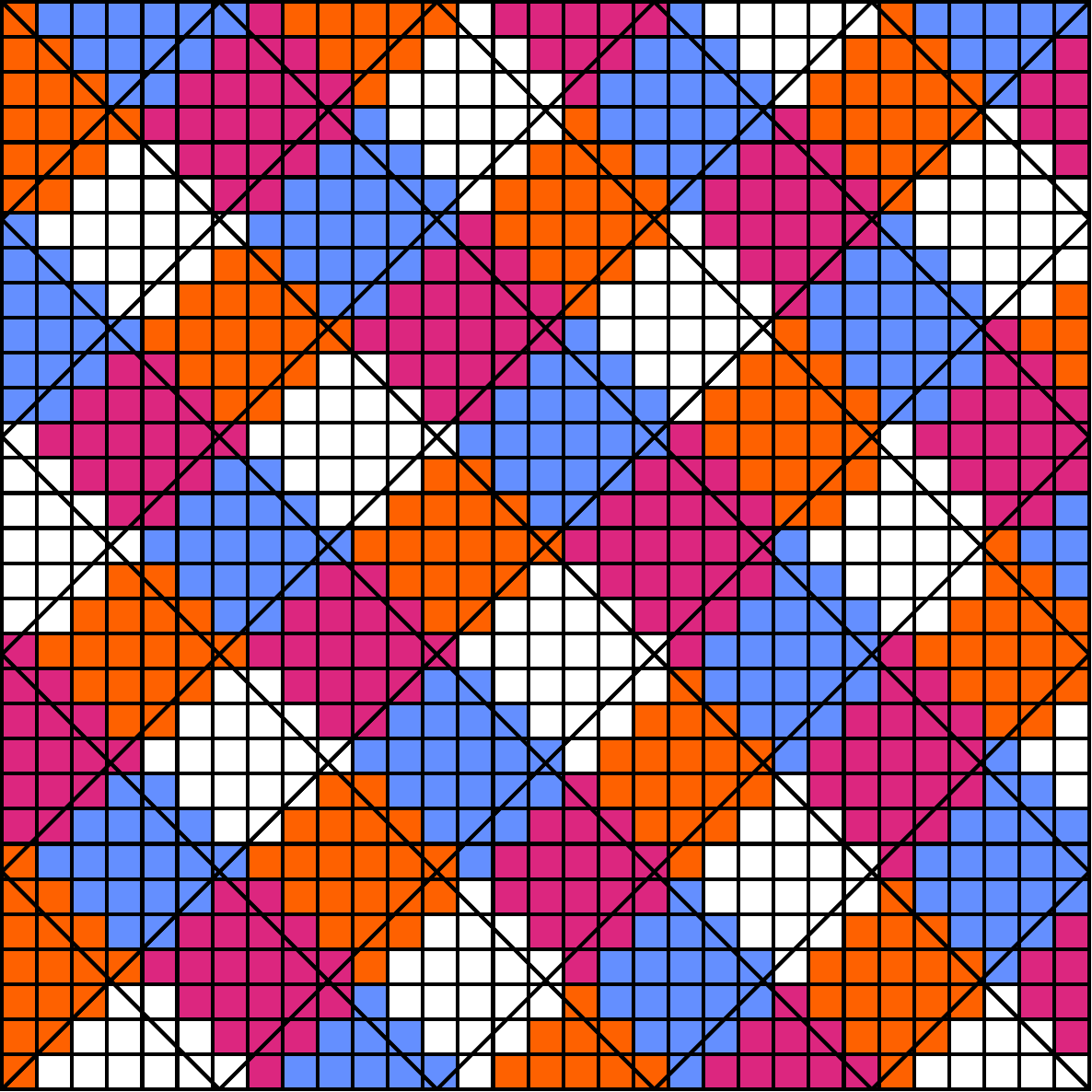}
	\end{subfigure}
	\caption{On the left, the division of $\interval^2$ into $I_N$, for $N=5$. The squares have area $1/(2N^2)$ while the half-squares have area $1/(4N^2)$. On the right, the corresponding partition of $[n]^2$ into $\{\alpha_n\}_{\alpha\in I_N}$, for $n=31$.}\label{fig:lattice}
\end{figure}

Let $n,N \in \N$. We partition $[n]^2$ into sets $\{\alpha_n\}_{\alpha \in I_N}$ as follows: For each $(i,j) \in [n]^2$, assign $(i,j)$ to the set $\alpha_n$ such that $\alpha \cap \sigma_{i,j}^n \neq \emptyset$ and such that the center-point of $\alpha$ is minimal in the lexicographic order. We emphasize that $\alpha$ is a subset of the continuous set $\interval^2$ whereas $\alpha_n$ is a subset of the discrete set $[n]^2$. We observe that $|\alpha_n| = |\alpha|n^2 \pm 8 \lceil n/N \rceil$ for every $\alpha \in I_N$. We write $\alpha^N(i,j)$ for the element $\alpha \in I_N$ such that $(i,j) \in \alpha_n$. Usually, $N$ will be clear from context, in which case we write $\alpha(i,j)$.

Let $(x,y) \in [n]^2$ and $\alpha \in I_N$. We write $L_{y,\alpha}^r$, $L_{x,\alpha}^c$, $L_{x+y,\alpha}^+$, and $L_{y-x,\alpha}^-$ for the number of positions in $\alpha_n$ and, respectively, row $y$, column $x$, plus-diagonal $x+y$, and minus-diagonal $y-x$. Here we abuse notation and do not write the dependence on $n$ explicitly; whenever we use this notation $n$ is clear from context.

Let $J_N$ be the division of $[-1,1]$ into the intervals $\{[-1+(i-1)/N,-1+i/N]\}_{1 \leq i \leq 2N}$.

We remark that neither $I_N$ nor $\{\sigma_{i,j}^n\}_{i,j\in[n]}$ is a partition of $\interval^2$. However, they are partitions up to sets of measure zero under all measures considered in the paper. Similarly, $J_N$ is a partition of $[-1,1]$ up to sets of measure zero under all measures we consider.

\section{Queenons}\label{sec:queenons}

In this section we define queenons --- the limits of $n$-queens configurations. We also define an associated entropy function and prove basic properties of these objects.

The limit theory of combinatorial objects is interesting in its own right (see, for example, \cite{lovasz2012large,hoppen2013limits,beneliezer_et_al:LIPIcs.ITCS.2021.42,garbe2020limits}). Nevertheless, it is beyond our scope to develop a comprehensive theory of queenons. Instead, we restrict ourselves to statements needed for the proofs of Theorems \ref{thm:main theorem first statement} and \ref{thm:main structural}.

\subsection{Definitions and basic properties}

Queens configurations are, in particular, permutation matrices. There is already a well-developed limit theory for permutations, in which the limiting objects are called \textit{permutons} \cite{hoppen2013limits,kral2013quasirandom,glebov2015finitely,kenyon2020permutations}. Let us recall their definition.

\begin{definition}
	A \termdefine{permuton} is a Borel probability measure on $\interval^2$ with uniform marginals:
	\[
	\forall -1/2 \leq a \leq b \leq 1/2, \gamma([a,b] \times \interval) = \gamma(\interval \times [a,b]) = b-a.
	\]
	
	For $N \in \N$, a permuton $\gamma$ is an \termdefine{$N$-step permuton} if for every $i,j \in [N]$, $\gamma$ has constant density (i.e., constant Radon--Nikodym derivative with respect to the Lebesgue measure) on $\sigma_{i,j}^N$. We call $\gamma$ a \termdefine{step permuton} if it is an $N$-step permuton for some $N$.
\end{definition}

\begin{rmk}
	In the definition above we follow \cite{kenyon2020permutations}. There are other, equivalent, definitions.
\end{rmk}

Before defining queenons we recall that since $\interval^2$ is a compact metric space, the space $\mP$ of Borel probability measures on $\interval^2$ with the weak topology is compact and metrizable (cf.\ \cite[Lemma 6.4]{parthasarathy2005probability}).

The characterization of $n$-queens configurations as permutation matrices in which the sum of every diagonal is at most $1$ suggests the following definitions.

\begin{definition}
	Let $\mu \in \mP$. We say that $\mu$ has \termdefine{sub-uniform diagonal marginals} if for every $-1 \leq a \leq b \leq 1$ it holds that
	\begin{align*}
	& \mu \left( \left\{ (x,y) \in \interval^2 : a \leq y-x \leq b \right\} \right) \leq b-a,\\
	& \mu \left( \left\{ (x,y) \in \interval^2 : a \leq x+y \leq b \right\} \right) \leq b-a.
	\end{align*}
\end{definition}
	
\begin{definition}\label{def:queenons}
	Let $\tilde{\Queenons} \subseteq \mP$ be the set of step permutons with sub-uniform diagonal marginals. Let $\Queenons = \overline{\tilde{\Queenons}}$ be its closure in the weak topology. We call the elements of $\Queenons$ \termdefine{queenons} and the elements of $\tilde{\Queenons}$ \termdefine{step queenons}.
\end{definition}

Recall that for an $n$-queens configuration $q \subseteq [n]^2$, we denote by $\gamma_q \in \mP$ the measure that has constant density $n$ on every $\sigma_{i,j}^n$ such that $(i,j) \in q$ and density $0$ elsewhere. The next observation follows from the more general \cref{clm:step queenon sufficient condition}.

\begin{obs}\label{obs:queens are queenons}
	Let $q \subseteq [n]^2$ be an $n$-queens configuration. Then $\gamma_q \in \tilde{\Queenons}$ and, in particular, is a queenon.
\end{obs}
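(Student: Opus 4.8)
The plan is to verify directly the two conditions defining $\tilde{\Queenons}$: that $\gamma_q$ is a step permuton, and that it has sub-uniform diagonal marginals. Since $\tilde{\Queenons}\subseteq\Queenons$ (a set is contained in its closure), this also yields the ``in particular'' clause. First I would check that $\gamma_q$ is a genuine probability measure: $g_q\geq 0$, and since an $n$-queens configuration has exactly $n$ queens, each contributing density $n$ on a square of area $1/n^2$, we get $\int g_q = n\cdot(n\cdot n^{-2}) = 1$. It is constant on every $\sigma_{i,j}^n$ (equal to $n$ or to $0$), so it is an $n$-step permuton as soon as it is a permuton, i.e.\ as soon as its marginals are uniform. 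For this I would use that $q$ is a permutation matrix: fixing $x\in\interval$, the vertical line through $x$ meets exactly one square carrying a queen --- the one in the column of $x$ --- and integrating $g_q=n$ over the height $1/n$ of that square gives $1$, so the $x$-marginal has density identically $1$ on $\interval$. The $y$-marginal is handled identically using rows. Thus $\gamma_q$ is a step permuton.

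For the diagonal condition, the key step is to understand the pushforward of $\gamma_q$ under $(x,y)\mapsto y-x$, a measure on $[-1,1]$. I claim the contribution of a single queen $(i,j)\in q$ to this pushforward has density equal to the ``tent'' $t\mapsto\max\bigl\{0,\,1-n\lvert t-(j-i)/n\rvert\bigr\}$, supported on the interval of length $2/n$ centered at $(j-i)/n$. This is a short computation: the slice $\sigma_{i,j}^n\cap\{y-x=t\}$ is a segment whose Euclidean length decreases linearly from $\sqrt{2}/n$ at $t=(j-i)/n$ to $0$ at $t=(j-i\pm 1)/n$, and multiplying by the density $n$ of $\gamma_q$ and dividing by $\lvert\nabla(y-x)\rvert=\sqrt 2$ yields the stated tent. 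Now, because $q$ contains at most one queen per minus-diagonal, the centers $(j-i)/n$ over $(i,j)\in q$ are \emph{distinct} integer multiples of $1/n$. On any interval $[k/n,(k+1)/n]$ only the tents centered at its two endpoints can be nonzero, and there their sum is $\bigl(1-n(t-k/n)\bigr)+\bigl(1-n((k+1)/n-t)\bigr)=1$; hence the total pushforward density $f$ satisfies $f\leq 1$ everywhere. Consequently, for all $-1\leq a\leq b\leq 1$ we have $\gamma_q\bigl(\{(x,y)\in\interval^2 : a\leq y-x\leq b\}\bigr)=\int_a^b f\leq b-a$, and the bound for $a\leq x-y\leq b$ is identical (the corresponding pushforward is the reflection $t\mapsto -t$ of the one above, hence also has density $\leq 1$). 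Running the same argument for the pushforward under $(x,y)\mapsto x+y$, using that $q$ meets each plus-diagonal at most once, gives the second inequality. Hence $\gamma_q$ has sub-uniform diagonal marginals and lies in $\tilde{\Queenons}\subseteq\Queenons$.

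There is no genuine obstacle here: every assertion is an immediate consequence of $q$ being a permutation matrix that hits each diagonal at most once. The only computation worth spelling out is the slice-length/tent identity together with the resulting partition-of-unity property of tent functions on the lattice $\tfrac1n\Z$ --- this is precisely the mechanism that translates the discrete constraint ``at most one queen per diagonal'' into the measure-theoretic bound ``diagonal marginal density at most $1$''.
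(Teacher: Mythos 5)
Your proposal is correct: the paper states this observation without proof, treating it as the routine verification that $q$ being a permutation matrix gives $\gamma_q$ uniform marginals (hence an $n$-step permuton) and that the at-most-one-queen-per-diagonal property gives the diagonal pushforward densities bounded by $1$, which is exactly the check you carry out. Your tent-function computation for the pushforward under $(x,y)\mapsto y-x$ and $(x,y)\mapsto x+y$ is a clean way to make the omitted argument explicit, and it is sound.
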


\begin{obs}
	Every queenon has sub-uniform diagonal marginals.
\end{obs}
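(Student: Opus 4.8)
The plan is to show that ``having sub-uniform diagonal marginals'' cuts out a weakly closed subset of $\mP$, so that the property descends automatically from $\tilde{\Queenons}$ to its closure $\Queenons$. The one point requiring care is that the condition as literally stated — an upper bound on the $\mu$-mass of the \emph{closed} strips $\{(x,y): a \le x\pm y \le b\} \cap \interval^2$ — is not obviously stable under weak limits: for a closed set $F$ the map $\mu \mapsto \mu(F)$ is only upper semicontinuous, which is the wrong direction for passing an upper bound to a limit. So the first move is to reformulate the condition in terms of open strips.

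\textbf{Step 1 (Reformulation via open strips).} I would first check that $\mu \in \mP$ has sub-uniform diagonal marginals if and only if $\mu(\{(x,y) \in \interval^2 : a < x-y < b\}) \le b-a$ and $\mu(\{(x,y) \in \interval^2 : a < x+y < b\}) \le b-a$ for all $-1 \le a < b \le 1$. One direction is monotonicity of measure. The other is continuity of $\mu$ from above along the decreasing family whose intersection is $\bigcap_{\varepsilon > 0}\{a-\varepsilon < x\pm y < b+\varepsilon\} = \{a \le x\pm y \le b\}$, which yields $\mu(\{a \le x\pm y\le b\} \cap \interval^2) \le \inf_{\varepsilon>0}(b-a+2\varepsilon) = b-a$ (truncating $a-\varepsilon$ and $b+\varepsilon$ to $[-1,1]$, which only helps).

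\textbf{Steps 2--3 (Closedness and passage to the closure).} Fix $a<b$ and let $U = \{(x,y) \in \interval^2 : a < x+y < b\}$; this is open in the compact metric space $\interval^2$, and similarly for the $x-y$ strip. By the portmanteau theorem the map $\mu \mapsto \mu(U)$ is lower semicontinuous on $\mP$ in the weak topology, so $\{\mu \in \mP : \mu(U) \le b-a\}$ is weakly closed. By Step 1 and the definition of $\tilde{\Queenons}$, every step queenon lies in every such closed set; hence so does every point of their intersection, which contains $\overline{\tilde{\Queenons}} = \Queenons$. Thus every queenon satisfies all the open-strip inequalities, and Step 1 converts this back into sub-uniform diagonal marginals. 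The main (essentially the only) obstacle is Step 1: recognizing that the closed-strip formulation must be traded for the open-strip one because of the direction of semicontinuity. Once that is in hand, the remaining steps are routine measure theory.
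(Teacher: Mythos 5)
Your overall strategy---show that the sub-uniform-marginal condition cuts out a weakly closed subset of $\mP$ and then use $\Queenons=\overline{\tilde{\Queenons}}$---is exactly the paper's route (the paper asserts this closedness in one line), and you correctly identified the closed-strip/open-strip semicontinuity issue as the point needing care. However, Step 1 as stated is false, and the parenthetical ``truncating $a-\varepsilon$ and $b+\varepsilon$ to $[-1,1]$, which only helps'' is where it breaks. On $\interval^2$ the quantity $x+y$ ranges over $[-1,1]$, so an open strip whose endpoints are forced into $[-1,1]$ can never detect the extreme level sets $x+y=\pm1$ or $x-y=\pm1$, i.e.\ the four corner points of the square. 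Concretely, $\mu=\delta_{(-1/2,-1/2)}$ satisfies $\mu(\{a<x+y<b\}\cap\interval^2)\le b-a$ and $\mu(\{a<x-y<b\}\cap\interval^2)\le b-a$ for every $-1\le a<b\le 1$ (each such open strip misses the corner), yet $\mu(\{-1\le x+y\le -1+\eta\}\cap\interval^2)=1>\eta$, so it does not have sub-uniform diagonal marginals. The precise failure in your chain: when $a-\varepsilon<-1$, the set $\{a-\varepsilon<x+y<b+\varepsilon\}\cap\interval^2$ is \emph{not} contained in the truncated open strip $\{-1<x+y<b+\varepsilon\}\cap\interval^2$, so the hypothesis with endpoints in $[-1,1]$ does not bound its measure, and the backward direction of your equivalence does not go through for a general $\mu\in\mP$.

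The repair is easy, and two natural patches exist. (i) Do not truncate: for arbitrary real $a<b$ the set $U_{a,b}=\{a<x+y<b\}\cap\interval^2$ is relatively open in $\interval^2$ (it is the trace of an open subset of the plane), so $\{\mu:\mu(U_{a,b})\le b-a\}$ is still weakly closed by portmanteau; step queenons satisfy all these conditions since $U_{a,b}\subseteq\{\max(a,-1)\le x+y\le\min(b,1)\}\cap\interval^2$ and $\min(b,1)-\max(a,-1)\le b-a$; then for a queenon, continuity from above along $U_{a-\varepsilon,b+\varepsilon}$ yields the closed-strip bound with no truncation step at all (and likewise for $x-y$). (ii) Alternatively, keep your Step 1 but add that every queenon has uniform row and column marginals (the coordinate projections are weakly continuous and the marginals of step permutons are Lebesgue), hence assigns zero mass to every horizontal line and in particular to the four corners; this restores the backward implication for the measures actually at hand. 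With either patch your argument becomes a complete and correct proof, following essentially the same approach as the paper but supplying the details the paper omits.
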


\begin{proof}
	This follows immediately from the fact that the set of measures in $\mP$ with sub-uniform diagonal marginals is closed in the weak topology.
\end{proof}

\begin{rmk}
	An alternative approach is to define queenons as the set of permutons with sub-uniform diagonal marginals. Denote this set by $\Gamma'$. As far as the goals of this paper are concerned it makes no difference whether one uses $\Gamma$ or $\Gamma'$. In particular, the enumeration theorem (\cref{thm:precise bounds}) and the large deviations principle (\cref{thm:LDP}) hold with $\Gamma$ replaced by $\Gamma'$. However, we do not know whether $\Gamma=\Gamma'$. Specifically, we were not able to prove that if $\gamma \in \Gamma'$ is not absolutely continuous with respect to the Lebesgue measure then $\gamma \in \Gamma$. (A proof that this does in fact hold when $\gamma \in \Gamma'$ \textit{is} absolutely continuous with respect to the Lebesgue measure follows similarly to the proof of \cref{lem:approximation}, below.) We have elected to work with $\Gamma$ since a consequence of the enumeration theorem is that for every $\gamma \in \Queenons$ there is a sequence of queens configurations $\{q_n\}_{n\in\N}$ such that $\gamma_{q_n} \to \gamma$. This justifies the perspective of queenons as limits of $n$-queens configurations.
\end{rmk}

Every queenon carries with it information about the distribution of queens in the diagonals. This is encapsulated by the measures on $[-1,1]$ in the next definition.

\begin{definition}\label{def:interval measures}
	For $\gamma \in \mP$ we define the probability measures $\gamma^+$ and $\gamma^-$ on $[-1,1]$ as the pushforwards of $\gamma$ under, respectively, $(x,y) \mapsto x+y$ and $(x,y) \mapsto y-x$. In other words, for any Borel set $X \subseteq [-1,1]$ we have
	\begin{align*}
	&\gamma^+(X) = \gamma \left( \left\{ (x,y) \in \interval^2 : x+y \in X \right\} \right) \text{ and}\\
	&\gamma^-(X) = \gamma \left( \left\{ (x,y) \in \interval^2 : y-x \in X \right\} \right).
	\end{align*}
	If $\gamma$ has sub-uniform diagonal marginals then for every $-1\leq a \leq b \leq 1$ it holds that $\gamma^+([a,b]),$ $\gamma^-([a,b]) \leq b-a$. Thus, we can define probability measures $\distPlus{\gamma}$ and $\distMinus{\gamma}$ on $[-1,1]$ by
	\begin{align*}
	&\distPlus{\gamma}([a,b]) = b-a-\gamma^+([a,b]),\\
	&\distMinus{\gamma}([a,b]) = b-a-\gamma^-([a,b]).
	\end{align*}
	In other words, momentarily denoting the Lebesgue measure on $[-1,1]$ by $\lambda$, we have $\distPlus{\gamma}(X) = \lambda(X) - \gamma^+(X)$ and $\distMinus{\gamma}(X) = \lambda(X) - \gamma^-(X)$ for every Borel set $X$.
	
	We also define the following notation: Let $\gamma \in \mP$, $N \in \N$, and $\alpha \in I_N$. There exists a unique $\beta \in J_N$ such that $\gamma(\alpha)$ contributes to $\gamma^+(\beta)$. We abuse notation and define $\gamma^+(\alpha) = \gamma^+(\beta)$. Similarly, we write $\gamma^-(\alpha)$ for $\gamma^-(\beta)$, where $\beta$ is the unique element of $J_N$ such that $\gamma(\alpha)$ contributes to $\gamma^-(\beta)$. If $\gamma$ has sub-uniform diagonal marginals we define $\distPlus{\gamma}(\alpha)$ and $\distMinus{\gamma}(\alpha)$ similarly.
\end{definition}

We are ready to define the entropy of a queenon.

Let $\mathcal{U}_\square$ denote the uniform distribution on $\interval^2$ and let $\mU_{[-1,1]}$ denote the uniform distribution on $[-1,1]$. We remind the reader that if $\mu$ is a probability measure on $\interval^2$ with density function $f$ then the Kullback--Leibler (KL) divergence is defined by
\[
D_{KL}(\mu || \mU_\square) \coloneqq \int_{\interval^2} f(x) \log (f(x)) dx.
\]
We remark that KL divergence is always nonnegative and may be infinite. If $\mu$ does not have a density function then we define $D_{KL}(\mu || \mU_\square) = \infty$. The KL divergence of a probability measure $\nu$ on $[-1,1]$ with density function $g$ is denoted and defined by
\[
D_{KL}(\nu || \mU_{[-1,1]})  \coloneqq \int_{[-1,1]} g(x) \log(2g(x)) dx.
\]
When it is clear from context if a measure $\rho$ is defined on $\interval^2$ or on $[-1,1]$ we may write simply $D_{KL}(\rho)$ for the KL divergence of $\rho$ with respect to the appropriate uniform distribution.

\begin{definition}\label{def:Q entropy}
	Let $\gamma \in \Queenons$. We define its \termdefine{Q-entropy} by
	\[
	H_q(\gamma) = -D_{KL} \left( \gamma || \mU_\square \right) - D_{KL}( \distPlus{\gamma} || \mU_{[-1,1]}) - D_{KL}( \distMinus{\gamma} || \mU_{[-1,1]}) + 2\log 2 - 3.
	\]
\end{definition}

We will use the following discrete approximations of $H_q$. For a finite probability distribution $p_1,\ldots,p_n$ we write $D(\{p_i\}_{i=1,\ldots,n}) \coloneqq \sum_{i=1}^n p_i \log \left( np_i \right)$ for its KL divergence with respect to the uniform distribution. Also, for $N \in \N$ and $\gamma \in \Queenons$ we define
\[
D^N(\gamma) = \sum_{\alpha \in I_N} \gamma(\alpha) \log \left( \frac{\gamma(\alpha)}{|\alpha|} \right)
\]
(recall that $|\alpha|$ is the area of $\alpha$). This is the KL divergence with respect to $\mU_\square$ of the measure $\tgamma \in \mP$ that has constant density on each $\alpha \in I_N$ and satisfies $\tgamma(\alpha) = \gamma(\alpha)$ for every $\alpha \in I_N$.

\begin{definition}\label{def:discrete Q entropy}
	Let $N \in \N$ and let $\gamma \in \Queenons$. Then
	\[
	H_q^N(\gamma) \coloneqq - D^N(\gamma) - D \left( \{ \distPlus{\gamma}(\alpha) \}_{\alpha \in J_N} \right) - D \left( \{ \distMinus{\gamma}(\alpha) \}_{\alpha \in J_N} \right) + 2\log 2 - 3.
	\]
\end{definition}

We are now in a position to state our enumeration theorem. We remind the reader that for $n \in \N$, $\gamma \in \Gamma$, and $\varepsilon > 0$, $B_n(\gamma,\varepsilon)$ is the set of $n$-queens configurations $q$ satisfying $\dist{\gamma_q}{\gamma} < \varepsilon$.

\begin{thm}\label{thm:precise bounds}
	Let $\gamma \in \Queenons$. Then:
	\begin{itemize}
		\item \textbf{Upper bound}: For all sufficiently small $\varepsilon > 0$ there exists an integer $N \geq \varepsilon^{-1/3}$ such that
		\[
		\limsup_{n \to \infty} \frac{|B_n(\gamma,\varepsilon)|^{1/n}}{n} \leq \exp \left( H_q^N(\gamma) + \varepsilon^{1/200} \right).
		\]
		
		\item \textbf{Lower bound}: For every $\varepsilon > 0$ there holds
		\[
		\liminf_{n \to \infty} \frac{|B_n(\gamma,\varepsilon)|^{1/n}}{n} \geq \exp \left( H_q(\gamma) - \varepsilon| H_q(\gamma)| \right).
		\]
	\end{itemize}
\end{thm}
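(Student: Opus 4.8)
The plan is to prove the upper and lower bounds essentially independently; in both cases the content is an estimate for the number of board cells in a small region that remain available after a prescribed fraction of the queens has been placed. \emph{Upper bound (entropy method).} Fix $\gamma$, a small $\varepsilon>0$, and set $N=\lceil\varepsilon^{-1/3}\rceil$. Let $q$ be uniform in $B_n(\gamma,\varepsilon)$ and augment it with a uniformly random ordering of its queens, obtaining $(v_1,\dots,v_n)$, so that $\log|B_n(\gamma,\varepsilon)|=H(q)=\sum_{t=1}^n H(v_t\mid v_1,\dots,v_{t-1})-\log n!$. Write $Q_{t-1}=\{v_1,\dots,v_{t-1}\}$, let $\alpha_t=\alpha^N(v_t)\in I_N$ be the coarse cell of the $t$-th queen, and $\alpha_{t,n}\subseteq[n]^2$ its lift; split $H(v_t\mid v_1,\dots,v_{t-1})=H(\alpha_t\mid v_1,\dots,v_{t-1})+H(v_t\mid\alpha_t,v_1,\dots,v_{t-1})$. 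Since every $\alpha\in I_N$ belongs to $\mR$, the hypothesis $\dist{\gamma_q}{\gamma}<\varepsilon$ forces $(\alpha_1,\dots,\alpha_n)$ to have $(\gamma(\alpha)\pm\varepsilon)n$ copies of each $\alpha$, so $\sum_t H(\alpha_t\mid v_1,\dots,v_{t-1})\le H(\alpha_1,\dots,\alpha_n)\le n\big(-D^N(\gamma)-\sum_{\alpha}\gamma(\alpha)\log|\alpha|\big)+o_\varepsilon(n)$ by type counting and the uniform continuity of Shannon entropy on the simplex of probability distributions on $I_N$. For the remaining terms, $H(v_t\mid\alpha_t,v_1,\dots,v_{t-1})\le\log\E\,|\alpha_{t,n}\cap\mA_{Q_{t-1}}|$. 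With $s=(t-1)/n$, a cell $c\notin q$ in $\alpha_{t,n}$ is blocked by exactly $k(c)\in\{2,3,4\}$ distinct queens of $q$ (one per occupied line through $c$, i.e.\ $k(c)=2$ plus the number of occupied diagonals through $c$), and is available with respect to the uniformly random $(t-1)$-subset $Q_{t-1}$ with probability $\binom{n-k(c)}{t-1}\big/\binom{n}{t-1}\le(1-s)^{k(c)}$. Since $\alpha_{t,n}$, in the coordinates $(x+y,\,y-x)$, is — up to parity and boundary corrections — a product of an interval of plus-diagonals and an interval of minus-diagonals whose numbers of occupied members are controlled by $\dist{\gamma_q}{\gamma}<\varepsilon$, hence by $\distPlus{\gamma}$ and $\distMinus{\gamma}$, summing over $c$ gives, for every $q\in B_n(\gamma,\varepsilon)$,
\[
\E\,|\alpha_{t,n}\cap\mA_{Q_{t-1}}|\le(1+o_\varepsilon(1))\,|\alpha_n|\,(1-s)^2\big((1-s)+a^+_\alpha s\big)\big((1-s)+a^-_\alpha s\big),
\]
where $a^+_\alpha=N\distPlus{\gamma}(\beta)$ for the $J_N$-strip $\beta$ carrying the plus-diagonals through $\alpha$, and $a^-_\alpha$ is defined symmetrically. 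Summing the logarithm over $t$ (a Riemann sum converging to $n\int_0^1$), and using $\int_0^1\log(1-s)\,ds=-1$, $\int_0^1\log\big((1-s)+as\big)\,ds=-1-\tfrac{a\log a}{1-a}$, together with $\gamma^+(\beta)=(1-a^+_\beta)/N$, one finds that the coarse and fine contributions combine with $-\log n!$ to give $nH_q^N(\gamma)+o_\varepsilon(n)$; bookkeeping the error terms (each a small power of $\varepsilon$) turns this into the stated $\varepsilon^{1/200}$ loss.

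\emph{Lower bound (randomized algorithm).} Given $\varepsilon$, choose $N=N(\varepsilon)$ large enough, using $H_q^N(\gamma)\to H_q(\gamma)$ (a standard convergence of discretized KL divergences; if $\gamma$ is not a step queenon, first replace it by a nearby one with comparable Q-entropy). The algorithm maintains a partial configuration $Q_0\subset Q_1\subset\cdots$: at step $t$ it samples $\alpha_t\in I_N$ independently with $\Prob[\alpha_t=\alpha]=\gamma(\alpha)$, then places the next queen uniformly at random among the cells of $\alpha_{t,n}\cap\mA_{Q_{t-1}}$, declaring failure if this set is empty; it runs for $n-o(n/\log n)$ steps and is then completed greedily to a full configuration $Q$. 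The analysis has three parts. \emph{(a) Pseudorandomness}: w.h.p.\ throughout the run, and uniformly over $\alpha\in I_N$, the count $|\alpha_{t,n}\cap\mA_{Q_{t-1}}|$ stays within a $1\pm o(1)$ factor of the same target $|\alpha_n|(1-s)^2((1-s)+a^+_\alpha s)((1-s)+a^-_\alpha s)$ as in the upper bound (so failure never occurs), which is proved by Freedman/Azuma-type martingale concentration for the numbers of occupied rows, columns, and plus-/minus-diagonals meeting each $\alpha_{t,n}$ and for the resulting available counts. \emph{(b) Accuracy}: w.h.p.\ $\dist{\gamma_Q}{\gamma}<\varepsilon$, since the per-step law of $\alpha_t$ makes the coarse statistics concentrate around $\gamma$, the diagonal marginals concentrate around the $J_N$-coarsenings of $\gamma^\pm$ (which lie within $2/N<\varepsilon/2$ of $\gamma^\pm$ once $N$ is large), and the completion perturbs every $\mR$-statistic by $o(1)$. \emph{(c) Entropy}: the sequence of random choices determines the ordered output, so $H(Q)\ge\sum_t H(\text{step $t$ choices}\mid\text{past})-\log n!-o(n)$, and step $t$ contributes $-\sum_\alpha\gamma(\alpha)\log\gamma(\alpha)+\E\log|\alpha_{t,n}\cap\mA_{Q_{t-1}}|$, which by (a) is $(1-o(1))$ times the corresponding quantity in the upper bound. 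Since $Q$ is supported on $B_n(\gamma,\varepsilon)$ after conditioning away the $o(1)$-probability bad event (a negligible entropy cost), $\log|B_n(\gamma,\varepsilon)|\ge H(Q)\ge n\log n+(1-o(1))nH_q^N(\gamma)$, which yields the stated lower bound.

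\emph{Main obstacle.} The crux, and the hardest step, is the pseudorandomness analysis (a) for the lower bound: showing that the available-cell counts track their predicted trajectories for essentially the entire run, uniformly over the $\Theta(n)$ steps (and the constantly many coarse cells), so that the algorithm almost surely does not fail and at every step harvests as much entropy as the upper bound permits. On the upper-bound side the delicate points are controlling the type-counting error and the fact that $(x,y)\mapsto(x+y,\,y-x)$ respects only the parity of the diagonals, so the product-structure estimate for $|\alpha_{t,n}\cap\mA_{Q_{t-1}}|$ must be carried out on the even and odd diagonal sublattices separately and corrected near the boundary of $\interval^2$.
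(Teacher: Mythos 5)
Your overall architecture (entropy method upstairs, trajectory-tracked randomized algorithm downstairs) matches the paper, but both halves have a genuine gap. The fatal one is in the upper bound: the displayed inequality $\E\,|\alpha_{t,n}\cap\mA_{Q_{t-1}}|\le(1+o_\varepsilon(1))\,|\alpha_n|(1-s)^2\bigl((1-s)+a^+_\alpha s\bigr)\bigl((1-s)+a^-_\alpha s\bigr)$ is false for some $q\in B_n(\gamma,\varepsilon)$, and ``carrying out the estimate on the even and odd sublattices separately'' does not repair it. Since a plus- and a minus-diagonal meet in a lattice cell only if they have the same parity, the number of doubly threatened cells in $\alpha_n$ is governed by the \emph{per-color} occupancies: writing $u_b,u_w$ (resp.\ $v_b,v_w$) for the occupied fractions of black/white plus- (resp.\ minus-) diagonals through $\alpha_n$, the color-blind bound you claim amounts to $\tfrac12(u_bv_b+u_wv_w)\le\tfrac14(u_b+u_w)(v_b+v_w)$, which fails whenever $(u_b-u_w)(v_b-v_w)>0$. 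The metric $\distNoParam$ is color-blind, so $B_n(\gamma,\varepsilon)$ contains configurations with such unbalanced splits, and for them the available-cell count (hence your per-step entropy bound) exceeds the color-blind target. This is exactly the obstruction the paper's proof is built around: it refines the revealing process by the color variable, proves the analogous bound for each BW-decomposition $(\gamma_b,\gamma_w)$ in terms of a functional $G^N(\gamma_b,\gamma_w)$, and only then recovers $H_q^N(\gamma)$ by showing (concavity plus the $b\leftrightarrow w$ symmetry) that $G^N$ is maximized at the equitable split $\frac12(\gamma,\gamma)$, where it equals $H_q^N(\gamma)$, finishing with a finite cover of the compact space of BW-decompositions compatible with $\gamma$. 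Without this extra layer your argument bounds $|B_n(\gamma,\varepsilon)|$ by something potentially larger than $\exp(nH_q^N(\gamma))$.

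In the lower bound, the step ``is then completed greedily to a full configuration'' is not justified and is not routine: after $n-o(n)$ queens are placed, the few uncovered rows and columns typically have all their cells blocked along diagonals, so no greedy extension exists in general. The paper completes via an absorption argument (swap some queen $(x,y)$ for the pair $(c,y),(x,r)$), and making this work requires two ingredients absent from your sketch: first, $\gamma$ is replaced not just by a nearby step queenon but by a mixture $\delta=(1-\rho)\tilde\gamma+\rho\kappa$ with density bounded away from $0$ (and diagonal marginals bounded away from full), which guarantees absorbers can exist at all; second, one must prove that w.h.p.\ the outcome of the random phase is $\Omega(n)$-absorbing and that absorbers survive to time $T$, which the paper does by coupling the process with a binomial random set and running separate concentration/martingale arguments. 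Your part (a) (trajectory tracking of safe/available counts with a stopping time, Azuma-type bounds) does correspond to the paper's analysis of the random phase, but identifying the completion as ``greedy'' skips the second main idea of the proof; you also need, as the paper does, a bound on how many partial configurations map to a given completed one, though that part is easy since only $o(n)$ queens are touched.
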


We prove the upper bound in Section \ref{sec:upper bound} and the lower bound in Section \ref{sec:lower bound}.

\begin{rmk}
	The asymmetry between the upper bound (which uses $H_q^N$) and the lower bound (which uses $H_q$) is due to the way we approach each proof. In Lemma \ref{lem:step approximation of entropy} we prove that for every $\gamma \in \Queenons$, $\lim_{N\to\infty} H_q^N(\gamma) = H_q(\gamma)$. Together with Theorem \ref{thm:precise bounds} this implies the more symmetric statement
	\[
	e^{H_q(\gamma)} \leq \liminf_{\varepsilon \downarrow 0} \liminf_{n \to \infty} \frac{|B_n(\gamma,\varepsilon)|^{1/n}}{n} \leq \limsup_{\varepsilon \downarrow 0} \limsup_{n \to \infty} \frac{|B_n(\gamma,\varepsilon)|^{1/n}}{n} \leq e^{H_q(\gamma)}
	\]
	which itself implies
	\[
	\lim_{\varepsilon \downarrow 0} \liminf_{n \to \infty} \frac{|B_n(\gamma,\varepsilon)|^{1/n}}{n} = \lim_{\varepsilon \downarrow 0} \limsup_{n \to \infty} \frac{|B_n(\gamma,\varepsilon)|^{1/n}}{n} = e^{H_q(\gamma)}.
	\]
\end{rmk}

\begin{ex}
	Let $\gamma$ be the uniform distribution on $\interval^2$. We will show that $H_q(\gamma) = -2$. This implies that $\mQ(n) \geq  ((1-\oone)ne^{-2})^n$. This already improves on the previous best bound $\mQ(n) \geq ((1-\oone)ne^{-3})^n$ \cite{luria2021lower,bowtell2021n}.
	
	Since $\gamma$ is uniform $D_{KL}(\gamma) = 0$. By symmetry, $D_{KL}( \distPlus{\gamma}) = D_{KL}(\distMinus{\gamma})$. The density function of $\gamma^+$ is $1-|c|$ (where $c$ varies from $-1$ to $1$). Therefore the density function of $\distPlus{\gamma}$ is $|c|$. Therefore:
	\[
	D_{KL}( \distPlus{\gamma} ) = \int_{-1}^1 |c|\log(2|c|)dc = \log(2) -1/2.
	\]
	Consequently
	\[
	H_q(\gamma) = - D_{KL}(\gamma) - D_{KL}(\distPlus{\gamma}) - D_{KL}(\distMinus{\gamma}) + 2\log 2 - 3 = -2.
	\]
\end{ex}

The next claims summarize basic properties of queenons and $\distNoParam$. We will rely on the following covering lemma. Recall the definition of $\mR$ from the introduction. We say the \termdefine{width} of the sets $\{(x,y): a \leq x+y \leq b\}$ and $\{(x,y): a \leq y-x \leq b\}$ is $b-a$.

\begin{lem}\label{lem:covering}
	Let $\alpha \in \mR$ and $N \in \N$. There exists a set $X \subseteq I_N$ such that $\alpha \subseteq \bigcup_{\beta \in X} \beta$ and $\left( \bigcup_{\beta \in X} \beta \right) \setminus \alpha$ is contained in four diagonals, each of width $2/N$.
\end{lem}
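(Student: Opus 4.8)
The plan is to cover $\alpha$ by those cells of $I_N$ that it meets, and then argue that the extra mass picked up by this over-cover is confined to a thin annular region described by four diagonals. Recall that $\alpha \in \mR$ has the form $\{(x,y) \in \interval^2 : a_1 \leq x+y \leq b_1,\ a_2 \leq y-x \leq b_2\}$, and the cells of $I_N$ are themselves intersections of $\interval^2$ with strips of the form $\{-1 + (i-1)/N \leq x+y \leq -1 + i/N\}$ and $\{-1 + (j-1)/N \leq y-x \leq -1 + j/N\}$, i.e.\ they are the cells of a grid in the rotated coordinates $u = x+y$, $v = y-x$. So the whole problem is really a one-dimensional statement in each of the two coordinates, made twice.

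First I would pass to the coordinates $u = x+y \in [-1,1]$ and $v = y-x \in [-1,1]$; in these coordinates $I_N$ is (the restriction to the diamond $\{|u|+|v| \leq 1\}$ of) the product grid $J_N \times J_N$, and $\alpha$ is (the restriction of) the axis-aligned box $[a_1,b_1] \times [a_2,b_2]$. Let $X \subseteq I_N$ be the set of cells $\beta$ with $\beta \cap \alpha \neq \emptyset$; then trivially $\alpha \subseteq \bigcup_{\beta \in X}\beta$. For the $u$-coordinate, let $[u_-,u_+]$ be the smallest union of intervals of $J_N$ containing $[a_1,b_1]$; since each interval of $J_N$ has length $1/N$, we have $u_- \geq a_1 - 1/N$ and $u_+ \leq b_1 + 1/N$, so $[u_-,u_+] \setminus [a_1,b_1]$ is contained in the two intervals $[u_-, a_1]$ and $[b_1, u_+]$, each of length at most $1/N$. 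The analogous statement holds for the $v$-coordinate with intervals $[v_-,a_2]$, $[b_2,v_+]$. Now $\bigcup_{\beta \in X}\beta$ is exactly $([u_-,u_+]\times[v_-,v_+]) \cap \interval^2$, and a point of this set that is not in $\alpha$ must fail one of the four inequalities $a_1 \leq u$, $u \leq b_1$, $a_2 \leq v$, $v \leq b_2$, hence lies in one of the four strips $\{u_- \leq u \leq a_1\}$, $\{b_1 \leq u \leq u_+\}$, $\{v_- \leq v \leq a_2\}$, $\{b_2 \leq v \leq v_+\}$. In the original coordinates these are four diagonals (two plus-diagonals and two minus-diagonals), and each has width at most $1/N$ — in fact I would be slightly wasteful and just quote width $2/N$ to match the statement, absorbing the possibility that the nearest grid line on each side could be as far as $2/N$ away in a sloppier bookkeeping, or simply noting $1/N \le 2/N$.

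The one genuine subtlety is the interaction with the boundary of $\interval^2$: $I_N$ contains both full squares $S_N$ and half-squares $T_N$, and $\alpha$ is intersected with $\interval^2$, so I should make sure that restricting everything to the diamond $\{|u|+|v|\leq 1\}$ does not break the one-dimensional argument. It does not: intersecting both $\bigcup_{\beta\in X}\beta$ and $\alpha$ with $\interval^2$ only removes points, so the containment of the symmetric-difference region inside the four strips is preserved (a point in $(\bigcup \beta)\cap\interval^2$ but not in $\alpha\cap\interval^2$ is either outside $\interval^2$ — impossible — or outside $\alpha$, and then the above applies). I expect this boundary bookkeeping to be the main place where care is needed; the rest is the straightforward observation that an axis-aligned box snapped outward to a grid gains only a frame of width one grid-spacing, done once per coordinate.
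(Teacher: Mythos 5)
Your proof is correct and follows essentially the same route as the paper: you take $X$ to be the cells of $I_N$ meeting $\alpha$ and observe that the over-cover is confined to thin diagonal strips around the four lines bounding $\alpha$ (the paper bounds each strip by width $2/N$ by taking all cells meeting each line, while your grid-snapping argument in the rotated coordinates bounds the excess directly by width $1/N \leq 2/N$). The only small imprecision is the claim that $\bigcup_{\beta\in X}\beta$ equals $\left([u_-,u_+]\times[v_-,v_+]\right)\cap\interval^2$ exactly — a corner cell inside that rectangle can meet $\interval^2$ yet miss $\alpha$ — but only the containment $\subseteq$ is needed for your argument, and that containment does hold.
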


\begin{proof}
	By definition of $\mR$ there exist $a_1,a_2,b_1,b_2 \in [-1,1]$ such that
	\[
	\alpha = \{ (x,y) \in \interval^2 : a_1 \leq x+y \leq b_1, a_2 \leq y-x \leq b_2 \}.
	\]
	Let $X = \{ \beta \in I_N : \alpha \cap \beta \neq \emptyset \}$. Then, by definition, $\alpha \subseteq \bigcup_{\beta \in X} \beta$. Now, for $\beta \in X$, if $\beta \nsubseteq \alpha$ then $\beta$ intersects one of the four lines $y=a_1-x,y=b_1-x,y=a_2+x,y=b_2+x$. For each line, the set of elements $\beta \in I_N$ intersecting it forms a diagonal of width $\leq 2/N$, proving the lemma.
\end{proof}

\begin{clm}\label{clm:I_N approximation of metric}
	Let $\gamma_1,\gamma_2 \in \Queenons$, $N \in \N$, and $\varepsilon > 0$. Suppose that for every $\alpha \in I_N$ we have $|\gamma_1(\alpha) - \gamma_2(\alpha)| < \varepsilon$. Then $\dist{\gamma_1}{\gamma_2} < 8/N + 4N^2 \varepsilon$.
\end{clm}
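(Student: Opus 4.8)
The plan is to deduce the bound on $\dist{\gamma_1}{\gamma_2}$, which is a supremum over the uncountable family $\mR$, from the finitely many hypotheses $|\gamma_1(\beta)-\gamma_2(\beta)|<\varepsilon$ ($\beta\in I_N$), using Lemma \ref{lem:covering} as the bridge between the discrete grid $I_N$ and the continuous family of diagonal rectangles. Fix an arbitrary $\alpha\in\mR$. The goal is to sandwich $\gamma_1(\alpha)$ and $\gamma_2(\alpha)$ in short intervals determined by $I_N$-data and conclude that $|\gamma_1(\alpha)-\gamma_2(\alpha)|\le 8/N+4N^2 m$, where $m:=\max_{\beta\in I_N}|\gamma_1(\beta)-\gamma_2(\beta)|$ is a quantity not depending on $\alpha$.

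For the sandwich: by Lemma \ref{lem:covering} there is $X\subseteq I_N$ with $\alpha\subseteq A:=\bigcup_{\beta\in X}\beta$ and $A\setminus\alpha$ contained in four diagonals of width $2/N$. Since $\gamma_1,\gamma_2$ are queenons they have sub-uniform diagonal marginals, so each of these four diagonals has $\gamma_i$-measure at most $2/N$; hence $\gamma_i(A\setminus\alpha)\le 8/N$, and because $\alpha\subseteq A$ we get $\gamma_i(\alpha)\in[\gamma_i(A)-8/N,\,\gamma_i(A)]$ for $i=1,2$. On the other hand $I_N$ partitions $\interval^2$ up to $\gamma_i$-null sets, so $\gamma_i(A)=\sum_{\beta\in X}\gamma_i(\beta)$, and therefore $|\gamma_1(A)-\gamma_2(A)|\le\sum_{\beta\in X}|\gamma_1(\beta)-\gamma_2(\beta)|\le |I_N|\cdot m\le 4N^2 m$ (using $|I_N|\le(2N)^2$). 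Combining the two observations, $\gamma_1(\alpha)-\gamma_2(\alpha)\le \gamma_1(A)-(\gamma_2(A)-8/N)\le 8/N+4N^2 m$, and by symmetry the same bound holds for $\gamma_2(\alpha)-\gamma_1(\alpha)$, so $|\gamma_1(\alpha)-\gamma_2(\alpha)|\le 8/N+4N^2 m$.

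Since this bound is uniform over $\alpha\in\mR$, taking the supremum gives $\dist{\gamma_1}{\gamma_2}\le 8/N+4N^2 m$, and as $m$ is a maximum over the finite set $I_N$ of quantities each strictly below $\varepsilon$, we have $m<\varepsilon$ and hence the claimed strict inequality. I do not expect a genuine obstacle here; the two points that need care are (i) invoking sub-uniform diagonal marginals so that each width-$2/N$ diagonal contributes at most $2/N$ (not $2/N$ times a density), which is exactly what Lemma \ref{lem:covering} is set up to exploit, and (ii) passing through the finite maximum $m$ rather than $\varepsilon$ directly — this is what lets the strict inequality survive the supremum over the infinitely many $\alpha\in\mR$.
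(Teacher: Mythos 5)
Your proof is correct and follows essentially the same route as the paper's: cover $\alpha\in\mR$ by $I_N$-cells via Lemma \ref{lem:covering}, bound the overshoot by $8/N$ using sub-uniform diagonal marginals, and bound the difference on the union by $4N^2$ times the per-cell discrepancy. Your extra step of routing the estimate through the finite maximum $m<\varepsilon$ to preserve strict inequality under the supremum is a small refinement the paper elides, but it changes nothing substantive.
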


\begin{proof}
	Let $\alpha \in \mR$. Let $X \subseteq I_N$ be a cover of $\alpha$ as guaranteed by Lemma \ref{lem:covering}. Let $U = \bigcup_{\beta \in X} \beta$. Then, for $i=1,2$:
	\[
	\gamma_i(\alpha) = \gamma_i(U) - \gamma_i(U \setminus \alpha).
	\]
	Since $\gamma_i$ has sub-uniform diagonal marginals, by Lemma \ref{lem:covering} we have $\gamma_i(U \setminus \alpha) \leq 8/N$. Additionally, using the fact that $|X| \leq |I_N| \leq 4N^2$:
	\[
	\gamma_1(U) = \sum_{ \beta \in X} \gamma_1(\beta) = \sum_{ \beta \in X} \gamma_2(\beta) \pm |X|\varepsilon = \gamma_2(U) \pm 4N^2\varepsilon.
	\]
	Therefore:
	\[
	|\gamma_1(\alpha) - \gamma_2(\alpha)| \leq |\gamma_1(U) - \gamma_2(U)| + |\gamma_1(U\setminus\alpha) - \gamma_2(U\setminus\alpha)| \leq 4N^2\varepsilon + 8/N.
	\]
	We conclude that $\dist{\gamma_1}{\gamma_2} \leq 4N^2\varepsilon + 8/N$.
\end{proof}

\begin{clm}\label{clm:config to queenon approximation}
	Let $\gamma \in \Queenons$, $N \in \N$, $\varepsilon > 0$, and let $q$ be an $n$-queens configuration satisfying
	\[
	\forall \alpha \in I_N, |\alpha_n \cap q| = (\gamma(\alpha) \pm \varepsilon)n.
	\]
	Then $\dist{\gamma_q}{\gamma} \leq 4N^2 \left( \varepsilon + 8/n \right) + 8/N$.
\end{clm}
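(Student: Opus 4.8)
The plan is to reduce Claim \ref{clm:config to queenon approximation} to Claim \ref{clm:I_N approximation of metric} by showing that the hypothesis on $q$ forces $|\gamma_q(\alpha) - \gamma(\alpha)|$ to be small for every $\alpha \in I_N$. The key observation is that $\gamma_q(\alpha)$ is almost exactly $|\alpha_n \cap Q|/n$: the measure $\gamma_q$ places mass $1/n$ on each of the $n$ cells $\sigma_{i,j}^n$ occupied by a queen of $q$, so $\gamma_q$ of the union $\cup_{(i,j) \in \alpha_n} \sigma_{i,j}^n$ equals exactly $|\alpha_n \cap Q|/n$. The only discrepancy between this union and $\alpha$ itself comes from cells $\sigma_{i,j}^n$ that straddle the boundary of $\alpha$, i.e.\ those assigned to $\alpha_n$ but not contained in $\alpha$ (or vice versa). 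Since the boundary of an element of $I_N$ consists of four diagonal segments, the number of such boundary cells is $O(n/N)$ per diagonal, hence $O(n)$ total with a constant depending on neither $n$ nor $N$ in a bad way; more precisely, this is where the earlier estimate $|\alpha_n| = |\alpha| n^2 \pm 8\lceil n/N \rceil$ and the sub-uniformity of $\gamma_q$'s diagonal marginals come in.

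Concretely, first I would fix $\alpha \in I_N$ and write $U_n = \bigcup_{(i,j) \in \alpha_n} \sigma_{i,j}^n$. Then $\gamma_q(U_n) = |\alpha_n \cap Q|/n = \gamma(\alpha) \pm \varepsilon$ by hypothesis. Next I would bound the symmetric difference $\gamma_q(U_n \triangle \alpha)$. Every cell $\sigma_{i,j}^n$ in $U_n \setminus \alpha$ or whose part lies in $\alpha \setminus U_n$ must meet the boundary of $\alpha$, which is a union of four diagonal strips of width $1/n$ (coming from $\sigma_{i,j}^n$ having diameter $O(1/n)$ in the $x+y$ and $y-x$ coordinates). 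Because $\gamma_q$ has sub-uniform diagonal marginals, the $\gamma_q$-mass of any diagonal strip of width $w$ is at most $w$, so the total mass on these four boundary strips — suitably widened to $O(1/n)$ to capture all straddling cells — is $O(1/n)$. This yields $\gamma_q(\alpha) = \gamma_q(U_n) \pm O(1/n) = \gamma(\alpha) \pm (\varepsilon + O(1/n))$ for each $\alpha \in I_N$.

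With the per-cell bound $|\gamma_q(\alpha) - \gamma(\alpha)| \le \varepsilon + c/n$ in hand (for an explicit small constant $c$, which the paper's constant $8$ suggests should be tracked carefully), I would invoke Claim \ref{clm:I_N approximation of metric} with $\varepsilon$ replaced by $\varepsilon + 8/n$ to conclude $\dist{\gamma_q}{\gamma} < 8/N + 4N^2(\varepsilon + 8/n)$, which is exactly the claimed bound. The main obstacle — really the only non-routine point — is getting the boundary estimate clean enough to land the constant $8$ rather than some larger number: one has to be careful that a cell $\sigma_{i,j}^n$ assigned to $\alpha_n$ but straddling $\partial\alpha$ contributes $\gamma_q$-mass only if it actually contains a queen, and that the four diagonal directions of $\partial\alpha$ (the two lines $x+y = \text{const}$ and the two lines $y-x = \text{const}$) each contribute a strip whose width, after accounting for the diameter of $\sigma_{i,j}^n$ in diagonal coordinates, is at most $2/n$, giving $4 \cdot 2/n = 8/n$ via sub-uniformity. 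Everything else is bookkeeping that parallels the proof of Claim \ref{clm:I_N approximation of metric}.
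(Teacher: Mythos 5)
Your proposal is correct and follows essentially the same route as the paper: reduce to Claim \ref{clm:I_N approximation of metric} and show $|\gamma_q(\alpha)-\gamma(\alpha)|\le \varepsilon+8/n$ for each $\alpha\in I_N$ by confining the boundary discrepancy to thin diagonal strips, where the paper simply counts directly that each of the four boundary lines of $\alpha$ meets the cells of at most two queens (hence at most $8$ straddling queens), which is the discrete form of your sub-uniformity argument. One small point of care: bound the two one-sided discrepancies $\gamma_q(U_n\setminus\alpha)$ and $\gamma_q(\alpha\setminus U_n)$ separately, each by four strips of width $2/n$ (so $\le 8/n$ each), and use $|\gamma_q(\alpha)-\gamma_q(U_n)|\le\max$ of the two; bounding the full symmetric difference as you phrase it would cost $16/n$ and miss the stated constant.
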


\begin{proof}
	By Claim \ref{clm:I_N approximation of metric} it is enough to show that for every $\alpha \in I_N$, $|\gamma_q(\alpha) - \gamma(\alpha)| \leq \varepsilon + 8/n$. Let $\alpha \in I_N$. Let $X$ be the set of queens $(i,j) \in q$ such that $\sigma_{i,j}^n \subseteq \alpha$ and let $Y$ be the set of queens $(i,j) \in q$ such that $\sigma_{i,j}^n \cap \alpha \neq \emptyset$. For every $(i,j) \in Y \setminus X$, $\sigma_{i,j}^n$ intersects one of the four diagonals defining $\alpha$. Since $q$ is a queens configuration, each diagonal line intersects at most $2$ queens. Therefore $|Y \setminus X| \leq 8$. Observe that $X \subseteq \alpha_n \cap q \subseteq Y \implies |X| \leq |\alpha_n \cap q| \leq |Y| \leq |X| + 8$. Similarly:
	\[
	|X|/n \leq \gamma_q(\alpha) \leq |Y|/n \leq (|X|+8)/n.
	\]
	Therefore $\gamma_q(\alpha) = (|\alpha_n \cap q| \pm 8)/n = \gamma(\alpha) \pm (\varepsilon + 8/n)$, as desired.
\end{proof}

\begin{clm}\label{clm:queenons compact convex metric}
	$(\Queenons,\distNoParam)$ is a convex, compact, metric space.
\end{clm}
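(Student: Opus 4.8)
The plan is to verify the three properties --- that $\distNoParam$ is a metric, that $\Queenons$ is convex, and that $\Queenons$ is compact --- separately; the first two are quick and compactness is the substantive point.

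\textbf{$\distNoParam$ is a metric on $\Queenons$.} Symmetry, nonnegativity (indeed $\distNoParam\leq 2$, as $\gamma_i(\alpha)\in[0,1]$), and the triangle inequality follow directly from writing $\distNoParam$ as a supremum over $\mR$ of $|\gamma_1(\alpha)-\gamma_2(\alpha)|$. The only axiom needing an argument is that $\dist{\gamma_1}{\gamma_2}=0$ forces $\gamma_1=\gamma_2$. Here I would note that $\mR$ is closed under finite intersections (the intersection of two diagonal rectangles is again a diagonal rectangle), contains $\interval^2$, and generates the Borel $\sigma$-algebra of $\interval^2$ --- it is the pullback of the axis-parallel rectangles along the linear isomorphism $(x,y)\mapsto(x+y,y-x)$. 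Thus $\mR$ is a $\pi$-system generating the Borel sets, and two Borel probability measures agreeing on it must coincide by Dynkin's $\pi$--$\lambda$ theorem.

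\textbf{$\Queenons$ is convex.} I would first check that $\tilde{\Queenons}$ is convex and then pass to the closure. If $\gamma_i\in\tilde{\Queenons}$ is an $N_i$-step permuton ($i=1,2$) and $t\in[0,1]$, then $t\gamma_1+(1-t)\gamma_2$ has constant density on each $\sigma_{i,j}^{M}$ with $M=\mathrm{lcm}(N_1,N_2)$, hence is an $M$-step permuton; and the uniform-marginal condition and each sub-uniform-diagonal-marginal inequality are preserved under mixing, being respectively an affine identity and an upper bound on a linear functional of the measure. So $\tilde{\Queenons}$ is convex. Taking weak limits preserves convexity: if $\gamma_1,\gamma_2\in\Queenons=\overline{\tilde{\Queenons}}$ are weak limits of sequences in $\tilde{\Queenons}$, then $t\gamma_1+(1-t)\gamma_2$ is the weak limit of the corresponding convex combinations, which lie in $\tilde{\Queenons}$.

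\textbf{$(\Queenons,\distNoParam)$ is compact.} Since $\interval^2$ is a compact metric space, $\mP$ is compact and metrizable in the weak topology, and $\Queenons$, being a weak closure, is weakly closed, hence weakly compact. I would deduce $\distNoParam$-compactness by showing every sequence $(\gamma_k)$ in $\Queenons$ has a $\distNoParam$-convergent subsequence: pass to a weakly convergent subsequence $\gamma_{k_\ell}\to\gamma\in\Queenons$ and argue that weak convergence within $\Queenons$ implies $\distNoParam$-convergence. This last implication is where sub-uniform diagonal marginals is used essentially: for $\gamma\in\Queenons$ and any cell $\alpha\in I_N$, the relative boundary of $\alpha$ in $\interval^2$ is contained in the topological boundary of the rectangle defining $\alpha$, hence in the four bounding diagonal lines, each a diagonal of width $0$ and thus of $\gamma$-measure $0$; so every $\alpha\in I_N$ is a $\gamma$-continuity set. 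Given $\delta>0$, fix $N$ with $8/N<\delta/2$; since $I_N$ is finite, the portmanteau theorem supplies $L$ with $|\gamma_{k_\ell}(\alpha)-\gamma(\alpha)|<\delta/(8N^2)$ for all $\alpha\in I_N$ and $\ell\geq L$, whence Claim~\ref{clm:I_N approximation of metric} gives $\dist{\gamma_{k_\ell}}{\gamma}<8/N+4N^2\cdot\delta/(8N^2)<\delta$. So $(\Queenons,\distNoParam)$ is sequentially compact, and being metric, compact.

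I expect the main obstacle to be precisely this passage from weak to $\distNoParam$ convergence; the enabling observation is that the whole defining class $\mR$ consists of continuity sets for every measure with sub-uniform diagonal marginals, which is what lets Claim~\ref{clm:I_N approximation of metric} be combined with the portmanteau theorem. As a by-product one can check the reverse implication --- $\distNoParam$-convergence implies weak convergence, by uniformly approximating a continuous test function by one constant on the cells of $I_N$ --- so that $\distNoParam$ in fact metrizes the weak topology on $\Queenons$; only the forward direction is needed above.
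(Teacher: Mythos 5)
Your proof is correct and follows essentially the same route as the paper's: the $\pi$-system uniqueness argument for the metric axiom, convexity of $\tilde{\Queenons}$ plus passage to the closure, and compactness via weak compactness of $\Queenons$ together with the sub-uniform diagonal marginals and Claim \ref{clm:I_N approximation of metric} to upgrade weak convergence to $\distNoParam$-convergence. The only cosmetic difference is that you invoke the portmanteau theorem on the cells of $I_N$ (which are continuity sets) and use only the direction weak $\Rightarrow \distNoParam$ to get sequential compactness, whereas the paper proves convergence on all of $\mR$ by sandwiching indicators between continuous functions and verifies the equivalence of the two modes of convergence in both directions.
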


\begin{proof}
	We first remark that $\distNoParam$ is a metric on $\mP$ (and, in fact, on the space of all finite Borel measures on $\interval^2$). Symmetry and the triangle inequality clearly hold, so we need only prove that for $\gamma_1,\gamma_2 \in \mP$, $\dist{\gamma_1}{\gamma_2} = 0 \implies \gamma_1 = \gamma_2$. Since $\mR$ is closed under finite intersections and generates the Borel $\sigma$-algebra, this follows from \cite[Lemma 1.9.4]{bogachev2007measure}.
	
	To see that $\Queenons$ is convex it is enough to observe that $\tilde{\Queenons}$ is convex.
	
	We have already mentioned that $\mP$, and hence $\Queenons$, is compact and metrizable with respect to the weak topology. Thus it suffices to show that weak sequential convergence in $\Gamma$ implies sequential convergence in $(\Queenons,\distNoParam)$. We remark (but do not prove) that the notions are, in fact, equivalent. However, convergence in $(\mP,\distNoParam)$ is \textit{stronger} than weak convergence. Their equivalence in $(\Queenons,\distNoParam)$ is due to the sub-uniform diagonal marginals property.
	
	Recall that a sequence $\gamma_1,\gamma_2,\ldots \in \mP$ converges to $\gamma \in \mP$ in the weak topology if for every continuous $f:\interval^2\to\R$ there holds $\lim_{n\to\inf}\int fd\gamma_n = \int f d\gamma$. Let $\gamma,\gamma_1,\gamma_2,\ldots \in \Queenons$ and suppose that $\gamma_n \to \gamma$ in the weak topology. We first show that for every $\alpha \in \mR$ it holds that $\gamma_n(\alpha) \to \gamma(\alpha)$. Let $\alpha \in \mR$ and let $\varepsilon > 0$. Let $\beta$ be the $\varepsilon$-neighborhood of $\alpha$ in the $\ell_1$ norm. Then $\beta \setminus \alpha$ is contained in four diagonals of width $\varepsilon$. Hence $\delta(\beta \setminus \alpha) \leq 4\varepsilon$ for every $\delta \in \Queenons$. Let $f:\interval^2 \to [0,1]$ be continuous, equal to $1$ on $\alpha$, and equal to $0$ outside $\beta$. Then, for every $\delta \in \Queenons$:
	\[
	\int f d\delta \geq \delta(\alpha) = \int f d \delta - \int_{\beta \setminus \alpha} f d \delta \geq \int f d \delta - 4\varepsilon.
	\]
	Thus
	\[
	|\gamma_n(\alpha) - \gamma(\alpha)| \leq \left| \int f d\gamma_n - \int f d\gamma \right| + 4\varepsilon \xrightarrow[n \to \infty]{} 4\varepsilon.
	\]
	Since $\varepsilon > 0$ was arbitrary, we conclude that $\gamma_n(\alpha) \to \gamma(\alpha)$.
	
	We now show that $\dist{\gamma_n}{\gamma} \to 0$. Let $\varepsilon > 0$. Let $N = \lfloor \varepsilon^{-1} \rfloor$ and let $n_0$ be large enough that for all $n \geq n_0$ and for every $\alpha \in I_N$ it holds that $|\gamma_n(\alpha) - \gamma(\alpha)| < \varepsilon^3$. Then, by Claim \ref{clm:I_N approximation of metric}, for every $n \geq n_0$ we have $\dist{\gamma_n}{\gamma} < 100\varepsilon$. Hence $\dist{\gamma_n}{\gamma} \to 0$. We conclude that $(\Queenons,\distNoParam)$ is compact.
\end{proof}

We now prove that $H_q^N$ approximates $H_q$.

\begin{lem}\label{lem:step approximation of entropy}
	Let $\gamma \in \Queenons$. Then $\lim_{N \to \infty} H_q^N(\gamma) = H_q(\gamma)$.
\end{lem}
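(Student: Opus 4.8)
The plan is to reduce the lemma to the three convergences, as $N\to\infty$,
\[
D^N(\gamma)\to D_{KL}(\gamma\|\mU_\square),\qquad D\bigl(\{\distPlus{\gamma}(\alpha)\}_{\alpha\in J_N}\bigr)\to D_{KL}(\distPlus{\gamma}\|\mU_\interval),
\]
and the analogue for $\distMinus{\gamma}$. Indeed, substituting these into the definitions of $H_q^N(\gamma)$ and $H_q(\gamma)$ (whose additive constants $2\log2-3$ coincide) immediately gives $H_q^N(\gamma)\to H_q(\gamma)$; and since each of the three quantities on the left is non-negative (Gibbs' inequality), there is no indeterminacy when $D_{KL}(\gamma\|\mU_\square)=+\infty$ — in that case the convergences force $H_q^N(\gamma)\to-\infty=H_q(\gamma)$. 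Two harmless preliminaries: $\gamma$ has sub-uniform diagonal marginals, hence assigns zero mass to every line $x+y=c$ and $y-x=c$, and $\distPlus{\gamma},\distMinus{\gamma}$ have density at most $1$; thus the grid lines of $I_N$ and of $J_N$ are null for all the measures in play, the cells of $I_N$ and $J_N$ are honest partitions up to null sets, and the lexicographic tie-breaking in their definition is immaterial.

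The device I would use is \emph{step-ification}. Let $(X,\lambda)$ denote either $\interval^2$ with uniform measure $\mU_\square$ and the partition into the cells of $I_N$, or $[-1,1]$ with uniform measure $\mU_\interval$ and the partition into the cells of $J_N$. For a Borel probability measure $\rho$ on $X$, let $\rho^{(N)}$ be the measure with $d\rho^{(N)}/d\lambda\equiv\rho(\alpha)/\lambda(\alpha)$ on each cell $\alpha$; it is a probability measure on $X$ with $\rho^{(N)}(\alpha)=\rho(\alpha)$ for every $\alpha$ (it need not lie in $\Queenons$, but only properties of $D_{KL}(\cdot\|\lambda)$ on the full space of probability measures will be used). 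Two elementary facts. First, $\rho^{(N)}\to\rho$ weakly: both measures give mass $\rho(\alpha)$ to each cell $\alpha$, on which a continuous $f$ varies by at most $\omega_f(1/N)$ (its modulus of continuity at scale $1/N$, the maximal cell diameter), so $\bigl|\int f\,d\rho^{(N)}-\int f\,d\rho\bigr|\le 2\,\omega_f(1/N)\to0$. Second, a one-line computation gives
\[
D_{KL}\bigl(\rho^{(N)}\,\|\,\lambda\bigr)=\sum_{\alpha}\lambda(\alpha)\,\frac{\rho(\alpha)}{\lambda(\alpha)}\log\frac{\rho(\alpha)}{\lambda(\alpha)}=\sum_{\alpha}\rho(\alpha)\log\frac{\rho(\alpha)}{\lambda(\alpha)},
\]
which is $D^N(\rho)$ when $X=\interval^2$ (since $\lambda(\alpha)=|\alpha|$), and $D\bigl(\{\rho(\alpha)\}_{\alpha\in J_N}\bigr)$ when $X=[-1,1]$ (since $\lambda(\alpha)=1/(2N)=1/|J_N|$).

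The proof then closes by a sandwich. For the upper bound, the data-processing inequality for relative entropy, applied to the map $\pi_N$ sending a point of $X$ to the index of its cell, gives
\[
D_{KL}\bigl(\rho^{(N)}\,\|\,\lambda\bigr)=D_{KL}\bigl((\pi_N)_*\rho\,\|\,(\pi_N)_*\lambda\bigr)\le D_{KL}(\rho\|\lambda),
\]
valid whether or not $\rho$ is absolutely continuous. For the lower bound, $\mu\mapsto D_{KL}(\mu\|\lambda)$ is lower semicontinuous in the weak topology — e.g.\ by the Donsker--Varadhan formula $D_{KL}(\mu\|\lambda)=\sup_{g\in C(X)}\bigl(\int g\,d\mu-\log\int e^{g}\,d\lambda\bigr)$, a supremum of weakly continuous functionals — so from $\rho^{(N)}\to\rho$ weakly we get $\liminf_N D_{KL}(\rho^{(N)}\|\lambda)\ge D_{KL}(\rho\|\lambda)$. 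Combining the two bounds, $D_{KL}(\rho^{(N)}\|\lambda)\to D_{KL}(\rho\|\lambda)$; applying this with $\rho=\gamma$ on $\interval^2$ and with $\rho=\distPlus{\gamma}$ and $\rho=\distMinus{\gamma}$ on $[-1,1]$ yields the three convergences, and hence the lemma.

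The only point that genuinely needs care — and the reason for going through weak lower semicontinuity rather than a pointwise martingale argument — is that the partitions $I_N$ are \emph{not} nested as $N$ grows (for instance $I_3$ does not refine $I_2$), so there is no single filtration to which one could apply the martingale convergence theorem; only the subsequence $N=2^k$ is monotone. Lower semicontinuity bypasses this by controlling the entire family $\{\rho^{(N)}\}_N$ of step approximants simultaneously, while the data-processing bound, which holds for each individual $N$, provides the matching upper bound. Everything else is routine: the two displayed identities and the standard facts that relative entropy is non-negative, monotone under coarsening, and weakly lower semicontinuous.
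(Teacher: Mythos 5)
Your proposal is correct, and it reaches the lemma by a genuinely different route than the paper. Both arguments make the same first reduction, to the three convergences $D^N(\gamma)\to D_{KL}(\gamma)$ and $D(\{\overline{\gamma}^{\pm}(\alpha)\}_{\alpha\in J_N})\to D_{KL}(\overline{\gamma}^{\pm})$, and both implicitly use that the cell boundaries are null (sub-uniform diagonal marginals for $I_N$, bounded density of $\overline{\gamma}^{\pm}$ for $J_N$). From there the paper does not argue from first principles: it invokes \cite[Proposition 9]{kenyon2020permutations}, transporting $\gamma$ to $[0,1]^2$ by a $\pi/4$ rotation so that the cells of $I_N$ become dyadic squares (absorbing the half-squares of $T_N$ into an $O(1/N)$ error and a $\log 2$ shift), and handles the diagonal marginals by forming a product with the uniform measure so the same proposition applies; the cited proposition itself splits into the absolutely continuous integrable, non-integrable, and singular cases. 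You instead prove the convergence of $D_{KL}(\rho^{(N)}\|\lambda)$ directly for any probability measure $\rho$, sandwiching it between the data-processing (coarsening) upper bound, which is exact at every $N$, and the weak lower semicontinuity of relative entropy (Donsker--Varadhan) applied to the weakly convergent step approximants $\rho^{(N)}\to\rho$. Your identification $D_{KL}(\rho^{(N)}\|\lambda)=D^N(\rho)$ (resp.\ the discrete $D$ on $J_N$) is the correct bookkeeping, the infinite-entropy case is handled uniformly without case analysis (the non-negativity of the other two terms rules out any $\infty-\infty$ issue, exactly as you say), and your remark that the non-nestedness of $\{I_N\}$ blocks a naive martingale-convergence argument is apt. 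What your approach buys is self-containedness and generality: no change of variables, no half-square error terms, no product-measure trick, and no appeal to an external proposition. What the paper's approach buys is brevity given the citation and a presentation parallel to the permuton literature it builds on.
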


\begin{proof}
	It suffices to show that $\lim_{N\to\infty} D^N(\gamma) = D_{KL}(\gamma)$, $\lim_{N \to \infty} D(\{\distPlus{\gamma}(\alpha)\}_{\alpha\in J_N}) = D_{KL}(\distPlus{\gamma})$, and $\lim_{N \to \infty} D(\{\distMinus{\gamma}(\alpha)\}_{\alpha \in J_N}) = D_{KL}(\distMinus{\gamma})$.
	
	By definition $D^N(\gamma) = \sum_{ \alpha \in I_N } \gamma(\alpha)\log \left( \gamma(\alpha) / |\alpha| \right)$. Therefore, $D^N(\gamma)$ is a Riemann sum for $D_{KL}(\gamma)$. Of course, $\gamma$ may not have a density function, and even if it does it may not be Riemann-integrable. Therefore, it is not immediate that the Riemann sums converge. This can be shown using a standard measure-theoretic argument relying on specific properties of the function $x \log x$. Rather than give the details, we derive our lemma from the following claim used to prove the analogous statement for permutons.
	
	In the next claim the absolute continuity (or lack thereof) of the measure $\mu$ is with respect to the Lebesgue measure. We remind the reader that by the Radon--Nikodym theorem if $\mu$ is absolutely continuous then it is given by a density function (also known as its Radon--Nikodym derivative).
	
	\begin{clm}[{\cite[Proposition 9]{kenyon2020permutations}}]
		Let $\mu$ be a finite Borel measure on $[0,1]^2$. For $m \in \N$ and $i,j \in [m]$, let $\mu_{i,j} = m^2 \mu([(i-1)/m,i/m]\times [(j-1)/m,j/m])$. Define:
		\[
		R_m = \frac{1}{m^2} \sum_{i,j \in [m]^2} \mu_{i,j} \log(\mu_{i,j}).
		\]
		Then:
		\begin{enumerate}
			\item If $\mu$ is absolutely continuous with density $f$ and $f \log f$ is integrable then $\lim_{m\to\infty}R_m = \int_{[0,1]^2} f\log f$.
			
			\item If $\mu$ is absolutely continuous with density $f$ and $f \log f$ is not integrable then $\lim_{m\to\infty}R_m = \infty$.
			
			\item If $\mu$ has a singular component (i.e., $\mu$ is not absolutely continuous) then $\lim_{m\to\infty}R_m = \infty$.
		\end{enumerate}
	\end{clm}
	
	In order to show that $\lim_{N\to\infty} D^N(\gamma) = D_{KL}(\gamma)$ we will define a finite measure $\mu$ on $[0,1]^2$ such that for every $N \in \N$, $R_{2N} = D^N(\gamma) + \log(2) - O(1/N)$. Let $F:\interval^2 \to [0,1]^2$ be the function
	\[
	F(x,y) = (1/2,1/2) + \frac{1}{2}(x+y,x-y).
	\]
	$F$ is a rotation of the plane by $\pi/4$ followed by rescaling and translation. It easily follows that for every $N\in\N$ and $i,j \in [2N]$ it holds that $F^{-1}([(i-1)/(2N),i/(2N)]\times [(j-1)/(2N),j/(2N)])$ is either empty or an element of $I_N$. Define the measure $\mu$ on $[0,1]^2$ by setting, for every Borel $U \subseteq [0,1]^2$, $\mu(U) = \gamma(F^{-1}(U))$. Now, for every $N \in \N$ there holds
	\[
	R_{2N} = \sum_{ \alpha \in I_N } \gamma(\alpha) \log \left( 4N^2 \gamma(\alpha) \right) = \sum_{\alpha \in I_N} \gamma(\alpha) \log \left( \frac{2\gamma(\alpha)}{|\alpha|} \right) - \sum_{\alpha \in T_N} \gamma(\alpha) \log(2).
	\]
	The half-squares in $T_N$ satisfy $\sum_{\alpha \in T_N} \gamma(\alpha) = O(1/N)$, so:
	\begin{align*}
	R_{2N} = \sum_{\alpha \in I_N} \gamma(\alpha) \log \left( \frac{2\gamma(\alpha)}{|\alpha|} \right) - O \left( \frac{1}{N} \right) = D^N(\gamma) + \log(2) - O \left( \frac{1}{N} \right).
	\end{align*}
	Hence
	\[
	\lim_{N \to \infty} D^N(\gamma) = \lim_{N \to \infty} R_{2N} - \log 2.
	\]
	Now $D_{KL}(\gamma) < \infty$ if and only if $\mu$ is absolutely continuous with density $f$ and $f \log f$ is integrable. Thus, if $D_{KL}(\gamma) = \infty$ then $\lim_{N \to \infty} D^N(\gamma) = \lim_{N \to \infty} R_{2N} - \log 2 = \infty$. Otherwise, if $\gamma$ has density function $g$ then $f = 2g\circ F^{-1}$. Hence, by the change of variables formula:
	\[
	\int_{[0,1]^2} f \log f = \int_{[0,1]^2} 2g \circ F^{-1} \log(2g\circ F^{-1}) = \frac{1}{2} \int_{\interval^2} 2g \log(2g) = D_{KL}(\gamma) + \log(2),
	\]
	implying $\lim_{N \to \infty} D^N(\gamma) = D_{KL}(\gamma)$.
	
	We now show that for $* \in \{+,-\}$, $\lim_{N \to \infty} D(\{\overline{\gamma}^*(\alpha)\}_{\alpha \in J_N}) = D_{KL}(\overline{\gamma}^*)$. We define a measure $\nu$ on $[0,1]^2$ as follows: Let $G:[-1,1]\to[0,1]$ be given by $G(x) = (x+1)/2$. Define the measure $\tilde{\nu}$ on $[0,1]$ by $\tilde{\nu}(U) = \overline{\gamma}^*(G^{-1}(U))$. Then, let $\nu$ be the product measure of $\tilde{\nu}$ with the uniform distribution on $[0,1]$. It then holds that
	\[
	D_{KL}(\nu || \mU_{[0,1]^2}) = D_{KL}(\tilde{\nu} || \mU_{[0,1]} ) = D_{KL}(\overline{\gamma}^* || \mU_{[-1,1]}).
	\]
	For every $N$ it holds that $R_N = D(\{\overline{\gamma}^*(\alpha)\}_{\alpha\in J_N})$. Therefore
	\[
	\lim_{N \to \infty} D(\{\overline{\gamma}^*(\alpha)\}_{\alpha\in J_N}) = \lim_{N \to \infty} R_N = D_{KL}(\nu || \mU_{[0,1]^2}) = D_{KL}(\overline{\gamma}^* || \mU_{[-1,1]}),
	\]
	completing the proof.
\end{proof}

\begin{lem}\label{lem:H_q concave}
	$H_q$ is strictly concave and upper semi-continuous.
\end{lem}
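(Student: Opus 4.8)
The plan is to read off concavity and upper semi-continuity of $H_q$ from its discrete approximations $H_q^N$, and then to obtain strict concavity by observing that, among the three divergences defining $H_q$, the two diagonal-marginal terms are affine precompositions of convex functionals — hence contribute concavity but no strictness — while $D_{KL}(\gamma\|\mU_\square)$ is strictly convex on the set of queenons with a density.

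For concavity and upper semi-continuity I would use Lemma~\ref{lem:step approximation of entropy}. The key observations are (i) each $H_q^N$ is continuous and concave on $(\Queenons,\distNoParam)$, and (ii) $H_q=\inf_N H_q^N$. For (i): every cell $\alpha\in I_N$ (a square or half-square, intersected with $\interval^2$) belongs to $\mR$, so $\gamma\mapsto\gamma(\alpha)$ is $1$-Lipschitz for $\distNoParam$; likewise $\gamma\mapsto\distPlus{\gamma}(\beta)=1/N-\gamma^+(\beta)$ and $\gamma\mapsto\distMinus{\gamma}(\beta)$ for $\beta\in J_N$ are affine and $\distNoParam$-continuous. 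Since $-H_q^N$ is, up to the additive constant $3-2\log 2$, a finite sum of terms $x\mapsto x\log(x/c)$ (each convex and continuous on $[0,1]$) composed with these affine maps, $H_q^N$ is continuous and concave. For (ii): the grid lines defining $I_N$ (resp.\ $J_N$) form a sub-collection of those defining $I_{2N}$ (resp.\ $J_{2N}$), so $I_{2N}$ refines $I_N$ and $J_{2N}$ refines $J_N$; the log-sum inequality applied cell by cell then gives $D^{2N}(\gamma)\geq D^N(\gamma)$, and the same for the two marginal divergences, whence $H_q^{2N}(\gamma)\leq H_q^N(\gamma)$. Thus $\{H_q^{2^kN}(\gamma)\}_k$ is decreasing, and by Lemma~\ref{lem:step approximation of entropy} its limit is $H_q(\gamma)$; so $H_q^N(\gamma)\geq H_q(\gamma)$ for every $N$, while also $\inf_N H_q^N(\gamma)\leq\lim_k H_q^{2^k}(\gamma)=H_q(\gamma)$, giving $H_q=\inf_N H_q^N$. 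An infimum of continuous functions is upper semi-continuous and an infimum of concave functions is concave. (Alternatively one can argue directly: $D_{KL}(\cdot\|\mU_\square)$ is convex and weakly lower-semicontinuous — the latter via the Donsker--Varadhan variational formula — and $\gamma\mapsto\distPlus{\gamma}$, $\gamma\mapsto\distMinus{\gamma}$ are affine and $\distNoParam$-continuous since diagonal strips lie in $\mR$.)

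For strict concavity, the two diagonal-marginal maps $\gamma\mapsto\distPlus{\gamma}$ and $\gamma\mapsto\distMinus{\gamma}$ into the probability measures on $[-1,1]$ are affine, so $-D_{KL}(\distPlus{\gamma})$ and $-D_{KL}(\distMinus{\gamma})$ are concave; being highly non-injective, these maps cannot produce strictness, so I use only concavity there. Let $\mD\subseteq\Queenons$ be the convex set of queenons admitting a density; on $\mD$ the density map $\gamma\mapsto f_\gamma$ is affine, and distinct elements of $\mD$ have densities differing on a set of positive Lebesgue measure. With $\phi(t)=t\log t$ strictly convex, for $\gamma_1\neq\gamma_2$ in $\mD$ and $s\in(0,1)$ the pointwise inequality $\phi(sf_{\gamma_1}+(1-s)f_{\gamma_2})\leq s\phi(f_{\gamma_1})+(1-s)\phi(f_{\gamma_2})$ is strict on a set of positive measure, and integrating yields $D_{KL}(s\gamma_1+(1-s)\gamma_2\,\|\,\mU_\square)<s\,D_{KL}(\gamma_1\|\mU_\square)+(1-s)\,D_{KL}(\gamma_2\|\mU_\square)$ whenever the two divergences on the right are finite (here $\phi\geq-1/e$ controls integrability). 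Adding the weak concavity of the other two terms gives $H_q(s\gamma_1+(1-s)\gamma_2)>sH_q(\gamma_1)+(1-s)H_q(\gamma_2)$.

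The main point to handle carefully is the precise meaning of ``strictly concave.'' Strict inequality can hold only on the effective domain $\{\gamma:H_q(\gamma)>-\infty\}$, which is convex (it is the intersection of the effective domains of the three divergences, finiteness of $\int f\log f$ being preserved under convex combinations): if $\gamma_1\neq\gamma_2$ both carry a singular part then so does every proper convex combination, making all three values $-\infty$, so no strict inequality is possible there. Hence the statement to prove is that $H_q$ is concave on $\Queenons$ and strictly concave on $\{H_q>-\infty\}$ --- which is exactly what is needed later for uniqueness of the maximizer $\gamma^*$, since the maximum is at least $H_q(\mU_\square)=-2>-\infty$. The remaining bookkeeping (that $H_q(\gamma)>-\infty$ forces $\gamma$ to have a density, that $0\log 0=0$ keeps the discrete sums continuous, and the form of the log-sum inequality with the unequal reference weights $|\alpha|$) is routine.
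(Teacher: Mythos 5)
Your proof is correct and follows essentially the same route as the paper: upper semi-continuity and concavity are obtained from the facts that each $H_q^N$ is continuous and concave on $(\Queenons,\distNoParam)$, that $H_q^N \geq H_q$, and that $H_q^N \to H_q$ (Lemma \ref{lem:step approximation of entropy}), while strict concavity comes from strict convexity of $t\mapsto t\log t$ applied to densities together with linearity of $\gamma\mapsto\distPlus{\gamma}$, $\gamma\mapsto\distMinus{\gamma}$. Your additional bookkeeping --- deriving $H_q^N\geq H_q$ via log-sum monotonicity along dyadic refinements, and restricting strictness to the effective domain $\{H_q>-\infty\}$ (which is all that is used for uniqueness of $\gamma^*$) --- is sound and merely spells out details that the paper's brief proof leaves implicit.
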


Before proving \cref{lem:H_q concave} we make the following observation, which follows from the convexity of KL divergence.

\begin{obs}\label{obs:HqN > Hq}
	For every $\gamma \in \Gamma$ and $N \in \N$ there holds $H_q^N(\gamma) \geq H_q(\gamma)$.
\end{obs}

\begin{proof}[Proof of \cref{lem:H_q concave}]
	Strict concavity of $H_q$ follows from strict convexity of KL divergence and the fact that $\distPlus{\gamma}$ and $\distMinus{\gamma}$ are linear in $\gamma$.
	
	To prove that $H_q$ is upper semi-continuous we note that by \cref{lem:step approximation of entropy,obs:HqN > Hq} $H_q(\gamma) = \inf_{N\in\N}H_q^N(\gamma)$ for every $\gamma \in \Queenons$. Furthermore, each $H_q^N$ is continuous. Hence $H_q$ is the pointwise infimum of a collection of continuous functions and therefore is upper-semicontinuous.
\end{proof}

\begin{lem}\label{lem:unique maximizer}
	There exists a unique maximizer $\gamma^* \in \Queenons$ for $H_q$.
\end{lem}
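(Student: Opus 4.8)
The plan is a routine compactness-and-convexity argument, combining the two preceding results.

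\emph{Existence.} By Claim~\ref{clm:queenons compact convex metric}, $(\Queenons, \distNoParam)$ is a compact metric space, and by Lemma~\ref{lem:H_q concave}, $H_q \colon \Queenons \to [-\infty,\infty)$ is upper semi-continuous. An upper semi-continuous function on a compact metric space attains its supremum: set $M := \sup_{\gamma \in \Queenons} H_q(\gamma)$, take a maximizing sequence $\gamma_1,\gamma_2,\ldots \in \Queenons$ with $H_q(\gamma_n) \to M$, and use compactness to pass to a subsequence converging (in $\distNoParam$) to some $\gamma^* \in \Queenons$. Upper semi-continuity then gives $H_q(\gamma^*) \geq \limsup_{n} H_q(\gamma_n) = M$, so $H_q(\gamma^*) = M$. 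Moreover the example computed above shows $H_q(\mU_\square) = -2$, so $M \geq -2$; in particular $M$ is finite and $\gamma^*$ is a genuine, finite-valued maximizer.

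\emph{Uniqueness.} Suppose $\gamma_1^*, \gamma_2^* \in \Queenons$ are both maximizers with $\gamma_1^* \neq \gamma_2^*$. By convexity of $\Queenons$ (Claim~\ref{clm:queenons compact convex metric}), the midpoint $\gamma' := \tfrac12(\gamma_1^* + \gamma_2^*)$ lies in $\Queenons$. Since $H_q(\gamma_1^*) = H_q(\gamma_2^*) = M$ is finite, strict concavity of $H_q$ (Lemma~\ref{lem:H_q concave}) applies at these points and yields $H_q(\gamma') > \tfrac12\bigl(H_q(\gamma_1^*) + H_q(\gamma_2^*)\bigr) = M$, contradicting the definition of $M$. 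Hence the maximizer is unique, and we call it $\gamma^*$.

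There is essentially no obstacle here; the statement follows formally from Claim~\ref{clm:queenons compact convex metric} and Lemma~\ref{lem:H_q concave}. The only points that warrant a moment's care are (i) confirming that $M$ is finite — so that the notion of strict concavity is applied at points where $H_q$ takes a real value rather than $-\infty$ — which is secured by exhibiting a queenon with finite $Q$-entropy, namely $\mU_\square$; and (ii) that the maximizing sequence in the existence argument converges in the metric $\distNoParam$ (equivalently, weakly, by Claim~\ref{clm:queenons compact convex metric}), so that upper semi-continuity is directly applicable.
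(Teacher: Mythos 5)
Your proof is correct and follows essentially the same route as the paper: existence via compactness of $(\Queenons,\distNoParam)$ plus upper semi-continuity of $H_q$ applied to a maximizing sequence, and uniqueness via strict concavity. The additional remark that the supremum is finite (via $H_q(\mU_\square)=-2$) is a nice small refinement the paper leaves implicit, but it does not change the argument.
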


\begin{proof}
	Uniqueness follows from the strict concavity of $H_q$. It remains to prove that $H_q$ has a maximizer. Since KL divergence is nonnegative, $H_q$ is bounded above by $2\log2-3$. Let $\gamma_1,\gamma_2,\ldots \in \Queenons$ be a sequence such that
	\[
	\lim_{n\to\infty} H_q(\gamma_n) = \sup_{\gamma \in \Queenons} H_q(\gamma).
	\]
	Since $\Queenons$ is compact we may assume that the sequence converges to a queenon $\gamma^*$. We claim that $H_q(\gamma^*) = \sup_{\gamma \in \Queenons} H_q(\gamma)$. This follows from upper semi-continuity of $H_q$.
\end{proof}

In Section \ref{sec:optimizing H_q} we will prove the following bounds on $H_q(\gamma^*)$.

\begin{clm}\label{clm:explicit bounds on H_q}
	The following holds: $-\lowerBoundConstant \leq H_q(\gamma^*) \leq - \upperBoundConstant$.
\end{clm}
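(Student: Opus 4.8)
Both inequalities amount to estimating the value of the concave program $\sup_{\gamma\in\Queenons}H_q(\gamma)$, which equals $H_q(\gamma^*)$ by Lemma~\ref{lem:unique maximizer}; in each direction the estimate is obtained numerically and then certified by a finite exact computation. For the lower bound, since $\gamma^*$ is the maximizer it suffices to exhibit one queenon $\gamma_0$ with $H_q(\gamma_0)\ge-\lowerBoundConstant$. I would take $\gamma_0$ to be an explicit step queenon — a step permuton with rational density $d_{i,j}$ on each cell $\sigma_{i,j}^M$ — obtained by solving a discretization of the program and snapping the optimum to a nearby rational feasible point. One then checks in exact arithmetic that $\gamma_0$ is a queenon: the $d_{i,j}$ are nonnegative, the array $(d_{i,j})$ has all row and column sums equal to $M$ (uniform marginals), and the diagonal marginals are sub-uniform — the last being finitely many linear inequalities, since the plus- and minus-diagonal densities of a step permuton are continuous and piecewise linear in the diagonal coordinate. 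That piecewise-linearity also makes $H_q(\gamma_0)$ exactly computable: $D_{KL}(\gamma_0)=\sum_{i,j}M^{-2}d_{i,j}\log d_{i,j}$, while $D_{KL}(\distPlus{\gamma_0})$ and $D_{KL}(\distMinus{\gamma_0})$ are finite sums of integrals $\int(pc+q)\log(2(pc+q))\,dc$ over the linear pieces, each with an elementary antiderivative. Hence $H_q(\gamma_0)$ is an explicit combination of rationals and logarithms of rationals, which is bounded below rigorously by enclosing the logarithms. Note one cannot substitute $H_q^N(\gamma_0)$ for $H_q(\gamma_0)$ here, since $H_q^N\ge H_q$ runs the wrong way for a lower bound.

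\textbf{Upper bound.} A queenon is, in particular, a probability measure on $\interval^2$ with uniform row and column marginals and sub-uniform diagonal marginals, so $H_q(\gamma^*)$ is at most the value of the relaxed program maximizing $-D_{KL}(\gamma)-D_{KL}(\rho^+)-D_{KL}(\rho^-)+2\log2-3$ over nonnegative $\gamma$ on $\interval^2$ and nonnegative $\rho^+,\rho^-$ on $[-1,1]$ subject to the linear constraints $\gamma(\interval^2)=1$, uniform row and column marginals, and $\gamma^+([a,b])+\rho^+([a,b])=b-a$, $\gamma^-([a,b])+\rho^-([a,b])=b-a$ for all $a\le b$ (so $\rho^\pm=\distPlus{\gamma},\distMinus{\gamma}$). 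I would bound this value by weak Lagrangian duality with piecewise-constant multipliers: choose multipliers that are step functions on the width-$1/N$ row, column, plus-diagonal, and minus-diagonal strips, collect the terms multiplying $\gamma,\rho^+,\rho^-$ into step potentials $U,W^+,W^-$, and apply the Gibbs identities $\sup_{f\ge0}\int(Uf-f\log f)=\int e^{U-1}$ and $\sup_{g\ge0}\int(W^\pm g-g\log(2g))=\tfrac12\int e^{W^\pm-1}$ to obtain
\[
H_q(\gamma^*)\ \le\ \int_{\interval^2}e^{U-1}\ +\ \tfrac12\int_{-1}^1 e^{W^+-1}\ +\ \tfrac12\int_{-1}^1 e^{W^--1}\ +\ (\text{constant from the dualized equalities})\ +\ 2\log2-3 ,
\]
where $U$ is constant on the cells of the common refinement of the four strip families, so all three integrals are finite sums of exponentials. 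Fixing the multipliers to rationals close to the numerically optimal dual solution, one verifies in exact arithmetic (rigorous enclosures of $e^{\text{rational}}$) that the right-hand side is at most $-\upperBoundConstant$; this is valid for any multiplier values with the correct sign on the two-sided constraints, so no optimality of the numerical dual point is needed. (Alternatively, one could use $H_q(\gamma^*)\le\sup_\gamma H_q^N(\gamma)$, reduce to maximizing the explicit concave function $\gamma\mapsto H_q^N(\gamma)$ over the polytope of $I_N$-histograms obeying the necessary queenon inequalities — with the non-grid-aligned row and column constraints handled by clipping as in Claims~\ref{clm:I_N approximation of metric} and~\ref{clm:config to queenon approximation} — and certify the resulting LP-type bound; the continuous dual above is preferable because it relaxes no part of the feasible set.)

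\textbf{Main obstacle.} The difficulty is entirely quantitative. The window $\lowerBoundConstant-\upperBoundConstant\approx 5\times10^{-3}$ is narrow, so the resolutions $M$ and $N$ must be large enough that the primal witness $\gamma_0$ and the piecewise-constant dual potentials come within well below $5\times10^{-3}$ of the true optimum — the existence of such near-optimal objects follows on the primal side from compactness of $(\Queenons,\distNoParam)$ and upper semi-continuity of $H_q$ (Lemma~\ref{lem:H_q concave}), and on the dual side from strong duality (justified by the strict feasibility of the uniform distribution for the diagonal inequalities), but converting ``near-optimal'' into a concrete numerical target requires care. The objects then have on the order of $N^2$ parameters, so the feasibility checks, the evaluation of $H_q(\gamma_0)$, and the evaluation of the dual bound are beyond hand computation and must be carried out by computer with exact rational arithmetic and rigorous enclosures of the transcendental quantities. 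Performing this certification — producing $\gamma_0$ and the dual potentials, checking feasibility, and bounding $H_q(\gamma_0)$ below and the dual expression above — is the real content of the proof.
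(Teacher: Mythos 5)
Your proposal is correct and takes essentially the same route as the paper: the lower bound is obtained by exhibiting an explicit rational step queenon (the paper uses a $12$-step one given by a $12\times 12$ matrix), checking feasibility by finitely many linear conditions, and evaluating $H_q$ of it exactly by computer, and the upper bound by a weak Lagrangian-dual certificate with piecewise-constant multipliers whose value is a finite sum of exponentials, certified numerically (the paper first projects onto the refinement $K_N$ with $N=17$ by concavity and then dualizes the resulting finite-dimensional concave program, while you dualize the continuous relaxation directly via the Gibbs identities — the two certificates amount to the same finite computation). No gaps; as you note, the substantive content is the computer-verified certification, which the paper supplies as accompanying code.
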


\subsection{Large deviations for queenons}

Theorems \ref{thm:main theorem first statement} and \ref{thm:main structural} both follow from the following large deviations principle.

For $\Delta \subseteq \Queenons$ we write $\Delta^{\circ}$ for the interior of $\Delta$ (in $\Gamma$) and $\overline{\Delta}$ for its closure. For $n \in \N$ we write $\Delta_n$ for the set of $n$-queens configurations $q$ such that $\gamma_q \in \Delta$.

\begin{thm}\label{thm:LDP}
	Let $\Delta \subseteq \Queenons$. The following hold:
	\[
	\sup_{\gamma \in \Delta^{\circ}} H_q(\gamma) \leq \liminf_{n \to \infty} \frac{1}{n} \log \left( \frac{|\Delta_n|}{n^n} \right) \leq \limsup_{n \to \infty} \frac{1}{n} \log \left( \frac{|\Delta_n|}{n^n} \right) \leq \sup_{\gamma \in \overline{\Delta}} H_q(\gamma).
	\]
\end{thm}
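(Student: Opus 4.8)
\emph{Proof plan.}
The plan is to deduce both inequalities from the enumeration bounds of Theorem~\ref{thm:precise bounds}, using the $\distNoParam$-balls $B(\gamma,\varepsilon) \coloneqq \{\delta \in \Queenons : \dist{\delta}{\gamma} < \varepsilon\}$ as the link between an arbitrary set $\Delta$ and the estimate we have for each individual queenon, and then invoking the compactness of $(\Queenons,\distNoParam)$ (Claim~\ref{clm:queenons compact convex metric}) to pass from local to global. Throughout I use that $\tfrac1n\log\!\big(|\Delta_n|/n^n\big) = \log\!\big(|\Delta_n|^{1/n}/n\big)$ and that $\log$ (with $\log 0 = -\infty$) is an increasing, continuous bijection $[0,\infty] \to [-\infty,\infty]$, hence commutes with $\liminf$ and $\limsup$; I also use $B_n(\gamma,\varepsilon) = \{q : \gamma_q \in B(\gamma,\varepsilon)\}$.

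\emph{Lower bound.} Fix $\gamma \in \mathring\Delta$ (if $\mathring\Delta = \varnothing$ there is nothing to prove). Since $\mathring\Delta$ is open there is $\varepsilon_0 > 0$ with $B(\gamma,\varepsilon_0) \subseteq \Delta$, so $B_n(\gamma,\varepsilon) \subseteq \Delta_n$ for all $0 < \varepsilon \le \varepsilon_0$ and hence $|\Delta_n| \ge |B_n(\gamma,\varepsilon)|$. Combining this with the lower bound of Theorem~\ref{thm:precise bounds},
\[
\liminf_{n\to\infty}\frac{|\Delta_n|^{1/n}}{n} \ge \liminf_{n\to\infty}\frac{|B_n(\gamma,\varepsilon)|^{1/n}}{n} \ge \exp\!\big((1-\varepsilon)H_q(\gamma)\big),
\]
and taking logarithms gives $\liminf_{n\to\infty}\tfrac1n\log(|\Delta_n|/n^n) \ge (1-\varepsilon)H_q(\gamma)$. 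Letting $\varepsilon \downarrow 0$ and then taking the supremum over $\gamma \in \mathring\Delta$ gives the left-hand inequality.

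\emph{Upper bound.} I would first establish a local estimate: for every $\gamma \in \Queenons$ and every $\delta > 0$ there is $\varepsilon = \varepsilon(\gamma,\delta) > 0$ with
\[
\limsup_{n\to\infty}\tfrac1n\log\!\big(|B_n(\gamma,\varepsilon)|/n^n\big) \le \max\{H_q(\gamma),\,-1/\delta\} + \delta .
\]
By Lemma~\ref{lem:step approximation of entropy}, $H_q^N(\gamma) \to H_q(\gamma)$ as $N \to \infty$, so there is $N_0$ with $H_q^N(\gamma) \le \max\{H_q(\gamma),\,-1/\delta\} + \delta/2$ for \emph{every} $N \ge N_0$ (when $H_q(\gamma) = -\infty$ this is precisely what divergence to $-\infty$ means). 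Now pick $\varepsilon$ small enough that $\varepsilon^{-1/3} \ge N_0$, that $\varepsilon^{1/200} \le \delta/2$, and that the upper bound of Theorem~\ref{thm:precise bounds} applies: it yields some $N \ge \varepsilon^{-1/3} \ge N_0$ with $\limsup_{n}|B_n(\gamma,\varepsilon)|^{1/n}/n \le \exp(H_q^N(\gamma) + \varepsilon^{1/200})$, and taking logarithms and using the bound on $H_q^N(\gamma)$ gives the local estimate. For the global bound, fix $\delta > 0$. The set $\overline\Delta$ is a closed subset of the compact metric space $(\Queenons,\distNoParam)$, hence compact, so the open cover $\{B(\gamma,\varepsilon(\gamma,\delta))\}_{\gamma\in\overline\Delta}$ admits a finite subcover $B(\gamma_1,\varepsilon_1),\dots,B(\gamma_k,\varepsilon_k)$. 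Every $q \in \Delta_n$ satisfies $\gamma_q \in \Delta \subseteq \overline\Delta$, so $q \in B_n(\gamma_i,\varepsilon_i)$ for some $i$; thus $|\Delta_n| \le \sum_{i=1}^k |B_n(\gamma_i,\varepsilon_i)| \le k\max_{1\le i\le k}|B_n(\gamma_i,\varepsilon_i)|$, whence $\tfrac1n\log(|\Delta_n|/n^n) \le \tfrac{\log k}{n} + \max_i \tfrac1n\log(|B_n(\gamma_i,\varepsilon_i)|/n^n)$. Taking $\limsup_{n\to\infty}$, the first summand vanishes and the local estimate yields
\[
\limsup_{n\to\infty}\tfrac1n\log\!\big(|\Delta_n|/n^n\big) \le \max_{1\le i\le k}\big(\max\{H_q(\gamma_i),\,-1/\delta\} + \delta\big) \le \max\big\{\textstyle\sup_{\gamma\in\overline\Delta}H_q(\gamma),\,-1/\delta\big\} + \delta .
\]
Letting $\delta \downarrow 0$ completes the proof; the case $\Delta = \varnothing$, in which both sides equal $-\infty$, is trivial.

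\emph{Main obstacle.} All the real content is in Theorem~\ref{thm:precise bounds}; the one point requiring care is that on the upper-bound side that theorem returns $H_q^N(\gamma)$ for an $N$ we do not control, only bound below by $\varepsilon^{-1/3}$. This is handled by observing that shrinking $\varepsilon$ forces $N$ to exceed any prescribed threshold, and that Lemma~\ref{lem:step approximation of entropy} bounds $H_q^N(\gamma)$ uniformly over all sufficiently large $N$, so the gap $H_q^N(\gamma) - H_q(\gamma)$ is absorbed into the error term. The remaining steps — the $-\infty$ bookkeeping via the $\max\{\cdot,-1/\delta\}$ device, commuting $\log$ with $\liminf/\limsup$, and the reduction to a finite subcover — are routine.
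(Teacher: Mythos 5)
Your proposal is correct and follows essentially the same route as the paper: the lower bound comes from fitting a ball $B_n(\gamma,\rho)\subseteq\Delta_n$ and applying the lower bound of Theorem \ref{thm:precise bounds}, and the upper bound combines the upper bound of Theorem \ref{thm:precise bounds} with Lemma \ref{lem:step approximation of entropy} and a finite subcover of the compact set $\overline{\Delta}$. The only cosmetic difference is that you unify the finite and $-\infty$ cases via the $\max\{\,\cdot\,,-1/\delta\}$ device, where the paper treats $\sup_{\gamma\in\overline{\Delta}}H_q(\gamma)=-\infty$ as a separate case.
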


Taking $\Delta = \Queenons$ and using Lemma \ref{lem:unique maximizer} and Claim \ref{clm:explicit bounds on H_q}, we derive Theorem \ref{thm:main theorem first statement}.

To prove Theorem \ref{thm:main structural}, let $\varepsilon > 0$, and take $\Delta = \left\{ \gamma \in \Queenons : \dist{\gamma}{\gamma^*} \geq \varepsilon \right\}$. Then, by upper semi-continuity and the fact that $\gamma^*$ uniquely maximizes $H_q$, we conclude that $\sup_{\gamma \in \overline{\Delta}} H_q(\gamma) < H_q(\gamma^*)$. Additionally, if an $n$-queens configuration $q$ satisfies $\dist{\gamma_q}{\gamma^*} \geq \varepsilon$ then $q \in \Delta_n$. By Theorem \ref{thm:LDP} there exists some $\delta > 0$ such that for all sufficiently large $n$,
\[
\frac{1}{n} \log\left( \frac{\mQ(n)}{n^n} \right) \geq H_q(\gamma^*) - \delta > H_q(\gamma^*) - 2\delta \geq \frac{1}{n} \log\left( \frac{|\Delta_n|}{n^n} \right).
\]
This implies
\[
\frac{|\Delta_n|}{\mQ(n)} \leq \exp \left( -n\delta \right),
\]
proving Theorem \ref{thm:main structural}.

\begin{proof}[Proof of Theorem \ref{thm:LDP}]
	The proof is modeled on that of \cite[Theorem 1]{kenyon2020permutations}.
	
	We first prove the lower bound, for which we may assume $\sup_{\gamma \in \Delta^{\circ}} H_q(\gamma) > -\infty$. Let $\varepsilon > 0$ and let $\delta \in \Delta^{\circ}$ satisfy $H_q(\delta) > \sup_{\gamma \in \Delta^{\circ}} H_q(\gamma) - \varepsilon$. Let $\rho \in (0,\varepsilon)$ satisfy $B_\rho(\delta) \subseteq \Delta$ (where $B_\rho(\delta)$ is the open ball of radius $\rho$ centered at $\delta$). Then, for every $n \in \N$, $B_n(\delta,\rho) \subseteq \Delta_n$. By the lower bound in Theorem \ref{thm:precise bounds}
	\begin{align*}
	\liminf_{n \to \infty} \frac{1}{n} \log \left( \frac{|\Delta_n|}{n^n} \right) \geq \liminf_{n \to \infty} \frac{1}{n} \log \left( \frac{|B_n(\delta,\rho)|}{n^n} \right)
	\geq H_q(\delta) - \rho |H_q(\delta)| & = (1+\rho) H_q(\delta)\\
	& \geq (1+\varepsilon) (\sup_{\gamma \in \Delta^{\circ}} H_q(\gamma) - \varepsilon).
	\end{align*}
	Since this is true for every $\varepsilon > 0$ the lower bound follows.
	
	For the upper bound we first handle the case that $\beta \coloneqq \sup_{\gamma \in \overline{\Delta}} H_q(\gamma) > -\infty$. Let $\varepsilon > 0$. By Theorem \ref{thm:precise bounds} and Lemma \ref{lem:step approximation of entropy}, for every $\delta \in \overline{\Delta}$ there exists some $n_\delta \in \N$ and some $\varepsilon_\delta > 0$ such that for all $n \geq n_\delta$:
	\[
	\left( \frac{|B_n(\delta,\varepsilon_\delta)|}{n^n} \right)^{1/n} \leq \exp \left( \beta + \varepsilon \right).
	\]
	Since $\overline{\Delta}$ is compact there exists a finite set $X \subseteq \overline{\Delta}$ such that $\overline{\Delta} \subseteq \bigcup_{\delta \in X} B_{\varepsilon_\delta}(\delta)$. Therefore, for every $n\in\N$ we have $\Delta_n \subseteq \bigcup_{\delta \in X} B_n(\delta, \varepsilon_\delta)$. Thus:
	\[
	\limsup_{n \to \infty} \frac{1}{n} \log \left( \frac{|\Delta_n|}{n^n} \right) \leq \sup_{\gamma \in \overline{\Delta}} H_q(\gamma) + \varepsilon.
	\]
	Since this is true for every $\varepsilon > 0$, we obtain the upper bound.
	
	The case that $\beta = -\infty$ is proved similarly. Let $t < 0$. By Theorem \ref{thm:precise bounds} and Lemma \ref{lem:step approximation of entropy}, for every $\delta \in \overline{\Delta}$ there exists some $n_\delta \in \N$ and some $\varepsilon_\delta > 0$ such that for all $n \geq n_\delta$:
	\[
	\left( \frac{|B_n(\delta,\varepsilon_\delta)|}{n^n} \right)^{1/n} \leq e^t.
	\]
	Applying a compactness argument we obtain
	\[
	\limsup_{n \to \infty} \frac{1}{n} \log \left( \frac{|\Delta_n|}{n^n} \right) \leq t.
	\]
	Since this is true for every $t < 0$ the proof is complete.
\end{proof}

\section{Useful claims and calculations}\label{sec:useful calculations}

We now collect several claims that will be useful in the sequel. On a first reading the reader may wish to skip this section and refer to it as each claim is used in the proof.

\begin{clm}\label{clm:H_q^N continuity bound}
	Let $N \in \N$ and $1/(2e) > \varepsilon > 0$. Suppose that $\gamma_1,\gamma_2 \in \Queenons$ satisfy $\dist{\gamma_1}{\gamma_2} < \varepsilon$. Then $|H_q^N(\gamma_1)-H_q^N(\gamma_2)| < 8N^2 \varepsilon \log \left( \frac{N}{2\varepsilon^2} \right)$.
\end{clm}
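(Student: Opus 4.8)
The plan is to transfer the hypothesis $\dist{\gamma_1}{\gamma_2}<\varepsilon$ into cell-by-cell closeness of all the discrete data that $H_q^N$ depends on, and then to bound the resulting change in each of the three ``entropy-like'' summands of $H_q^N$ using an elementary modulus-of-continuity estimate for $t\mapsto t\log t$. For the first part, note that every element of $I_N$ is an $\mR$-set, so $\dist{\gamma_1}{\gamma_2}<\varepsilon$ gives $|\gamma_1(\alpha)-\gamma_2(\alpha)|<\varepsilon$ for all $\alpha\in I_N$. Likewise, for each $\beta\in J_N$ the plus- and minus-diagonal strips $\{(x,y)\in\interval^2:x+y\in\beta\}$ and $\{(x,y)\in\interval^2:y-x\in\beta\}$ lie in $\mR$, so $|\gamma_1^+(\beta)-\gamma_2^+(\beta)|<\varepsilon$ and $|\gamma_1^-(\beta)-\gamma_2^-(\beta)|<\varepsilon$; since $\distPlus{\gamma}(\beta)$ and $\distMinus{\gamma}(\beta)$ are obtained from $\gamma^+(\beta)$ and $\gamma^-(\beta)$ by subtracting the length of $\beta$ (which does not depend on $\gamma$), the same bound holds for $|\distPlus{\gamma_1}(\beta)-\distPlus{\gamma_2}(\beta)|$ and $|\distMinus{\gamma_1}(\beta)-\distMinus{\gamma_2}(\beta)|$.

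The one genuinely non-routine ingredient is the inequality: \emph{if $a,b\in[0,1]$ with $|a-b|\le\varepsilon\le 1/e$, then $|a\log a-b\log b|\le\varepsilon\log(1/\varepsilon)$} (with the convention $0\log 0:=0$). I would prove this by setting $\phi(t)=-t\log t$ and, assuming without loss of generality $a=b+\delta$ with $0\le\delta\le\varepsilon$, noting that $\phi'$ is decreasing, so $b\mapsto\phi(b+\delta)-\phi(b)$ is decreasing on $[0,1-\delta]$ and thus takes values in $[\phi(1)-\phi(1-\delta),\,\phi(\delta)-\phi(0)]=[-\phi(1-\delta),\,\phi(\delta)]$; then $\phi(\delta)\le\phi(\varepsilon)=\varepsilon\log(1/\varepsilon)$ since $\phi$ is increasing on $[0,1/e]$, and $\phi(1-\delta)\le\phi(\delta)$ since $t\log t\le (1-t)\log(1-t)$ for $t\in(0,1/2]$ (and $\delta\le\varepsilon<1/2$). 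Hence $|\phi(a)-\phi(b)|\le\varepsilon\log(1/\varepsilon)$.

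To assemble, use $H_q^N(\gamma)=-D^N(\gamma)-D(\{\distPlus{\gamma}(\beta)\}_{\beta\in J_N})-D(\{\distMinus{\gamma}(\beta)\}_{\beta\in J_N})+2\log 2-3$, so it is enough to bound the changes in $D^N$ and in the two $D(\cdot)$ terms. For the diagonal-marginal terms, $|J_N|=2N$ and $\sum_{\beta\in J_N}\distPlus{\gamma}(\beta)=1$, so $D(\{\distPlus{\gamma}(\beta)\}_\beta)=\sum_{\beta}\distPlus{\gamma}(\beta)\log\distPlus{\gamma}(\beta)+\log(2N)$; the additive $\log(2N)$ cancels in the difference, which by the previous two paragraphs is therefore at most $2N\varepsilon\log(1/\varepsilon)$, and similarly for the minus-marginal term. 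For $D^N$, write $D^N(\gamma)=\sum_{\alpha\in I_N}\gamma(\alpha)\log\gamma(\alpha)+\sum_{\alpha\in I_N}\gamma(\alpha)\log\tfrac{1}{|\alpha|}$; the first sum changes by at most $|I_N|\,\varepsilon\log(1/\varepsilon)\le 4N^2\varepsilon\log(1/\varepsilon)$, while in the second sum $\log\tfrac{1}{|\alpha|}$ takes only the values $\log(2N^2)$ on squares and $\log(4N^2)$ on half-squares, so after subtracting the $\gamma$-independent constant $\log(2N^2)$ and using $\sum_\alpha\gamma_i(\alpha)=1$ the difference of second sums equals $\log 2\cdot|\gamma_1(T_N)-\gamma_2(T_N)|$, which is at most $\log 2\cdot|T_N|\,\varepsilon\le 4N^2\varepsilon\log 2$ (and is $0$ when $N=1$, since then all cells are congruent half-squares). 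Adding these up and using $\varepsilon<1/e$ (so $\log(1/\varepsilon)>1>\log 2$) together with $\log\frac{N}{2\varepsilon^2}=\log N-\log 2+2\log(1/\varepsilon)$, a short computation --- done separately for $N=1$ and $N\ge 2$ --- bounds the total by $8N^2\varepsilon\log\frac{N}{2\varepsilon^2}$.

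I expect the modulus-of-continuity inequality to be the only point requiring thought; the rest is bookkeeping, whose key structural feature is that the ``large'' multipliers of order $\log N$ attached to the affine parts of $D^N$ and of the $D(\cdot)$'s do not hurt, since they multiply differences of quantities that sum to $1$ for both $\gamma_1$ and $\gamma_2$ and hence cancel, leaving only the harmless $\varepsilon\log(1/\varepsilon)$-type errors times $O(N^2)$ cells.
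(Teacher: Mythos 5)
Your proposal is correct and follows essentially the same route as the paper's proof: reduce $\dist{\gamma_1}{\gamma_2}<\varepsilon$ to cell-wise bounds $|\gamma_1(\alpha)-\gamma_2(\alpha)|<\varepsilon$ on $I_N$ (and likewise for the diagonal marginals, all these sets being in $\mR$), apply a modulus-of-continuity estimate for $t\mapsto t\log t$, and sum over the $\leq 4N^2$ cells of $I_N$ and $2N$ cells of $J_N$. Your two refinements — the sharper pointwise bound $\varepsilon\log(1/\varepsilon)$ in place of the paper's $-2\varepsilon\log(2\varepsilon)$, and cancelling the $\gamma$-independent part of $\log(1/|\alpha|)$ instead of bounding that term crudely by $8N^2\varepsilon\log(2N)$ as the paper does — are valid and only leave more room in the final arithmetic, which indeed closes: for $N\geq 2$ your total is at most $12N^2\varepsilon\log(1/\varepsilon)$ while the target exceeds $16N^2\varepsilon\log(1/\varepsilon)$, and for $N=1$ (all cells half-squares) the needed inequality reduces to $\varepsilon\leq 1/2$.
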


\begin{proof}
	Let $f:[0,1]\to\R$ be the function $f(x) = x\log(x)$ (with $f(0)=0$). Let $x,y \in [0,1]$ such that $|x-y| \leq \varepsilon$. We claim that $|f(x)-f(y)| \leq -2\varepsilon\log(2\varepsilon)$. Indeed, assume without losing generality that $x \leq y$. If $0 \leq x \leq \varepsilon$ then, since $f$ is convex and decreasing on $[0,x+\varepsilon]$:
	\[
	|f(x)-f(y)| \leq |f(x+|y-x|) - f(0)| \leq |f(2\varepsilon)| = - 2\varepsilon\log(2\varepsilon).
	\]
	Otherwise, $x \in [\varepsilon,1]$. We observe that for every $\zeta \in [\varepsilon,1]$, $|f'(\zeta)| \leq -2\log(2\varepsilon)$. Hence, by the mean value theorem:
	\[
	|f(x)-f(y)| \leq -2\log(2\varepsilon)|y-x| \leq -2\varepsilon\log(2\varepsilon).
	\]
	
	Now, by definition:
	\begin{align*}
	D^N(\gamma_1) - D^N(\gamma_2)	& = \sum_{ \alpha \in I_N} \left(\gamma_1(\alpha) \log \left( \gamma_1(\alpha) \right) - \gamma_2(\alpha)\log(\gamma_2(\alpha)) - (\gamma_1(\alpha)-\gamma_2(\alpha))\log(|\alpha|)\right)\\
	& = \sum_{ \alpha \in I_N } \left( f(\gamma_1(\alpha)) - f(\gamma_2(\alpha)) - (\gamma_1(\alpha)-\gamma_2(\alpha))\log(|\alpha|) \right).
	\end{align*}
	Since both $\gamma_1$ and $\gamma_2$ are probability measures:
	\[
	\left| \sum_{\alpha \in I_N} (\gamma_1(\alpha)-\gamma_2(\alpha)) \log(|\alpha|) \right| \leq 8N^2 \dist{\gamma_1}{\gamma_2} \log(2N) \leq 8N^2 \varepsilon \log(2N).
	\]
	Additionally:
	\[
	\left|\sum_{ \alpha \in I_N } (f(\gamma_1(\alpha)) - f(\gamma_2(\alpha))) \right| \leq -|I_N| 2\varepsilon \log(2\varepsilon) \leq - 8N^2 \varepsilon \log(2\varepsilon).
	\]
	By similar considerations:
	\[
	\left|D \left( \{\distPlus{\gamma_1}(\alpha)\}_{\alpha \in J_N} \right) - D \left( \{\distPlus{\gamma_2}(\alpha)\}_{\alpha \in J_N} \right)\right| \leq -4N\varepsilon\log(2\varepsilon)
	\]
	and
	\[
	\left|D \left( \{\distMinus{\gamma_1}(\alpha)\}_{\alpha \in J_N} \right) - D \left( \{\distMinus{\gamma_2}(\alpha)\}_{\alpha \in J_N} \right)\right| \leq -4N\varepsilon\log(2\varepsilon).
	\]
	Therefore:
	\[
	\left|H_q^N(\gamma_1) - H_q^N(\gamma_2)\right| \leq 8N^2\varepsilon\log \left( \frac{N}{\varepsilon} \right) - 8N\varepsilon\log(2\varepsilon) \leq 8N^2 \varepsilon \log \left( \frac{N}{2\varepsilon^2} \right),
	\]
	as claimed.
\end{proof}

\begin{clm}\label{clm:log integral}
	Let $0 < b \leq 1$ and $n,T \in \N$ satisfy $(1-1/e)n \leq T < n - \sqrt{n}$. Then
	\[
	\sum_{t=0}^{T-1} b \log \left( 1 - bt/n \right) = n \left( - (1-b) \log (1-b) - b \right) \pm 3(n-T) \left|\log(1-T/n)\right|.
	\]
\end{clm}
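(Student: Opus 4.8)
The plan is to recognize both sides of the identity as approximations to the single integral $\int_0^n f$, where $f(t) \coloneqq b\log(1-bt/n)$. The hypothesis $b \le 1$ ensures $1-bt/n \ge 1-t/n > 0$ for $t < n$, so $f$ is well defined and non-positive on $[0,n)$ and integrable on $[0,n]$ (even when $b=1$); moreover $f'(t) = -(b^2/n)/(1-bt/n) < 0$, so $f$ is strictly decreasing. First I would compute the main term: substituting $u = bt/n$ and using $\int_0^x \log(1-u)\,du = -(1-x)\log(1-x) - x$,
\[
\int_0^n f(t)\,dt = n\int_0^b \log(1-u)\,du = n\bigl(-(1-b)\log(1-b) - b\bigr),
\]
so the claimed right-hand side is exactly $\int_0^n f$. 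The same computation gives $\int_0^T f(t)\,dt = n\bigl(-(1-bT/n)\log(1-bT/n) - bT/n\bigr)$, and hence a closed form for the nonnegative ``tail'' $\int_T^n (-f)$.

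Next, $S \coloneqq \sum_{t=0}^{T-1} f(t)$ is a left Riemann sum for $\int_0^T f$ with unit mesh, so monotonicity of $f$ gives $\int_0^T f \le S \le \int_0^T f + \bigl(f(0)-f(T)\bigr) = \int_0^T f - f(T)$, using $f(0)=0$ (the discretization gap can be halved by noting $f$ is also concave, but that is not needed). Writing $\int_0^T f = \int_0^n f - \int_T^n f$ and combining, I get
\[
0 \;\le\; S - n\bigl(-(1-b)\log(1-b) - b\bigr) \;\le\; -f(T) + \int_T^n \bigl(-f(t)\bigr)\,dt,
\]
so $S$ always exceeds the claimed main term and only the upper bound on this error needs to be controlled.

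For that, I would use both hypotheses. Since $0 < b \le 1$, for $t \in [0,n)$ we have $0 \le -f(t) \le -\log(1-t/n)$, so $-f(T) \le -\log(1-T/n)$ and
\[
\int_T^n \bigl(-f(t)\bigr)\,dt \;\le\; \int_T^n \bigl(-\log(1-t/n)\bigr)\,dt \;=\; (n-T)\bigl(1 - \log(1-T/n)\bigr).
\]
The hypothesis $T \ge (1-1/e)n$ gives $1 - T/n \le 1/e$, i.e.\ $\bigl|\log(1-T/n)\bigr| = -\log(1-T/n) \ge 1$; since also $n-T \ge 1$, this lets one dominate the ``$+1$'' in the parentheses and the term $-f(T)$ (each at most $\bigl|\log(1-T/n)\bigr| \le (n-T)\bigl|\log(1-T/n)\bigr|$) by multiples of $(n-T)\bigl|\log(1-T/n)\bigr|$, which after collecting terms yields the stated error $\pm\,2(n-T)\log(1-T/n)$.

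The main obstacle is entirely in this last step: one must keep track of the discretization error $-f(T)$ and of the tail $\int_T^n(-f)$ — each genuinely of order $n-T$ plus order $\bigl|\log(1-T/n)\bigr|$ — and verify that for every integer $T$ with $(1-1/e)n \le T < n$ and every $0 < b \le 1$ their sum does not exceed $2(n-T)\bigl|\log(1-T/n)\bigr|$. The inequality $\bigl|\log(1-T/n)\bigr| \ge 1$ coming from the hypothesis on $T$, together with the reduction to the extremal case $b=1$ by monotonicity of the relevant expressions in $b$, is precisely what makes the accounting close; the closed-form antiderivatives and the Riemann-sum comparison are routine.
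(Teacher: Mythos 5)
Your decomposition and main-term computation are correct, and the route is essentially the paper's: the paper likewise treats the sum as a Riemann sum, integrates exactly via the antiderivative $F(x)=-(bx+(1-bx)\log(1-bx))$, and bounds the "tail'' $F(T/n)-F(1)$ using the hypothesis $|\log(1-T/n)|\ge 1$; your variant (integrating all the way to $n$, treating $\int_T^n(-f)$ as the tail, and bounding the discretization error by monotonicity of $f$ rather than by $\max|f'|$) is the same argument in slightly different clothing. The individual estimates you state are fine: $-f(t)\le-\log(1-t/n)$ for $0<b\le1$, $\int_T^n\bigl(-\log(1-t/n)\bigr)\,dt=(n-T)\bigl(1-\log(1-T/n)\bigr)$, and $0\le S-\int_0^T f\le -f(T)$.

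The gap is in the final accounting, which is exactly the step you flag as the crux. Write $k=n-T\ge1$ and $L=-\log(1-T/n)\ge1$. Your upper bound on the error is $-f(T)+\int_T^n(-f)\le L+k(1+L)=kL+k+L$, and you propose to conclude $\le 2kL$ by dominating $k$ and $L$ each by $kL$. That yields $3kL$, not $2kL$; to get $2kL$ you would need $k+L\le kL$, i.e.\ $\tfrac1k+\tfrac1L\le1$, which fails in part of the allowed range — most importantly whenever $L$ is close to $1$ (i.e.\ $T$ close to $(1-1/e)n$), for every $k$, and also for small $k$ with moderate $L$. And there is genuinely almost no slack to play with: for $b=1$ the tail alone is exactly $\int_T^n(-f)=k(1+L)$, and $S\ge\int_0^n f$ always, so the true discrepancy is at least $k(1+L)$, which approaches the entire budget $2kL$ as $L\downarrow1$; the leftover budget is only $k(L-1)$, so the additional discretization term $-f(T)\le L$ cannot be absorbed using only $k\ge1$, $L\ge1$ (even halving it by concavity does not help when $L\approx1$). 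As written your argument proves the claim with constant $3$ in place of $2$ (which would in fact be enough for every application of the claim in the paper, since there $1-T/n$ is far below $1/e$ and $k(L-1)$ is huge), but it does not prove the stated claim; to recover the constant $2$ you need either a separate, sharper treatment of the regime $L$ near $1$ (where $k=\Theta(n)$, so the discretization error must be beaten down below $k(L-1)$), or a finer joint estimate of $-f(T)$ and the tail in $b$ and $T$, rather than the term-by-term domination you describe.
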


\begin{proof}
	Let $f(x) = b \log(1-bx)$ and observe that $\frac{b}{n} \sum_{t=0}^{T-1} \log \left( 1 - bt/n \right)$ is a Riemann sum for the integral $\int_0^{T/n} f(x) dx$. Also, for every $x \in [0,T/n]$:
	\[
	|f'(x)| = \frac{b^2}{1-bx} \leq \frac{1}{1-T/n}.
	\]
	Therefore:
	\[
	\int_0^{T/n} f(x) dx = \frac{b}{n} \sum_{t=0}^{T-1} \log \left( 1 - bt/n \right) \pm \frac{1}{n} \max_{x\in[0,T/n]}|f'(x)| = \frac{b}{n} \sum_{t=0}^{T-1} \log \left( 1 - bt/n \right) \pm \frac{1}{n-T}.
	\]
	We can calculate the integral exactly. Let $F(x) = -\left( bx + (1-bx)\log(1-bx) \right)$. Then $F'(x) = f(x)$. Thus:
	\begin{align*}
	\int_0^{T/n} f(x) dx & = F(T/n) - F(0) = F(1) - F(0) + F(T/n)-F(1)\\
	& = - b - (1-b)\log(1-b) + F(T/n)-F(1).
	\end{align*}
	Now, for $g(x) = x\log(x)$:
	\[
	F(T/n) - F(1) = b(1-T/n) + g(1-b)-g(1-bT/n).
	\]
	For every $x,y \in [0,1]$ with $|x-y|\leq 1/e$ there holds $|g(x)-g(y)| \leq |y-x||\log(|y-x|)|$. Therefore $|g(1-b)-{g(1-bT/n)}| \leq b(1-T/n)|\log(b(1-T/n))| \leq (1-T/n)|\log(1-T/n)|$. Finally, since $T \geq (1-1/e)n$ we have $|\log(1-T/n)| \geq 1$. Therefore:
	\[
	|F(T/n) - F(1)| \leq 2(1-T/n)|\log(1-T/n)|.
	\]
	Hence:
	\[
	\sum_{t=0}^{T-1} b \log \left( 1 - bt/n \right) = n \left( - (1-b) \log (1-b) - b \right) \pm \left(2(n-T) \left|\log(1-T/n)\right| + \frac{n}{n-T}\right).
	\]
	Since $(1-1/e)n \leq T < n-\sqrt{n}$ it holds that $n/(n-T) < \sqrt{n} < (n-T) \left|\log(1-T/n)\right|$. Therefore:
	\[
	\sum_{t=0}^{T-1} b \log \left( 1 - bt/n \right) = n \left( - (1-b) \log (1-b) - b \right) \pm 3(n-T) \left|\log(1-T/n)\right|,
	\]
	as claimed.
\end{proof}

\begin{clm}\label{clm:line sums}
	There exists a constant $C>0$ such that the following holds: Let $\gamma$ be an $N$-step queenon. Let $G_M$ be the maximal density of $\gamma$. Let $n \in \N$ satisfy $n \geq N^2$ and let $(x,y) \in [n]^2$. The following hold:
	\begin{enumerate}
		\item\label{itm:row sum} $\sum_{\alpha \in I_N} \frac{\gamma(\alpha) L_{y,\alpha}^r}{|\alpha_n|} = \frac{1}{n} \pm \frac{C NG_M}{n^2}$.
		
		\item\label{itm:column sum} $\sum_{\alpha \in I_N} \frac{\gamma(\alpha) L_{x,\alpha}^c}{|\alpha_n|} = \frac{1}{n} \pm \frac{C NG_M}{n^2}$.
		
		\item\label{itm:plus diagonal sum} $\sum_{ \alpha \in I_N } \frac{\gamma(\alpha) D_{x+y,\alpha}^+ }{|\alpha_n|} = \frac{N \gamma^+(\alpha)}{n} \pm \frac{C G_M}{Nn}$.
		
		\item\label{itm:minus diagonal sum} $\sum_{ \alpha \in I_N } \frac{\gamma(\alpha) D_{y-x,\alpha}^- }{|\alpha_n|} = \frac{N \gamma^-(\alpha)}{n} \pm \frac{C G_M}{Nn}$.
	\end{enumerate}
\end{clm}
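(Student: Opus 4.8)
The plan is to prove all four identities by a common scheme: rewrite each sum as a sum over the positions of a single line $\ell$ (the row $y$, the column $x$, or the diagonal through $(x,y)$), pass from $|\alpha_n|$ to $|\alpha|n^2$, and then compare the resulting discrete sum to the appropriate marginal of a continuous measure. Since $\{\alpha_n\}_{\alpha\in I_N}$ partitions $[n]^2$ and each of $L^r_{y,\alpha},L^c_{x,\alpha},D^+_{x+y,\alpha},D^-_{y-x,\alpha}$ counts exactly the positions of $\ell$ lying in $\alpha_n$, every left-hand side equals $\sum_{(u,v)\in\ell\cap[n]^2}\gamma(\alpha^N(u,v))/|\alpha^N(u,v)_n|$. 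Using $|\alpha_n|=|\alpha|n^2\pm 8\lceil n/N\rceil$, $|\alpha|\ge 1/(4N^2)$ and the hypothesis $n\ge N^2$, the relative error in replacing $|\alpha_n|$ by $|\alpha|n^2$ is $O(N/n)$; since $\gamma(\alpha)/|\alpha|\le G_M$ and $|\ell\cap[n]^2|\le n$, this step contributes $O(NG_M/n^2)$ to (a),(b) and $O(G_M/(Nn))$ to (c),(d). Writing $\bar g(\alpha):=\gamma(\alpha)/|\alpha|$ (the average density of $\gamma$ on $\alpha$, so $\bar g\le G_M$) and letting $\bar\gamma$ be the probability measure with density $\bar g$ on each $\alpha\in I_N$ (the ``$I_N$-coarsening'' of $\gamma$), we are reduced to showing $\tfrac1{n^2}\sum_{(u,v)\in\ell}\bar g(\alpha^N(u,v))$ equals $\tfrac1n$ (resp.\ $\tfrac{N\gamma^\pm(\beta)}{n}$) up to the claimed error.

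Next I would control the passage from this discrete sum to $\tfrac1n$ times the exact marginal of $\bar\gamma$ along $\ell$. A row or column meets only $O(N)$ cells of $I_N$, a diagonal only $O(1)$ cells inside each $J_N$-interval it crosses, consecutive positions of $\ell$ are $\Theta(1/n)$ apart, and the lexicographic tie-break in the definition of $\alpha^N(\cdot)$ mislabels only $O(1)$ positions per cell boundary; together these give a Riemann error of $O(NG_M/n^2)$ for (a),(b) and $O(G_M/(Nn))$ for (c),(d). It then remains to evaluate the marginals of $\bar\gamma$. For the diagonal statements this is immediate: the cells of $I_N$ refine the bands $\{x+y\in\beta\}$ and $\{y-x\in\beta\}$, $\beta\in J_N$, so $\bar\gamma$ has the same diagonal marginals as $\gamma$; and since $\gamma$ is an $N$-step permuton, $\gamma^+$ and $\gamma^-$ are piecewise linear with breakpoints in $J_N$ and slope $O(G_M)$ on each piece (a sweeping segment has one endpoint moving at unit speed with integrand $\le G_M$), so the density over the width-$\Theta(1/n)$ sub-band at $x+y$ is $N\gamma^+(\beta)\pm O(G_M/n)$, which after dividing by $n$ gives parts (c),(d).

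For the row/column statements, the marginals of $\gamma$ are identically $1$ (uniform marginals), and the point is that the coarsening $\gamma\mapsto\bar\gamma$ does not change them. Writing $g_{ij}$ for the value of $g$ on $\sigma^N_{ij}$ and expanding $\gamma(\alpha)=\sum_{i,j}g_{ij}|\alpha\cap\sigma^N_{ij}|$, the row marginal of $\bar\gamma$ at height $h$ equals $\sum_{i,j}g_{ij}w_{ij}(h)$, where $w_{ij}(h)=\sum_{\alpha\in I_N}\ell_\alpha(h)\,|\alpha\cap\sigma^N_{ij}|/|\alpha|\ge 0$ (with $\ell_\alpha(h)$ the length of the intersection of the line at height $h$ with $\alpha$) and $\sum_{i,j}w_{ij}(h)=1$. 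A linear functional $(g_{ij})\mapsto\sum_{i,j}g_{ij}w_{ij}(h)$ is constant on the affine hull of $N$-step permutons precisely when $w$ lies in the span of $\{e_i\otimes\mathbf 1,\ \mathbf 1\otimes e_j\}$, i.e.\ when $w_{ij}(h)$ is additive, $w_{ij}(h)=a_i(h)+b_j(h)$; so the task is to prove exactly this additivity, and the value of the resulting constant is then forced to be $1$ since it must agree with the uniform permuton.

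The main obstacle is this additivity statement, equivalently the assertion that the $I_N$-coarsening of an $N$-step permuton again has uniform row and column marginals — the subtlety being that the cells of $I_N$ are $\ell_1$-balls while the marginals are axis-aligned, so the coarsening does not visibly factor. I would establish it by combining a bulk symmetry with a boundary computation: a horizontal $1/N$-translation permutes both the $I_N$-grid and the $\sigma^N$-grid while fixing horizontal lines, so it fixes $w_{ij}(h)$ whenever all relevant cells lie away from $\partial(\interval^2)$, reducing the verification of additivity to the $O(1)$ boundary strips, where one checks it directly from how the diamond cells meet the $\sigma^N$-grid and then feeds in the exact constraints $\sum_i g_{ij}=N$ and $\sum_j g_{ij}=N$. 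The diagonal cases, where the coarsening is exact, serve as a sanity check for the scheme; the row/column cases are where essentially all of the work lies.
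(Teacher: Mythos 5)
Your skeleton is the same as the paper's: the coarsened measure $\bar\gamma$ (constant density on each $\alpha\in I_N$ with $\bar\gamma(\alpha)=\gamma(\alpha)$) is exactly the measure $\delta$ the paper constructs, the error bookkeeping ($|\alpha_n|=|\alpha|n^2(1\pm O(N/n))$, $O(N)$ cells met by a row, $O(1)$ boundary positions per cell, trivial handling of half-squares on the diagonal) matches the paper's, and you have correctly isolated the crux: that the $I_N$-coarsening of an $N$-step permuton again has uniform row and column marginals. Where you diverge is in how you propose to prove that crux. The paper does it directly: the marginal density of $\delta$ is piecewise linear with breakpoints on the $1/(2N)$-grid, so it evaluates it at those finitely many heights by writing $\delta(\alpha_{i,j})$ in terms of the entries of the density matrix $G$ and using the row/column sums $=N$. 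You instead reformulate it as additivity of the weights, $w_{ij}(h)=a_i(h)+b_j(h)$, and propose a bulk horizontal $1/N$-translation symmetry plus a boundary check. The reformulation is sound (your affine-hull equivalence and the normalization $\sum_{ij}w_{ij}(h)=1$ are correct), and the translation argument does give $i$-independence of $w_{ij}(h)$ for interior columns.

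The genuine gap is that the boundary verification, which you assert ``one checks directly,'' is precisely where all the content of the paper's proof lives, and it is not trivial as stated: near the top/bottom edges and at the corners the clipped half-cells behave differently from the left/right edges (for a half-cell cut by a vertical boundary the chord length, the area, and the intersection with a row strip all halve, so the contribution matches the bulk, whereas for a half-cell cut by a horizontal boundary the chord at a given height does not halve while the area does), so the weight pattern changes with $h$ and a genuine case analysis comparable in length to the paper's computation is still required. Moreover your phrase ``feeds in the exact constraints $\sum_i g_{ij}=N$'' muddles the logic: additivity of $w(h)$ is a statement about $w$ alone, and if you end up invoking the constraints at the boundary you are back to verifying the marginal directly, i.e.\ the paper's argument. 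Two smaller points: for parts \ref{itm:plus diagonal sum} and \ref{itm:minus diagonal sum} you do not need piecewise linearity of $\gamma^+$ at all --- since $\bar\gamma$ is constant on the diamonds, its plus-marginal density is exactly $N\gamma^+(\beta)$ on the interior of each band $\beta\in J_N$ (only the half-cells at the band ends need the crude bound), which is how the paper argues; and your stated slope bound $O(G_M)$ for the density of $\gamma^+$ is in general $O(NG_M)$, though this is harmless given $n\ge N^2$.
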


\begin{proof}
	Let $G$ be the $N \times N$ matrix such that for every $i,j\in[N]$, the density of $\gamma$ on $\sigma_{i,j}^N$ is $G_{i,j}$.
	
	Let $\delta$ be the probability measure on $\interval^2$ that, for every $\alpha \in I_N$, has constant density on $\alpha$ and satisfies $\delta(\alpha) = \gamma(\alpha)$. We claim that $\delta$ is a permuton, i.e., has uniform marginals. We will show that it has uniform marginals along columns; this suffices because of the symmetry between rows and columns.
	
	Let $f:\interval \to \R$ be the density function of the marginal distribution of $\delta$ along vertical lines (i.e., for every $-1/2\leq a \leq b \leq 1/2$ we have $\delta(\{(x,y) : x \in [a,b]\}) = \int_a^bf(x)dx$). We need to show that $f \equiv 1$. We observe that $f$ is continuous and piece-wise linear with respect to the intervals $\{[-1/2+(i-1)/(2N),-1/2+i/(2N)]\}_{i \in [2N]}$. Thus, it suffices to show that for every integer $0 \leq i \leq 2N$ it holds that $f(-1/2+i/(2N)) = 1$.
	
	We must take a closer look at $\delta$. For this we need some notation. We recommend the reader have Figure \ref{fig:lattice} at hand. For $(a,b) \in \R^2$ and $\varepsilon > 0$ let $B_\varepsilon^1(a,b)$ be the closed $\ell_1$-ball of radius $\varepsilon$ centered at $(a,b)$. Observe that every element of $I_N$ is the intersection of an $\ell_1$-ball of radius $1/(2N)$ with $\interval^2$. For $0 \leq i \leq 2N$ even and $j \in [N]$, let
	\[
	\alpha_{i,j} \coloneqq B_{1/(2N)}^1(-1/2+i/(2N), -1/2 - 1/(2N) + j/N) \cap \interval^2 \in I_N
	\]
	and for $1 \leq i < 2N$ odd and $0 \leq j \leq N$ let
	\[
	\alpha_{i,j} \coloneqq B_{1/(2N)}^1(-1/2+i/(2N), -1/2 + j/N) \cap \interval^2 \in I_N.
	\]
	
	We make the following observations.
	
	\begin{itemize}
		\item If $i=0$ then, for every $j \in [N]$, $\alpha_{i,j} = \alpha_{0,j}$ is a half-square contained (up to a set of measure zero) in $\sigma_{1,j}^N$. Thus, $\delta(\alpha_{0,j}) = G_{1,j}/(4N^2)$. Therefore $f(-1/2) = \sum_{j=1}^NG_{1,j}/N$. Because $G$ is the density matrix of a permuton, the sum along each row is $N$. Therefore $f(-1/2) = 1$.
		
		\item The case $i=2N$ is handled similarly: $f(1/2) = \sum_{j=1}^N G_{N,j}/N = 1$.
		
		\item If $1 < i < 2N$ is even, then every $\alpha_{i,j}$ is a square, the left half of which is contained in $\sigma_{i/2,j}^N$ and the right half of which is contained in $\sigma_{i/2+1,j}^N$. Therefore $\delta(\alpha_{i,j}) = (G_{i/2,j} + G_{i/2+1,j})/(4N^2)$. Hence $f(-1/2+i/(2N)) = \sum_{j=1}^N (G_{i/2,j} + G_{i/2+1,j})/(2N) = 1$.
		
		\item If $1 \leq i < 2N$ is odd then for $1 \leq j < N$, $\alpha_{i,j}$ is a square, the lower half of which is contained in $\sigma_{(i+1)/2,j}^N$ and the upper half of which is contained in $\sigma_{(i+1)/2,j+1}^N$. In this case $\delta(\alpha_{i,j}) = (G_{(i+1)/2,j} + G_{(i+1)/2,j+1})/(4N^2)$. Additionally, $\alpha_{i,0}$ is a half-square contained in $\sigma_{(i+1)/2,1}^N$ and $\alpha_{i,N}$ is a half-square contained in $\sigma_{(i+1)/2,N}^N$. Therefore $\delta(\alpha_{i,1}) = G_{(i+1)/2,1}/(4N^2)$ and $\delta(\alpha_{i,N}) = G_{(i+1)/2,N}/(4N^2)$. Hence
		\begin{align*}
		f(-1/2+i/(2N)) & = \frac{1}{2N} \left(G_{(i+1)/2,1} + G_{(i+1)/2,N} + \sum_{j=1}^{N-1} \left( G_{(i+1)/2,j} + G_{(i+1)/2,j+1} \right)  \right)\\
		& = \frac{1}{N} \sum_{j=1}^N G_{(i+1)/2,j} = 1.
		\end{align*}
	\end{itemize}
	This completes the proof that $\delta$ is a permuton.
	
	In the following, the constants $0 < C_1 < C_2 <\ldots$ are each chosen to be sufficiently large with respect to the previous choices. We emphasize that none of them depend on $\gamma,N$, or $n$.
	
	We now prove \ref{itm:row sum}. By construction: $\sum_{\alpha \in I_N} \frac{\gamma(\alpha) L_{y,\alpha}^r}{|\alpha_n|} = \sum_{\alpha \in I_N} \frac{\delta(\alpha) L_{y,\alpha}^r}{|\alpha_n|}$.	Because $\delta$ is a permuton it has uniform marginals and so:
	\[
	\sum_{ a=1 }^n \delta(\sigma_{a,y}^n) = \delta(\{ (a,b) : -1/2 + (y-1)/n \leq b \leq -1/2+y/n \}) = \frac{1}{n}.
	\]
	Hence it suffices to prove that
	\begin{equation}\label{eq:row column sum goal}
	\left| \sum_{\alpha \in I_N} \frac{\delta(\alpha) L_{y,\alpha}^r}{|\alpha_n|} - \sum_{ a=1 }^n \delta(\sigma_{a,y}^n) \right| \leq \frac{C NG_M}{n^2}
	\end{equation}
	for a suitable constant $C$ (independent of $\gamma,N$, and $n$).
	
	Recall that for every $\alpha \in I_N$, $|\alpha_n| = |\alpha|n^2 \pm 8n/N = |\alpha|n^2(1 \pm C_1 N/n)$. Therefore, $\sum_{\alpha \in I_N} \frac{\delta(\alpha) L_{y,\alpha}^r}{|\alpha_n|} = \left( 1 \pm \frac{C_2 N}{n} \right) \frac{1}{n^2} \sum_{\alpha \in I_N} \frac{\delta(\alpha) L_{y,\alpha}^r}{|\alpha|}$. Now, there are fewer than $2N$ elements $\alpha \in I_N$ such that $L_{y,\alpha}^r>0$. Additionally, for each one, $L_{y,\alpha}^r \leq 2n/N$. Finally, for every $\alpha$, there holds $\delta(\alpha)/|\alpha| \leq G_M$. Hence: $\left( 1 \pm \frac{C_2 N}{n} \right) \frac{1}{n^2} \sum_{\alpha \in I_N} \frac{\delta(\alpha) L_{y,\alpha}^r}{|\alpha|} = \frac{1}{n^2} \sum_{\alpha \in I_N} \frac{\delta(\alpha) L_{y,\alpha}^r}{|\alpha|} \pm \frac{C_3 G_M N}{n^2}$. Now consider $\frac{1}{n^2}\sum_{\alpha \in I_N} \frac{\delta(\alpha) L_{y,\alpha}^r}{|\alpha|}$. This may be rewritten as $\sum_{a=1}^n \frac{\delta(\alpha(a,y))}{|\alpha(a,y)|n^2}$. Let $X \subseteq [n]$ be the set of indices $a$ such that $\sigma_{a,y}^n \subseteq \alpha(a,y)$. For every $a \in X$ there holds $\delta(\sigma_{a,y}^n) = \delta(\alpha(a,y))/(|\alpha(a,y)|n^2)$. Therefore:
	\[
	\left|\sum_{a=1}^n \frac{\delta(\alpha(a,y))}{|\alpha(a,y)|n^2} - \sum_{ a=1 }^n \delta(\sigma_{a,y}^n)\right| \leq \sum_{a \notin X} \left( \frac{\delta(\alpha(a,y))}{|\alpha(a,y)|n^2} + \delta(\sigma_{a,y}^n) \right) \leq \frac{2G_M}{n^2}(n-|X|).
	\]
	Since there are at most $3N$ indices $a$ such that $\sigma_{a,y}^n$ intersects more than one element of $I_N$ we have:
	\[
	\left| \sum_{\alpha \in I_N} \frac{\delta(\alpha) L_{y,\alpha}^r}{|\alpha_n|} - \sum_{ a=1 }^n \delta(\sigma_{a,y}^n) \right| \leq \frac{C_3 G_M N}{n^2} + \frac{6 G_M N}{n^2} \leq \frac{C_4 G_M N}{n^2},
	\]
	proving \eqref{eq:row column sum goal} and hence \ref{itm:row sum}. A proof of \ref{itm:column sum} is obtained by interchanging the roles of rows and columns in the preceding proof.
	
	Next, we use a similar argument to prove \ref{itm:plus diagonal sum}. Consider
	\[
	\sum_{ \alpha \in I_N } \frac{\gamma(\alpha) D_{x+y,\alpha}^+ }{|\alpha_n|} = \sum_{ \alpha \in I_N } \frac{\delta(\alpha) D_{x+y,\alpha}^+ }{|\alpha_n|} = \sum_{ \alpha \in S_N } \frac{\delta(\alpha) D_{x+y,\alpha}^+ }{|\alpha_n|} + \sum_{ \alpha \in T_N } \frac{\delta(\alpha) D_{x+y,\alpha}^+ }{|\alpha_n|}.
	\]
	We first show that the contribution from $T_N$ is negligible. Indeed, there are at most $4$ elements $\alpha \in T_N$ such that $D_{x+y,\alpha}^+ > 0$. For every $\alpha$ it holds that $D_{x+y,\alpha}^+ \leq 3n/N$. Therefore
	\[
	\sum_{ \alpha \in T_N } \frac{\delta(\alpha) D_{x+y,\alpha}^+ }{|\alpha_n|} \leq \frac{C_5 G_M}{nN}.
	\]
	Now, for every $\alpha \in S_N$ such that $D_{x+y,\alpha}^+ > 0$ we have $D_{x+y,\alpha}^+ = n/(2N) \pm 1$ and $|\alpha_n| = n^2/(2N^2) \pm 8n/N$. Therefore:
	\begin{align*}
	\sum_{ \alpha \in S_N } \frac{\delta(\alpha) D_{x+y,\alpha}^+ }{|\alpha_n|} & = \left( 1 \pm \frac{C_6 N}{n} \right) \frac{N}{n} \sum_{ \alpha \in S_N : D_{x+y,\alpha}^+>0 } \delta(\alpha)\\
	& = \left( 1 \pm \frac{C_6 N}{n} \right) \frac{N}{n} \left( \delta^+(\alpha) - \sum_{ \alpha \in T_N : D_{x+y,\alpha}^+>0 } \delta(\alpha) \right).
	\end{align*}
	Again using the fact that there are at most $4$ half-squares $\alpha \in T_N$ such that $D_{x+y,\alpha}^+>0$:
	\begin{align*}
	\sum_{ \alpha \in S_N } \frac{\delta(\alpha) D_{x+y,\alpha}^+ }{|\alpha_n|} & = \left( 1 \pm \frac{C_6 N}{n} \right) \frac{N}{n} \left( \delta^+(\alpha) \pm \frac{G_M}{N^2} \right) = \frac{N\delta^+(\alpha)}{n} \pm \left( \frac{C_6 N}{n^2} + \frac{G_M}{Nn} + \frac{C_6 G_M}{n^2} \right).
	\end{align*}
	By assumption, $n \geq N^2$. Additionally, because $\gamma$ is a permuton, $G_M \geq 1$. Therefore $\frac{C_6 N}{n^2} + \frac{G_M}{Nn} + \frac{C_6 G_M}{n^2} \leq \frac{C_7 G_M}{Nn}$. Finally, we note that by construction $\delta^+(\alpha) = \gamma^+(\alpha)$. Therefore:
	\[
	\sum_{ \alpha \in I_N } \frac{\gamma(\alpha) D_{x+y,\alpha}^+ }{|\alpha_n|} = \frac{N\gamma^+(\alpha)}{n} \pm \frac{C_7 G_M}{nN},
	\]
	as desired. A proof of \ref{itm:minus diagonal sum} can be obtained similarly.
\end{proof}

The next claim establishes a sufficient condition for a step permuton to be a queenon.

\begin{clm}\label{clm:step queenon sufficient condition}
	Let $A$ be a nonnegative $N \times N$ matrix in which the sum of every row and column is equal to $N$ and the sum of every diagonal is at most $N$. Let $\gamma$ be the measure on $\interval^2$ that for every $i,j \in [N]$ has constant density $A_{i,j}$ on the square $\sigma_{i,j}^N$. Then $\gamma$ is a queenon.
\end{clm}

\begin{proof}
	The fact that $A$ has all row and column sums equal to $N$ implies that $\gamma$ is a permuton. It remains to verify the sub-uniform diagonal marginals property. We will do so for $\gamma^+$; the proof for $\gamma^-$ is similar.
	
	Let $g:\interval^2\to\R$ and $g^+:[-1,1] \to \R$ be the density functions of $\gamma$ and $\gamma^+$, respectively. We wish to show that $g^+$ is bounded from above by $1$.
	
	We first observe that $g^+$ is linear on each of the intervals $[-1+(i-1)/N,-1+i/N]$ for $i \in [2N]$. Indeed, denoting the indicator of $\sigma_{i,j}^N$ by $s_{i,j}$, we can write $g = \sum_{i,j \in [N]}A_{i,j} s_{i,j}$. From this it follows that for every $c \in [-1,1]$ and $i,j \in [N]$ the contribution of $A_{i,j} s_{i,j}$ to $g^+(c)$ is proportional to $A_{i,j}$ as well as to the length of the line segment $\ell_{i,j}(c) \coloneqq \{ (x,y) \in \sigma_{i,j}^N : x+y=c \}$.
	
	Let $i,j \in [N]$ and $c \in [-1,1]$. In order to calculate the length of $\ell_{i,j}(c)$, which we denote $\norm{\ell_{i,j}(c)}$, we consider two cases. First, if $c < -1 + (i+j-2)/N$ or $c > -1 + (i+j)/N$ then $\ell_{i,j}(c)$ is empty and $\norm{\ell_{i,j}(c)} = 0$. Otherwise there exists some $t \in [-1/N,1/N]$ such that $c = -1 + (i+j-1)/N + t$. In this case $\norm{\ell_{i,j}(c)} = \sqrt{2}(1/N-|t|)$.
	
	We can now calculate the contribution of $A_{i,j}s_{i,j}$ to $g^+(c)$. As we have mentioned there exists a universal constant $C$ such that this contribution is $CA_{i,j}\norm{\ell_{i,j}(c)}$. To determine the value of $C$ we note that the total contribution, which is $\int_{-1}^1 CA_{i,j} \norm{\ell_{i,j}(c)} dc = \sqrt{2}CA_{i,j} / N^2$ must be equal to $\int_{\interval^2} A_{i,j} s_{i,j} = A_{i,j}/N^2$. It follows that $C = 1/\sqrt{2}$.
	
	By the piece-wise linear nature of $g^+$ it now suffices to show that for every $k=0,1,\ldots,2N$ there holds $g^+(-1+k/N) \leq 1$. By the considerations above for each such $k$ we have
	\[
	g^+(-1+k/N) = \sum_{i,j \in [N]} \frac{A_{i,j}}{\sqrt{2}} \norm{\ell_{i,j}(-1+k/N)} = \sum_{i,j \in [N] : i+j = k+1} \frac{A_{i,j}}{\sqrt{2}} \frac{\sqrt{2}}{N} = \frac{1}{N} \sum_{i,j \in [N] : i+j = k+1} A_{i,j}.
	\]
	On the right we are summing a diagonal of $A$. By assumption this is at most $N$, which implies the desired bound.
\end{proof}

\subsection{An approximation lemma}

To prove the lower bound in \cref{thm:precise bounds} we will use the fact that every queenon can not only be approximated by a step queenon (which is true by definition) but that this can be done without losing too much entropy.

\begin{lem}\label{lem:approximation}
	Let $\gamma \in \Queenons$ and $\varepsilon > 0$. There exists a step queenon $\tgamma \in \tilde{\Queenons}$ such that $\dist{\gamma}{\tgamma} < \varepsilon$. Furthermore, if $H_q(\gamma) > -\infty$ then we may choose $\tgamma$ such that $H_q(\tgamma) > H_q(\gamma) - \varepsilon$. Additionally, we may assume that the densities of $\tgamma^+$ and $\tgamma^-$ are bounded away from $1$ and that the density of $\tgamma$ is bounded away from $0$.
\end{lem}

\cref{lem:approximation} holds by definition when $H_q(\gamma) = -\infty$. Henceforth, we assume that $H_q(\gamma)>-\infty$.

\begin{definition}
	For a permuton $\gamma$ and $N \in \N$, we write $\gamma_N$ for the permuton with constant density on $\sigma^N_{i,j}$ and $\gamma_N(\sigma^N_{i,j}) = \gamma(\sigma^N_{i,j})$ for every $i,j \in [N]$.
\end{definition}

\begin{rmk}
	It is easy to prove the permuton analogue of \cref{lem:approximation}. Indeed, given a permuton $\gamma$ and $N \in \N$, it is always the case that $\gamma_N$ has permuton entropy at least that of $\gamma$ (this follows from the concavity of KL divergence). Since $\gamma_N\to\gamma$, the statement follows. However, even if $\gamma$ is a queenon $\gamma_N$ may not be. Furthermore, even if $\gamma_N$ \textit{is} a queenon, it is possible that $H_q(\gamma_N) < H_q(\gamma)$. This explains the relative complexity of the proof of \cref{lem:approximation}.
\end{rmk}

The proof idea is to take a very large $N \in \N$ and consider $\gamma_N$. As mentioned, $\gamma_N$ is not necessarily a queenon, as it might not have sub-uniform diagonal marginals. Nevertheless, we will show that by ``shifting'' a small amount of probability mass we can modify it to have sub-uniform diagonal marginals.

In the following, $\varepsilon_1,\varepsilon_2,\ldots$ are positive constants that are chosen successively such that each is sufficiently small with respect to all previous choices as well as $\varepsilon$ and $\gamma$.

As a first step we replace $\gamma$ with a queenon with strictly sub-uniform diagonal marginals. Let $\kappa$ be the $3$-step queenon whose density function is given by the matrix
\[
\begin{bmatrix}
	0.9&1.2&0.9\\
	1.2&0.6&1.2\\
	0.9&1.2&0.9
\end{bmatrix}.
\]
We note that \cref{clm:step queenon sufficient condition} implies that $\kappa$ is indeed a queenon. Let $\gamma_1 \coloneqq \varepsilon_1 \kappa + (1-\varepsilon_1)\gamma$.

\begin{obs}\label{obs:strict SUDM}
	The densities of $\gamma_1^+$ and $\gamma_1^-$ are at most $1 - 0.2 \varepsilon_1 < 1$.
\end{obs}

\begin{proof}
	This follows immediately from the fact that the densities of $\kappa^+$ and $\kappa^-$ are less than $0.8$.
\end{proof}

Since $\gamma_1$ has finite queenon entropy it has a density function $g$. Next, we find a continuous approximation of $g$. By Lusin's theorem \cite[Theorem 2.23]{rudin1966real} there exists a continuous $\tg : \interval^2 \to [0,\infty)$ satisfying:
\begin{enumerate}[({\bfseries{G\arabic{enumi}}})]
	\item\label{itm:tg-g small} $\int |\tg - g| < \varepsilon_2$,
	
	\item $\int |g\log g - \tg \log \tg| < \varepsilon_2$,
	
	\item $\mu(\{x : \tg(x) \neq g(x)\}) < \varepsilon_2$, where $\mu$ is the Lebesgue measure on $\interval^2$.
\end{enumerate}

Let $N \in \N$ be sufficiently large with respect to $1 / \varepsilon_2$ and $\norm{\tg}_\infty$. Let $\gamma_2 \coloneqq (\gamma_1)_N$. We emphasize that $\gamma_2$ is a permuton but not necessarily a queenon. Let $A$ be the $N \times N$ matrix corresponding to the density function of $\gamma_2$ in the obvious way (so that every row and column of $A$ sums to $N$). By \cref{clm:step queenon sufficient condition} $\gamma_2$ is a queenon if every diagonal sum in $A$ is at most $N$. We will show that almost all diagonal sums of $A$ are less than $N$, so that $\gamma_2$ is almost a queenon. We will then ``spread'' the over-weighted diagonals around the rest of the permuton to obtain a bona fide queenon that is close to $\gamma_2$ (and hence $\gamma$).

Let $\tgamma_2$ be the measure on $\interval^2$ defined by $\tgamma_2(X) = \int_X \tg$. We now show that because $\tg$ is continuous, $\tgamma_2$ almost has sub-uniform diagonal marginals. Since $\gamma_2-\tgamma_2$ has small total weight, this will allow us to conclude the same for $\gamma_2$.

We need some terminology. First, for $k \in \Z$, we call sets of the form $\bigcup_{i+j=k} \sigma^N_{i,j}$ or $\bigcup_{i-j=k} \sigma^N_{i,j}$ \termdefine{block diagonals}. Observe that each (nonempty) block diagonal is contained in a unique diagonal strip of width $2/N$. Call a block diagonal $D$ \termdefine{bad} if $|\tgamma_2(D) - \gamma_1(D)| > \sqrt{\varepsilon_2} / N$ or $| \tgamma_2(E) - \gamma_1(E)| > \sqrt{\varepsilon_2} / N$ for $E$ the diagonal strip containing $D$ (otherwise the block diagonal is \termdefine{good}). Since $\int |\tg - g| < \varepsilon_2$ there are at most $8\sqrt{\varepsilon_2} N$ bad block diagonals (counting both plus- and minus-diagonals).

\begin{clm}\label{clm:good diagonals are good}
	Let $D$ be a good block diagonal. Then
	\[
	N^2 \tgamma_2(D) = N^2 \gamma_2(D) \pm \sqrt{\varepsilon_2}N = \sum_{i,j : \sigma^N_{i,j} \subseteq D} A_{i,j} \pm \sqrt{\varepsilon_2}N \leq (1 - 0.1\varepsilon_1) N.
	\]
\end{clm}

\begin{rmk}
	\cref{clm:good diagonals are good} implies that if $g$ itself is continuous then $\gamma_2$ is an $N$-step queenon.
\end{rmk}

\begin{proof}
	We first note that $\gamma_2(D)=\gamma_1(D)= \frac{1}{N^2} \sum_{i,j:\sigma^N_{i,j} \subseteq D}A_{i,j}$ by definition. Since $D$ is good, it holds that $\tgamma_2(D) = \gamma_1(D) \pm \sqrt{\varepsilon_2}/N$. This proves the two equalities in the claim. It remains to prove the inequality.
	
	Let $E$ be the width-$2/N$ diagonal strip containing $D$. By \cref{obs:strict SUDM} the density of $\gamma_1^+$ is $\leq 1-0.2\varepsilon_1$. Hence:
	\[
	\gamma_1(E) \leq 2(1-0.2\varepsilon_1) / N.
	\]
	Let $\mD$ be the collection of squares $\sigma_{i,j}^N$ that are contained in $D$. For every $\sigma\in\mD$, let $x_\sigma \in \interval^2$ be its centerpoint. We may assume that $N$ is large enough that for every $x,y \in \interval^2$, if $\norm{x-y}_2 < 5 / N$ then $|\tg(x) - \tg(y)| < \varepsilon_2^2$. It then holds that
	\begin{equation}\label{eq:continuous diagonal weight}
		\tgamma_2(E) = \frac{2}{N^2} \sum_{\sigma \in \mD}\left( \tg(x_\sigma) \pm \varepsilon_2^2 \right) \pm \frac{\norm{\tg}_\infty}{N^2} = \frac{2}{N^2} \sum_{\sigma \in \mD}\tg(x_\sigma) \pm \frac{3\varepsilon_2^2}{N}.
	\end{equation}
	Similarly, since $D$ is good, it holds that
	\begin{equation}\label{eq:block diagonal continuous weight}
		\tgamma_2(D) = \frac{1}{N^2} \sum_{\sigma \in \mD} \left( \tg(x_\sigma) \pm \varepsilon_2^2 \right) \stackrel{\eqref{eq:continuous diagonal weight}}{=} \frac{1}{2} \tgamma_2(E) \pm \frac{3 \varepsilon_2^2}{N} = \frac{1}{2} \gamma_1(E) \pm \left(\frac{3\varepsilon_2^2}{N} + \frac{\sqrt{\varepsilon_2}}{N}\right) \leq \frac{1 - 0.2\varepsilon_1 + 2\sqrt{\varepsilon_2}}{N}.
	\end{equation}
	Hence
	\[
	N^2\gamma_2(D) \pm \sqrt{\varepsilon_2}N \leq N^2\tgamma_2(D) + 2\sqrt{\varepsilon_2}N \stackrel{\eqref{eq:block diagonal continuous weight}}{\leq} (1 - 0.2\varepsilon_1 + 4\sqrt{\varepsilon_2})N \leq (1 - 0.1\varepsilon_1)N,
	\]
	completing the proof.
\end{proof}

We will make small changes to $\gamma_2$ to obtain a queenon. The basic building blocks are matrices that shift weight between diagonals while preserving the uniform marginals. Specifically, for $a,b,c,d \in [N]$ let $A(a,b,c,d)$ be the $N \times N$ matrix in which $(A(a,b,c,d))_{a,b} = (A(a,b,c,d))_{c,d} = 1$, $(A(a,b,c,d))_{a,d} = (A(a,b,c,d))_{c,b} = -1$, and all other entries are $0$. Observe that for every $N \times N$ matrix $B$ and every $t \in \R$, adding $tA(a,b,c,d)$ to $B$ leaves its row and column sums unchanged.

For every $a,b \in [N]$, let $\mathfrak{A}_{a,b}$ be the set of matrices $A(a,b,c,d)$ such that all block diagonals incident to $(a,d),(c,b)$, and $(c,d)$ are good and $a \neq c, b \neq d$. We will show presently that this set is not empty. Let
\[
B(a,b) \coloneqq \frac{1}{|\mathfrak{A}_{a,b}|} \sum_{X \in \mathfrak{A}_{a,b}} X.
\]

\begin{clm}\label{clm:shifter properties}
	Let $a,b \in [N]$. The following hold:
	\begin{enumerate}
		\item\label{itm:shifter self 1} $(B(a,b))_{a,b} = 1$;
		
		\item\label{itm:far square bound} for every $c \neq a$ and $d \neq b$, $0 \leq (B(a,b))_{c,d} \leq 2 / N^2$;
		
		\item\label{itm:near square bound} for every $c \neq a$ and $d \neq b$, $0 \geq (B(a,b))_{c,b}, (B(a,b))_{a,d} \geq - 2 / N$;
		
		\item\label{itm:shifter row columns sums} all row and column sums in $B(a,b)$ are zero; and
		
		\item\label{itm:B absolute sumn} $\sum_{i,j \in [N]} |B(a,b)_{i,j}| \leq 4$.
	\end{enumerate}
\end{clm}

\begin{proof}
	We first prove that
	\begin{equation}\label{eq:absorber size lower bound}
		|\mathfrak{A}_{a,b}| \geq N^2\! / 2.
	\end{equation}
	Indeed, there are fewer than $8\sqrt{\varepsilon_2}N$ bad diagonals and each intersects column $b$ in at most one place. Thus, there are at least $(1-9\sqrt{\varepsilon_2})N$ indices $c \neq a$ such that both diagonals incident to $(c,b)$ are good. By similar reasoning, there are at least $(1-9\sqrt{\varepsilon_2})N$ indices $d \neq b$ such that both diagonals incident to $(a,d)$ are good. Each such choice results in a unique $(c,d)$. Since there are at most $8\sqrt{\varepsilon_2}N$ bad diagonals and each contains at most $N$ squares, there are at most $8\sqrt{\varepsilon_2}N^2$ choices of $c,d$ such that $(c,d)$ is contained in a bad diagonal. Therefore:
	\[
	|\mathfrak{A}_{a,b}| \geq \left((1-9\sqrt{\varepsilon_2})N\right)^2 - 8\sqrt{\varepsilon_2}N^2 \geq (1-30\sqrt{\varepsilon_2})N^2,
	\]
	implying \eqref{eq:absorber size lower bound}.
	
	We now prove the assertions in order.
	
	\begin{enumerate}
		\item By definition $X_{a,b}=1$ for every $X \in \mathfrak{A}_{a,b}$. Since $B(a,b)$ is a convex combination of $\mathfrak{A}_{a,b}$, the claim follows immediately.
		
		\item Fix $c \neq a$ and $d \neq b$. By definition
		\[
		(B(a,b))_{c,d} \leq \frac{1}{|\mathfrak{A}_{a,b}|} A(a,b,c,d) \stackrel{\eqref{eq:absorber size lower bound}}{\leq} \frac{2}{N^2},
		\]
		as desired.
		
		\item We will prove the claim for $(B(a,b))_{a,d}$; the proof for $(B(a,b))_{c,b}$ is similar. By definition:
		\begin{align*}
			(B(a,b))_{a,d} = \frac{1}{|\mathfrak{A}_{a,b}|} \sum_{X \in \mathfrak{A}_{a,b}} X_{a,d}  = - \frac{|\{ c \in  [N] : A(a,b,c,d) \in \mathfrak{A}_{a,b} \}|}{|\mathfrak{A}_{a,b}|} \stackrel{\eqref{eq:absorber size lower bound}}{\geq} - \frac{N}{N^2\!/2} = - \frac{2}{N},
		\end{align*}
		proving the claim.
		
		\item This follows from the fact that for every $a,b,c,d$, all row and column sums in $A(a,b,c,d)$ are zero.
		
		\item This follows from the fact that for every $X \in \mathfrak{A}_{a,b}$ the sum of the absolute values of the entries in $X$ is $4$.\qedhere
	\end{enumerate}
\end{proof}

Let $S \subseteq [N]^2$ be the set of index pairs $(a,b)$ such that at least one of the two block diagonals containing $\sigma^N_{a,b}$ is bad. Define
\[
C \coloneqq A - \sum_{(a,b) \in S}A_{a,b} B(a,b).
\]
Let $\gamma^\dagger$ be the measure on $\interval^2$ whose density matrix is given by $C$ and let $\tgamma \coloneqq (1-\varepsilon_3)\gamma^\dagger + \varepsilon_3 \kappa$ (with $\varepsilon_3>0$ chosen sufficiently small). Let $g^\dagger$ denote the density function of $\gamma^\dagger$. The next claim implies \cref{lem:approximation}.

\newcommand{\entropyApproximation}{{\varepsilon}}

\begin{clm}\label{clm:tgamma props}
	The following hold.
	\begin{enumerate}
		\item\label{itm:tgamma is step-queenon} $\tgamma$ is a strictly positive step-queenon and the densities of $\tgamma^+$ and $\tgamma^-$ are bounded away from $1$;
		
		\item\label{itm:tgamma close} $\dist{\tgamma}{\gamma} < \varepsilon$; and
		
		\item\label{itm:entropy approximation} $H_q(\tgamma)> H_q(\gamma) - \entropyApproximation$.
	\end{enumerate}
\end{clm}

Before proving this claim we recall that there are at most $8 \sqrt{\varepsilon_2} N$ bad diagonals. Additionally, since $\gamma_1$ has sub-uniform diagonals marginals, each block diagonal has measure at most $2/N$. Hence $\frac{1}{N^2} \sum_{(a,b) \in S}A_{a,b} \leq 8 \sqrt{\varepsilon_2} N \times (2/N)$, implying
\begin{equation}\label{eq:sum of bad cells}
\sum_{(a,b) \in S}A_{a,b} \leq 16 \sqrt{\varepsilon_2} N^2.
\end{equation}

\begin{proof}[Proof of \cref{clm:tgamma props}\ref{itm:tgamma is step-queenon}]
	We first prove that $\gamma^\dagger$ is a step queenon. For this we must show that $C$ is nonnegative, that every row and column sum in $C$ is equal to $N$, and that every (plus- or minus-) diagonal sum in $C$ is at most $N$.
	
	That the row and column sums of $C$ are equal to $N$ follows from the fact that the same holds for $A$ and \cref{clm:shifter properties} \ref{itm:shifter row columns sums}.
	
	Next, we show that $C$ is nonnegative. Let $c,d \in [N]$. By construction, $\gamma_1 \geq \varepsilon_1 \kappa$. Therefore $A_{c,d} \geq 0.6 \varepsilon_1$. If $(c,d) \in S$ then, by definition, for every $a,b \in [N]$ and $X \in \mathfrak{A}_{a,b}$ we have $X_{c,d}=0$ unless $c=a$ and $b=d$. Therefore, in this case
	\[
	C_{c,d} = A_{c,d} - A_{c,d}B(c,d) \stackrel{\text{\cref{clm:shifter properties} \ref{itm:shifter self 1}}}{=} 0.
	\]
	Otherwise we have:
	\[
	C_{c,d} = A_{c,d} - \sum_{(a,b) \in S}A_{a,b} (B(a,b))_{c,d}.
	\]
	Observe that $-(B(a,b))_{c,d} \leq 0$ only if $c\neq a$ and $b\neq d$ (since $(c,d) \notin S$ we do not need to consider the case $a=c,b=d$). In this case by \cref{clm:shifter properties}\ref{itm:far square bound} we have $(B(a,b))_{c,d} \leq 2 / N^2$, and hence:
	\begin{align*}
		C_{c,d} \geq 0.6 \varepsilon_1 - \frac{2}{N^2} \sum_{(a,b) \in S}A_{a,b} \stackrel{\text{\eqref{eq:sum of bad cells}}}{\geq} 0.6\varepsilon_1 - \frac{2}{N^2} 16 \sqrt{\varepsilon_2} N^2 > 0.6\varepsilon_1 - 32 \sqrt{\varepsilon_2} > 0.
	\end{align*}
	This completes the proof that $C$ is nonnegative.
	
	It remains to show that every diagonal sum in $C$ is at most $N$. Let $D \subseteq [N]^2$ be (the index set of) a diagonal. If $D$ corresponds to a bad block diagonal then, by construction, $D \subseteq S$ and $\sum_{(c,d) \in D} C_{c,d} = 0$. Now suppose $D$ is good. We first bound $C_{c,d}$ in terms of $A_{c,d}$ for fixed $(c,d) \in D$. Since $(c,d) \notin S$,
	\begin{align*}
		C_{c,d} & = A_{c,d} - \sum_{(a,b) \in S}A_{a,b} (B(a,b))_{c,d}\\
		& \leq A_{c,d} - \sum_{a \in [N] : (a,d) \in S} A_{a,d}(B(a,d))_{c,d} - \sum_{b \in [N] : (c,b) \in S} A_{c,b} (B(c,b))_{c,d}\\
		& \stackrel{\text{\cref{clm:shifter properties} \ref{itm:near square bound}}}{\leq} A_{c,d} + \frac{2}{N} \left( \sum_{a \in [N] : (a,d) \in S} A_{a,d} + \sum_{b \in [N] : (c,b) \in S} A_{c,b} \right).
	\end{align*}
	Using this, we bound the measure of the entire diagonal. We have
	\begin{align*}
		\sum_{(c,d) \in D} C_{c,d} & \leq \sum_{(c,d) \in D} A_{c,d} + \frac{2}{N} \sum_{(c,d) \in D} \left( \sum_{a \in [N] : (a,d) \in S} A_{a,d} + \sum_{b \in [N] : (c,b) \in S} A_{c,b} \right)\\
		& \leq \sum_{(c,d) \in D} A_{c,d} + \frac{2}{N} \sum_{(a,b) \in S} 2A_{a,b} \stackrel{\text{\eqref{eq:sum of bad cells}}}{\leq} \sum_{(c,d) \in D} A_{c,d} + \frac{4}{N} \times 16 \sqrt{\varepsilon_2}N^2\\
		& \leq \sum_{(c,d) \in D} A_{c,d} + 64 \sqrt{\varepsilon_2}N \stackrel{\text{\cref{clm:good diagonals are good}}}{\leq} \left( 1 - \frac{1}{10}\varepsilon_1 + 65 \sqrt{\varepsilon_2} \right)N < \left( 1 - \frac{1}{20}\varepsilon_1 \right)N.
	\end{align*}
	This completes the proof that $\gamma^\dagger$ is a step-queenon. Since $\tgamma$ is the convex combination of step queenons, it too is a step queenon. Furthermore, $\tgamma \geq \varepsilon_3\kappa$ which is strictly positive. Similarly, since the densities of both $\kappa^+$ and $\kappa^-$ are bounded away from $1$ so are the densities of $\tgamma^+$ and $\tgamma^-$.
\end{proof}

\begin{rmk}\label{rmk:tgamma diag density upper bound}
	Note that we have actually proved that $1-\varepsilon_1 / 20$ is an upper bound on the densities of $(\gamma^\dagger)^+$ and $(\gamma^\dagger)^-$.
\end{rmk}

\begin{proof}[Proof of \cref{clm:tgamma props}\ref{itm:tgamma close}]
	We first remark that by construction $\dist{\gamma_1}{\gamma} \leq 2\varepsilon_1$ and $\dist{\gamma^\dagger}{\tgamma} < 2\varepsilon_3$. Thus, since we assume $\varepsilon_1$ is sufficiently small, it suffices to show that $\dist{\gamma^\dagger}{\gamma_1} < \varepsilon_1$.
	
	Recall that $g^\dagger$ is the density function of $\gamma^\dagger$, that $g$ is the density function of $\gamma_1$, and that $\tg$ is a continuous approximation of $g$. It then holds that $\dist{\gamma^\dagger}{\gamma_1} \leq \int |g^\dagger - g|$. Let $g_2$ be the density function of $\gamma_2$. We then have
	\[
	\int |g^\dagger - g| \leq \int |g^\dagger - g_2| + \int |g_2 - \tg| + \int |\tg - g|.
	\]
	By \ref{itm:tg-g small}, $\int|\tg-g| < \varepsilon_2$. We turn to the second summand. There holds
	\[
	\int |g_2 - \tg| = \sum_{i,j \in [N]} \int_{\sigma^N_{i,j}} |g_2 - \tg|.
	\]
	For $i,j \in [N]$ let $x_{i,j}$ be the center point of $\sigma^N_{i,j}$. Let $\Delta_{i,j} = |g_2(x_{i,j}) - \tg(x_{i,j})|$. We may assume that $N$ is sufficiently large that for every $x \in \sigma^N_{i,j}$ we have $|\tg(x)-\tg(x_{i,j})| < \varepsilon_2$. Therefore, since $g_2$ is constant on $\sigma_{i,j}^N$, we have $\int_{\sigma_{i,j}} |g_2 - \tg| \leq \left(\Delta_{i,j} + \varepsilon_2\right)/N^2$. We conclude that
	\[
	\int |g_2 - \tg| \leq \frac{1}{N^2} \sum_{i,j \in [N]}\Delta_{i,j} + \varepsilon_2.
	\]
	
	Recall that by definition of $\gamma_2$ there holds $g_2(x_{i,j})=N^2\int_{\sigma_{i,j}^N} g$ for every $i,j \in [N]$. Hence:
	\begin{align*}
		\sum_{i,j \in [N]} \Delta_{i,j} = \sum_{i,j \in [N]} \left| \left(N^2 \int_{\sigma_{i,j}^N} g \right) - \tg(x_{i,j}) \right| & \leq \sum_{i,j \in [N]} \left| N^2 \left( \int_{\sigma_{i,j}^N} (g -\tg) + \int_{\sigma_{i,j}^N} (\tg - \tg(x_{i,j})) \right) \right|\\
		& \leq N^2 \left( \int|g-\tg| + \frac{\varepsilon_2}{N^2} \right) \leq 2N^2\varepsilon_2.
	\end{align*}
	Therefore
	\[
	\int|g_2-\tg| \leq 3\varepsilon_2.
	\]
	It remains to bound $\int|g^\dagger-g_2|$. By definition of $g^\dagger$ and $g_2$ there holds
	\begin{align*}
		\int|g^\dagger-g_2| & = \frac{1}{N^2} \sum_{i,j \in [N]} |(C-A)_{i,j}| = \frac{1}{N^2} \sum_{i,j \in [N]} \left| \sum_{(a,b) \in S}A_{a,b}(B(a,b))_{i,j} \right|\\
		& \leq \frac{1}{N^2} \sum_{(a,b) \in S} A_{a,b} \sum_{i,j \in [N]} |(B(a,b))_{i,j}| \stackrel{\text{\cref{clm:shifter properties}\ref{itm:B absolute sumn}}}{\leq} \frac{4}{N^2} \sum_{(a,b) \in S} A_{a,b} \stackrel{\text{\eqref{eq:sum of bad cells}}}{\leq} 64 \sqrt{\varepsilon_2}.
	\end{align*}
	As a consequence,
	\[
	\int|g^\dagger-g| \leq 64 \sqrt{\varepsilon_2} + 3\varepsilon_2 + \varepsilon_2 < \varepsilon_1,
	\]
	as desired.
\end{proof}

\begin{rmk}\label{rmk:small ell1 difference}
	For future reference we record that we have proved $\int |g^\dagger-g| < \varepsilon_1$.
\end{rmk}

It remains to prove \cref{clm:tgamma props}\ref{itm:entropy approximation}. Recall that by definition, for every queenon $\delta$
\[
H_q(\delta) = -D_{KL} \left( \delta || \mU_\square \right) - D_{KL}( \distPlus{\delta} || \mU_{[-1,1]}) - D_{KL}( \distMinus{\delta} || \mU_{[-1,1]}) + 2\log 2 - 3.
\]
Additionally, if $\delta$ is a step queenon whose density function is given by the $N\times N$ matrix $D$ then
\[
D_{KL} \left( \delta || \mU_\square \right) = \frac{1}{N^2}\sum_{i,j \in [N]^2} D_{i,j} \log \left( D_{i,j} \right).
\]

We will use the following auxiliary claims, which bound the contributions of the various components of $H_q(\gamma) - H_q(\tgamma)$.

\begin{clm}\label{clm:close square entropy}
	$D_{KL} \left( \gamma^\dagger || \mU_\square \right) \leq D_{KL} \left( \gamma_1 || \mU_\square \right) + \varepsilon_1$.
\end{clm}

\begin{proof}
	We first observe that $D_{KL} \left( \gamma_2 || \mU_\square \right) \leq D_{KL} \left( \gamma_1 || \mU_\square \right)$. This follows from the fact that $D_{KL} \left( \gamma_1 || \mU_\square \right) = \int_{\interval^2} g \log g$ (where $g$ is the density function of $\gamma_1$) together with the convexity of the function $x \log x$. Thus, it suffices to prove $D_{KL} \left( \gamma^\dagger || \mU_\square \right) \leq D_{KL} \left( \gamma_2 || \mU_\square \right) + \varepsilon_1$.
	
	Let $(a_1,b_1),\ldots,(a_{|S|},b_{|S|})$ be an arbitrary ordering of $S$. For $0 \leq k \leq |S|$, let
	\[
	C^k \coloneqq A - \sum_{i=1}^k A_{a_i,b_i} B(a_i,b_i),
	\]
	and let $\gamma^\dagger_k$ be the measure on $\interval^2$ with density function given by $C^k$ (so that $\gamma^\dagger_0 = \gamma_2$ and $\gamma^\dagger_{|S|}=\gamma^\dagger$).
	We will show that for every $k \in [|S|]$ there holds
	\begin{equation}\label{eq:one step DKL increase}
		D_{KL} \left( \gamma^\dagger_k || \mU_\square \right) \leq D_{KL} \left( \gamma^\dagger_{k-1} || \mU_\square \right) +\frac{1}{N^2} \left( 1 + 2A_{a_k,b_k} \log(600/\varepsilon_1) \right).
	\end{equation}
	
	We first observe that for every $k \in [|S|]$ and $a,b \in [N]$ it is always the case that either $C_{a,b}^k = 0$ or $C_{a,b}^k \geq \varepsilon_1 / 2$. Furthermore, if $(a,b) \notin S$ then $C_{a,b}^k \geq \varepsilon_1 / 2$ always. Indeed, recall that $\gamma^\dagger_0=\gamma_2$ which has density function greater than $0.6\varepsilon_1$. Therefore $C_{a,b}^0 = A_{a,b} \geq 0.6\varepsilon_1$ for every $a,b$. Now, if $(a,b) \in S$, then by definition, $B(a_i,b_i)_{a,b}=0$ unless $(a,b)=(a_i,b_i)$, in which case $B(a_i,b_i)_{a,b}=1$. This implies the claim for $(a,b) \in S$. If $(a,b) \notin S$, we note that for every $i \in [|S|]$ it holds that $B(a_i,b_i)_{a,b} \geq 0$ only if $a_i\neq a$ and $b_i\neq b$, in which case $B(a_i,b_i)_{a,b} \leq 2/N^2$. Hence, for every $k$, it holds that $C^k_{a,b} \geq A_{a,b} - \frac{2}{N^2} \sum_{i=1}^{|S|}A_{a_i,b_i}$, which by \eqref{eq:sum of bad cells} is at least $0.6\varepsilon_1 - 32\sqrt{\varepsilon_2}>\varepsilon_1/2$.
	
	For $x>0$ let $f(x) = x \log(x)$ and let $f(0)=0$. Let $k \in [|S|]$. To avoid excessive subscripting, let $a=a_k$ and $b=b_k$. For $i,j \in [N]$ let $\Delta_{i,j} \coloneqq C^k_{i,j}-C^{k-1}_{i,j}$. We then have
	\begin{align*}
		D_{KL} \left( \tgamma_k || \mU_\square \right) - D_{KL} \left( \tgamma_{k-1} || \mU_\square \right) & = \frac{1}{N^2} \sum_{i,j \in [N]} \left( f(C^{k}_{i,j}) - f(C^{k-1}_{i,j}) \right)\\
		& = \frac{1}{N^2} \sum_{i,j \in [N]} \left( f(C^{k-1}_{i,j} + \Delta_{i,j}) - f(C^{k-1}_{i,j}) \right).
	\end{align*}
	We first note that by definition $C^k_{a,b}=0$ and $C^{k-1}_{a,b}=A_{a,b}$. Thus $f(C^k_{a,b})-f(C^{k-1}_{a,b})=-A_{a,b}\log(A_{a,b}) \leq -\min f = 1/e \leq 1$.
	
	Next, for every $i \neq a$ and $j \neq b$ observe that by \cref{clm:shifter properties}\ref{itm:far square bound} $0 \geq \Delta_{i,j} \geq - 2A_{a,b} / N^2 \geq -2/N \geq -\varepsilon_1/4$. If $(i,j) \in S$ then $\Delta_{i,j}=0$. Otherwise $C_{i,j}^{k-1} \geq \varepsilon_1/2$. Since $f$ is convex $f(C^{k-1}_{i,j} + \Delta_{i,j}) - f(C^{k-1}_{i,j}) \leq \Delta_{i,j} f'(C_{i,j}^{k-1}+\Delta_{i,j}) \leq \Delta_{i,j} f'(\varepsilon_1/2) = \Delta_{i,j} (1+\log(\varepsilon_1/2))$. Thus:
	\[
	\frac{1}{N^2} \sum_{i,j \in [N]:i\neq a, j \neq b} \left( f(C^{k-1}_{i,j} + \Delta_{i,j}) - f(C^{k-1}_{i,j}) \right) \leq \frac{2A_{a,b}}{N^2} \log(4/\varepsilon_1).
	\]
	Finally, we will bound
	\[
	\frac{1}{N^2} \sum_{i \in [N]:i\neq a} \left( f(C^{k-1}_{i,b} + \Delta_{i,b}) - f(C^{k-1}_{i,b}) \right) + \frac{1}{N^2} \sum_{j \in [N]:j\neq b} \left( f(C^{k-1}_{a,j} + \Delta_{a,j}) - f(C^{k-1}_{a,j}) \right).
	\]
	We will bound the first sum; the second sum can be similarly bounded. We note that by \cref{clm:shifter properties}\ref{itm:near square bound} for every $i\neq a$ there holds $0 \leq \Delta_{i,b} \leq 2A_{a,b}/N \leq 2$. We further note that if $(i,b) \in S$ then $\Delta_{i,b}=0$. Otherwise $C^{k-1}_{i,b} \geq \varepsilon_1 / 2$. Hence, since $f$ is convex, we have 
	\begin{align*}
		f(C^{k-1}_{i,b} + \Delta_{i,b}) - f(C^{k-1}_{i,b}) \leq \Delta_{i,b} f'(C_{i,b}^{k-1} + \Delta_{i,b}) & \leq \frac{2A_{a,b}}{N} f'(C^{k-1}_{i,b}+2)\\
		& \leq \frac{2A_{a,b}}{N} \left(1 + \log(C_{i,b}^{k-1} + 2)\right).
	\end{align*}
	Consequently:
	\begin{align*}
		\frac{1}{N^2} & \sum_{i \in [N]:i\neq a} \left( f(C^{k-1}_{i,b} + \Delta_{i,b}) - f(C^{k-1}_{i,b}) \right)
		\leq \frac{2A_{a,b}}{N^3} \sum_{i \in [N]:i\neq a} \left(1 + \log(C_{i,b}^{k-1} + 2)\right)\\
		& \leq \frac{2A_{a,b}}{N^2} \left(1 + \log \left( \frac{2}{N}\sum_{i \in [N]:i \neq a}C_{i,b}^{k-1} + 2 \right)\right) \leq \frac{2A_{a,b}}{N^2} \left(1 + \log(4)\right) = \frac{2A_{a,b}}{N^2} \log(4e).
	\end{align*}
	Putting these together we obtain
	\[
	D_{KL} \left( \gamma^\dagger_k || \mU_\square \right) - D_{KL} \left( \gamma^\dagger_{k-1} || \mU_\square \right) \leq \frac{1}{N^2} \left( 1 + 2A_{a,b} (\log(4/\varepsilon_1) + 2\log(4e)) \right),
	\]
	proving \eqref{eq:one step DKL increase}.
	
	It follows that
	\begin{align*}
		D_{KL} \left( \gamma^\dagger || \mU_\square \right) - D_{KL} \left( \gamma_2 || \mU_\square \right) \leq \sum_{(a,b) \in S} \frac{1}{N^2} \left( 1 + 2A_{a,b} \log(600/\varepsilon_1) \right).
	\end{align*}
	Recall that $\sum_{(a,b) \in S}A_{a,b} \leq 16 \sqrt{\varepsilon_2}N^2$ and that $|S| \leq 8\sqrt{\varepsilon_2}N^2$. This implies
	\[
	D_{KL} \left( \gamma^\dagger || \mU_\square \right) - D_{KL} \left( \gamma_2 || \mU_\square \right) \leq 8 \sqrt{\varepsilon_2} (1 + 4\log(600/\varepsilon_1)).
	\]
	Provided that $\varepsilon_2$ is sufficiently small with respect to $\varepsilon_1$, this proves the claim.
\end{proof}

\begin{clm}\label{clm:close diagonal entropy}
	$D_{KL}(\distPlus{\gamma^\dagger}||\mU_{[-1,1]}) + D_{KL}(\distMinus{\gamma^\dagger}||\mU_{[-1,1]}) \leq D_{KL}(\distPlus{\gamma_1}||\mU_{[-1,1]}) + D_{KL}(\distMinus{\gamma_1}||\mU_{[-1,1]}) + \varepsilon / 2$.
\end{clm}

\begin{proof}
	\newcommand{\tf}{{\tilde{f}}}
	
	We will show that $D_{KL}(\distPlus{\gamma^\dagger}||\mU_{[-1,1]}) \leq D_{KL}(\distPlus{\gamma_1}||\mU_{[-1,1]}) + \varepsilon / 4$. A similar argument can be used to show that $D_{KL}(\distMinus{\gamma^\dagger}||\mU_{[-1,1]}) \leq D_{KL}(\distMinus{\gamma_1}||\mU_{[-1,1]}) + \varepsilon / 4$.
	
	Let $f_1:[-1,1] \to [0,1]$ be the density function of $\distPlus{\gamma_1}$ and let $f^\dagger:[-1,1]\to[0,1]$ be the density function of $\distPlus{\gamma^\dagger}$. For $x >0$ let $h(x) = x\log(2x)$ and set $h(0)=0$. By definition:
	\[
	D_{KL}(\distPlus{\gamma^\dagger}||\mU_{[-1,1]}) - D_{KL}(\distPlus{\gamma_1}||\mU_{[-1,1]}) = \int_{-1}^{1} h(f^\dagger(x)) - h(f_1(x)) dx.
	\]
	By the mean value theorem, for every $x \in [-1,1]$ there exists some $\zeta_x$ between $f_1(x)$ and $f^\dagger(x)$ such that $h(f^\dagger(x)) - h(f_1(x)) = (f^\dagger(x) - f_1(x)) h'(\zeta_x)$. We claim that $|h'(\zeta_x)|$ is uniformly bounded by $\log(10 / \varepsilon_1)$. Indeed, since $h'(x) = 1/2 + \log(2x)$ this will follow if we can show that $\zeta_x \in [\varepsilon_1/20, 1]$ for every $x$. In turn, this will follow if we can show that $f^\dagger(x),f_1(x) \in [\varepsilon_1/20, 1]$ for every $x$. The upper bounds on $f^\dagger(x)$ and $f_1(x)$ are immediate from the definitions of $\distPlus{\gamma^\dagger}$ and $\distPlus{\gamma_1}$. For the lower bound, first note that (as observed earlier) $\gamma_1^+$ has density $\leq 1 - 0.2\varepsilon_1$. This implies that $f_1(x) \geq 1 - (1-0.2\varepsilon_1) = \varepsilon_1 / 5$. Similarly, by \cref{rmk:tgamma diag density upper bound}, $f^\dagger(x) \geq 1 - (1-\varepsilon_1/20) = \varepsilon_1/20$. We conclude:
	\begin{equation}\label{eq:diagonal entropy increase}
		\begin{split}
		\left| D_{KL}(\distPlus{\gamma^\dagger}||\mU_{[-1,1]}) - D_{KL}(\distPlus{\gamma_1}||\mU_{[-1,1]}) \right| & \leq \int_{-1}^1 |f^\dagger(x) - f_1(x)| |h'(\zeta_x)| dx\\
		& \leq \log(10/\varepsilon_1) \int_{-1}^1 |f^\dagger(x)-f_1(x)| dx.
		\end{split}
	\end{equation}
	We will now bound the integral $I \coloneqq \int_{-1}^1 |f^\dagger(x)-f_1(x)| dx$.
	
	We begin with the following observation: Suppose that  $d:\interval^2\to\R$ is the density function of a queenon $\delta$ and that $d:[-1,1]\to\R$ is the density function of $\distPlus{\delta}$. Then, for every $x \in [0,1]$ there holds $d^+(x) = 1 - \int_{-1/2+x}^{1/2} d(s,x-s) ds$. Similarly, for $x \in [-1,0]$, there holds $d^+(x) = 1 - \int_{-1/2}^{1/2-x} d(s,x-s) ds $. This allows us to express $I$ as an integral over the unit square.
	
	Recall that $g,g^\dagger:\interval^2\to\R$ are, respectively, the density functions of $\gamma_1$ and $\gamma^\dagger$. Thus, it holds that
	\begin{align*}
	I & \leq \int_{-1}^0 \int_{-1/2}^{1/2-x} |g^\dagger(s,x-s) - g(s,x-s)| ds dx + \int_{0}^1 \int_{-1/2+x}^{1/2} |g^\dagger(s,x-s) - g(s,x-s)| ds dx\\
	& = \int_{\interval^2} |g^\dagger - g| \stackrel{\text{\cref{rmk:small ell1 difference}}}{\leq} \varepsilon_1.
	\end{align*}
	Thus, \eqref{eq:diagonal entropy increase} implies
	\[
	D_{KL}(\distPlus{\gamma^\dagger}||\mU_{[-1,1]}) \leq D_{KL}(\distPlus{\gamma_1}||\mU_{[-1,1]}) + I \log(10/\varepsilon_1) \leq D_{KL}(\distPlus{\gamma_1}||\mU_{[-1,1]}) + \varepsilon_1 \log(10/\varepsilon_1).
	\]
	Provided $\varepsilon_1$ is sufficiently small with respect to $\varepsilon$, this completes the proof.
\end{proof}

We are ready to prove \cref{clm:tgamma props}\ref{itm:entropy approximation}. This will complete the proof of \cref{lem:approximation}.

\begin{proof}[Proof of \cref{clm:tgamma props}\ref{itm:entropy approximation}]
	We have:
	\[
	H_q(\tgamma) - H_q(\gamma) = \left(H_q(\tgamma) - H_q(\gamma^\dagger)\right) + \left(H_q(\gamma^\dagger) - H_q(\gamma_1)\right) + \left(H_q(\gamma_1) - H_q(\gamma)\right).
	\]
	Recalling that $\gamma_1 = \varepsilon_1 \kappa + (1-\varepsilon_1)\gamma$, the concavity of $H_q$ implies $H_q(\gamma_1) \geq \varepsilon_1 H_q(\kappa) + (1-\varepsilon_1)H_q(\gamma)$. By choosing $\varepsilon_1$ sufficiently small we may assume that $H_q(\gamma_1) - H_q(\gamma) > -\varepsilon/5$. Similarly, since $\tgamma = (1-\varepsilon_3)\gamma^\dagger + \varepsilon_3\kappa$ we may assume that $H_q(\tgamma) - H_q(\gamma^\dagger) > -\varepsilon/5$. Hence, it suffices to show that $H_q(\gamma^\dagger) - H_q(\gamma_1) > -3\varepsilon/5$.
	
	By definition:
	\begin{align*}
		H_q(\gamma^\dagger) - H_q(\gamma_1) = -D_{KL}(\gamma^\dagger||\mU_\square) + D_{KL}(\gamma_1||\mU_\square) & - D_{KL}(\distPlus{\gamma^\dagger}||\mU_{[-1,1]}) - D_{KL}(\distMinus{\gamma^\dagger}||\mU_{[-1,1]})\\
		& + D_{KL}(\distPlus{\gamma_1}||\mU_{[-1,1]}) + D_{KL}(\distMinus{\gamma_1}||\mU_{[-1,1]}).
	\end{align*}
	By \cref{clm:close square entropy,clm:close diagonal entropy} this is at least $-\varepsilon_1 - \varepsilon/2 > -3\varepsilon/5$, as desired.
\end{proof}

\section{Upper bound}\label{sec:upper bound}

\subsection{Entropy preliminaries}

In this section we prove the upper bound in Theorem \ref{thm:precise bounds}. The main tool is the entropy method. We briefly recall the definitions and properties we will use.

If $X$ is a random variable taking values in a finite set $S$ then its entropy is defined as
\[
H(X) = - \sum_{s \in S} \Prob \left[ X=s \right] \log \left( \Prob \left[ X=s \right] \right).
\]
The entropy function is strictly concave and so $H(X) \leq \log(|S|)$ with equality holding if and only if $X$ is uniform.

If $X,Y$ are two random variables taking values in a set $S \times T$ we write $(X\given Y=t)$ for the marginal distribution of $X$ given that $Y=t\in T$. The conditional entropy of $X$ given $Y$ is defined as:
\[
H(X \given Y) = \sum_{t \in T} \Prob [Y=t] H(X \given Y=t) = \E H(X \given Y=t).
\]
We will also use the chain rule. If $X_1,\ldots,X_n$ is a sequence of random variables then
\[
H(X_1,\ldots,X_n) = \sum_{i=1}^n H(X_i \given X_1,\ldots,X_{i-1}).
\]

\subsection{Proof overview}

In this subsection we outline the proof and give some intuition. We emphasize that the discussion is informal, and we do not rely on it for the proof.

Let $\gamma \in \Queenons$ and let $\varepsilon > 0$ be sufficiently small. Consider the following random process: Choose $q \in B_n(\gamma,\varepsilon)$ uniformly at random, and let $X_1,X_2,\ldots,X_n$ be a uniformly random ordering of the queens in $q$. Then
\begin{equation}\label{eq:q entropy vs sequential}
H(X_1,\ldots,X_n) = H(q) + \log(n!) = \log|B_n(\gamma,\varepsilon)| + \log(n!).
\end{equation}

We will bound $H(X_1,\ldots,X_n)$ using the chain rule. Specifically, we will bound $H(X_t \given X_1,\ldots, X_{t-1})$ for every $1 \leq t \leq n$. We do this by introducing additional random variables: Let $N$ be a large, fixed constant. For every $1 \leq t \leq n$, let $Y_t \in I_N$ be the $\alpha$ such that $X_t \in \alpha_n$. By the chain rule:
\begin{equation}\label{eq:Xt chain rule}
	H(X_t \given X_1,\ldots,X_{t-1}) = H(Y_t \given X_1,\ldots,X_{t-1}) + H(X_t \given X_1,\ldots,X_{t-1},Y_t).
\end{equation}
Since $q \in B_n(\gamma,\varepsilon)$, for every $\alpha \in I_N$ it holds that $|q \cap \alpha_n| \approx n \gamma(\alpha)$. Therefore:
\begin{equation}\label{eq:HYt}
	H(Y_t) \approx - \sum_{ \alpha \in I_N } \gamma(\alpha) \log(\gamma(\alpha)).
\end{equation}
It is not difficult to show that this holds even when conditioning on $X_1,\ldots,X_{t-1}$, provided $t$ is not too close to $n$. This is because the placements of these queens typically reflect the distribution $\gamma$.

We now wish to bound $H(X_t \given X_1,\ldots,X_{t-1},Y_t)$. Let $Q(t)$ be the partial $n$-queens configuration $\{X_1,\ldots,X_t\}$. Recall that a position is available in $Q(t)$ if it does not share a row, column, or diagonal with an element of $Q(t)$. For $\alpha \in I_N$ let $A_\alpha(t) = |\alpha_n \cap \mA_{Q(t)}|$. Conditioning on $X_1,\ldots,X_{t-1},Y_t$ it follows that $X_t$ given $X_1,\ldots,X_{t-1},Y_t$ is an element of $(Y_t)_n \cap \mA_{Q(t-1)}$. Therefore:
\[
H(X_t \given X_1,\ldots,X_{t-1},Y_t) \leq \E \left[\log \left( A_{Y_t}(t-1) \right)\right] \approx \sum_{\alpha \in I_N} \gamma(\alpha) \E \left[\log \left( A_\alpha (t-1) \right) \given Y_t=\alpha \right].
\]
Applying Jensen's inequality:
\begin{equation}\label{eq:HXt leq log E}
	H(X_t \given X_1,\ldots,X_{t-1},Y_t) \leq \sum_{\alpha\in I_N} \gamma(\alpha) \log \left( \E \left[ A_{\alpha}(t-1) \given Y_t=\alpha \right] \right).
\end{equation}

In order to bound $A_{\alpha}(t-1)$ we make the following observations: Every position in $\alpha_n$ shares its row and column with queens from $q$. Additionally, each position can share between $0$ and $2$ of its diagonals with queens from $q$. For an arbitrary $n$-queens configuration it would be challenging to proceed further. Fortunately, we know that $q \in B_n(\gamma,\varepsilon)$, and we use this to our advantage. Indeed, the number of plus-diagonals passing through $\alpha_n$ that are occupied by elements of $q$ is $\approx \gamma^+(\alpha)n$ and the number of occupied minus-diagonals is $\approx \gamma^-(\alpha_n)n$. The total number of each kind of diagonal passing through $\alpha_n$ is $\approx n/N$. If we assume that the occupied diagonals in each direction are approximately independent (over the choice of $q$), then there are  $\approx B_2(\alpha) \coloneqq |\alpha_n| N^2 \gamma^+(\alpha)\gamma^-(\alpha)$ positions threatened along both diagonals, $\approx B_1(\alpha) \coloneqq |\alpha_n| N\left(\gamma^+(\alpha) (1-N\gamma^-(\alpha)) + \gamma^-(\alpha)(1-N\gamma^+(\alpha)) \right)$ positions threatened by exactly one diagonal, and $\approx B_0(\alpha) \coloneqq |\alpha_n| (1-N\gamma^+(\alpha))(1-N\gamma^-(\alpha))$ positions unthreatened by diagonals. Now, if a position is threatened by $i$ diagonals then the probability that it is available at time $t$ is $\approx (1-t/n)^{2+i}$ (the ``$2$'' in the exponent accounts for the fact that every position shares its row and column with a queen). This is because it is available only if the $2+i$ queens threatening it are not in $Q(t-1)$ and these events are approximately independent. Moreover, these estimates hold even when conditioning on the outcome of $Y_t$. Therefore, for every $\alpha \in I_N$:
\begin{equation}\label{eq:heuristic expectation A}
	\E \left[ A_\alpha(t-1) \given Y_t=\alpha \right] \approx \E \left[ A_\alpha(t-1) \right] \approx \sum_{i=0}^2 B_i(\alpha) \left( 1 - \frac{t}{n} \right)^{2+i}.
\end{equation}

In light of \eqref{eq:q entropy vs sequential}, \eqref{eq:Xt chain rule}, \eqref{eq:HYt}, \eqref{eq:HXt leq log E}, and \eqref{eq:heuristic expectation A} we have
\[
\sum_{t=1}^n H(X_t \given X_1,\ldots,X_{t-1}) \leq \sum_{t=1}^n \sum_{ \alpha \in I_N } \gamma(\alpha) \left( - \log(\gamma(\alpha))  + \log \left( \sum_{i=0}^2 B_i(\alpha) \left( 1 - \frac{t}{n} \right)^{2+i} \right) \right).
\]
Hence, to obtain the desired upper bound on $|B_n(\gamma,\varepsilon)|$ it suffices to verify that
\[
\sum_{t=1}^n \sum_{ \alpha \in I_N } \gamma(\alpha) \left( - \log(\gamma(\alpha))  + \log \left( \sum_{i=0}^2 B_i(\alpha) \left( 1 - \frac{t}{n} \right)^{2+i} \right) \right) \approx n\log(n) + n H_q(\gamma) + \log(n!).
\]

Most of the assertions above can be justified routinely. There is one heuristic, however, that needs more work. This is the statement that the occupied plus-diagonals and the occupied minus-diagonals passing through $\alpha_n$ are distributed independently. At first glance it might not be clear why this is important. Indeed, one might make the mistake of thinking that every plus diagonal passing through $\alpha_n$ intersects every minus-diagonal passing through $\alpha_n$ in exactly one position. However, as every chess player will immediately point out, this is not the case --- diagonals on a chess board intersect only if they are both black or both white. Intuitively, our heuristic is justified by the idea that the entropy is maximized when the configuration is ``color-blind'', and black and white diagonals are equally likely to be occupied. In order to prove this we introduce a new limit object that includes information regarding the distribution of queens on board positions of each color.

\subsection{BW decompositions}\label{sec:BW decompositions}

\newcommand{\gammabw}{{(\gamma_b,\gamma_w)}}

Given a queenon, we will consider the various ways of decomposing it into a distribution of queens on black and white spaces. Given such a decomposition we will bound, from above, the number of $n$-queens configurations close to it. We will show that the bound is maximized when the partition is equitable.

\begin{definition}
	Let $\gamma \in \Queenons$. A \termdefine{BW-decomposition} of $\gamma$ is a pair $\gammabw$ of Borel measures on $\interval^2$ satisfying:
	\begin{enumerate}[({\bfseries{BW\arabic{enumi}}})]
		\item $\gamma_b + \gamma_w = \gamma$.
		
		\item\label{itm:BW subuniform} For every $-1 \leq a < b \leq 1$ both of the sets
		\[
		\left\{ (x,y) : a \leq x+y \leq b \right\}, \quad \left\{ (x,y) : a \leq y-x \leq b \right\}
		\]
		have measure at most $(b-a)/2$ under both $\gamma_b$ and $\gamma_w$.
	\end{enumerate}
	Let $\bw{\gamma}$ be the set of BW-decompositions of $\gamma$. Let $\BW = \bigcup_{\gamma \in \Queenons} \bw{\gamma}$. We endow it with the metric
	\[
	\distbw{\gammabw}{(\delta_b,\delta_w)} = \max \left\{ \dist{\gamma_b}{\delta_b}, \dist{\gamma_w}{\delta_w} \right\}.
	\]
	
	Let $\lambda$ denote the Lebesgue measure on $[-1,1]$. Given $\gammabw \in \BW$, for $i\in\{b,w\}$ we define the measures $\gamma_i^+,\gamma_i^-,\BWdistPlus{\gamma_i},\BWdistMinus{\gamma_i}$ on the interval $[-1,1]$ by setting
	\begin{align*}
	&\gamma_i^+(X) = \gamma_i \left( \left\{ (x,y) : x+y \in X \right\} \right),\\
	&\gamma_i^-(X) = \gamma_i \left( \left\{ (x,y) : x-y \in X \right\} \right),\\
	&\BWdistPlus{\gamma_i}(X) = \lambda(X)/2-\gamma_i^+(X),\\
	&\BWdistMinus{\gamma_i}(X) = \lambda(X)/2-\gamma_i^-(X)
	\end{align*}
	for every Borel set $X \subseteq [-1,1]$.
\end{definition}

We remark that the fact that $\BWdistPlus{\gamma_i}$ and $\BWdistMinus{\gamma_i}$ are positive probability measures follows from \ref{itm:BW subuniform}.

Let $\gammabw \in \bw{\gamma}$ for $\gamma \in \Queenons$. Observe that $\gammabw$ can be viewed as a probability measure on two copies of the unit square. Similarly, $(\BWdistPlus{\gamma_b},\BWdistPlus{\gamma_w})$ and $(\BWdistMinus{\gamma_b},\BWdistMinus{\gamma_w})$ can each be viewed as probability measures on two copies of the interval $[-1,1]$. With this in mind we define, for $N \in \N$, the discrete approximation of the KL divergence of $\gammabw$ with respect to the uniform distribution:
\begin{align*}
D^N \gammabw = \sum_{i\in \{b,w\}} \sum_{\alpha \in I_N} \gamma_i(\alpha) \log \left( \frac{2\gamma_i(\alpha)}{|\alpha|} \right).
\end{align*}
We define the function
\begin{align*}
G^N \gammabw \coloneqq - D^N\gammabw - D \left( \{ \BWdistPlus{\gamma_i}(\alpha) \}_{\alpha \in J_N, i \in \{b,w\}} \right) - D \left( \{ \BWdistMinus{\gamma_i}(\alpha) \}_{\alpha \in J_N, i \in \{b,w\}} \right) + 2\log 2 - 3.
\end{align*}
The reader should think of $G^N$ as a modification of the discrete Q-entropy function $H_q^N$ that is suitable for BW-decompositions.

Let $q$ be an $n$-queens configuration. Then $q$ can be partitioned into $q_b,q_w$, where $q_b$ consists of the queens occupying black positions (i.e., positions $(x,y)$ such that $x+y$ is even) and $q_w$ is the set of queens on white positions. We define a BW-decomposition $(\gamma_{q,b},\gamma_{q,w})$ as follows: For $i \in \{b,w\}$, let $\gamma_{q,i}$ be the measure that has constant density $n$ on every square $(-1/2+(x-1)/n,-1/2+x/n) \times (-1/2+(y-1)/n,-1/2+y/n)$ for $(x,y) \in q_i$ and density $0$ elsewhere. For $\gammabw \in \BW$ and $\varepsilon > 0$ let $B_n(\gammabw,\varepsilon)$ be the set of $n$-queens configurations $q$ such that $\distbw{\gammabw}{(\gamma_{q,b},\gamma_{q,w})} < \varepsilon$.

The main result of this section is an upper bound on $|B_n(\gammabw,\varepsilon)|$.

\begin{lem}\label{lem:bw upper bound}
	For all $\varepsilon > 0$ sufficiently small the following holds. Let $\gammabw \in \BW$. Set $N = \lceil \varepsilon^{-1/3} \rceil$. Then
	\[
	\limsup_{n \to \infty} \frac{|B_n(\gammabw,\varepsilon)|^{1/n}}{n} \leq \exp \left( G^N \gammabw + \varepsilon^{1/100} \right).
	\]
\end{lem}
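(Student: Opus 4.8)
The proof is an instance of the entropy method applied to a random reveal of a uniform element of $B_n(\gammabw,\varepsilon)$. Fix small $\varepsilon>0$, put $N=\lfloor \varepsilon^{-1/3}\rfloor$, let $q$ be uniform in $B_n(\gammabw,\varepsilon)$, and let $X_1,\dots,X_n$ be a uniformly random ordering of its queens, so that $H(X_1,\dots,X_n)=\log|B_n(\gammabw,\varepsilon)|+\log(n!)$. For each $t$ let $Z_t\in\{b,w\}$ record the colour of $X_t$ (the parity of $x+y$) and $Y_t\in I_N$ the cell containing it. By the chain rule $H(X_t\mid X_{<t})=H(Z_t,Y_t\mid X_{<t})+H(X_t\mid X_{<t},Z_t,Y_t)$, so I bound $\sum_tH(X_t\mid X_{<t})\le\mathcal{E}_1+\mathcal{E}_2$ where $\mathcal{E}_1=\sum_tH(Z_t,Y_t\mid X_{<t})$ and $\mathcal{E}_2=\sum_tH(X_t\mid X_{<t},Z_t,Y_t)$, and estimate each.

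For $\mathcal{E}_1$: conditioning cannot increase entropy, so $\mathcal{E}_1\le H\big((Z_1,Y_1),\dots,(Z_n,Y_n)\big)$, and given $q$ the latter sequence is a uniform arrangement of the size-$n$ multiset with multiplicities $m_{i,\alpha}:=|q_i\cap\alpha_n|$. Since $\distbw{(\gamma_{q,b},\gamma_{q,w})}{\gammabw}<\varepsilon$ and every $\alpha\in I_N$ lies in $\mR$, we have $m_{i,\alpha}=(\gamma_i(\alpha)\pm O(\varepsilon))n$. A Stirling estimate — crucially exploiting $\sum_{i,\alpha}m_{i,\alpha}=n$ exactly, so that no factor $\log n$ is ever multiplied by the $O(\varepsilon)$ slack — then gives $\mathcal{E}_1\le\log(n!)-n\log n+n-n\sum_{i,\alpha}\gamma_i(\alpha)\log\gamma_i(\alpha)+o(n)+O(\varepsilon^{1/3}\log(1/\varepsilon))\,n$.

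For $\mathcal{E}_2$: conditional on $X_{<t}$ and $(Z_t,Y_t)=(i,\alpha)$, the queen $X_t$ ranges over the available colour-$i$ positions of $\alpha_n$, of size $A_{i,\alpha}(t-1):=|\alpha_n\cap\{\text{colour }i\}\cap\mA_{Q(t-1)}|$ where $Q(t-1)=\{X_1,\dots,X_{t-1}\}$; so by Jensen $H(X_t\mid X_{<t},Z_t,Y_t)\le\sum_{i,\alpha}\Prob[Z_t=i,Y_t=\alpha]\log\E[A_{i,\alpha}(t-1)\mid Z_t=i,Y_t=\alpha]$, with $\Prob[Z_t=i,Y_t=\alpha]=\E[m_{i,\alpha}]/n=\gamma_i(\alpha)\pm O(\varepsilon)$. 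A position of $\alpha_n$ not in $q$ is threatened by exactly $2+k$ queens ($k\in\{0,1,2\}$ the number of its occupied diagonals), hence survives $Q(t-1)$ with probability $\binom{n-2-k}{t-1}/\binom{n}{t-1}=(1-t/n)^{2+k}(1+O((n-t)^{-1}))$, so, writing $N_k$ for the number of colour-$i$ positions of $\alpha_n$ threatened by exactly $k$ diagonals,
\[
\E[A_{i,\alpha}(t-1)\mid Z_t=i,Y_t=\alpha]=(1-t/n)^2\big(N_0+(1-t/n)N_1+(1-t/n)^2N_2\big)(1+o(1)).
\]
Here is the point of BW-decompositions: a colour-$i$ plus-diagonal always meets a colour-$i$ minus-diagonal in exactly one point, so in the coordinates $(x+y,\,y-x)$ the set $\alpha_n\cap\{\text{colour }i\}$ is, up to an $O(1/N)$-fraction of cells meeting $\partial\interval^2$ (whose total $\gamma$-mass is $O(1/N)$ by the uniform marginals of the permuton $\gamma$, and which one handles by simply dropping the diagonal constraints), a product set. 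Thus the "occupied plus'' and "occupied minus'' events are independent for a uniform colour-$i$ position, giving $N_2=P_iM_i$, $N_1=P_i(L_i'-M_i)+M_i(L_i-P_i)$, $N_0=(L_i-P_i)(L_i'-M_i)$, where $P_i,M_i$ count the occupied colour-$i$ plus/minus diagonals meeting $\alpha_n$ and $L_i,L_i'=n/(2N)\pm O(1)$ are the totals; moreover $q\in B_n(\gammabw,\varepsilon)$ forces $P_i/L_i=2N\gamma_i^+(\alpha)\pm O(\varepsilon^{2/3})$ and likewise for $M_i/L_i'$ (here $\gamma_i^+(\alpha):=\gamma_i^+(\beta)$ for the $\beta\in J_N$ containing $\alpha$'s plus-coordinates, as in the queenon case). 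The quadratic factors, $N_0+xN_1+x^2N_2=(L_i-P_i+P_ix)(L_i'-M_i+M_ix)$, whence
\begin{align*}
\log\E[A_{i,\alpha}(t-1)\mid\cdots]={}&2\log(1-t/n)+\log(|\alpha_n|/2)\\
&+\log\!\big(1-2N\gamma_i^+(\alpha)\tfrac tn\big)+\log\!\big(1-2N\gamma_i^-(\alpha)\tfrac tn\big)+o(1).
\end{align*}
Truncating the reveal at $T=n-n^{1-\delta}$ (bounding the remaining steps trivially by $2\log n$ each), and summing over $t$: after isolating the $\Theta(\log n)$-sized pieces of the first two terms and summing those using the exact identities $\sum_{i,\alpha}\Prob[Z_t=i,Y_t=\alpha]=1$ and $\sum_{i,\alpha}m_{i,\alpha}=n$ (so once more $\log n$ never meets the $O(\varepsilon)$ slack), they contribute $2\log(n!)+n\sum_{i,\alpha}\gamma_i(\alpha)\log(|\alpha|/2)+o(n)$; and applying Claim~\ref{clm:log integral} with $b=2N\gamma_i^\pm(\alpha)\in[0,1]$ (with a direct refinement when $\gamma_i^\pm(\alpha)$ is tiny), together with $\sum_{\alpha\in I_N,\ \alpha\subseteq\beta}\gamma_i(\alpha)=\gamma_i^+(\beta)$ and $\distPlus{\gamma_i}(\beta)=\tfrac1{2N}(1-2N\gamma_i^+(\beta))$, the diagonal terms collapse to $-n\,D(\{\distPlus{\gamma_i}(\beta)\}_{\beta\in J_N,i})-n\,D(\{\distMinus{\gamma_i}(\beta)\}_{\beta\in J_N,i})+2n\log2-2n+o(n)$. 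Adding $\mathcal{E}_1+\mathcal{E}_2$, subtracting $\log(n!)$, noting $-\sum_{i,\alpha}\gamma_i(\alpha)\log\gamma_i(\alpha)+\sum_{i,\alpha}\gamma_i(\alpha)\log(|\alpha|/2)=-D^N\gammabw$, and that the stray linear terms cancel via $\log(n!)=n\log n-n+o(n)$, everything telescopes to $\log|B_n(\gammabw,\varepsilon)|\le n\log n+n\,G^N\gammabw+o(n)+O(\varepsilon^{1/3}\log(1/\varepsilon))\,n$; for $\varepsilon$ small this is $\le n\log n+n(G^N\gammabw+\varepsilon^{1/100})$, and dividing by $n$, exponentiating, and taking $\limsup_{n\to\infty}$ gives the lemma.

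The two genuinely delicate points are: (i) the product-structure/independence claim — this is exactly what BW-decompositions buy us, since two diagonals of opposite colour never meet, so splitting by colour is precisely what makes a cell a product set and rigorously validates the informal "occupied plus- and minus-diagonals are distributed independently'' step; one must additionally dispose of the cells meeting $\partial\interval^2$, where this fails, via the uniform marginals of $\gamma$; and (ii) the error bookkeeping, which must keep every $\varepsilon$-dependent loss at $O(\varepsilon^{c}n)$ for a fixed $c>0$ — in particular a factor $\log n$ (from $\log|\alpha_n|$ or from $\log(1-T/n)$ near the cutoff) must never multiply the $O(\varepsilon)$ cell-count error, which is why all $\Theta(\log n)$-scale contributions are extracted and pinned down through exact conservation laws rather than through the $B_n(\gammabw,\varepsilon)$-estimates.
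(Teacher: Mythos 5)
Your architecture is the same as the paper's: reveal a uniform $q\in B_n((\gamma_b,\gamma_w),\varepsilon)$ in a random order, split $H(X_t\given X_{<t})$ into a cell/colour part and a within-cell part, bound the latter via Jensen together with a decomposition of the cell by number of diagonal threats, whose product structure is exactly what the colour split buys (this is the paper's Claim \ref{clm:partition by threat number}, Observation \ref{obs:D_j weighted sum} and Claim \ref{clm:upper bound expected available}), handle the half-squares $T_N$ trivially via the uniform marginals, and convert the $t$-sums into KL divergences with Claim \ref{clm:log integral}. Two of your deviations are fine and even cleaner: bounding $\mathcal{E}_1$ by the entropy of the whole colour/cell sequence plus a Stirling estimate is a legitimate replacement for the paper's per-step concentration argument (Claims \ref{clm:Y entropy} and \ref{clm:W concentration upper bound}), and extracting the $\Theta(\log n)$-scale terms with exact total weight $1$ so that no $\log n$ ever multiplies the $O(\varepsilon)$ cell-count slack is sound bookkeeping.

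The genuine gap is in the within-cell estimate near your cutoff $T=n-n^{1-\delta}$. Your display $\log\E[A_{i,\alpha}(t-1)\given\cdots]=2\log(1-t/n)+\log(|\alpha_n|/2)+\log(1-2N\gamma_i^+(\alpha)\tfrac tn)+\log(1-2N\gamma_i^-(\alpha)\tfrac tn)+o(1)$ is false there: membership in $B_n((\gamma_b,\gamma_w),\varepsilon)$ only pins the empirical occupied fractions to $P_i/L_i=2N\gamma_i^+(\alpha)\pm O(N\varepsilon)$, and replacing the empirical fraction by the idealized one inside the logarithm costs up to $O\bigl(N\varepsilon/(1-t/n)\bigr)$ per step (likewise the $\pm C(q,\alpha,i)$ and $\pm O(\varepsilon)n^2$ count errors, measured against a main term that has shrunk to order $n^{2-4\delta}/N^2$), which at $1-t/n\approx n^{-\delta}$ is polynomially large in $n$, not $o(1)$. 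Summing these per-step errors yields an $O(N\varepsilon\,\delta\,n\log n)$ term that swamps $\varepsilon^{1/100}n$ as $n\to\infty$; your conservation-law device controls the coefficients of the $\log n$-sized terms but not the arguments of the diagonal logarithms. This is precisely why the paper stops at $T=\lfloor(1-\varepsilon^{1/13})n\rfloor$, where all factors stay bounded below by a power of $\varepsilon$ and every per-step replacement error is $\mathrm{poly}(\varepsilon)$. The gap is repairable without abandoning your cutoff: keep the empirical fractions inside the logarithm, perform the $t$-summation first via Claim \ref{clm:log integral} (valid for any $b\in(0,1]$), and only then compare the resulting closed forms, using that $b\mapsto(1-b)\log(1-b)+b$ has modulus of continuity $O(\eta\log(1/\eta))$; this gives a total discrepancy $O\bigl(N\varepsilon\log(1/(N\varepsilon))\bigr)n$, which is acceptable. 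But note you cannot instead simply retreat to the paper's cutoff, because your trivial $2\log n$-per-step disposal of the tail is only affordable when the tail has length $n^{1-\delta}$; a constant-fraction tail needs its own, finer argument. As written, the "$+o(1)$" step fails, so the proof is incomplete at that point even though the overall plan is right.
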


Before proving Lemma \ref{lem:bw upper bound} we make the following observations.

\begin{obs}\label{obs:G properties}
	Let $\gamma \in \Queenons$ and $N \in \N$. The following hold.
	\begin{enumerate}
		\item $G^N$ is concave.
		
		\item $G^N \left( \frac{1}{2}(\gamma,\gamma) \right) = H_q^N(\gamma)$.
		
		\item $G^N$ is maximized on $\bw{\gamma}$ by $\frac{1}{2}(\gamma,\gamma)$.
		
		\item $\BW$ with the topology induced by $\distbwNoParam$ is compact.
	\end{enumerate}
\end{obs}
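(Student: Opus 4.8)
The plan is to dispatch the four items in order; the first three are formal consequences of the definition of $G^N$, and only the last requires a genuine argument.

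\textbf{Parts (a) and (b).} For (a), I would note that for each $i\in\{b,w\}$ and $\alpha\in I_N$ the map $\gammabw\mapsto\gamma_i(\alpha)$ is linear, and the maps $\gammabw\mapsto\distPlus{\gamma_i}(\alpha)$ and $\gammabw\mapsto\distMinus{\gamma_i}(\alpha)$ are affine. Since $t\mapsto t\log(ct)$ is convex on $[0,\infty)$ for every $c>0$, each summand of $D^N\gammabw$, of $D(\{\distPlus{\gamma_i}(\alpha)\}_{\alpha\in J_N,\,i})$, and of $D(\{\distMinus{\gamma_i}(\alpha)\}_{\alpha\in J_N,\,i})$ is convex in $\gammabw$; hence these three sums are convex, and $G^N$, being $-1$ times their sum plus a constant, is concave. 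For (b), substitute $\gamma_b=\gamma_w=\gamma/2$, so that $\gamma_i(\alpha)=\gamma(\alpha)/2$, $\gamma_i^+=\gamma^+/2$, $\gamma_i^-=\gamma^-/2$, and therefore $\distPlus{\gamma_i}=\distPlus{\gamma}/2$, $\distMinus{\gamma_i}=\distMinus{\gamma}/2$. Then $\gamma_i(\alpha)\log(2\gamma_i(\alpha)/|\alpha|)=\tfrac12\gamma(\alpha)\log(\gamma(\alpha)/|\alpha|)$, and summing over $i\in\{b,w\}$ recovers $D^N(\gamma)$. Likewise, doubling the index set in the diagonal terms exactly compensates the halving of the masses inside the logarithms, so $D(\{\distPlus{\gamma_i}(\alpha)\}_{\alpha\in J_N,\,i})=D(\{\distPlus{\gamma}(\alpha)\}_{\alpha\in J_N})$ and similarly for the minus-diagonals; hence $G^N(\tfrac12(\gamma,\gamma))=H_q^N(\gamma)$.

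\textbf{Part (c).} First, $\tfrac12(\gamma,\gamma)\in\bw{\gamma}$: condition (1) is clear, and condition (2) holds since the $\tfrac12\gamma$-mass of any diagonal band is half its $\gamma$-mass, which is at most $\tfrac12(b-a)$ because the queenon $\gamma$ has sub-uniform diagonal marginals. Next, $G^N$ is invariant under interchanging its two arguments (each of $D^N$ and the two $D(\cdot)$ terms depends only on the unordered pair of measures), and $\bw{\gamma}$ is likewise stable under this interchange. Hence for any $\gammabw\in\bw{\gamma}$, concavity from part (a) gives
\[
G^N\!\left(\tfrac12(\gamma,\gamma)\right)=G^N\!\left(\tfrac12(\gamma_b,\gamma_w)+\tfrac12(\gamma_w,\gamma_b)\right)\ge\tfrac12 G^N(\gamma_b,\gamma_w)+\tfrac12 G^N(\gamma_w,\gamma_b)=G^N\gammabw,
\]
which is the assertion.

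\textbf{Part (d), the main obstacle.} I would identify $\gammabw\in\BW$ with the Borel probability measure on the disjoint union of two copies of $\interval^2$ whose restrictions to the two copies are $\gamma_b$ and $\gamma_w$; since that disjoint union is a compact metric space, the space of such measures is compact and metrizable in the weak topology. The conditions cutting out $\BW$ — that $\gamma_b+\gamma_w$ is a probability measure and that $\gamma_b,\gamma_w$ each assign mass at most $(b-a)/2$ to every diagonal band — are closed under weak convergence, just as the set of measures with sub-uniform diagonal marginals is closed; hence $\BW$ is weakly compact. It then remains to show the $\distbwNoParam$-topology agrees with the weak topology on $\BW$, which I would do by the two-sided argument of Claim \ref{clm:queenons compact convex metric} carried out in each coordinate: $\distbwNoParam$-convergence forces weak convergence of $\gamma_b$ and of $\gamma_w$ because $\mR$ generates the Borel $\sigma$-algebra and $I_N$ is finite, while conversely weak convergence forces $\distbwNoParam$-convergence because the sub-uniform diagonal marginals of $\gamma_b$ and $\gamma_w$ allow any $\alpha\in\mR$ to be covered by elements of $I_N$ up to four thin diagonal bands of arbitrarily small mass, exactly as in Claims \ref{clm:I_N approximation of metric} and \ref{clm:queenons compact convex metric}. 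The one point needing care is that those earlier claims, stated for queenons, use only the sub-uniform diagonal marginals property and not the step-permuton structure, so they apply verbatim to $\gamma_b$ and $\gamma_w$; granting this, $(\BW,\distbwNoParam)$ is compact.
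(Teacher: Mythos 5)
Your proof is correct and follows essentially the same route as the paper: convexity of $x\log x$ under linear/affine substitutions for (a), unpacking the definitions at $\gamma_b=\gamma_w=\gamma/2$ for (b), the swap-and-average concavity trick for (c), and for (d) weak compactness of $\BW$ as measures on two copies of $\interval^2$ together with the equivalence of $\distbwNoParam$ and the weak topology via the covering arguments of Claims \ref{clm:I_N approximation of metric} and \ref{clm:queenons compact convex metric}. The only slip is in (d): the conditions cutting out $\BW$ require $\gamma_b+\gamma_w\in\Queenons$, not merely that the sum is a probability measure satisfying the half-band bounds, but this condition is also weakly closed because $\Queenons$ is closed by definition and $(\gamma_b,\gamma_w)\mapsto\gamma_b+\gamma_w$ is weakly continuous, so your closedness argument goes through unchanged.
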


\begin{proof}
	$G^N$ is concave because the function $-x \log(x)$ is concave and $(\BWdistPlus{\gamma_b},\BWdistPlus{\gamma_w})$, $(\BWdistMinus{\gamma_b},\BWdistMinus{\gamma_w})$ are linear functions of $\gammabw$.
	
	The fact that $G^N \left( \frac{1}{2}(\gamma,\gamma) \right) = H_q^N(\gamma)$ is seen by unpacking the definitions.
	
	Let $\gammabw \in \bw{\gamma}$. Then $(\gamma_w,\gamma_b) \in \bw{\gamma}$ as well. $G^N$ is symmetric in $\gamma_b$ and $\gamma_w$. Thus $G^N(\gamma_w,\gamma_b) = G^N \gammabw$. By concavity:
	\begin{align*}
	G^N\gammabw = \frac{1}{2}\left(G^N\gammabw + G^N(\gamma_w,\gamma_b)\right) \leq G^N \left( \frac{1}{2}\left( \gammabw + (\gamma_w,\gamma_b) \right) \right) = G^N \left( \frac{1}{2}(\gamma,\gamma) \right).
	\end{align*}
	
	Compactness follows in much the same way as the analogous statement for queenons (Claim \ref{clm:queenons compact convex metric}): Every element of $\BW$ is, in particular, a Borel probability measure on two copies of $\interval^2$. Thus, $\BW$ is compact with respect to the weak topology. One then argues similarly to the proof of Claim \ref{clm:queenons compact convex metric} that $\distbwNoParam$ induces the weak topology on $\BW$.
\end{proof}

\subsection{Proof of Lemma \ref{lem:bw upper bound}}

\newcommand{\Tdef}{{\lfloor (1 - \varepsilon^{1/13})n \rfloor}}

We prove Lemma \ref{lem:bw upper bound} using the entropy method. Fix (a sufficiently small) $\varepsilon > 0$ and (a sufficiently large) $n \in \N$. We may assume $B_n(\gammabw, \varepsilon) \neq \emptyset$. Define the constant
\[
T \coloneqq \Tdef
\]
and recall that $N = \lceil \varepsilon^{-1/3} \rceil$ was defined in the lemma's statement.

Consider the following random process: Choose $q \in B_n(\gammabw,\varepsilon)$ uniformly at random and let $X_1,X_2,\ldots,X_n$ be a uniformly random ordering of the elements of $q$. Then
\begin{equation}\label{eq:sequential vs total entropy}
H(X_1,\ldots,X_n) = H(q) + \log(n!) = \log|B_n(\gammabw,\varepsilon)| + \log(n!).
\end{equation}

By the chain rule:
\[
H(X_1,\ldots,X_n) = \sum_{t=1}^n H(X_t \given X_1,\ldots,X_{t-1}).
\]

For every $\alpha \in I_N$ and $i \in \{b,w\}$, it holds that
\begin{equation}\label{eq:q_i close to gamma_i}
|\alpha_n \cap q_i| = (\gamma_i(\alpha) \pm 2\varepsilon )n.
\end{equation}

Now define the sequences $Y_1,Y_2,\ldots,Y_n$ and $Z_1,Z_2,\ldots,Z_n$, where $Y_t$ is equal to the $\alpha \in I_N$ such that $X_t \in \alpha_n$ and $Z_t = b$ if $X_t$ is on a black square and $Z_t = w$ otherwise.

\begin{clm}\label{clm:Y entropy}
	For every $1 \leq t \leq T$ it holds that
	\[
	H(Y_t,Z_t \given X_1,\ldots,X_{t-1}) = - D^N\gammabw + 2\log \left( 2N \right) \pm \varepsilon^{5/39}.
	\]
\end{clm}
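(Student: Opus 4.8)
The plan is to sandwich $H(Y_t,Z_t\mid X_1,\ldots,X_{t-1})$ between $H(Y_t,Z_t\mid X_1,\ldots,X_{t-1},q)$ and $H(Y_t,Z_t)$, where $q$ denotes the random configuration, using only the monotonicity ``conditioning reduces entropy'' ($H(W\mid A,q)\le H(W\mid A)\le H(W)$), and then to show that both bracketing quantities equal $-D^N\gammabw+2\log(2N)$ up to an error of size $O(\varepsilon^{10/39}\log(1/\varepsilon))$ plus a term tending to $0$ as $n\to\infty$; this is below $\varepsilon^{5/39}$ once $\varepsilon$ is small and $n$ is large. Throughout, write $W_t=(Y_t,Z_t)$, which takes values in the set of pairs $(\alpha,i)$ with $\alpha\in I_N$, $i\in\{b,w\}$, of which there are at most $2|I_N|\le 8N^2$.

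The first ingredient is to identify the target distribution. By exchangeability of $X_1,\ldots,X_n$ and \eqref{eq:q_i close to gamma_i}, $\Prob[W_t=(\alpha,i)]=\E_q[|\alpha_n\cap q_i|/n]=\gamma_i(\alpha)\pm 2\varepsilon$ for every $t$. Separately, expanding the definition of $D^N\gammabw$ and using $|\alpha|\in\{1/(2N^2),1/(4N^2)\}$ together with $\gamma(\cup T_N)=O(1/N)$ (which follows from sub-uniform diagonal marginals), one gets $-\sum_{\alpha,i}\gamma_i(\alpha)\log\gamma_i(\alpha)=-D^N\gammabw+2\log(2N)\pm O(1/N)$. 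The upper bound is then immediate: $H(W_t\mid X_1,\ldots,X_{t-1})\le H(W_t)=-\sum_{\alpha,i}\Prob[W_t=(\alpha,i)]\log\Prob[W_t=(\alpha,i)]$, and since each probability equals $\gamma_i(\alpha)\pm 2\varepsilon$, the continuity estimate for Shannon entropy (the same computation as in Claim~\ref{clm:H_q^N continuity bound}, with at most $8N^2$ atoms and displacement $2\varepsilon$) gives $H(W_t)=-D^N\gammabw+2\log(2N)+O(N^2\varepsilon\log(1/\varepsilon))=-D^N\gammabw+2\log(2N)+O(\varepsilon^{1/3}\log(1/\varepsilon))$.

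For the lower bound I would work with $H(W_t\mid X_1,\ldots,X_{t-1},q)=\E[\phi(\widetilde p)]$, where $\phi$ is Shannon entropy and, conditioned on $q$ and on $X_1,\ldots,X_{t-1}$, the variable $X_t$ is uniform on $q\setminus\{X_1,\ldots,X_{t-1}\}$, so $W_t$ has distribution $\widetilde p$ with $\widetilde p_{\alpha,i}=(|\alpha_n\cap q_i|-k_{\alpha,i})/(n-t+1)$ for $k_{\alpha,i}=|\{s<t:(Y_s,Z_s)=(\alpha,i)\}|$. Using $|\alpha_n\cap q_i|=\gamma_i(\alpha)n\pm 2\varepsilon n$ and $n-t+1\ge n-T\ge \varepsilon^{1/13}n$ one rewrites $\widetilde p_{\alpha,i}=\gamma_i(\alpha)+\tfrac{\gamma_i(\alpha)(t-1)-k_{\alpha,i}}{n-t+1}\pm 2\varepsilon^{12/13}$. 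The crux is to bound $\E|k_{\alpha,i}-\gamma_i(\alpha)(t-1)|$: its mean is $(t-1)(\gamma_i(\alpha)\pm 2\varepsilon)$ by exchangeability, and its variance is $O(n+\varepsilon^2n^2)$ by the law of total variance, since conditioned on $q$ it is hypergeometric with variance at most $n/4$, while the conditional mean $(t-1)|\alpha_n\cap q_i|/n$ ranges over an interval of length $O(\varepsilon n)$ as $q$ varies in $B_n(\gammabw,\varepsilon)$. Hence $\E|k_{\alpha,i}-\gamma_i(\alpha)(t-1)|=O(\sqrt n+\varepsilon n)$, so $\E|\widetilde p_{\alpha,i}-\gamma_i(\alpha)|=O(\varepsilon^{12/13})+o_n(1)$, and summing over the at most $8N^2=O(\varepsilon^{-2/3})$ atoms, $\E\|\widetilde p-(\gamma_i(\alpha))_{\alpha,i}\|_1=O(\varepsilon^{12/13-2/3})+o_n(1)=O(\varepsilon^{10/39})+o_n(1)$. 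Pushing this through the entropy continuity estimate (Jensen and concavity of $\delta\mapsto\delta\log(1/\delta)$ on the bulk, and the crude bounds $\phi\le\log(8N^2)$ and Markov to absorb the unlikely event $\|\widetilde p-\gamma\|_1>1/2$) gives $\E|\phi(\widetilde p)-\phi((\gamma_i(\alpha))_{\alpha,i})|=O(\varepsilon^{10/39}\log(1/\varepsilon))+o_n(1)$, hence $H(W_t\mid X_1,\ldots,X_{t-1},q)\ge -D^N\gammabw+2\log(2N)-O(\varepsilon^{10/39}\log(1/\varepsilon))-o_n(1)$.

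Combining the two bounds closes the sandwich, since all error terms are at most $\varepsilon^{5/39}$ for $\varepsilon$ small and $n$ large. The main obstacle is the variance estimate for $k_{\alpha,i}$ and, more broadly, the bookkeeping of how the error sources — the $\pm 2\varepsilon$ slack in \eqref{eq:q_i close to gamma_i}, the $o_n(1)$ concentration error, the factor $\varepsilon^{-1/13}$ coming from $t\le T$, and the factor $N^2=\varepsilon^{-2/3}$ from the number of atoms — multiply together and still come out below $\varepsilon^{5/39}$. The restriction $t\le T$ is essential here: for $t$ near $n$ a cell $\alpha$ can be emptied, so $\widetilde p$ need not be close to $(\gamma_i(\alpha))_{\alpha,i}$ and the claim fails.
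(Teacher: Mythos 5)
Your argument is correct, but it takes a genuinely different route from the paper. The paper fixes the conditioning on the history directly: it introduces the counts $W_{\alpha,i}(t)$, applies an exponential concentration inequality for random permutations (Claim~\ref{clm:W concentration upper bound}) to show that outside an event of probability $e^{-\Omega(n)}$ the conditional law of $(Y_t,Z_t)$ given $X_1,\ldots,X_{t-1}$ is pointwise $\gamma_i(\alpha)\pm\varepsilon^{11/13}$, and then computes the entropy of that conditional law, absorbing the bad event by the law of total probability. You instead avoid concentration altogether by sandwiching $H(Y_t,Z_t\given X_1,\ldots,X_{t-1})$ between $H(Y_t,Z_t\given X_1,\ldots,X_{t-1},q)$ and $H(Y_t,Z_t)$: the upper end is handled by exchangeability and \eqref{eq:q_i close to gamma_i}, and the lower end by a Chebyshev-level estimate (hypergeometric variance plus law of total variance) for the cell counts, pushed through a Fannes-type continuity bound in expectation. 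What the paper's route buys is pointwise control of the conditional distribution on a high-probability event, in the same style as the rest of its entropy bookkeeping, and a slightly cleaner error ($\varepsilon^{6/39}$ at the analogous stage); what yours buys is elementarity — no permutation concentration inequality is needed, only second moments and the monotonicity of conditional entropy — at the cost of the worse but still sufficient error $O(\varepsilon^{10/39}\log(1/\varepsilon))+o_n(1)$, which indeed sits below $\varepsilon^{5/39}$ for $\varepsilon$ small and $n$ large, consistently with the paper's conventions. One small correction: the bound $\sum_{\alpha\in T_N,\,i}\gamma_i(\alpha)=O(1/N)$ does not follow from sub-uniform diagonal marginals (the half-squares along one edge sweep through all diagonal positions); it follows, as in the paper, from the fact that $\gamma_b+\gamma_w=\gamma$ is a permuton with uniform row and column marginals and the half-squares lie in four axis-parallel boundary strips of width $1/(2N)$ — an immediate fix that does not affect the rest of your argument.
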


To prove Claim \ref{clm:Y entropy} we introduce, for every $\alpha \in I_N$, $i \in \{b,w\}$, and $0 \leq t < T$ the random variable $W_{\alpha,i}(t)$, equal to the number of indices $1 \leq s \leq t$ such that $(Y_s,Z_s) = (\alpha, i)$. Observe that $\E W_{\alpha,i}(t) = |q_i \cap \alpha_n|t/n = (\gamma_i(\alpha) \pm 2\varepsilon) t$. Let $\mB(t)$ be the event that for some $\alpha \in I_N$ and $i \in \{b,w\}$, it holds that $| W_{\alpha,i}(t) - \gamma_i(\alpha)t | \geq 3 \varepsilon n$.

\begin{clm}\label{clm:W concentration upper bound}
	For every $0 \leq t < T$ there holds:
	\[
	\Prob \left[ \mB(t) \right] \leq \exp \left( - \Omega \left( n \right) \right).
	\]
\end{clm}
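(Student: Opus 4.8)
The plan is to condition on the random configuration $q$, observe that each count $W_{\alpha,i}(t)$ is then hypergeometric, apply a Hoeffding-type tail bound valid for sampling without replacement, and finish with a union bound over the constantly many pairs $(\alpha,i)$.

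First I would fix $q \in B_n(\gammabw,\varepsilon)$ and set $m_{\alpha,i} \coloneqq |\alpha_n \cap q_i|$, so that by \eqref{eq:q_i close to gamma_i} we have $m_{\alpha,i} = (\gamma_i(\alpha) \pm 2\varepsilon)n$ for every $\alpha \in I_N$ and $i \in \{b,w\}$. Conditioned on $q$, the sequence $X_1,\dots,X_n$ is a uniformly random ordering of the $n$ elements of $q$, so $W_{\alpha,i}(t)$ equals the number of elements of the fixed $m_{\alpha,i}$-element set $\alpha_n \cap q_i$ occurring among the first $t$ terms; thus, conditioned on $q$, $W_{\alpha,i}(t)$ is hypergeometric with mean $m_{\alpha,i} t / n$. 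Since $t < T < n$, this mean satisfies $|m_{\alpha,i}t/n - \gamma_i(\alpha)t| \leq 2\varepsilon t \leq 2\varepsilon n$.

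Next I would invoke a concentration inequality for sampling without replacement: viewing $W_{\alpha,i}(t)$ as a function of the uniformly random permutation underlying $X_1,\dots,X_n$, any transposition changes its value by at most $1$, so McDiarmid's bounded-differences inequality for random permutations (equivalently, Hoeffding's inequality for sampling without replacement) gives, conditionally on $q$,
\[
\Prob\left[ \left| W_{\alpha,i}(t) - \frac{m_{\alpha,i}t}{n} \right| \geq \varepsilon n \right] \leq 2\exp\left( -\Omega(n) \right),
\]
where the implied constant depends only on $\varepsilon$. By the triangle inequality together with $|m_{\alpha,i}t/n - \gamma_i(\alpha)t| \leq 2\varepsilon n$, the event $|W_{\alpha,i}(t) - \gamma_i(\alpha)t| \geq 3\varepsilon n$ forces $|W_{\alpha,i}(t) - m_{\alpha,i}t/n| \geq \varepsilon n$, and hence also has conditional probability $\exp(-\Omega(n))$.

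Finally I would take a union bound over the $2|I_N| \leq 8N^2$ pairs $(\alpha,i)$ to get $\Prob[\mB(t) \mid q] \leq 8N^2 \exp(-\Omega(n))$; as this holds for every $q \in B_n(\gammabw,\varepsilon)$, averaging over $q$ gives $\Prob[\mB(t)] \leq 8N^2\exp(-\Omega(n))$. Since $N = \lfloor \varepsilon^{-1/3} \rfloor$ and $\varepsilon$ are both fixed, the prefactor $8N^2$ is an absolute constant and the bound is $\exp(-\Omega(n))$, as claimed. I do not anticipate a real obstacle here: the only points requiring care are using a tail inequality appropriate to sampling without replacement rather than to i.i.d.\ draws, and keeping the $O(\varepsilon n)$ discrepancy between the conditional mean $m_{\alpha,i}t/n$ and the target $\gamma_i(\alpha)t$ separate from the genuine concentration estimate.
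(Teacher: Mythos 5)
Your proposal is correct and follows essentially the same route as the paper: condition on $q$, view $W_{\alpha,i}(t)$ as a $1$-Lipschitz (under transpositions) function of the random ordering, apply the concentration inequality for random permutations with $\lambda = \Theta(\varepsilon n)$, and union bound over the constantly many pairs $(\alpha,i)$. The only difference is cosmetic: you separate the $O(\varepsilon n)$ gap between the conditional mean $m_{\alpha,i}t/n$ and the target $\gamma_i(\alpha)t$ explicitly via the triangle inequality, which the paper does implicitly by absorbing it into the $3\varepsilon n$ slack.
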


The proof relies on the following concentration inequality for random permutations which follows from arguments of McDiarmid (see, for example, \cite[Theorem 3.7]{mcdiarmid1998concentration} and the examples in the section that follows). The precise statement given here appears as \cite[Lemma 2.7]{keevash2018counting}.

\begin{thm}
	Let $S_n$ be the symmetric group of degree $n$, let $b > 0$, and let $f:S_n \to \R$ be a function satisfying: for every $\sigma \in S_n$ and every transposition $\tau$, $|f(\tau \circ \sigma) - f(\sigma)| < b$. Let $X$ be a uniformly random element of $S_n$. Then, for every $\lambda > 0$:
	\[
	\Prob \left[ |f(X) - \E [f(X)]| > \lambda \right] \leq 2 \exp \left( - \frac{\lambda^2}{2nb^2} \right).
	\]
\end{thm}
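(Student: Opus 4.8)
The plan is to run the classical Doob (exposure) martingale on $S_n$ and finish with the Azuma--Hoeffding inequality; the statement is the analogue, for the uniform measure on $S_n$, of McDiarmid's bounded-differences inequality. Let $X$ be the uniformly random permutation and reveal its values one position at a time: for $0 \leq k \leq n$ let $\mathcal{F}_k$ be the $\sigma$-algebra generated by $X(1),\dots,X(k)$ and set $M_k = \E\left[ f(X) \mid \mathcal{F}_k \right]$. Then $(M_k)_{k=0}^{n}$ is a martingale with $M_0 = \E[f(X)]$ (since $\mathcal{F}_0$ is trivial) and $M_n = f(X)$ (since $X(1),\dots,X(n)$ determines $X$). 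It therefore suffices to bound the increments $|M_k - M_{k-1}|$ by $b$ and invoke Azuma.

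The key step is the increment bound. Fix $1 \leq k \leq n$ and a prefix $X(1)=a_1,\dots,X(k-1)=a_{k-1}$, let $A = [n]\setminus\{a_1,\dots,a_{k-1}\}$ be the set of values still available for position $k$, and for $a \in A$ write $g(a) = \E\left[ f(X) \mid \mathcal{F}_{k-1},\, X(k)=a \right]$. I claim $|g(a)-g(a')| \leq b$ for all $a,a' \in A$. Indeed, $\rho \mapsto (a\ a')\circ\rho$ is a bijection from $\{\rho\in S_n : \rho(j)=a_j\ (j<k),\ \rho(k)=a\}$ onto the same set with $\rho(k)=a'$: it fixes every prefix value (since $a,a'\notin\{a_1,\dots,a_{k-1}\}$) and changes the value at position $k$ from $a$ to $a'$. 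As $(a\ a')$ is a transposition, the hypothesis gives $|f((a\ a')\circ\rho)-f(\rho)| < b$ for each such $\rho$, and averaging over $\rho$ (these sets carry the uniform measure) yields $|g(a)-g(a')| \leq b$. Since, conditionally on $\mathcal{F}_{k-1}$, the value $X(k)$ is uniform on $A$, we have that $M_{k-1}$ is the average of $\{g(a)\}_{a\in A}$ while $M_k = g(X(k))$; hence $|M_k-M_{k-1}| \leq \max_{a,a'\in A}|g(a)-g(a')| \leq b$.

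Finally I would apply the Azuma--Hoeffding inequality to $(M_k)_{k=0}^{n}$ with all increment bounds equal to $b$, so that $\sum_{k=1}^n b^2 = nb^2$, giving
\[
\Prob\left[ |f(X) - \E[f(X)]| > \lambda \right] = \Prob\left[ |M_n - M_0| > \lambda \right] \leq 2\exp\left( -\frac{\lambda^2}{2nb^2} \right),
\]
which is exactly the claim. The main obstacle is the increment bound: unlike for independent coordinates, conditioning on $X(k)=a$ versus $X(k)=a'$ also changes the conditional law of the remaining values $X(k+1),\dots,X(n)$, so one cannot argue by perturbing a single coordinate. The transposition bijection $\rho\mapsto(a\ a')\circ\rho$ is precisely what fixes this, reducing the comparison of $g(a)$ and $g(a')$ to one application of the single-transposition Lipschitz hypothesis; the rest is routine.
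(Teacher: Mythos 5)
Your proof is correct, and it is the standard argument for this inequality: the paper itself does not prove the statement but imports it from \cite{keevash2018counting}, where (as in McDiarmid-style treatments of random permutations) it is established exactly this way — the Doob exposure martingale $M_k = \E[f(X)\mid X(1),\dots,X(k)]$, the transposition bijection $\rho \mapsto (a\,a')\circ\rho$ to bound the increments by $b$, and Azuma--Hoeffding. Your handling of the key subtlety (that conditioning on $X(k)=a$ versus $X(k)=a'$ changes the law of the remaining values, which the transposition coupling resolves) is exactly right, so there is nothing to add.
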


\begin{proof}[Proof of Claim \ref{clm:W concentration upper bound}]
	Let $\alpha \in I_N$ and let $i \in \{b,w\}$. Conditioning on $q$, $W_{\alpha,i}(t)$ is a function of the uniformly random permutation that determines the order $X_1,\ldots,X_n$. Furthermore, changing this order by a single transposition affects $W_{\alpha,i}(t)$ by at most $1$. We have already noted that $\E W_{\alpha,i}(t) = (\gamma_i(\alpha) \pm 2\varepsilon) t$. Therefore
	\[
	\Prob \left[ |W_{\alpha,i}(t) - \gamma_i(\alpha)t| > 3 \varepsilon n \right] \leq 2 \exp \left( - \frac{(\varepsilon n)^2}{2n} \right) = \exp \left( - \Omega (n) \right).
	\]
	The claim follows by applying a union bound to the $2|I_N|T$ choices for $i,\alpha,t$.
\end{proof}

\begin{proof}[Proof of Claim \ref{clm:Y entropy}]
	Observe that for every $\alpha \in I_N, i \in \{b,w\}$ and for any $X_1,\ldots,X_{t-1}$ such that $\mB^c(t-1)$ occurs:
	\begin{equation}\label{eq:typical distribution probability}
	\Prob \left[ (Y_t,Z_t) = (\alpha,i) \given X_1,\ldots,X_{t-1} \right] = \frac{(\gamma_i(\alpha) \pm 5\varepsilon)n - \gamma_i(\alpha)t}{n-t}
	= \gamma_i(\alpha) \pm \frac{5\varepsilon n}{n-T} = \gamma_i(\alpha) \pm \varepsilon^{11/13}.
	\end{equation}
	
	Recall that $(Y_t,Z_t)$ takes values in a set of size $2|I_N| = O(1)$. Therefore its entropy is bounded from above by $O(1)$ as well (regardless of any conditioning). Hence, by the law of total probability,
	\begin{align*}
	H(Y_t,Z_t \given X_1,\ldots,X_{t-1}) = & H(Y_t,Z_t \given X_1,\ldots,X_{t-1}, \mB(t-1)) \Prob[\mB(t-1)]\\
	& + H(Y_t,Z_t \given X_1,\ldots,X_{t-1}, \mB^c(t-1))(1 - \Prob[\mB(t-1)])\\
	\stackrel{\text{\cref{clm:W concentration upper bound}}}{=} & H(Y_t,Z_t \given X_1,\ldots,X_{t-1}, \mB^c(t-1)) \pm \exp \left( - \Omega \left( n \right) \right).
	\end{align*}
	
	By \eqref{eq:typical distribution probability}:
	\begin{equation}\label{eq:Y entropy}
	H (Y_t,Z_t \given X_1,\ldots,X_{t-1}, \mB^c(t-1))
	= - \sum_{\alpha \in I_N, i \in \{b,w\}} \left( \gamma_i(\alpha) \pm \varepsilon^{11/13} \right) \log \left( \gamma_i(\alpha) \pm \varepsilon^{11/13} \right).
	\end{equation}
	Let $B \subseteq I_N \times \{b,w\}$ be the set of indices $(\alpha,i)$ such that $\gamma_i(\alpha) \leq 2\varepsilon^{11/13}$. Observe that $|B| \leq 2|I_N| = O(\varepsilon^{-2/3})$. Continuing \eqref{eq:Y entropy}:
	\begin{align*}
	H & (Y_t,Z_t \given X_1,\ldots,X_{t-1}, \mB^c(t-1))\\
	= & - \sum_{(\alpha,i) \in B} (\gamma_i(\alpha) \pm \varepsilon^{11/13}) \log \left( \gamma_i(\alpha) \pm \varepsilon^{11/13} \right)
	\quad - \sum_{(\alpha,i) \notin B} (\gamma_i(\alpha) \pm \varepsilon^{11/13}) \log \left( \gamma_i(\alpha) \pm \varepsilon^{11/13} \right)\\
	= & - \sum_{\alpha \in I_N, i \in \{b,w\}} \gamma_i(\alpha) \log(\gamma_i(\alpha)) \pm \varepsilon^{6/39}\\
	= & - \sum_{\alpha \in I_N, i \in \{b,w\}} \gamma_i(\alpha) \log \left( \frac{2\gamma_i(\alpha)}{|\alpha|} \right) - \sum_{\alpha \in I_N, i \in \{b,w\}} \gamma_i(\alpha) \log \left( \frac{|\alpha|}{2} \right) \pm \varepsilon^{6/39}\\
	= & - D^N\gammabw - \sum_{\alpha \in I_N, i \in \{b,w\}} \gamma_i(\alpha) \log \left( \frac{|\alpha|}{2} \right) \pm \varepsilon^{6/39}.
	\end{align*}
	We turn our attention to the sum $\sum_{\alpha \in I_N, i \in \{b,w\}} \gamma_i(\alpha) \log(|\alpha|/2)$. Recall that $S_N \cup T_N$ is the partition of $I_N$ into squares and half-squares, respectively. For every square $\alpha \in S_N$ we have $|\alpha| = 1/(2N^2)$ and for every half-square $\alpha \in T_N$ we have $|\alpha| = 1/(4N^2)$. Thus:
	\[
	\sum_{\alpha \in I_N, i \in \{b,w\}} \gamma_i(\alpha) \log \left( \frac{|\alpha|}{2} \right) = -\sum_{\alpha \in I_N, i \in \{b,w\}} \gamma_i(\alpha) \log(4N^2) - \sum_{ \alpha \in T_N, i \in \{b,w\} } \gamma_i(\alpha) \log(2).
	\]
	The half-squares in $T_N$ are contained in four axis-parallel lines of width $1/(2N)$ each. Thus, since $\gamma_b+\gamma_w$ has uniform marginals, $\sum_{ \alpha \in T_N, i \in \{b,w\} } \gamma_i(\alpha) \log(2) \leq 2\log(2)/N$. Therefore:
	\[
	\sum_{\alpha \in I_N, i \in \{b,w\}} \gamma_i(\alpha) \log \left(\frac{|\alpha|}{2} \right) = -\log(4N^2) \pm \frac{2\log2}{N} = -2\log(2N) \pm \varepsilon^{6/39}.
	\]
	Hence:
	\[
	H (Y_t,Z_t \given X_1,\ldots,X_{t-1}) = - D^N\gammabw + 2 \log(2N) \pm \varepsilon^{5/39},
	\]
	as claimed.
\end{proof}

We will now estimate $H(X_t \given X_1,\ldots,X_{t-1},Y_t,Z_t)$.

For $(\alpha,i) \in I_N \times \{b,w\}$ and $0 \leq t < T$ let $\mA_{\alpha,i}(t)$ denote the set of available positions of color $i$ in $\alpha_n$ at time $t$. Let $A_{\alpha,i}(t) = |\mA_{\alpha,i}(t)|$. We note that given $X_1,\ldots,X_{t-1},Y_t,Z_t$, the queen $X_t$ is chosen from $\mA_{Y_t,Z_t}(t-1)$. Thus:
\[
H(X_t \given X_1,\ldots,X_{t-1},Y_t,Z_t) \leq \E [\log \left( A_{Y_t,Z_t}(t-1) \right)].
\]
For notational conciseness we define
\[
E_{\alpha,i}(t-1) = \E [A_{\alpha,i}(t-1) \given (Y_t,Z_t) = (\alpha,i) ],\qquad
p_{\alpha,i}(t) = \Prob \left[(Y_t,Z_t) = (\alpha,i)\right].
\]
By concavity of the logarithm:
\begin{equation}\label{eq:H(X_t) total prob}
\begin{split}
H(X_t \given X_1,\ldots,X_{t-1},Y_t,Z_t) & \leq \sum_{\alpha \in I_N, i \in \{b,w\}} p_{\alpha,i}(t) \log \left( E_{\alpha,i}(t-1) \right)\\
& \stackrel{\text{\eqref{eq:q_i close to gamma_i}}}{\leq} \sum_{\alpha \in I_N, i \in \{b,w\}} \gamma_i(\alpha) \log \left( E_{\alpha,i}(t-1) \right) + 2|I_N| \varepsilon \log (n^2)\\
& \leq \sum_{\alpha \in I_N, i \in \{b,w\}} \gamma_i(\alpha) \log \left( E_{\alpha,i}(t-1) \right) + 16N^2 \varepsilon \log(n).
\end{split}
\end{equation}

In order to estimate $E_{\alpha,i}(t-1)$ we look carefully at the positions in $\alpha_n$. Given $q$, each position of color $i$ in $\alpha_n$ falls into exactly one of the following categories:
\begin{itemize}
	\item It is a queen in $q$.
	
	\item It is not a queen, and the diagonals incident to it are unoccupied in $q$.
	
	\item It is not a queen and exactly one of the diagonals incident to it is occupied in $q$.
	
	\item It is not a queen and both diagonals incident to it are occupied in $q$.
\end{itemize}
Denote the number of positions in each category by, respectively, $C(q,\alpha,i)$, $D_0(q,\alpha,i)$, $D_1(q,\alpha,i)$, $D_2(q,\alpha,i)$ (for the $D$s, the subscript denotes the number of \textit{diagonal threats} for each position). Although these are random variables, the fact that $q \in B_n(\gammabw,\varepsilon)$ means they cannot vary too much. For $\alpha,i \in I_N \times \{b,w\}$ define:
\begin{align*}
&D_0(\alpha,i) = \BWdistPlus{\gamma_i}(\alpha) \BWdistMinus{\gamma_i}(\alpha),\\
&D_1(\alpha,i) = \BWdistPlus{\gamma_i}(\alpha) \gamma_i^-(\alpha) + \BWdistMinus{\gamma_i}(\alpha) \gamma_i^+(\alpha),\\
&D_2(\alpha,i) = \gamma_i^+(\alpha) \gamma_i^-(\alpha).
\end{align*}

The following observation can be proved by expanding the definitions.

\begin{obs}\label{obs:D_j weighted sum}
	For every $(\alpha,i) \in I_N \times \{b,w\}$ and every $0 \leq t < T$:
	\[
	\sum_{j=0}^2 D_j(\alpha,i) \left( 1 - \frac{t}{n} \right)^{j+2} = \frac{1}{4N^2}\left( 1 - \frac{t}{n} \right)^2 \left( 1 - 2N \gamma_i^+(\alpha) \frac{t}{n} \right) \left( 1 - 2N \gamma_i^-(\alpha) \frac{t}{n} \right).
	\]
\end{obs}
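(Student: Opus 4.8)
The plan is a direct algebraic verification, so I would not introduce any new machinery. Abbreviate $s = t/n$, $a = \gamma_i^+(\alpha)$, $b = \gamma_i^-(\alpha)$. The one substantive point is to observe that every element of $J_N$ has length $1/N$, so by the definitions of $\distPlus{\gamma_i}$ and $\distMinus{\gamma_i}$ for a BW-decomposition (where the relevant normalization is $(b-a)/2$, not $b-a$), one has $\distPlus{\gamma_i}(\alpha) = \tfrac{1}{2N} - a$ and $\distMinus{\gamma_i}(\alpha) = \tfrac{1}{2N} - b$. Plugging these into the definitions of $D_0, D_1, D_2$ from just above the observation gives $D_0(\alpha,i) = (\tfrac{1}{2N}-a)(\tfrac{1}{2N}-b)$, $D_1(\alpha,i) = (\tfrac{1}{2N}-a)b + a(\tfrac{1}{2N}-b)$, and $D_2(\alpha,i) = ab$.

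I would then pull $(1-s)^2$ out of the left-hand side, which leaves the quadratic $D_0 + D_1(1-s) + D_2(1-s)^2$ in the variable $(1-s)$. The trick is to recognize this as the product
\[
\Bigl(\tfrac{1}{2N} - a + a(1-s)\Bigr)\Bigl(\tfrac{1}{2N} - b + b(1-s)\Bigr);
\]
expanding this product and matching the coefficients of $1$, $(1-s)$, and $(1-s)^2$ against $D_0, D_1, D_2$ confirms the identity. Finally I would simplify each factor: $\tfrac{1}{2N} - a + a(1-s) = \tfrac{1}{2N} - as = \tfrac{1}{2N}\bigl(1 - 2Nas\bigr)$, and likewise for the $b$-factor. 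The two $\tfrac{1}{2N}$ prefactors combine to $\tfrac{1}{4N^2}$, and re-substituting $s = t/n$, $a = \gamma_i^+(\alpha)$, $b = \gamma_i^-(\alpha)$ produces exactly the claimed right-hand side $\tfrac{1}{4N^2}(1-t/n)^2(1 - 2N\gamma_i^+(\alpha)t/n)(1 - 2N\gamma_i^-(\alpha)t/n)$.

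There is no real obstacle here: the computation is elementary polynomial arithmetic. The only place to be slightly careful is the normalization. Because BW-decompositions carry half the mass on diagonals, the complementary measures $\distPlus{\gamma_i}, \distMinus{\gamma_i}$ are defined with $(b-a)/2$, so the per-cell ``free space'' is $\tfrac{1}{2N} - \gamma_i^+(\alpha)$ (resp.\ $\tfrac{1}{2N} - \gamma_i^-(\alpha)$) rather than $\tfrac{1}{N} - \gamma_i^+(\alpha)$; getting this constant right is what makes the factorization collapse cleanly. The hypothesis $0 \le t < T$ plays no role in the algebraic identity itself — it is relevant only because the quantities $D_j(q,\alpha,i)$ are being concentrated around $D_j(\alpha,i)$ elsewhere on that range.
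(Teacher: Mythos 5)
Your proof is correct and is essentially the paper's argument: the paper states only that the observation follows by expanding the definitions, and your computation — using $\distPlus{\gamma_i}(\alpha) = \tfrac{1}{2N}-\gamma_i^+(\alpha)$, $\distMinus{\gamma_i}(\alpha) = \tfrac{1}{2N}-\gamma_i^-(\alpha)$ on the length-$1/N$ intervals of $J_N$ and factoring $D_0 + D_1(1-s) + D_2(1-s)^2$ as the product of the two linear factors — is exactly that expansion, carried out cleanly. Your remark about the $(b-a)/2$ normalization for BW-decompositions is the one point where a slip could occur, and you handled it correctly.
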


\begin{clm}\label{clm:partition by threat number}
	The following hold for every $q \in B_n(\gammabw,\varepsilon)$ and every $(\alpha,i) \in S_N \times \{b,w\}$.
	\begin{align*}
	& C(q,\alpha,i) \leq n,\\
	& D_0(q,\alpha,i) = \left( D_0(\alpha,i) \pm O (\varepsilon) \right)n^2,\\
	& D_1(q, \alpha,i) = \left( D_1(\alpha,i) \pm O(\varepsilon) \right)n^2,\\
	& D_2(q, \alpha,i) = \left( D_2(\alpha,i) \pm O(\varepsilon) \right)n^2.
	\end{align*}
\end{clm}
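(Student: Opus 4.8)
The plan is to reduce everything to a clean combinatorial description of $\alpha_n$ for $\alpha\in S_N$. Such an $\alpha$ lies in a single band of $J_N$ in each of the two diagonal directions, and its set of cells of a fixed color is a grid: writing a cell $(a,b)$ via its plus-index $p=a+b$ and minus-index $m=b-a$ (so the color is the common parity of $p$ and $m$), as $(p,m)$ ranges over (the plus-band of $\alpha$) $\times$ (the minus-band of $\alpha$) with $p\equiv m\equiv i\pmod 2$ we obtain exactly the color-$i$ cells of $\alpha_n$, and each pair gives exactly one cell of $\alpha_n$. This is where the chessboard observation from the proof overview enters — a plus-diagonal and a minus-diagonal meet in the plane precisely when they have the same color — together with the fact that $\alpha\in S_N$ is a genuine rotated square and hence is not clipped by $\partial\interval^2$, so no color-$i$ plus-diagonal of $\alpha$'s band is too short to cross all of $\alpha$'s minus-band (this would fail for the boundary half-squares in $T_N$, which is why the claim is restricted to $S_N$). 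The bound $C(q,\alpha,i)\le n$ is then immediate, since the queens of $q$ inside $\alpha_n$ lie in distinct columns.

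Next I would count occupied and unoccupied diagonals. The plus-band $\beta\in J_N$ of $\alpha$ meets $n/(2N)\pm O(1)$ color-$i$ plus-diagonals of $\alpha_n$; since an occupied plus-diagonal carries exactly one queen, the number of occupied ones among them is $n\,\gamma_{q,i}^+(\beta)\pm O(1)$. The set $\{(x,y)\in\interval^2: a\le x+y\le b\}$ lies in $\mR$, and $q\in B_n(\gammabw,\varepsilon)$ gives $\dist{\gamma_{q,i}}{\gamma_i}<\varepsilon$ for $i\in\{b,w\}$; hence this count is $(\gamma_i^+(\alpha)\pm O(\varepsilon))n$, and subtracting, the number of unoccupied color-$i$ plus-diagonals through $\alpha_n$ is $(\distPlus{\gamma_i}(\alpha)\pm O(\varepsilon))n$. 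Additive errors of order $O(1)$ or $O(n/N)$ coming from band boundaries and from $|\alpha_n|=|\alpha|n^2\pm O(n/N)$ are $o(\varepsilon n^2)$ for $n$ large, so they disappear into the $O(\varepsilon)$ terms. The analogous estimates hold for color-$i$ minus-diagonals, with $\gamma_i^-(\alpha)$ and $\distMinus{\gamma_i}(\alpha)$.

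Finally I would classify the color-$i$ cells of $\alpha_n$ by whether their incident plus- and minus-diagonals are occupied and invoke the grid bijection. A cell whose two incident diagonals are both unoccupied cannot be a queen (a queen occupies both of its own diagonals), so $D_0(q,\alpha,i)$ equals (unoccupied color-$i$ plus-diagonals)$\times$(unoccupied color-$i$ minus-diagonals)$=(\distPlus{\gamma_i}(\alpha)\distMinus{\gamma_i}(\alpha)\pm O(\varepsilon))n^2$; likewise a cell with exactly one occupied incident diagonal is not a queen, giving $D_1(q,\alpha,i)=(\gamma_i^+(\alpha)\distMinus{\gamma_i}(\alpha)+\distPlus{\gamma_i}(\alpha)\gamma_i^-(\alpha)\pm O(\varepsilon))n^2$. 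For $D_2$, the number of color-$i$ cells of $\alpha_n$ whose two incident diagonals are both occupied is (occupied color-$i$ plus-diagonals)$\times$(occupied color-$i$ minus-diagonals)$=(\gamma_i^+(\alpha)\gamma_i^-(\alpha)\pm O(\varepsilon))n^2$, and from these one removes precisely the at most $n$ queens of $\alpha_n$ (each of which does have both its incident diagonals occupied, by itself), so $D_2(q,\alpha,i)=(D_2(\alpha,i)\pm O(\varepsilon))n^2$ after absorbing the $O(n)$ term. In each product the cross terms are $O(\varepsilon)$, since $\gamma_i^\pm(\alpha),\distPlus{\gamma_i}(\alpha),\distMinus{\gamma_i}(\alpha)\le 1/(2N)\le 1$, and this matches the definitions of $D_0(\alpha,i),D_1(\alpha,i),D_2(\alpha,i)$. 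The only real obstacle is the first paragraph — pinning down the grid/bijection picture of $\alpha_n$ for $\alpha\in S_N$ and verifying that the boundary and discretization discrepancies are genuinely negligible; the rest is the bookkeeping above.
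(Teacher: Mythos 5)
Your proposal is correct and follows essentially the same route as the paper: the heart of both arguments is the observation that for $\alpha \in S_N$ the color-$i$ positions of $\alpha_n$ correspond bijectively (up to $O(n/N)$ boundary cells) to pairs of same-color plus- and minus-diagonals through the two bands, so each $D_j(q,\alpha,i)$ factors as a product of occupied/unoccupied diagonal counts, with the queens subtracted only in the $D_2$ case. The only (cosmetic) difference is that you control the occupied-diagonal counts by applying $\dist{\gamma_{q,i}}{\gamma_i}<\varepsilon$ directly to the diagonal strip in $\mR$, giving error $O(\varepsilon)$, whereas the paper sums the cellwise estimate \eqref{eq:q_i close to gamma_i} over the band, giving error $O(N\varepsilon)$ and then absorbing $(N\varepsilon)^2<\varepsilon$.
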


\begin{proof}
	Let $q \in B_n(\gammabw,\varepsilon)$ and $(\alpha,i) \in S_N \times \{b,w\}$. $C(q,\alpha,i)$ is the number of color $i$ queens in $\alpha_n$. Since $q$ contains $n$ queens, $C(q,\alpha,i) \leq n$.
	
	Let $P^+,P^- \subseteq I_N$ be those elements sharing, respectively, their plus-diagonal and minus-diagonal with $\alpha$. Then, by \eqref{eq:q_i close to gamma_i}, for $* \in \{+,-\}$:
	\[
	\sum_{ \beta \in P^*} |q_i \cap \beta_n| = \sum_{\beta \in P^*} (\gamma_i(\beta) \pm 2\varepsilon)n = \left(\gamma_i^*(\alpha) \pm 2N\varepsilon \right)n.
	\]
	Now, every plus-diagonal containing an element of $\bigcup_{\beta \in P^+} (q_i \cap \beta_n)$ intersects every minus-diagonal containing an element of $\bigcup_{\beta \in P^-} (q_i \cap \beta_n)$ in exactly one color-$i$ square in $\alpha_n$. Furthermore, every color-$i$ square in two occupied diagonals (including the color-$i$ queens in $\alpha_n$) is obtained in this way. Hence:
	\begin{align*}
	D_2(q,\alpha,i) & = \left(\gamma_i^+(\alpha) \pm 2N\varepsilon \right) \left(\gamma_i^-(\alpha) \pm 2N\varepsilon \right)n^2 \pm C(q,\alpha,i)\\
	& = \left(D_2(\alpha,i) \pm 2N \varepsilon (\gamma_i^+(\alpha) + \gamma_i^-(\alpha) + 2N\varepsilon) \right) n^2 \pm n.
	\end{align*}
	Since $\gamma$ has sub-uniform diagonal marginals, $\gamma_i^+(\alpha) + \gamma_i^-(\alpha) \leq 2/N$. Additionally, by definition, $(N\varepsilon)^2 < \varepsilon$. Hence:
	\[
	D_2(q,\alpha,i) = \left(D_2(\alpha,i) \pm 8\varepsilon\right)n^2,
	\]
	as desired.
	
	The bounds on $D_1(\alpha,i)$ and $D_0(\alpha,i)$ are proved similarly, after noting that for $* \in \{+,-\}$, the number of unoccupied $*$-diagonals of color $i$ passing through $\alpha_n$ is $(1/(2N) - \gamma_i^*(\alpha) \pm 4N\varepsilon)n = (\overline{\gamma_i}^*(\alpha) \pm 4N\varepsilon)n$.
\end{proof}

Claim \ref{clm:partition by threat number} allows us to estimate $\E [A_{\alpha,i}(t-1)]$. We remark that Claim \ref{clm:partition by threat number} only holds for $\alpha \in S_N$. The half-squares in $T_N$ constitute only a small part of the measure of $\gamma$ and so for them the weak bound $A_{\alpha,i}(t-1) \leq n^2$ is all we need.

\begin{clm}\label{clm:upper bound expected available}
	For every $1 \leq t \leq T$ and every $\alpha,i \in S_N \times \{b,w\}$ it holds that
	\[
	E_{\alpha,i}(t-1) = \left( 1 \pm O \left( N^2\varepsilon \right) \right) \left( 1 - \frac{t}{n} \right)^2 \left( 1 - 2N \gamma_i^+(\alpha) \frac{t}{n} \right) \left( 1 - 2N \gamma_i^-(\alpha) \frac{t}{n} \right) \frac{n^2}{4N^2}.
	\]
\end{clm}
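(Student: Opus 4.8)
The plan is to compute $E_{\alpha,i}(t-1) = \E[A_{\alpha,i}(t-1) \mid (Y_t,Z_t)=(\alpha,i)]$ by conditioning further on $q$ and on the position $X_t = p$ of the $t$-th revealed queen (necessarily a color-$i$ queen in $\alpha_n$), estimating $\E[A_{\alpha,i}(t-1) \mid q, X_t = p]$ \emph{uniformly} in $q \in B_n(\gammabw,\varepsilon)$ and in the admissible $p$, and then averaging over the conditional law of $(q,p)$. Given $q$ and $X_t = p$, the partial configuration $Q(t-1) = \{X_1,\dots,X_{t-1}\}$ is a uniformly random $(t-1)$-element subset of $q \setminus \{p\}$. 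Write $A_{\alpha,i}(t-1)$ as a sum over the color-$i$ positions $p'$ of $\alpha_n$ of the indicator that $p'$ is available at time $t-1$. A color-$i$ queen of $q$ in $\alpha_n$ is available iff it lies outside $Q(t-1)$, and since $q$ has at most $n$ queens this category contributes $O(n)$ to the expectation. A non-queen position $p'$ is threatened by exactly one queen of $q$ in its row, one in its column, and one in each of its \emph{occupied} incident diagonals, so if $p'$ lies in category $D_j$ (in the notation of Claim~\ref{clm:partition by threat number}) it is threatened by exactly $2+j$ queens of $q$ and is available at time $t-1$ iff $Q(t-1)$ avoids all of them. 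Provided $p$ does not itself threaten $p'$ --- which fails for only the $O(n/N)$ color-$i$ positions of $\alpha_n$ collinear with $p$, contributing a further $O(n/N)$ --- this probability equals $\binom{n-1-(2+j)}{t-1}/\binom{n-1}{t-1} = (1 \pm o(1))(1-t/n)^{2+j}$, the $o(1)$ being uniform in $q,p,p'$ and $t \le T$ because $1-t/n \ge 1-T/n \ge \varepsilon^{1/13}$ is bounded away from $0$.

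Summing over $p'$ and inserting Claim~\ref{clm:partition by threat number}, which gives $D_j(q,\alpha,i) = (D_j(\alpha,i) \pm O(\varepsilon))n^2$ for every $q \in B_n(\gammabw,\varepsilon)$, yields
\[
\E[A_{\alpha,i}(t-1) \mid q,\, X_t = p] = (1 \pm o(1))\,n^2 \sum_{j=0}^{2} D_j(\alpha,i)\left(1 - t/n\right)^{2+j} \;\pm\; O(\varepsilon n^2),
\]
uniformly in $q$ and $p$; since the right-hand side depends on neither $q$ nor $p$, averaging over their conditional distribution given $(Y_t,Z_t)=(\alpha,i)$ leaves $E_{\alpha,i}(t-1)$ obeying the same bound. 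Now apply Observation~\ref{obs:D_j weighted sum} to rewrite the sum as $\frac{1}{4N^2}(1-t/n)^2(1-2N\gamma_i^+(\alpha)t/n)(1-2N\gamma_i^-(\alpha)t/n)$. Because $\gammabw$ is a BW-decomposition we have $2N\gamma_i^{\pm}(\alpha) \le 1$, so each of the last two factors lies in $[1-t/n,\,1]$ and the whole expression is at least $\frac{1}{4N^2}(1-t/n)^4$; this lower bound converts the additive errors $O(\varepsilon n^2)+O(n)+o(n^2/N^2)$ into a multiplicative factor of the order stated in the claim, completing the argument.

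The argument is essentially all error bookkeeping, and the crux is to carry it out uniformly enough that the per-$(q,p)$ estimate survives the (mildly $q$-dependent) averaging against the conditional law: one must check that conditioning on $(Y_t,Z_t)=(\alpha,i)$ merely replaces $q$ by $q\setminus\{X_t\}$ in the uniform-subset sampling, that the color-$i$ positions of $\alpha_n$ collinear with $X_t$ --- and the queens of $q$ themselves --- genuinely contribute only a lower-order term, and that the binomial ratio is well-approximated by a power, which is exactly where the cutoff $t \le T$ (keeping $1-t/n \ge \varepsilon^{1/13}$) is essential. The other delicate point is the final conversion of the absolute $O(\varepsilon)$-scale errors supplied by Claim~\ref{clm:partition by threat number} into a multiplicative error on $E_{\alpha,i}(t-1)$; this works only because Observation~\ref{obs:D_j weighted sum} together with $2N\gamma_i^\pm(\alpha)\le1$ bounds the main term below by $\frac{n^2}{4N^2}(1-t/n)^4$.
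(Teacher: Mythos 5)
Your proposal is correct and follows essentially the same route as the paper: classify the non-queen color-$i$ positions of $\alpha_n$ by the number $2+j$ of queens of $q$ threatening them, approximate the probability of availability at time $t-1$ by $(1\pm o(1))(1-t/n)^{2+j}$, plug in Claim \ref{clm:partition by threat number} and Observation \ref{obs:D_j weighted sum}, and convert the additive $O(\varepsilon n^2)$ error into a multiplicative one using that each factor of the main term is bounded below (via $t \le T$ and $2N\gamma_i^{\pm}(\alpha) \le 1$). The only difference is that you make explicit some details the paper elides --- the effect of conditioning on $(Y_t,Z_t)=(\alpha,i)$, the uniform $(t-1)$-subset structure of $Q(t-1)$ given $q$ and $X_t=p$, and the $O(n/N)$ positions of $\alpha_n$ collinear with $X_t$ --- which matches (indeed slightly exceeds) the paper's level of rigor.
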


\begin{proof}
	Fix $q$, $\alpha \in S_N, i \in \{b,w\}$ and $1 \leq t \leq T$. For $j = 0,1,2$ there are $D_j(q,\alpha,i)$ positions in $\alpha_n$ that are not queens and exactly $j$ of the diagonals incident to them are occupied. Hence, for each position counted by $D_j(q,\alpha,i)$, the probability that it is available at time $t-1$ is $\oneoone(1-t/n)^{j+2}$. Therefore:
	\begin{align*}
	&E_{\alpha,i}(t-1) = \oneoone \sum_{j=0}^2 \left( 1 - \frac{t}{n} \right)^{j+2} D_j(q,\alpha,i) \pm C(q,\alpha,i)\\
	&\stackrel{\text{Claim \ref{clm:partition by threat number}}}{=} \oneoone \sum_{j=0}^2 \left( 1 - \frac{t}{n} \right)^{j+2} \left(D_j(\alpha,i) \pm O(\varepsilon)\right)n^2 \pm n\\
	&\stackrel{\text{Observation \ref{obs:D_j weighted sum}}}{=} \left( 1 - \frac{t}{n} \right)^2 \left( 1 - 2N \gamma_i^+(\alpha) \frac{t}{n} \right) \left( 1 - 2N \gamma_i^-(\alpha) \frac{t}{n} \right) \frac{n^2}{4N^2} \pm O\left( \varepsilon n^2 \right).
	\end{align*}
	Finally, since $t/n \leq T/n = 1-\Omega(\varepsilon^{1/13})$ and $\gamma_i^+(\alpha) \leq 1/(2N)$, each of $1-t/n,1-2N\gamma_i^+(\alpha)t/n$ and $1-2N\gamma_i^-(\alpha)t/n$ is $\Omega \left( \varepsilon^{1/13} \right)$. Hence:
	\[
	E_{\alpha,i}(t-1) = \left( 1 \pm O \left( N^2 \varepsilon^{9/13} \right) \right) \left( 1 - \frac{t}{n} \right)^2 \left( 1 - 2N \gamma_i^+(\alpha) \frac{t}{n} \right) \left( 1 - 2N \gamma_i^-(\alpha) \frac{t}{n} \right) \frac{n^2}{4N^2},
	\]
	as claimed.
\end{proof}

Continuing from \eqref{eq:H(X_t) total prob} and using the fact that $16N^2 \varepsilon \leq 17/N$:
\begin{align*}
H & (X_t \given X_1,\ldots,X_{t-1},Y_t,Z_t) \leq \sum_{\alpha \in I_N, i \in \{b,w\}} \gamma_i(\alpha) \log \left( E_{\alpha,i}(t-1) \right) + 16 N^2 \varepsilon \log(n)\\
& \leq \sum_{\alpha \in S_N, i \in \{b,w\}} \gamma_i(\alpha) \log (E_{\alpha,i}(t-1)) + \sum_{\alpha \in T_N, i \in \{b,w\}} \gamma_i(\alpha) \log (E_{\alpha,i}(t-1)) + \frac{17}{N} \log(n).
\end{align*}
As mentioned above we bound the second sum using the trivial bound $E_{\alpha,i}(t-1) \leq n^2$ and the fact that the half-squares in $T_N$ are contained in four axis parallel rectangles of width $\leq 1/(2N)$. We now use Claim \ref{clm:upper bound expected available} to bound the contribution of the squares in $S_N$.
\begin{align*}
H & (X_t \given X_1,\ldots,X_{t-1},Y_t,Z_t)\\
\leq & \sum_{\alpha \in S_N, i \in \{b,w\}} \gamma_i(\alpha) \log (E_{\alpha,i}(t-1)) + \frac{4}{N} \log (n) + \frac{17}{N} \log(n)\\
\leq & \sum_{\alpha \in S_N, i \in \{b,w\}} \gamma_i(\alpha) \log \left( 1 \pm O(N^2\varepsilon^{9/13}) \right) + 2\sum_{\alpha \in S_N, i \in \{b,w\}} \gamma_i(\alpha) \log \left( 1 - \frac{t}{n} \right)\\
& + \sum_{\alpha \in S_N, i \in \{b,w\}} \gamma_i(\alpha) \log \left( 1 - 2N \gamma_i^+(\alpha) \frac{t}{n} \right) + \sum_{\alpha \in S_N, i \in \{b,w\}} \gamma_i(\alpha) \log \left( 1 - 2N \gamma_i^-(\alpha) \frac{t}{n} \right)\\
& + \sum_{\alpha \in S_N, i \in \{b,w\}} \gamma_i(\alpha) \log \left( \frac{n^2}{4N^2} \right) + \frac{25}{N}\log(n).
\end{align*}

We will now bound the contribution of each of the terms as we sum over $t$. To begin:
\[
\sum_{t=0}^{T-1} \sum_{\alpha \in S_N, i \in \{b,w\}} \gamma_i(\alpha) \log \left( 1 \pm O(N^2\varepsilon^{9/13}) \right) = O\left( N^2\varepsilon^{9/13} T \right) = O \left( \varepsilon^{1/39} n \right).
\]
Next:
\[
\sum_{t=0}^{T-1} \sum_{\alpha \in S_N, i \in \{b,w\}} \gamma_i(\alpha) \log \left( \frac{n^2}{4N^2} \right) \leq n \log \left( \frac{n^2}{4N^2} \right).
\]

Now:
\begin{align*}
\sum_{t=0}^{T-1} \sum_{\alpha \in S_N, i \in \{b,w\}} & \gamma_i(\alpha) \log \left( 1 - \frac{t}{n} \right)
\leq \left( 1 - O \left( \frac{1}{N} \right) \right) \sum_{t=1}^T \log \left( 1 - \frac{t}{n} \right)\\
& \stackrel{\text{Claim \ref{clm:log integral}}}{\leq} \left( 1 - O \left( \frac{1}{N} \right) \right) \left( -1 \pm \varepsilon^{2/39} \right)n = -n \pm O \left( \varepsilon^{2/39} n \right).
\end{align*}

We now note that
\begin{align*}
\sum_{\alpha \in S_N, i \in \{b,w\}} & \gamma_i(\alpha) \log \left( 1 - 2N \gamma_i^+(\alpha) \frac{t}{n} \right)\\
& \leq \sum_{\alpha \in I_N, i \in \{b,w\}} \gamma_i(\alpha) \log \left( 1 - 2N \gamma_i^+(\alpha) \frac{t}{n} \right) + \frac{4}{N} \left| \log \left( 1 - \frac{T}{n} \right) \right|\\
& = \sum_{ \alpha \in J_N, i \in \{b,w\}} \gamma_i^+(\alpha) \log \left( 1 - 2N\gamma_i^+(\alpha) \frac{t}{n} \right) + \frac{4}{N} \left| \log \left( 1 - \frac{T}{n} \right) \right|.
\end{align*}
Applying \cref{clm:log integral}:
\begin{align*}
\sum_{t=1}^T & \sum_{\alpha \in S_N, i \in \{b,w\}} \gamma_i(\alpha) \log \left( 1 - 2N \gamma_i^+(\alpha) \frac{t}{n} \right)\\
& \leq - \sum_{\alpha \in J_N, i \in\{b,w\} } \frac{n}{2N} \left( (1 - 2N\gamma_i^+(\alpha))\log(1 - 2N\gamma_i^+(\alpha)) + 2N\gamma_i^+(\alpha) \right)\\
& \qquad + \frac{2|J_N|}{2N} 3(n-T)|\log(1-T/n)|\\
& = -n \sum_{ \alpha \in J_N, i \in \{b,w\} } \BWdistPlus{\gamma_i}(\alpha) \log \left( 2N \BWdistPlus{\gamma_i}(\alpha) \right) - n + 12(1-T/n) \left|\log \left( 1 - T/n \right)\right| n\\
& = - n D \left(\{ \BWdistPlus{\gamma_i}(\alpha) \}_{\alpha \in I_N, i \in \{b,w\}} \right) + n \log(2) - n + 12(1-T/n) \left|\log \left( 1 - T/n \right)\right| n.
\end{align*}
Similarly:
\begin{align*}
\sum_{t=1}^T \sum_{\alpha \in S_N, i \in \{b,w\}} & \gamma_i(\alpha) \log \left( 1 - 2N \gamma_i^-(\alpha) \frac{t}{n} \right)\\
& \leq - n D \left(\{ \BWdistMinus{\gamma_i}(\alpha) \}_{\alpha \in I_N, i \in \{b,w\}} \right) + n \log(2) - n + 12(1-T/n) \left|\log \left( 1 - T/n \right)\right| n.
\end{align*}

As a consequence we obtain:
\begin{equation}\label{eq:H X_t bound}
\begin{split}
\sum_{t=1}^T & H(X_t \given X_1,\ldots,X_{t-1},Y_t,Z_t)\\
\leq & - n D \left(\{ \BWdistPlus{\gamma_i}(\alpha) \}_{\alpha \in I_N, i \in \{b,w\}} \right) - n D \left(\{ \BWdistMinus{\gamma_i}(\alpha) \}_{\alpha \in I_N, i \in \{b,w\}} \right)\\
& + 2n \log(2) - 4n + n \log \left( \frac{n^2}{4N^2} \right) + 24(1-T/n) \left|\log \left( 1 - T/n \right)\right| n + O \left( \varepsilon^{1/39} n \right)\\
\leq & - n D \left(\{ \BWdistPlus{\gamma_i}(\alpha) \}_{\alpha \in I_N, i \in \{b,w\}} \right) - n D \left(\{ \BWdistMinus{\gamma_i}(\alpha) \}_{\alpha \in I_N, i \in \{b,w\}} \right)\\
& - 2n \log(2N) - 4n + 2n\log(2) + 2n \log \left( n \right) + O \left( \varepsilon^{1/39} n \right).
\end{split}
\end{equation}

We are ready to prove Lemma \ref{lem:bw upper bound}.

\begin{proof}[Proof of Lemma \ref{lem:bw upper bound}]
	By the chain rule:
	\[
	H(X_1,\ldots,X_n) = \sum_{t=1}^n H(Y_t,Z_t \given X_1,\ldots,X_{t-1}) + \sum_{t=1}^n H(X_t \given X_1,\ldots,X_{t-1},Y_t,Z_t).
	\]
	By Claim \ref{clm:Y entropy} and using the fact that for every $t$, $H(Y_t,Z_t \given X_1,\ldots,X_{t-1}) \leq \log(2|I_N|)$:
	\begin{align*}
	\sum_{t=1}^n H(Y_t,Z_t \given X_1,\ldots,X_{t-1}) \leq - n D^N\gammabw + 2n\log(2N) + O \left( \varepsilon^{1/39} n \right).
	\end{align*}
	Together with \eqref{eq:H X_t bound} this implies
	\begin{align*}
	H  (X_1,\ldots,X_n) \leq & \\
	& - n D^N\gammabw - n D \left(\{ \BWdistPlus{\gamma_i}(\alpha) \}_{\alpha \in I_N, i \in \{b,w\}} \right) - n D \left(\{ \BWdistMinus{\gamma_i}(\alpha) \}_{\alpha \in I_N, i \in \{b,w\}} \right)\\
	& - 4n + 2n \log \left( n \right) + 2n\log(2) + O \left( \varepsilon^{1/39} n \right)\\
	= &\ n G^N\gammabw + 2n\log(n) - n + O \left( \varepsilon^{1/39} n \right)\\
	\leq &\ nG^N\gammabw + n \log(n) + \log(n!) + \varepsilon^{1/100} n.
	\end{align*}
	Therefore, by \eqref{eq:sequential vs total entropy}:
	\[
	\log \left( |B_n(\gammabw,\varepsilon)| \right) = H(X_1,\ldots,X_n) - \log(n!) \leq nG^N\gammabw + n\log(n) + \varepsilon^{1/100} n .
	\]
	Hence
	\[
	\frac{|B_n(\gammabw,\varepsilon)|^{1/n}}{n} \leq \exp \left( G^N\gammabw + \varepsilon^{1/100} \right),
	\]
	proving the lemma.
\end{proof}

We will now use Lemma \ref{lem:bw upper bound} to prove the upper bound in Theorem \ref{thm:precise bounds}.

\begin{proof}[Proof of Theorem \ref{thm:precise bounds} upper bound]
	Define
	\[
	X \coloneqq \left\{ \gammabw \in \BW : \dist{\gamma_b + \gamma_w}{\gamma} \leq \varepsilon \right\}.
	\]
	Note that $X \subseteq \BW$ is closed and therefore compact. Let $(\gamma_b^1,\gamma_w^1),\ldots,(\gamma_b^M,\gamma_w^M) \in X$ be such that $X \subseteq \bigcup_{i=1}^M B_\varepsilon (\gamma_b^i,\gamma_w^i)$. Observe that
	\[
	B_n(\gamma,\varepsilon) \subseteq \bigcup_{i=1}^M B_n((\gamma_b^i,\gamma_w^i), \varepsilon).
	\]

	Now, by Lemma \ref{lem:bw upper bound}, for each $i$:
	\[
	|B_n((\gamma_b^i,\gamma_w^i),\varepsilon)| \leq n^n \exp \left( n \left(G^N(\gamma_b^i,\gamma_w^i) + \varepsilon^{1/100} + \oone \right) \right)
	\]
	where $N = \lfloor \varepsilon^{-1/3} \rfloor$.
	By Observation \ref{obs:G properties} $G^N(\gamma_b^i,\gamma_w^i) \leq H_q^N(\gamma_{b}^i + \gamma_{w}^i)$. Since $(\gamma_b^i,\gamma_w^i) \in X$ by definition $\dist{\gamma_b^i + \gamma_w^i}{\gamma} \leq \varepsilon$. Therefore by Claim \ref{clm:H_q^N continuity bound} ${H_q^N(\gamma_{b}^i + \gamma_{w}^i)} \leq H_q^N(\gamma) + \varepsilon^{1/4}$. Therefore, for every $1 \leq i \leq M$:
	\begin{align*}
	\frac{|B_n((\gamma_b^i,\gamma_w^i),\varepsilon)|}{n^n} \leq \exp \left( n \left( H_q^N(\gamma) + \varepsilon^{1/4} + \varepsilon^{1/100} \right) \right)
	\leq \exp \left( n \left( H_q^N(\gamma) + 2\varepsilon^{1/100} \right) \right).
	\end{align*}
	Therefore:
	\[
	\frac{|B_n(\gamma,\varepsilon)|}{n^n} \leq M \exp \left( n \left( H_q^N(\gamma) + 2\varepsilon^{1/100} \right) \right) \leq \exp \left( n \left( H_q^N(\gamma) + \varepsilon^{1/200} \right) \right),
	\]
	proving the theorem.
\end{proof}

\section{Lower bound}\label{sec:lower bound}

In this section we prove the lower bound in Theorem \ref{thm:precise bounds}.

Let $\gamma$ be a queenon, let $\varepsilon > 0$, and let $n \in \N$. We may assume that $H_q(\gamma) > -\infty$ and that $\varepsilon$ is sufficiently small in terms of $\gamma$. We will describe a randomized algorithm that constructs an element of $B_n(\gamma,\varepsilon)$. We will derive the lower bound by counting the number of possible outcomes.

The algorithm has two phases: a random phase, in which most of the queens are placed on the board, and a correction phase, in which a small number of modifications are made to obtain a complete configuration.

It is helpful to replace $\gamma$ with a queenon that is close to $\gamma$ and has some additional desirable properties. By \cref{lem:approximation} there exists an $N$-step queenon $\delta$ with the following properties:

\begin{enumerate}[({\bfseries{D\arabic{enumi}}})]
	\item\label{itm:delta close to gamma} $\dist{\gamma}{\delta} < \varepsilon^2$.
	
	\item\label{itm:delta approximates entropy} $H_q(\delta) > H_q(\gamma) - \varepsilon |H_q(\gamma)|$.
	
	\item\label{itm:everywhere positive density} $\delta$ has density $\Theta(1)$ everywhere.
	
	\item\label{itm:diagonals not full} There exists a constant $\eta < 1$ such that $\delta^+$ and $\delta^-$ have density $\leq \eta$ everywhere.
\end{enumerate}
Furthermore, since every $N$-step queenon is also an $M$-step queenon for every $M$ that is a multiple of $N$, we may (and do) assume that $N \geq \varepsilon^{-2}$.

The fact that $\delta$ has positive density everywhere will make it easier to find the absorbers required for the correction phase of the algorithm. Additionally, \ref{itm:diagonals not full} ensures that every diagonal has probability bounded away from $1$ of being occupied in the random phase of the algorithm.

Choose a sufficiently large $K \in \N$ (that may depend on $\varepsilon$ and $\gamma$ but not $n$) and define:
\[
M \coloneqq \lfloor n^{0.1} \rfloor N, \quad T \coloneqq n - \lfloor n^{1-1/K^2} \rfloor,
\]
Observe that $N$ divides $M$, so $\delta$ is an $M$-step queenon. We now describe the first phase of the algorithm. Recall from \cref{sec:notation} that for $\alpha \in I_M$, the set $\alpha_n \subseteq [n]^2$ is (roughly) the set of board positions that correspond to $\alpha$.

\begin{algorithm}\label{alg:random}
	\hfill
	\begin{itemize}
		\item Let $Y_1,Y_2,\ldots,Y_T \in I_M$ be i.i.d.\ random variables, where for every $\alpha \in I_M$, $\Prob [Y_1 = \alpha] = \delta(\alpha)$.
		
		\item Set $Q(0) = \emptyset$.
		
		\item For every $0 < t \leq T$:
		
		\begin{itemize}
			\item Let $\mA_{Y_t}(t-1)$ be the set of available positions in $(Y_t)_n$. If $\mA_{Y_t}(t-1) = \emptyset$ abort, define $X_t=X_{t+1}=\ldots=X_T = *$, and set $Q(t), Q(t+1), \ldots, Q(T)$ all equal to $Q(t-1)$.
			
			\item Otherwise, choose $X_t \in \mA_{Y_t}(t-1)$ uniformly at random and set $Q(t) = Q(t-1) \cup \{ X_t \}$.
		\end{itemize}
	\end{itemize}
\end{algorithm}

We will show that w.h.p.\ Algorithm \ref{alg:random} does not abort. We will also calculate the entropy $H(X_t \given X_1,\ldots,X_{t-1})$ which will allow us to estimate the number of possible outcomes. By design, for each $\alpha \in I_M$ the number of queens placed in $\alpha_n$ is $\approx \gamma(\alpha)n$. Hence, we expect that any queen configuration close to $Q(T)$ (i.e., the outcome of Algorithm \ref{alg:random}) is an element of $B_n(\gamma,\varepsilon)$.

In the second phase of the algorithm we seek to make a small number of modifications to $Q(T)$ in order to obtain an $n$-queens configuration. The key is the idea of \textit{absorption}, which we now illustrate: Suppose $Q$ is a partial $n$-queens configuration that does not cover row $r$ and column $c$. We wish to obtain a partial $n$-queens configuration $Q'$ that covers all rows and columns covered by $Q$ and also covers row $r$ and column $c$. We might try adding $(c,r)$ to $Q$, but if either of the diagonals incident to $(c,r)$ is occupied this will not work. Instead, we look for a queen $(x,y) \in Q$ satisfying:
\begin{enumerate}
	\item\label{itm:distinct diags} $(c,y)$ and $(x,r)$ do not share a diagonal (equivalently, $(c,r)$ and $(x,y)$ do not share a diagonal) and
	
	\item\label{itm:free diags} none of the (four) diagonals containing $(c,y)$ or $(x,r)$ are occupied.
\end{enumerate}
Supposing such a queen exists, we observe that $Q' \coloneqq \left( Q \setminus \{(x,y)\} \right) \cup \{(c,y),(x,r)\}$ is a partial $n$-queens configuration satisfying the conditions above. In this way, we have \textit{absorbed} row $r$ and column $c$ into our configuration. We call such a queen an \termdefine{absorber for $(c,r)$ in $Q$} (see Figure \ref{fig:absorber}). We denote the set of absorbers for $(c,r)$ in $Q$ by $\mB_Q(c,r)$.

\begin{figure}
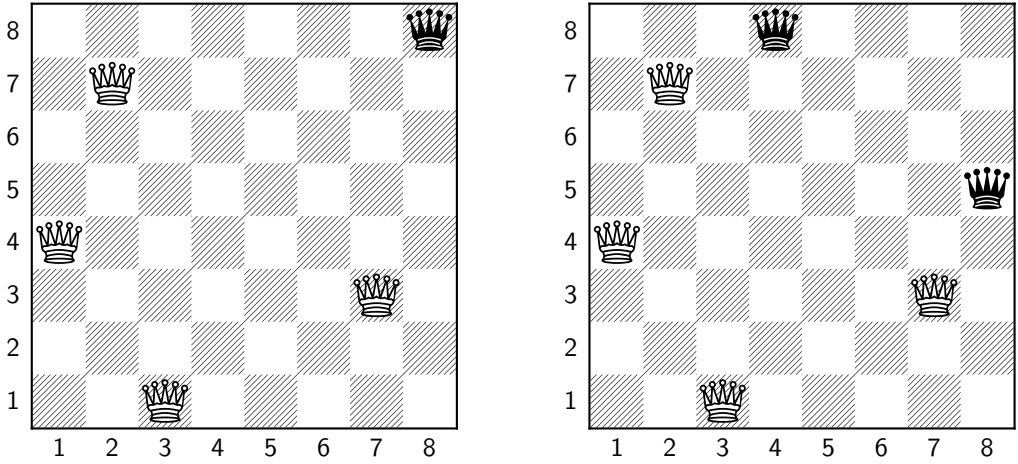

	\centering
	{	\def\whitepieces{qa4, qb7, qc1, qg3}
		\def\blackpieces{qh8}
		\chessboard[ showmover=false,labelbottomformat=\arabic{filelabel}, setwhite=\whitepieces,addblack=\blackpieces]
		\def\whitepieces{qa4, qb7, qc1, qg3}
		\def\blackpieces{qd8,qh5}
		\chessboard[ showmover=false,labelbottomformat=\arabic{filelabel}, setwhite=\whitepieces,addblack=\blackpieces]
	}
	\caption{In the partial $8$-queens configuration on the left the black queen at $(8,8)$ is an absorber for the square $(4,5)$. In the configuration on the right the queen at $(8,8)$ has been removed, while the black queens at $(4,8)$ and $(8,5)$ have been added, thus absorbing row $5$ and column $4$ into the configuration.}\label{fig:absorber}
\end{figure}

The following algorithm attempts to use absorbers to complete $Q(T)$. We remark that the number of uncovered rows in $Q(T)$ is always equal to the number of uncovered columns, and that in the (typical) case that \cref{alg:random} did not abort these are both equal to $n-T = \lfloor n^{1-1/K^2} \rfloor$.

\begin{algorithm}\label{alg:absorption}\hfill
	\begin{itemize}
		\item Let $L_R$ and $L_C$ be, respectively, the sets of rows and columns not covered by $Q(T)$. Set $k \coloneqq |L_R|$.
		
		\item Let $(c_1,r_1),(c_2,r_2),\ldots,(c_{k}, r_{k})$ be an arbitrary matching of $L_C$ to $L_R$.
		
		\item Set $R(0) \coloneqq Q(T)$.
		
		\item For $i=1,2,\ldots,k$:
		
		\begin{itemize}
			\item If $\mB_i \coloneqq \mB_{R(i-1)}(c_i,r_i) = \emptyset$ abort. 
			\item Otherwise, choose some $(x_i,y_i) \in \mB_i$ and set
			\[
			R(i) \coloneqq \left( R(i-1) \setminus \{(x_i,y_i)\} \right) \cup \{(x_i,r_i),(c_i,y_i)\}.
			\]
		\end{itemize}
	\end{itemize}
\end{algorithm}

Clearly, if Algorithm \ref{alg:absorption} does not abort then $R(k)$ is an $n$-queens configuration. In Section \ref{sec:absorbers} we show that w.h.p.\ $Q(T)$ satisfies a combinatorial condition that guarantees the success of Algorithm \ref{alg:absorption}.

\begin{rmk}
	The absorption procedure described above was introduced by Luria and the author in \cite{luria2021lower}. There, it was used in combination with a simple random greedy algorithm to show that $\mQ(n) \geq ((1-\oone))n e^{-3})^n$. While the analysis of Algorithm \ref{alg:absorption} shares some details with \cite{luria2021lower}, there are additional difficulties due to the fact that $\gamma$ may be far from uniform.
\end{rmk}

We analyze Algorithm \ref{alg:random} in Section \ref{sec:random analysis}. We analyze Algorithm \ref{alg:absorption} in Section \ref{sec:absorbers}. Then, in Section \ref{sec:lower bound proof} we put everything together and prove the lower bound in Theorem \ref{thm:precise bounds}.

\subsection{Analysis of Algorithm \ref{alg:random}}\label{sec:random analysis}

The analysis of Algorithm \ref{alg:random} is somewhat technical and calls for motivation. The overarching intuition is that because each $Y_t$ is distributed according to $\{\delta(\alpha)\}_{\alpha \in I_M}$, after $t$ steps of the process approximately $\delta(\alpha)t$ queens have been placed in $\alpha_n$. Thus, $Q(t)$ ``looks like'' a random size-$t$ subset of a random element of $B_n(\delta,\varepsilon)$. Indeed, an outside observer may not know if the process $\{Q(t)\}_{t=0}^T$ is governed by Algorithm \ref{alg:random} or by choosing $q \in B_n(\gamma,\varepsilon)$ uniformly at random and revealing its queens in a random order (though this is not literally true in an information-theoretic sense).

In order to analyze Algorithm \ref{alg:random} we need to track the distribution of available positions on the board. For example, we will need to know the number of available positions in each row. Our general strategy is to track random variables by showing they are close to smooth trajectory functions. However, this means we cannot track available positions directly: Whenever a queen is added to a row the number of available positions it contains jumps down to zero. Thus, we cannot expect this random variable to follow a smooth trajectory. To overcome this we define a related notion.

\begin{definition}
	Let $Q$ be a partial $n$-queens configuration. A position $(x,y) \in [n]^2$ is \termdefine{row-safe} in $Q$ if the column and both diagonals incident to it are unoccupied. It is \termdefine{column-safe} if the row and both diagonals incident to it are unoccupied in and it is \termdefine{plus (minus)-safe} if the row, column, and minus (plus)-diagonal incident to it are unoccupied.
\end{definition}

Observe that a position is available if and only if it is row-safe and its row is unoccupied. Analogous statements hold for column, plus, and minus-safe positions.

We will track the number of safe positions located in small strips of the board.

Let $\alpha \in I_M$ and let $(x,y) \in [n]^2$. Let $\mR_{y,\alpha}(t)$ be the set of row safe positions in $Q(t)$ that are in row $y$ and in $\alpha_n$. Let $\mC_{x,\alpha}(t)$ be the set of column-safe positions in $Q(t)$ in column $x$ and in $\alpha_n$. Let $\mD^+_{x+y,\alpha}(t)$ ($\mD^-_{y-x,\alpha}(t)$) be the set of plus- (minus-)safe positions in $Q(t)$ in plus- (minus-)diagonal $x+y$ ($y-x$) and in $\alpha_n$. Finally, let $\mA_\alpha(t) \coloneqq \alpha_n \cap \mA_{Q(t)}$ be the set of available positions in $\alpha_n$ at time $t$.

For each of these (random) sets, which are denoted using stylized Latin letters, we use the capital Latin letter equivalent for its cardinality. For example, $R_{y,\alpha}(t) = |\mR_{y,\alpha}(t)|$. In order to streamline the analysis it is useful to define the normalized random variable $\tA_\alpha(t) \coloneqq A_\alpha(t) / ((n-t)/M)$.

We now define the expected trajectories of the random variables. Let $(x,y) \in [n]^2$ and $\alpha \in I_M$. Recall the definitions of $L_{y,\alpha}^r,L_{x,\alpha}^c,L_{x+y,\alpha}^+,$ and $L_{x-y,\alpha}^-$ from \cref{sec:partitions}. For $t \in [0,T]$ define:
\begin{align*}
&r_{y,\alpha}(t) = L^r_{y,\alpha} \left( 1 - \frac{t}{n} \right) \left( 1 - M\delta^+(\alpha) \frac{t}{n} \right) \left( 1 - M\delta^-(\alpha) \frac{t}{n} \right),\\
&c_{x,\alpha}(t) = L^c_{x,\alpha} \left( 1 - \frac{t}{n} \right) \left( 1 - M\delta^+(\alpha) \frac{t}{n} \right) \left( 1 - M\delta^-(\alpha) \frac{t}{n} \right),\\
&d_{x+y,\alpha}^+(t) = L_{x+y,\alpha}^+ \left( 1 - \frac{t}{n} \right)^2 \left( 1 - M\delta^-(\alpha) \frac{t}{n} \right),\\
&d_{x-y,\alpha}^-(t) = L_{x-y,\alpha}^- \left( 1 - \frac{t}{n} \right)^2 \left( 1 - M\delta^+(\alpha) \frac{t}{n} \right),\\
&\ta_\alpha(t) = \frac{|\alpha_n|}{(n-t)/M} \left( 1 - \frac{t}{n} \right)^2 \left( 1 - M\delta^+(\alpha) \frac{t}{n} \right) \left( 1 - M\delta^-(\alpha) \frac{t}{n} \right).
\end{align*}
We also define the error function:
\[
E(t) = \frac{n}{M^{5/4}(1-t/n)^{K}}
\]
(where $K$ is the (large) constant used to define $T$).

We will use a differential equation method \cite{wormald1999differential} style martingale analysis to show that w.h.p.\ the random variables closely follow their trajectories. Informally, the method states that if a sequence of random variables $F(0),F(1),\ldots,F(T)$ and a smooth function $f:[0,T] \to \R$ satisfy:
\begin{itemize}
	\item \textit{Initial condition}: $F(0) \approx f(0)$;
	
	\item \textit{Trajectory condition}: For every $t<T$, $\E [F(t+1) - F(t) \given F(t)] \approx f'(t)$; and
	
	\item \textit{Boundedness condition}: There exists a constant $C$ such that $\norm{f'}_\infty \leq C$ and $|F(t+1)-F(t)| \leq C$;
\end{itemize}
then w.h.p.\ $F(t) \approx f(t)$.

In general, it may not be the case that the expected one-step changes in the random variables we have defined are close to the derivatives of their respective trajectories. However, we will show that this is the case for as long as they remain close to their trajectories. This motivates the next definition.

\begin{definition}\label{def:tau}
	Let the stopping time $\tau > 0$ be the smallest $t$ such that one of the random variables deviates by more than $E(t)$ from its expected trajectory. That is, $\tau$ is the smallest $t$ such that there exists some $(x,y) \in [n]^2$ and $\alpha \in I_M$ such that at least one of
	\begin{align*}
	& |R_{y,\alpha}(t) - r_{y,\alpha}(t)|, |C_{x,\alpha}(t) - c_{x,\alpha}(t)|, |D^+_{x+y,\alpha}(t) - d^+_{x+y,\alpha}(t)|,\\
	& |D^-_{x-y,\alpha}(t) - d^-_{x-y}(t)|, |\tA_\alpha(t) - \ta_\alpha(t)|
	\end{align*}
	is larger than $E(t)$. If there is no such $t$ set $\tau = \infty$.
\end{definition}

Most of this section is devoted to proving the next proposition, which implies that w.h.p.\ Algorithm \ref{alg:random} does not abort.

\begin{prop}\label{prop:tau inf whp}
	It holds that $\Prob \left[ \tau < \infty \right] = \exp \left( - \Omega \left( n^{0.75} \right) \right)$.
\end{prop}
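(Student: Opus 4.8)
The plan is to apply the differential equation method of \cite{wormald1999differential}. On the event $\{t<\tau\}$ every one of the five families of random variables ($R_{y,\alpha}$, $C_{x,\alpha}$, $D^+_{x+y,\alpha}$, $D^-_{y-x,\alpha}$, $Z_\alpha$) is within $E(t)$ of its trajectory; I will show that this forces their conditional one-step increments to agree with those of the trajectories up to an error negligible next to the budget increment $E(t+1)-E(t)$, and then a bounded-difference martingale inequality applied to each stopped, centred process will bound by $\exp(-\Omega(n^{0.75}))$ the probability that a given variable is the first to leave its tube before time $T$; a union bound over the $\mathrm{poly}(n)$ variables finishes. Two trivial reductions are used throughout: since $Q(0)=\emptyset$, every variable equals its trajectory at $t=0$ exactly (e.g.\ $R_{y,\alpha}(0)=L_{y,\alpha}^r=r_{y,\alpha}(0)$), so $\tau\ge 1$ always; and since one queen occupies one row, one column and one diagonal of each type, it destroys at most three row-safe (respectively column-, plus-, minus-safe) positions in a fixed line inside a fixed $\alpha_n$, and occupies at most one plus-diagonal meeting $\alpha_n$, so all the tracked variables—and, since the relevant line-counts are $O(n/M)$, the trajectory functions and their derivatives—change by $O(1)$ per unit of time.

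\emph{The drift on $\{t<\tau\}$ (the crux).} Fix $t<\tau$ and condition on $\mathcal H_t$, the $\sigma$-algebra generated by $Y_1,X_1,\dots,Y_t,X_t$. Step $t+1$ draws $Y_{t+1}=\beta$ with probability $\delta(\beta)$ and then $X_{t+1}$ uniformly from $\mA_\beta(t)$. Using Observation \ref{obs:delta properties}—so that $\delta$ has density bounded above and below by positive constants and $\distPlus{\delta},\distMinus{\delta}$ have density at most some $\eta<1$—together with $t\le T=(1-\oone)n$, the on-track bounds imply that every survival factor occurring in the trajectories stays bounded away from $0$, and that $A_\beta(t)$ is positive and known to within a factor $1\pm\oone$ (sum $R_{y,\beta}(t)\approx r_{y,\beta}(t)$ over the unoccupied rows meeting $\beta_n$). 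Hence, for a given row-safe position in $\alpha_n$, the probability that $X_{t+1}$ occupies its column (respectively one of its two diagonals) equals, up to a factor $1\pm\oone$ and an additive error that is negligible even after summing over the $O(n/M)$ positions of a strip, the corresponding ratio evaluated along the trajectories; a short computation using the uniform marginals of $\delta$ identifies these ratios with the hazard rates $-\tfrac{d}{dt}\log$ of the three factors defining $r_{y,\alpha}$. Summing over the $R_{y,\alpha}(t)$ row-safe positions yields
\[
\E\!\left[R_{y,\alpha}(t+1)-R_{y,\alpha}(t)\mid\mathcal H_t\right]=r_{y,\alpha}(t+1)-r_{y,\alpha}(t)\pm\xi(t),\qquad \xi(t)=o\!\left(E(t+1)-E(t)\right),
\]
and the same analysis gives the analogous estimates for $C_{x,\alpha}$, $D^+_{x+y,\alpha}$, $D^-_{y-x,\alpha}$ and $Z_\alpha$ (for the last only the rate at which plus-diagonals meeting $\alpha_n$ become occupied is needed).

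\emph{Concentration and union bound.} For a tracked variable $F$ with trajectory $f$, put $\widetilde F(t)=F(t\wedge\tau)-f(t\wedge\tau)$. By the previous paragraph the conditional one-step mean of $\widetilde F$ lies in $[-\xi(t),\xi(t)]$ while $t<\tau$ and is $0$ afterwards, and the increments are $O(1)$; subtracting the accumulated conditional drift gives a martingale $\widetilde M_F$ with $O(1)$ increments such that $|\widetilde F(t)|\le|\widetilde M_F(t)|+\sum_{s<t\wedge\tau}|\xi(s)|$, and since $\xi(s)=o(E(s+1)-E(s))$ with $E$ increasing, $\sum_{s<t}|\xi(s)|\le\tfrac12 E(t)$ for all $t\le T$ once $n$ is large. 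Thus if $F$ is the first variable to escape, at time $\tau\le T$, then $|\widetilde M_F(\tau)|>\tfrac12 E(\tau)\ge\tfrac12 E(0)$. By Azuma's inequality, $\Prob[\max_{t\le T}|\widetilde M_F(t)|>\tfrac12 E(0)]\le 2\exp(-\Omega(E(0)^2/n))$; since $M=\lfloor n^{0.1}\rfloor N=\Theta(n^{0.1})$ we get $E(0)=n/M^{5/4}=\Theta(n^{7/8})$, hence $E(0)^2/n=\Theta(n^{3/4})$. There are $O(Mn)=\mathrm{poly}(n)$ tracked variables, so
\[
\Prob[\tau<\infty]=\Prob[\tau\le T]\le\sum_{F}\Prob\!\left[\max_{t\le T}|\widetilde M_F(t)|>\tfrac12 E(0)\right]=\exp\!\left(-\Omega(n^{0.75})\right).
\]

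\emph{Main obstacle.} The difficulty is concentrated in the drift computation of the crux paragraph. First, $X_{t+1}$ is not uniform over the available positions of the board but is reweighted cell-by-cell according to $\delta$, so one must simultaneously control the local counts of safe positions in \emph{every} strip of $I_M$—which is exactly why five coupled families of variables must be tracked—and verify that the resulting one-step probabilities reassemble into the precise product forms of $r_{y,\alpha},\dots,z_\alpha$. Second, near the horizon $t=T$ the trajectories are small, so the tube must stay relatively narrow there; this is what forces the particular choices $E(t)=n/(M^{5/4}(1-t/n)^K)$, $T=n-\lfloor n^{1-1/K^2}\rfloor$ and $K$ large, and the error terms $\xi(t)$ must be shown to remain below $E(t+1)-E(t)$ uniformly in $t<T$ under these choices.
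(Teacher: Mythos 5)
Your proposal is correct and follows essentially the same route as the paper: the paper freezes each variable at $\tau$, folds the drift allowance into supermartingales $X^{\pm}(t) = \pm(X(t)-x(t)) - \tfrac12 E(t)$ whose negative drift comes from $E(t+1)-E(t) \approx K E(t)/(n-t)$ dominating the $O(E(t)/(n-t))$ error (Claims \ref{clm:A_alpha lower bound}--\ref{clm:diagonal occupation probability} supply exactly the hazard-rate identification you sketch, via Claim \ref{clm:line sums}), and then applies Azuma with $E(0)^2/T = \Theta(n^{0.75})$ and a union bound over variables and times. The only nuance is that your $\xi(t)$ is a constant ($O(1/K)$) fraction of $E(t+1)-E(t)$ rather than literally $o$ of it, which is precisely why $K$ must be taken large, as you note.
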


We begin by recording an estimate on $A_\alpha(t)$ that holds under the assumption $\tau>t$.

\begin{obs}\label{obs:A_alpha asymptotic}
	Suppose that $\tau > t$. Then, for every $\alpha \in I_M$ we have
	\[
	A_\alpha(t) = \left( 1 \pm \frac{10 M E(t)}{(1-\eta)^2 (n-t) } \right) |\alpha_n| \left( 1 - \frac{t}{n} \right)^2 \left( 1 - M\delta^+(\alpha) \frac{t}{n} \right) \left( 1 - M\delta^-(\alpha) \frac{t}{n} \right)
	\]
	and
	\[
	A_\alpha(t) \geq \frac{(1-\eta)^2 (n-t)^2}{5M^2}.
	\]
\end{obs}

\begin{proof}
	The proof follows straightforwardly from unpacking the definitions. Indeed, since $\tau > t$, we have
	\[
	A_\alpha(t) = \tA_\alpha(t) \frac{n-t}{M} = \ta_\alpha(t)\frac{n-t}{M} \pm E(t)\frac{n-t}{M} = \left( 1 \pm \frac{E(t)}{\ta_\alpha(t)} \right) \ta_\alpha(t) \frac{n-t}{M}.
	\]
	Using the facts that $|\alpha_n| \geq n^2\!/(5M^2)$ and that $M\delta^+(\alpha), M\delta^-(\alpha) \leq \eta$ we conclude that for every $t \leq n$ there holds $\ta_\alpha(t) \geq (1-\eta)^2 (n-t) / (10M)$. Together with the definition of $\ta_\alpha(t)$ this implies the equality in the observation.
	
	The lower bound on $A_\alpha(t)$ follows from the equality together with the fact that $|\alpha_n| \geq n^2 / (4.5M^2)$ and that $10ME(t) / ((1-\eta)^2(n-t)) = o(1)$.
\end{proof}

The next claim shows that for as long as $\tau > t$, every unoccupied row and column is approximately equally likely to be occupied at step $t+1$.

\begin{clm}\label{clm:row-col prob}
	Suppose that $\tau > t$. Then, for every unoccupied row or column in $Q(t)$, the probability that it is occupied in $Q(t+1)$ is
	\[
	\frac{1}{n-t} \pm O \left( \frac{ME(t)}{(n-t)^2}\right).
	\]
\end{clm}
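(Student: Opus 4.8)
The plan is to read the conditional probability straight off Algorithm \ref{alg:random} and then identify its leading term using the estimates that $\tau>t$ makes available. Fix an unoccupied row $y$ of $Q(t)$ and condition on a realization of $Q(t)$ with $\tau>t$. By Claim \ref{clm:A_alpha lower bound} every $\mathcal{A}_\alpha(t)$ is nonempty, so at step $t+1$ the algorithm picks $Y_{t+1}=\alpha$ with probability $\delta(\alpha)$ and then $X_{t+1}$ uniformly in $\mathcal{A}_{Y_{t+1}}(t)$; since row $y$ is unoccupied, a position of row $y$ in $\alpha_n$ is available iff it is row-safe, so $|\mathcal{A}_\alpha(t)\cap(\text{row }y)|=R_{y,\alpha}(t)$. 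Hence row $y$ becomes occupied in $Q(t+1)$ with probability
\[
p_y=\sum_{\alpha\in I_M}\delta(\alpha)\,\frac{R_{y,\alpha}(t)}{A_\alpha(t)},
\]
the sum effectively over the fewer than $2M$ cells with $L^r_{y,\alpha}>0$. The column statement will follow by the symmetry between rows and columns.

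For the squares $\alpha\in S_M$, $\tau>t$ gives $R_{y,\alpha}(t)=r_{y,\alpha}(t)\pm E(t)$ and the two-sided estimate of Claim \ref{clm:A_alpha lower bound} for $A_\alpha(t)$. The common factors $(1-t/n)(1-M\delta^+(\alpha)t/n)(1-M\delta^-(\alpha)t/n)$ cancel, so that
\[
\frac{R_{y,\alpha}(t)}{A_\alpha(t)}=\frac{L^r_{y,\alpha}}{|\alpha_n|\,(1-t/n)}\bigl(1+\theta_\alpha\bigr),\qquad \theta_\alpha=O\!\Bigl(\tfrac{ME(t)}{n(1-t/n)^{3}}\Bigr),
\]
where the additive $\pm E(t)$ in $R_{y,\alpha}(t)$ is absorbed using the a priori bound $A_\alpha(t)=\Omega\!\bigl(\tfrac{n^2}{M^2}(1-t/n)^2\bigr)$ (also from Claim \ref{clm:A_alpha lower bound}). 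Weighting by $\delta(\alpha)$ and applying item \ref{itm:row sum} of Claim \ref{clm:line sums} to the $M$-step queenon $\delta$ (its hypothesis $n\ge M^2$ holds since $M=\Theta(n^{0.1})$) turns the leading term into $\tfrac1{1-t/n}\bigl(\tfrac1n\pm O(MG_M/n^2)\bigr)=\tfrac1{n-t}\bigl(1\pm O(MG_M/n)\bigr)$. A short calculation — using $E(t)\ge n/M^{5/4}$ and $M=\Theta(n^{0.1})$, and noting that $E(t)$ carries the large negative power $(1-t/n)^{-K}$, which for $K$ sufficiently large swamps the extra $(1-t/n)^{-O(1)}$ produced by these cancellations — shows the $S_M$-part of $p_y$ equals $\tfrac1{n-t}\pm O\!\bigl(\tfrac{ME(t)}{(n-t)^2}\bigr)$.

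It remains to control the half-squares $\alpha\in T_M$, and this is where I expect the real work. For a row $y$ at distance $\gg n/M$ from the top and bottom of the board, $y$ meets only $O(1)$ half-squares (those straddling $x=\pm1/2$ near its endpoints), with $\sum_{\alpha\in T_M}L^r_{y,\alpha}=O(n/M)$, so the a priori bound $A_\alpha(t)=\Omega\!\bigl(\tfrac{n^2}{M^2}(1-t/n)^2\bigr)$ makes their contribution $O\!\bigl(\tfrac{G_M}{Mn(1-t/n)^2}\bigr)=O\!\bigl(\tfrac{ME(t)}{(n-t)^2}\bigr)$, which is absorbed. The obstacle is the $O(n/M)$ rows within distance $O(n/M)$ of the top or bottom of the board (and, by symmetry, the analogous columns): such a row can meet $\Omega(M)$ half-squares containing a constant fraction of its length, so the crude lower bound on $A_\alpha(t)$ no longer suffices and one needs $A_\alpha(t)=(1\pm o(1))|\alpha_n|(1-t/n)^2(1-M\delta^+(\alpha)t/n)(1-M\delta^-(\alpha)t/n)$ for these half-squares as well. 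Since $\tau>t$ pins down each $D^+_{c,\alpha}(t)=L^+_{c,\alpha}(1-t/n)^2(1-M\delta^-(\alpha)t/n)\pm E(t)$ — proportional to $L^+_{c,\alpha}$ with a $c$-independent ratio — and pins down $Z_\alpha(t)$, this reduces to showing $\sum_{c\text{ occupied}}L^+_{c,\alpha}=(1\pm o(1))M\delta^+(\alpha)\tfrac tn|\alpha_n|$, i.e.\ that the unoccupied plus-diagonals incident to $\alpha_n$ are not biased in total length relative to a uniform subset of the same size. This ought to follow from $\delta$ being an $M$-step queenon — so that all $\sim n/M$ plus-diagonals incident to a single cell of $I_M$ have essentially the same probability of being occupied — together with a concentration estimate; supplying this cleanly is the main difficulty. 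With it in hand, $p_y=\tfrac1{n-t}\pm O\!\bigl(\tfrac{ME(t)}{(n-t)^2}\bigr)$ for every row $y$, and the column case is immediate by symmetry.
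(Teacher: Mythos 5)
Your setup and your treatment of the squares $S_M$ coincide with the paper's proof: the same identity $p_y=\sum_{\alpha\in I_M}\delta(\alpha)R_{y,\alpha}(t)/A_\alpha(t)$, the same cancellation of the common factors using $\tau>t$ and Claim \ref{clm:A_alpha lower bound}, and the same appeal to Claim \ref{clm:line sums}\ref{itm:row sum}. But as submitted this is not a proof: you explicitly leave the half-square contribution unresolved for rows (and columns) near the edges of the board, calling the required two-sided estimate for $A_\alpha(t)$ with $\alpha\in T_M$ ``the main difficulty'' and only sketching a reduction to an unbiasedness statement about the total length of unoccupied diagonals. That missing step is real: $\tau>t$ controls $Z_\alpha(t)$ and each $D^+_{c,\alpha}(t)$, but in a half-square the lengths $L^+_{c,\alpha}$ range from $O(1)$ to $\Theta(n/M)$, so these controls alone do not determine $A_\alpha(t)=\sum_{c\in\mZ_\alpha(t)}D^+_{c,\alpha}(t)$ up to a $1\pm o(1)$ factor, and neither your write-up nor the claims you cite supply the concentration estimate you would need. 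For comparison, the paper does not attempt tight asymptotics on $T_M$ at all: it uses only the crude lower bound $A_\alpha(t)=\Omega\bigl(n^2(1-t/n)^2/M^2\bigr)$ from Claim \ref{clm:A_alpha lower bound}, together with the assertion that a given row meets only $O(1)$ half-squares, so their total $\delta$-mass is $O(1/M^2)$ and their contribution is $O\bigl(1/(Mn(1-t/n))\bigr)$, which is within the allowed error $O\bigl(ME(t)/(n-t)^2\bigr)$. You are right that this assertion is not literally valid for the $\Theta(n/M)$ rows nearest the top and bottom (such a row meets $\Theta(M)$ half-squares of the $I_M$ tiling, and for the extreme rows a constant fraction — indeed most — of $L^r_{y,\cdot}$ sits in $T_M$, which also degrades the $S_M$ main term), so you have put your finger on a step the paper treats too quickly; but identifying the difficulty is not resolving it, and your proposal stops exactly where the new work would have to begin.

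A secondary problem is your error bookkeeping on $S_M$: the claim that extra factors of $(1-t/n)^{-O(1)}$ are ``swamped'' because $E(t)$ carries $(1-t/n)^{-K}$ with $K$ large is not a valid absorption. The target error is $O\bigl(ME(t)/(n-t)^2\bigr)$ with the same $E(t)$; a term of the form $ME(t)/\bigl((n-t)^2(1-t/n)^j\bigr)$ with $j\geq 1$ exceeds it by $(1-t/n)^{-j}$, which near $t=T$ is of order $n^{j/K^2}$ and is not bounded by any constant, no matter how large $K$ is chosen. So this part of your argument needs a genuinely more careful computation (the relative error from Claim \ref{clm:A_alpha lower bound} must be multiplied against a main term of size $O(1/(n-t))$, and the additive $\pm E(t)$ handled separately), rather than an appeal to the size of $K$.
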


\begin{proof}
	By symmetry it suffices to prove only the statement for unoccupied rows.
	
	Let $y \in [n]$. For $0 \leq t \leq T$ let $B(t)$ be the event that row $y$ is occupied in $Q(t)$. Given $Q(t)$ such that row $y$ is unoccupied (i.e., $B(t)$ does not hold), we have
	\[
	\Prob[B(t+1) \given Q(t)] = \sum_{x=1}^n \Prob \left[ X_t = (x,y) \given Q(t) \right] = \sum_{\alpha \in I_M} \frac{\delta(\alpha) R_{y,\alpha}(t)}{A_\alpha(t)}.
	\]
	If $\tau > t$ then for every $\alpha$ we have $R_{y,\alpha}(t) = r_{y,\alpha}(t) \pm E(t)$ and (by \cref{obs:A_alpha asymptotic}) for $C = 10 / (1-\eta)^2$ and $\alpha \in I_M$:
	\[
	A_{\alpha}(t) = \left( 1 \pm \frac{C M E(t)}{n-t} \right) |\alpha_n| \left( 1 - \frac{t}{n} \right)^2 \left( 1 - M\delta^+(\alpha) \frac{t}{n} \right) \left( 1 - M \delta^-(\alpha) \frac{t}{n} \right).
	\]
	Additionally, for $D = (1-\eta)^2/5$ and every $\alpha \in I_M$, there holds $A_\alpha(t) \geq D (n-t)^2\!/M^2$.	Therefore:
	\begin{align*}
	\Prob & \left[ B(t+1) \given B^c(t) \land \tau > t \right]\\
	= & \left( 1 \pm \frac{C M E(t)}{n-t} \right) \sum_{\alpha \in I_M : L_{y,\alpha}^r>0} \frac{\delta(\alpha) \left( L^r_{y,\alpha} \left( 1 - \frac{t}{n} \right) \left( 1 - M\delta^+(\alpha) \frac{t}{n} \right) \left( 1 - M\delta^-(\alpha) \frac{t}{n} \right) \pm E(t) \right)}{|\alpha_n| \left( 1 - t/n \right)^2 \left( 1 - M\delta^+(\alpha) \frac{t}{n} \right) \left( 1 - M \delta^-(\alpha) \frac{t}{n} \right)}\\
	= & \left( 1 \pm \frac{C M E(t)}{n-t} \right) \sum_{\alpha \in I_M} \frac{\delta(\alpha) L^r_{y,\alpha} }{|\alpha_n| \left( 1 - t/n \right)} \pm \frac{ M^2 E(t)}{D (n-t)^2} \sum_{\alpha \in I_M : L_{y,\alpha}^r > 0} \delta(\alpha).
	\end{align*}
	Observe that the set of $\alpha \in I_M$ such that $L_{y,\alpha} > 0$ all intersect row $y$. Therefore these sets are all contained in an axis parallel rectangle of height $\leq 4/M$. Since $\delta$ has uniform marginals, we have $\sum_{\alpha \in I_M : L_{y,\alpha} > 0} \delta(\alpha) \leq 4/M$. Therefore:
	\[
	\frac{ M^2 E(t)}{D (n-t)^2} \sum_{\alpha \in I_M : L_{y,\alpha}^r > 0} \delta(\alpha) \leq \frac{ 4M E(t)}{D (n-t)^2}.
	\]
	We turn our attention to the first sum. By Claim \ref{clm:line sums} \ref{itm:row sum}:
	\[
	\sum_{\alpha \in I_M} \frac{\delta(\alpha)L_{y,\alpha}^r}{|\alpha_n|} = \frac{1}{n} \pm O \left( \frac{1}{nM} \right).
	\]
	Hence
	\begin{align*}
	\left( 1 \pm \frac{C M E(t)}{n-t} \right) & \sum_{\alpha \in I_M} \frac{\delta(\alpha) L^r_{y,\alpha} }{|\alpha_n| \left( 1 - t/n \right)}
	= \left( 1 \pm \frac{C M E(t)}{n-t} \right) \left( 1 \pm O \left( \frac{1}{M} \right) \right) \frac{1}{n-t}\\
	& = \left( 1 \pm \frac{2C M E(t)}{n-t} \right)  \frac{1}{n-t} = \frac{1}{n-t} \pm \frac{2CME(t)}{(n-t)^2}.
	\end{align*}
	Therefore:
	\begin{align*}
	\Prob \left[ B(t+1) \given B^c(t) \land \tau > t \right] = \frac{1}{n-t} \pm \left( \frac{2CME(t)}{(n-t)^2} + \frac{4ME(t)}{D(n-t)^2} \right) = \frac{1}{n-t} \pm O \left( \frac{ME(t)}{(n-t)^2} \right),
	\end{align*}
	as desired.
\end{proof}

\begin{clm}\label{clm:diagonal occupation probability}
	Let $* \in \{+,-\}$. Suppose that $\tau > t$ and that $*$-diagonal $c$, which intersects $\alpha_n$, is unoccupied in $Q(t)$. Then, the probability that $*$-diagonal $c$ is occupied in $Q(t+1)$ is
	\[
	\frac{M \delta^*(\alpha)}{n \left( 1 - M\delta^*(\alpha) \frac{t}{n} \right)} \pm O \left( \frac{ME(t)}{(n-t)^2} \right).
	\]
\end{clm}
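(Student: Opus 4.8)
The proof parallels that of Claim \ref{clm:row-col prob}, with diagonals playing the role of rows and columns; by symmetry it suffices to treat $* = +$. Condition on a history $Q(t)$ with $\tau > t$, so that by Claim \ref{clm:A_alpha lower bound} we have $A_\beta(t) > 0$ for every $\beta \in I_M$ and Algorithm \ref{alg:random} does not abort at step $t+1$. Since plus-diagonal $c$ is unoccupied in $Q(t)$, a position of plus-diagonal $c$ lying in $\beta_n$ is available in $Q(t)$ exactly when it is plus-safe, so the number of available positions of plus-diagonal $c$ in $\beta_n$ is precisely $D^+_{c,\beta}(t)$. Using the description of one step of Algorithm \ref{alg:random} (pick $Y_{t+1} = \beta$ with probability $\delta(\beta)$, then a uniform available position in $\beta_n$),
\[
\Prob[\text{plus-diagonal $c$ occupied in }Q(t+1) \mid Q(t)] = \sum_{\beta} \delta(\beta) \frac{D^+_{c,\beta}(t)}{A_\beta(t)},
\]
the sum ranging over the $O(M)$ elements $\beta \in I_M$ met by plus-diagonal $c$; all of these share the plus-coordinate of the diagonal, so $\delta^+(\beta) = \delta^+(\alpha)$ throughout the sum.

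Next I would insert the trajectory estimates afforded by $\tau > t$. For $\beta \in S_M$ we have $D^+_{c,\beta}(t) = d^+_{c,\beta}(t) \pm E(t)$, and by Claim \ref{clm:A_alpha lower bound} $A_\beta(t)$ equals its trajectory $\tilde A_\beta(t) \coloneqq |\beta_n|(1-t/n)^2(1-M\delta^+(\alpha)t/n)(1-M\delta^-(\alpha)t/n)$ up to an additive $O(nE(t)/M)$. The point on which everything hinges --- and the reason the trajectories $d^+,d^-$ were defined as they were --- is that in the ratio $d^+_{c,\beta}(t)/\tilde A_\beta(t)$ the factors $(1-t/n)^2$ and $(1-M\delta^-(\alpha)t/n)$ cancel, leaving the moderate-sized quantity
\[
P_\beta \coloneqq \frac{d^+_{c,\beta}(t)}{\tilde A_\beta(t)} = \frac{L^+_{c,\beta}}{|\beta_n|\left(1-M\delta^+(\alpha)\tfrac{t}{n}\right)} = O\!\left(\frac{M}{n(1-t/n)}\right).
\]
Dividing the numerator and denominator of $D^+_{c,\beta}(t)/A_\beta(t)$ by $d^+_{c,\beta}(t)$ and $\tilde A_\beta(t)$ respectively, the relative errors $E(t)/d^+_{c,\beta}(t)$ and $nE(t)/(M\tilde A_\beta(t))$ are each $O\!\left(M^{-1/4}(1-t/n)^{-(K+O(1))}\right)$, hence $o(1)$ once $K$ is chosen large (using $1-t/n \ge n^{-1/K^2}$ for $t \le T$ and $M \ge n^{0.1}$). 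Thus $D^+_{c,\beta}(t)/A_\beta(t) = P_\beta(1 \pm o(1))$ for $\beta \in S_M$. For the $O(1)$ half-squares $\beta \in T_M$ I would just use $D^+_{c,\beta}(t) \le L^+_{c,\beta} = O(n/M)$, $A_\beta(t) = \Omega\!\left(n^2(1-t/n)^2/M^2\right)$, and $\delta(\beta) = O(1/M^2)$ to bound their total contribution by $O\!\left(n/(M(n-t)^2)\right)$, which is $o\!\left(ME(t)/(n-t)^2\right)$.

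Finally I would sum. Pulling $\left(1-M\delta^+(\alpha)t/n\right)^{-1}$ out of $\sum_{\beta \in S_M} \delta(\beta) P_\beta$ and applying Claim \ref{clm:line sums} \ref{itm:plus diagonal sum} with $N = M$ and $\gamma = \delta$ --- whose error term is $O(1/(Mn))$ since $G_M = O(1)$ by Observation \ref{obs:delta properties}, and discarding the half-square part of that sum, also $O(1/(Mn))$ ---
\[
\sum_{\beta \in S_M} \delta(\beta) P_\beta = \frac{1}{1-M\delta^+(\alpha)\tfrac{t}{n}}\left(\frac{M\delta^+(\alpha)}{n} \pm O\!\left(\frac{1}{Mn}\right)\right) = \frac{M\delta^+(\alpha)}{n\left(1-M\delta^+(\alpha)\tfrac{t}{n}\right)} \pm O\!\left(\frac{ME(t)}{(n-t)^2}\right),
\]
the last step using $1-M\delta^+(\alpha)t/n \ge 1-t/n = (n-t)/n$ together with $(1-t/n)^{K+1} \le 1 \le M^{3/4}$. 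Folding in the $(1 \pm o(1))$ multiplicative slack from the previous paragraph (each $P_\beta = O(M/(n(1-t/n)))$, so the accumulated error is comfortably $o(ME(t)/(n-t)^2)$ once $K$ is large) and the negligible half-square contribution yields the claimed estimate. The main obstacle is the error bookkeeping rather than any conceptual difficulty: one must resist converting $A_\beta(t)$ into multiplicative-error form against a large quantity before performing the cancellation in $P_\beta$ --- it is that cancellation which keeps every relative error $o(1)$ --- and then repeatedly invoke $1-t/n \ge n^{-1/K^2}$ and the largeness of $K$ to absorb the accumulated negative powers of $1-t/n$.
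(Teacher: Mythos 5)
Your proposal follows the paper's own route almost verbatim: the same decomposition $\sum_{\beta}\delta(\beta)D^+_{c,\beta}(t)/A_\beta(t)$, the same trajectory estimates under $\tau>t$, the same cancellation leaving $L^+_{c,\beta}/(|\beta_n|(1-M\delta^+(\alpha)t/n))$, and the same appeal to Claim \ref{clm:line sums}\ref{itm:plus diagonal sum} after using $\delta^+(\beta)=\delta^+(\alpha)$. Your treatment of the half-squares and of the $O(1/(Mn))$ error from Claim \ref{clm:line sums} is fine. The gap is in the last step, where you fold the multiplicative $(1\pm o(1))$ slack back into the additive error. Because you only ever use $1-M\delta^{\pm}(\beta)t/n\ge 1-t/n$, your main term $\sum_\beta\delta(\beta)P_\beta$ is bounded only by $O(1/(n-t))=O(1/(n(1-t/n)))$ and your relative errors are $O\bigl(M^{-1/4}(1-t/n)^{-(K+O(1))}\bigr)$; their product is of order $M^{-1/4}n^{-1}(1-t/n)^{-(K+O(1))-1}$, whereas the claimed budget is $ME(t)/(n-t)^2=M^{-1/4}n^{-1}(1-t/n)^{-(K+2)}$. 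The discrepancy is a factor $(1-t/n)^{-\Theta(1)}$, which for $t$ near $T$ is $n^{\Theta(1/K^2)}$. This is \emph{not} "comfortably $o(ME(t)/(n-t)^2)$ once $K$ is large": both your error and the budget carry the same $(1-t/n)^{-K}$, so enlarging $K$ never removes the $n^{\Theta(1/K^2)}$ overshoot for a fixed constant $K$. The weaker error form you actually obtain would then be fatal downstream, since the one-step-change claims multiply this probability error by counts of size $\Theta((n-t)/M)$ and the resulting drift error must be dominated by $KE(t)/(2(n-t))$ with $K$ a fixed constant (Lemma \ref{lem:supermartingale condition}).

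The missing ingredient is the property of $\delta$ recorded in Observation \ref{obs:delta properties}\ref{itm:diagonals not full} and used throughout Section \ref{ssec:random analysis} in the form $M\delta^{+}(\beta),M\delta^{-}(\beta)\le\eta<1$ (this is where the $(1-\eta)$ constants in Claim \ref{clm:A_alpha lower bound}, and later $\zeta=\eta/(1-\eta)$, come from; it holds because of the $\rho\kappa$ perturbation in the definition of $\delta$). With it, $1-M\delta^{\pm}(\beta)t/n\ge 1-\eta=\Omega(1)$ uniformly for all $t\le n$, so $P_\beta=O(M/n)$ rather than $O(M/(n(1-t/n)))$, the main term is $O(1/n)$, and the lower bounds on $d^+_{c,\beta}(t)$ and on the trajectory of $A_\beta(t)$ lose only $(1-t/n)^2$ instead of $(1-t/n)^3$ or $(1-t/n)^4$. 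Then the slack-times-main-term product becomes $O\bigl(M^{-1/4}n^{-1}(1-t/n)^{-(K+2)}\bigr)=O\bigl(ME(t)/(n-t)^2\bigr)$, and your argument closes exactly as the paper's does. So the approach is right, but you must invoke the $\eta$-bound on the diagonal marginals of $\delta$; with only the sub-uniformity bound $M\delta^{\pm}\le 1$ the stated error estimate is not reached.
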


The term $M\delta^*(\alpha) / (n - M\delta^*(\alpha) t)$ can be interpreted as follows: In a configuration $q \in B_n(\delta,\varepsilon)$, approximately $n\delta^*(\alpha)$ of the $*$-diagonals passing through $\alpha_n$ are occupied. Thus, after $t$ steps of the process, the probability that the next queen placed should occupy a particular one of these diagonals is proportional to $\delta^*(\alpha)$ (i.e., the fraction of the occupied $*$-diagonals that pass through $\alpha_n$) and also $1/(n/M-\delta^*(\alpha)t)$ (i.e., the inverse of the number of remaining unoccupied diagonals that pass through $\alpha_n$).

\begin{proof}
	The proof is similar to that of Claim \ref{clm:row-col prob}. We prove only the case $*=+$. For $0 \leq t \leq T$ let $B(t)$ be the event that plus-diagonal $c$ is occupied in $Q(t)$. Given $Q(t)$ such that plus-diagonal $c$ is unoccupied, we have
	\[
	\Prob \left[ B(t+1) \given Q(t) \right] = \sum_{\beta \in I_M} \frac{\delta(\beta) D^+_{c,\beta}(t)}{A_\beta(t)}.
	\]
	If $\tau > t$ we have, for $C = 10/(1-\eta)^2$:
	\begin{align*}
	\Prob & \left[ B(t+1) \given B^c(t) \land \tau > t \right]\\
	= & \left( 1 \pm \frac{CM E(t)}{n-t} \right) \sum_{\beta \in I_M} \frac{\delta(\beta) \left( L^+_{c,\beta} \left( 1 - \frac{t}{n} \right)^2 \left( 1 - M\delta^-(\beta) \frac{t}{n} \right) \pm E(t) \right)}{|\beta_n| \left( 1 - t/n \right)^2 \left( 1 - M\delta^+(\beta) \frac{t}{n} \right) \left( 1 - M \delta^-(\beta) \frac{t}{n} \right)}\\
	= & \left( 1 \pm \frac{CM E(t)}{n-t} \right) \sum_{\beta \in I_M} \frac{\delta(\beta) L^+_{c,\beta}}{|\beta_n| \left( 1 - M\delta^+(\beta) \frac{t}{n} \right)} \pm O \left( \frac{M E(t)}{(n-t)^2} \right).
	\end{align*}
	We note that for every $\beta \in I_M$ such that $L_{c,\beta}^+>0$ it holds that $\delta^+(\beta) = \delta^+(\alpha)$. Therefore, by Claim \ref{clm:line sums} \ref{itm:plus diagonal sum}:
	\begin{align*}
	\sum_{\beta \in I_M} \frac{\delta(\beta) L^+_{c,\beta}}{|\beta_n| \left( 1 - M\delta^+(\beta) \frac{t}{n} \right)} = \frac{M \delta^+(\alpha)}{n - M \delta^+(\alpha)t } \pm O \left( \frac{1}{Mn} \right).
	\end{align*}
	Hence:
	\begin{align*}
	\Prob \left[ B(t+1) \given B^c(t) \land \tau > t \right]
	& = \left( 1 \pm \frac{CM E(t)}{n-t} \right) \frac{M \delta^+(\alpha)}{n - M \delta^+(\alpha) t} \pm O \left( \frac{ME(t)}{(n-t)^2} \right)\\
	& = \frac{M \delta^+(\alpha)}{n - M \delta^+(\alpha) t } \pm O \left( \frac{ME(t)}{(n-t)^2} \right),
	\end{align*}
	as desired.
\end{proof}

We now transform the random variables so that we can apply a martingale analysis. Let $X(t)$ be one of the random variables in
\[
\{R_{y,\alpha}(t), C_{x,\alpha}(t), D^+_{x+y,\alpha}(t), D^-_{x-y,\alpha}(t), \tA_{\alpha}(t) : \alpha \in I_M, (x,y) \in [n]^2, 0 \leq t \leq T\}.
\]
We write the corresponding trajectory function as $x(t)$ (for example, if $X(t) = R_{y,\alpha}(t)$ then $x(t) = r_{y,\alpha}(t)$). Define the following random variables:
\begin{align*}
X^+(t) = \begin{cases}
X(t) - x(t) - \frac{1}{2}E(t) & t \leq \tau\\
X^+(t-1) & t > \tau.
\end{cases},\\
X^-(t) = \begin{cases}
x(t) - X(t) - \frac{1}{2}E(t) & t \leq \tau\\
X^-(t-1) & t > \tau
\end{cases}.
\end{align*}
We will show that these sequences are supermartingales with respect to the filtration induced by $Q(0),Q(1),\ldots,Q(T)$. We will then apply the Azuma--Hoeffding inequality to show that they closely follow their trajectories. This will imply Proposition \ref{prop:tau inf whp}.

As shown in Claims \ref{clm:row-col prob} and \ref{clm:diagonal occupation probability}, conditioning on $\tau > t$ implies that a certain regularity holds at time $t$. This makes it easy to calculate expected one-step changes. This motivates freezing the random variables at the stopping time $\tau$.

We will use the following version of the Azuma--Hoeffding inequality.

\begin{thm}[{\cite[Lemma 1]{wormald1995differential}}]\label{thm:azuma}
	Let $X_0,X_1,\ldots$ be a supermartingale with respect to a filtration $\mF_0,\mF_1,\ldots$. Let $C>0$ satisfy $C \geq |X_i - X_{i-1}|$ for every $i$. Then, for every $\lambda>0$ and $t \geq 0$, it holds that
	\[
	\Prob \left[ X_t \geq X_0 + \lambda \right] \leq \exp \left( - \frac{\lambda^2}{2 t C^2} \right).
	\]
\end{thm}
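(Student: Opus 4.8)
The plan is to prove this by the exponential moment (Chernoff) method, the standard route to Azuma--Hoeffding type bounds; since the statement is quoted from \cite{wormald1995differential} I will keep the argument brief. Write $D_i = X_i - X_{i-1}$, so that $X_t - X_0 = \sum_{i=1}^t D_i$, with $|D_i| \le C$ and, by the supermartingale property, $\E[D_i \given \mF_{i-1}] \le 0$. First I would fix a parameter $s > 0$, to be optimized at the very end, and aim to control the moment generating function $\E\!\left[e^{s(X_t - X_0)}\right]$.

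The crux is a conditional Hoeffding estimate: $\E[e^{sD_i} \given \mF_{i-1}] \le e^{s^2 C^2 / 2}$ for each $i$. To get it, use convexity of $x \mapsto e^{sx}$ on $[-C,C]$ to obtain the chord bound $e^{sx} \le \cosh(sC) + (x/C)\sinh(sC)$ for $x \in [-C,C]$, apply it with $x = D_i$, take the conditional expectation, and use $\E[D_i \given \mF_{i-1}] \le 0$ together with $\sinh(sC) \ge 0$ to kill the linear term, leaving $\E[e^{sD_i}\given\mF_{i-1}] \le \cosh(sC) \le e^{s^2 C^2/2}$ (the last inequality being the elementary $\cosh u \le e^{u^2/2}$, by comparing Taylor coefficients). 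Then I would iterate this, conditioning successively on $\mF_{t-1}, \mF_{t-2}, \dots$ and extracting the bounded factor at each stage, to get
\[
\E\!\left[e^{s(X_t - X_0)}\right] = \E\!\left[e^{s\sum_{i=1}^{t-1}D_i}\,\E\!\left[e^{sD_t}\given\mF_{t-1}\right]\right] \le e^{s^2 C^2/2}\,\E\!\left[e^{s\sum_{i=1}^{t-1}D_i}\right] \le \cdots \le e^{t s^2 C^2/2}.
\]

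Finally, Markov's inequality applied to $e^{s(X_t - X_0)}$ gives, for every $s > 0$,
\[
\Prob\!\left[X_t \ge X_0 + \lambda\right] \le e^{-s\lambda}\,\E\!\left[e^{s(X_t-X_0)}\right] \le \exp\!\left(-s\lambda + \tfrac{1}{2}t s^2 C^2\right),
\]
and optimizing the exponent over $s$ --- the minimum is at $s = \lambda/(tC^2)$ --- yields exactly $\exp\!\left(-\lambda^2/(2tC^2)\right)$. I do not expect a genuine obstacle here: the result is classical and is invoked in the applications (where the tracked quantities are frozen at the stopping time $\tau$, which is harmless since a stopped supermartingale is still a supermartingale with the same one-step bound). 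The single point that repays a little care is the conditional Hoeffding estimate, where one has the inequality $\E[D_i\given\mF_{i-1}]\le 0$ rather than an equality --- this is precisely why the chord bound is written so that the linear correction term carries the favorable sign.
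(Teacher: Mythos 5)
Your proposal is correct and is the standard Chernoff/Hoeffding-lemma proof of the Azuma--Hoeffding inequality for supermartingales with bounded differences: the chord bound handles the one-sided drift condition $\E[D_i \given \mF_{i-1}] \leq 0$ exactly as you note, and the iteration, Markov step, and optimization at $s = \lambda/(tC^2)$ all go through. The paper itself gives no proof of this statement---it is quoted as \cite[Lemma 1]{wormald1995differential}---so there is nothing to compare against beyond noting that your argument is the classical one and correctly establishes the stated bound.
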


The next lemma establishes the boundedness condition required by Theorem \ref{thm:azuma}.

\begin{lem}\label{lem:one step bound}
	Let $\{X(t)\}_{t=0}^T$ be one of the sequences $\{ R_{y,\alpha}(t) \}$, $\{C_{x,\alpha}(t) \}$, $\{D^+_{x+y,\alpha}(t) \}$, $\{D^-_{x-y,\alpha}(t)\}$, $\{\tA_\alpha(t)\}$, for $(x,y) \in [n]^2$ and $\alpha \in I_M$. Then, for every $0 \leq t < T$:
	\[
	|X^+(t+1) - X^+(t)|, |X^-(t+1) - X^-(t)| = O(1/(1-t/n)).
	\]
\end{lem}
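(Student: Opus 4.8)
The plan is to split the one-step increment of $X^\pm$ into three pieces --- the increment of the underlying count $X$, the increment of its trajectory $x$, and the increment of the error function $E$ --- and to bound each by a constant. First I would dispose of the frozen regime: if $t \geq \tau$ then $X^+(t+1) = X^+(t) = X^+(\tau)$ (and likewise $X^-(t+1)=X^-(t)$), so the increment is $0$. Thus it suffices to treat $t$ with $t+1 \leq \tau$, where both $X^\pm(t)$ and $X^\pm(t+1)$ are given by the live formulas, and
\[
X^+(t+1)-X^+(t) = \bigl(X(t+1)-X(t)\bigr) - \bigl(x(t+1)-x(t)\bigr) - \tfrac12\bigl(E(t+1)-E(t)\bigr),
\]
with $X^-$ satisfying the same identity up to swapping the first two signs.

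For $|X(t+1)-X(t)|$ I would use the combinatorial structure of the safe sets. Placing the single queen $X_{t+1}$ occupies exactly one new row, column, plus-diagonal, and minus-diagonal. A position in a fixed row (resp.\ column, diagonal) loses row-safety (resp.\ column-, plus-, minus-safety) only if one of the (at most two) lines it still depended on becomes occupied; since each newly occupied line meets the fixed line defining the strip in at most one cell, at most three positions of $\alpha_n$ on that line change status, plus possibly $X_{t+1}$ itself. For $Z_\alpha$, the new queen closes at most one plus-diagonal incident to $\alpha_n$. Hence $|X(t+1)-X(t)| = O(1)$ in all five cases.

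For $|x(t+1)-x(t)|$ I would note that every trajectory is a ``capacity'' $L^\bullet_{\cdot,\alpha}$ (or $n/M$, for $z_\alpha$) times at most three affine factors of $t$ of the form $1-c\,t/n$ with $0 \leq c \leq 1$; here sub-uniform diagonal marginals give $M\delta^+(\alpha),M\delta^-(\alpha) \leq 1$. Since a line meets the strip $\alpha_n$ in $O(n/M)$ cells, $L^\bullet_{\cdot,\alpha}=O(n/M)$, each affine factor is bounded by $1$ on $[0,T]$ and has derivative $O(1/n)$, so the product rule yields $\norm{x'}_\infty = O\bigl((n/M)\cdot(1/n)\bigr)=O(1/M)=O(1)$, whence $|x(t+1)-x(t)|=O(1)$ by the mean value theorem.

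The only term requiring any care is $E(t+1)-E(t)$. Differentiating $E(s)=n/\bigl(M^{5/4}(1-s/n)^K\bigr)$ gives $E'(s)=K/\bigl(M^{5/4}(1-s/n)^{K+1}\bigr)$, which is increasing, so $E(t+1)-E(t) \leq E'(t+1) \leq K/\bigl(M^{5/4}(1-T/n)^{K+1}\bigr)$. With $n-T=\lfloor n^{1-1/K^2}\rfloor$ we have $1-T/n=\Theta(n^{-1/K^2})$, and $M=\Theta(n^{1/10})$ gives $M^{5/4}=\Theta(n^{1/8})$, so this bound is $O\bigl(K\,n^{(K+1)/K^2-1/8}\bigr)$; choosing $K$ large enough that $(K+1)/K^2<1/8$ makes it $o(1)$, in particular $O(1)$. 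The main (mild) obstacle is exactly this bookkeeping --- ensuring the polynomial blow-up of $E'$ near $t=T$ is beaten by the $M^{-5/4}\sim n^{-1/8}$ gain --- which is precisely why $T$ is kept within $n^{1-1/K^2}$ of $n$ and $K$ is taken large. Combining the three estimates, and arguing identically for $X^-$, gives $|X^+(t+1)-X^+(t)|,|X^-(t+1)-X^-(t)|=O(1)$.
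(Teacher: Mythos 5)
Your proposal is correct and follows essentially the same route as the paper: the paper likewise splits the increment into the change in the count (at most $3$, since one new queen occupies exactly one row, column, plus-diagonal and minus-diagonal, each meeting the relevant line in at most one cell), the change in the trajectory function, and the change in $E$, bounding the latter two by their derivatives. Your more explicit bookkeeping for $E'$ near $t=T$ is exactly what underlies the paper's terser assertion that all these derivatives are bounded by $1$ (using that $K$ is chosen large), so there is nothing to add.
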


\begin{proof}
	Let $(x,y) \in [n]^2$ and $\alpha \in I_M$.
	
	We first note that the derivatives of the functions $r_{y,\alpha}$, $c_{x,\alpha}$, $d^+_{x+y,\alpha}$, $d^-_{y-x,\alpha}$, $\ta_\alpha$, and $E$ are bounded in absolute value by $1$.
	
	Next, we observe that whenever a queen is added to a partial $n$-queens configuration, exactly one row, one column, one plus-diagonal, and one minus-diagonal are occupied. Thus, in every time step, each of $\{ R_{y,\alpha} \}$, $\{C_{x,\alpha} \}$, $\{D^+_{x+y,\alpha} \}$, and $\{D^-_{x-y,\alpha} \}$ changes by at most $3$. Similarly, $A_\alpha$ changes by at most $7n/M$, so $\tA_{\alpha}$ changes by at most $7/(1-t/n)$.
	
	Together, these observations imply that $X^+$ and $X^-$ can change by at most $9/(1-t/n) = O(1/(1-t/n))$ at time $t$.
\end{proof}

The next step is to show that the random variables are supermartingales.

\begin{lem}\label{lem:supermartingale condition}
	Let $\{X(t)\}_{t=0}^T$ be one of the sequences $\{ R_{y,\alpha}(t) \}$, $\{C_{x,\alpha}(t) \}$, $\{D^+_{x+y,\alpha}(t) \}$, $\{D^-_{x-y,\alpha}(t)\}$, $\{\tA_\alpha(t)\}$, for $(x,y) \in [n]^2$ and $\alpha \in I_M$. Then, for every $0 \leq t < T$:
	\[
	\E \left[ X^+(t+1) - X^+(t) \given Q(1), \ldots, Q(t) \right] \leq 0
	\]
	and
	\[
	\E \left[ X^-(t+1) - X^-(t) \given Q(1), \ldots, Q(t) \right] \leq 0.
	\]
\end{lem}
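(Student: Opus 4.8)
The plan is to reduce each assertion to a single one–step drift estimate and then verify that estimate separately for the five families of random variables, exploiting the fact that the trajectory functions $r_{y,\alpha},c_{x,\alpha},d^{+}_{x+y,\alpha},d^{-}_{y-x,\alpha},z_\alpha$ were built so that $-x'(t)$ equals, to leading order, the expected one–step \emph{decrease} of $X(t)$. When $t\ge\tau$ both $X^{+}$ and $X^{-}$ are frozen, so their increments vanish; and since $\{\tau\le t\}$ is $Q(t)$–measurable, it suffices to fix a configuration $Q(t)$ with $\tau>t$ and show
\[
\E\bigl[X(t+1)-X(t)\mid Q(t)\bigr]=\bigl(x(t+1)-x(t)\bigr)\pm\tfrac12\bigl(E(t+1)-E(t)\bigr),
\]
because subtracting $x(t)$ and $\tfrac12E(t)$ are deterministic operations and this two–sided bound forces $\E[X^{+}(t+1)-X^{+}(t)\mid Q(t)]\le0$ and $\E[X^{-}(t+1)-X^{-}(t)\mid Q(t)]\le0$ at once. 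Taylor's theorem gives $x(t+1)-x(t)=x'(t)\pm\tfrac12\norm{x''}_\infty$, and a direct computation gives $\norm{x''}_\infty=O(1/(Mn))$, negligible next to $E(t+1)-E(t)\ge E'(t)\asymp KM^{-5/4}(1-t/n)^{-(K+1)}$; so the target reduces to $\E[X(t+1)-X(t)\mid Q(t)]=x'(t)\pm\tfrac13 E'(t)$.

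I would compute the conditional expected one–step change by decomposing it according to which lines $X_{t+1}$ occupies. For $Z_\alpha$ the right tool is an exact union identity: an unoccupied plus–diagonal incident to $\alpha_n$ becomes occupied precisely when $X_{t+1}$ lands in a cell sharing a plus–diagonal with $\alpha$, and since $\tau>t$ forces (via Claim~\ref{clm:A_alpha lower bound}) that Algorithm~\ref{alg:random} does not abort at step $t+1$, this has probability exactly $\delta^{+}(\alpha)$ (the total $\delta$–mass of those cells), which equals $z_\alpha(t)-z_\alpha(t+1)$ up to lower–order boundary corrections. For $R_{y,\alpha}$, $C_{x,\alpha}$, $D^{+}_{x+y,\alpha}$ and $D^{-}_{y-x,\alpha}$ I would write the one–step decrease as a sum over the safe positions the variable counts: such a position ceases to be safe exactly when $X_{t+1}$ occupies one of its two or three incident lines that were still free at time $t$; those lines meet only at that (non–queen) position, so inclusion–exclusion turns the expected decrease into (number of safe positions) times (per–line occupation probability), minus a negligible $\Pr[X_{t+1}=p]$ correction. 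Substituting $\tfrac1{n-t}$ for rows and columns (Claim~\ref{clm:row-col prob}) and $\tfrac{M\delta^{\pm}(\alpha)}{n(1-M\delta^{\pm}(\alpha)t/n)}$ for diagonals (Claim~\ref{clm:diagonal occupation probability}), and using $R_{y,\alpha}(t)=r_{y,\alpha}(t)\pm E(t)$ (and likewise for the others) from $\tau>t$, the leading term collapses to exactly $-x'(t)$; this is the algebraic identity the trajectory functions were designed to satisfy.

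The real work, and the step I expect to be the main obstacle, is bounding the error terms against the slack $\tfrac13 E'(t)\asymp KM^{-5/4}(1-t/n)^{-(K+1)}$. For $Z_\alpha$ the naive approach — summing the per–diagonal error $O(ME(t)/(n-t)^2)$ of Claim~\ref{clm:diagonal occupation probability} over the $\Theta(n/M)$ diagonals incident to $\alpha_n$ — gives $O(nE(t)/(n-t)^2)=O(M^{-5/4}(1-t/n)^{-(K+2)})$, which is \emph{not} dominated by $\tfrac13E'(t)$ once $1-t/n$ is as small as $n^{-1/K^2}$; this is exactly why the exact union identity, not a term–by–term bound, is needed there. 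For $R_{y,\alpha}$, $C_{x,\alpha}$, $D^{+}$, $D^{-}$ the naive sum does work, because the trajectory functions carry $j\ge1$ factors of $(1-t/n)$: the number of relevant safe positions is $O\bigl(\tfrac{n}{M}(1-t/n)^{j}+E(t)\bigr)$, so multiplying by $O(ME(t)/(n-t)^2)$ yields $O\bigl(E(t)/(n(1-t/n))+ME(t)^{2}/(n-t)^{2}\bigr)$; the first term is $O(M^{-5/4}(1-t/n)^{-(K+1)})=O(E'(t)/K)$, and the second is absorbed by the spare factor $M^{1/4}$ in the definition of $E$ together with $M=\Theta(n^{0.1})$ and the choice $T=n-\lfloor n^{1-1/K^2}\rfloor$ with $K$ large enough that $n^{1/K}\ll M^{1/4}$. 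The remaining small errors — the $\tfrac12\norm{x''}_\infty$ term, the $\pm E(t)$ perturbations of the safe–position counts, the $\Pr[X_{t+1}=p]$ corrections, the half–square cells of $T_M$, and the discrete boundary discrepancy in $\delta^{\pm}(\alpha)$ — are all of strictly lower order by the same bookkeeping, using the uniform bound $A_\alpha(t)\ge\tfrac{n^{2}}{40M^{2}}(1-\eta)^{3}(1-t/n)^{2}$ from Claim~\ref{clm:A_alpha lower bound}, valid for every $\alpha\in I_M$. Collecting everything gives $\E[X(t+1)-X(t)\mid Q(t)]=x'(t)\pm\tfrac13 E'(t)$, which closes the argument.
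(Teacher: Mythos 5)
Your proposal is correct, and for the row, column, and two diagonal-safety families it is essentially the paper's proof: freeze the variables at $\tau$, compare $\E[X(t+1)-X(t)\given Q(t)]$ on $\{\tau>t\}$ with $x(t+1)-x(t)$ against the slack $\tfrac12(E(t+1)-E(t))=\Theta(K E(t)/(n-t))$, and compute the drift by summing, over the safe positions counted by $X(t)$, the per-line occupation probabilities of Claims \ref{clm:row-col prob} and \ref{clm:diagonal occupation probability}, with the $\Prob[X_{t+1}=p]$ inclusion--exclusion correction controlled via Claim \ref{clm:A_alpha lower bound}; your bookkeeping (the counts carry at least one factor $(1-t/n)$, and the $M E(t)^2/(n-t)^2$ term is absorbed since $K$ is large relative to the exponent $0.1$ in $M$) matches the paper's, with $K$ large swallowing the absolute constants. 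The genuine difference is $Z_\alpha$. The paper treats it by the same term-by-term route: it multiplies the estimate of Claim \ref{clm:diagonal occupation probability} by $Z_\alpha(t)=z_\alpha(t)\pm E(t)$ and records the error as $O(E(t)/(n-t))$ in item \ref{itm:Z expected change}. As you observe, $z_\alpha(t)=\tfrac nM\bigl(1-M\delta^+(\alpha)\tfrac tn\bigr)$ is only $O((n-t)/M)$ when $M\delta^+(\alpha)$ is close to $1$, so that product is in general only $O\bigl(nE(t)/(n-t)^2\bigr)=O\bigl(E(t)/((n-t)(1-t/n))\bigr)$, which near $t=T$ exceeds the available slack by a factor of order $n^{1/K^2}/K$; the paper's displayed manipulation tacitly uses $1-M\delta^+(\alpha)t/n=O(1-t/n)$, so the stated bound does not follow from it in general. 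Your exact union identity --- $Z_\alpha(t)-Z_\alpha(t+1)$ is the indicator that $X_{t+1}$ lands in the plus-band of $\alpha$, an event whose conditional probability equals $\delta^+(\alpha)$ up to a boundary/misassignment correction of order $1/(n(1-t/n)^2)$ (the $\pm O(n/M)$ discrepancy between $\alpha_n$ and $\alpha$ plus the $O(1)$ stray diagonals at each band edge, divided by the uniform lower bound on $A_\beta(t)$ from Claim \ref{clm:A_alpha lower bound}) --- matches $z_\alpha(t)-z_\alpha(t+1)=\delta^+(\alpha)$ on the nose and keeps the error far below $K E(t)/(n-t)$. So for this one family your argument is not merely an alternative but a repair: it needs only the no-abort consequence of $\tau>t$, avoids Claim \ref{clm:diagonal occupation probability} entirely, and yields the estimate of item \ref{itm:Z expected change} with room to spare; what the paper's uniform treatment buys is brevity, at the cost of the lossy step you identified. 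Two small points for a final write-up: the probability is not literally $\delta^+(\alpha)$, so quantify the boundary correction as above rather than saying ``exactly''; and measurability of $\{\tau\le t\}$ should be phrased with respect to the filtration generated by $Q(0),\ldots,Q(t)$, which is also how the paper's conditioning on $Q(t)$ is meant.
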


Before proving Lemma \ref{lem:supermartingale condition} we calculate the expected one step changes of our random variables.

\begin{clm}
	Let $(x,y) \in [n]^2$, $\alpha \in I_M$, and $0 \leq t < T$. The following hold for every $Q \subseteq [n]^2$ such that $\Prob[\tau>t \cap Q(t) = Q] > 0$:
	
	\begin{enumerate}
		\item\label{itm:R expected change}
		$\E \left[ R_{y,\alpha}(t+1) - R_{y,\alpha}(t) \given Q(t) = Q, \tau > t \right] = r'_{y,\alpha}(t) \pm O \left( \frac{E(t)}{n-t} \right)$.
		
		\item\label{itm:C expected change}
		$\E \left[ C_{x,\alpha}(t+1) - C_{x,\alpha}(t) \given Q(t) = Q, \tau > t \right] = c'_{x,\alpha}(t) \pm O \left( \frac{E(t)}{n-t} \right)$.
		
		\item\label{itm:D^+ expected change}
		$\E \left[ D_{x+y,\alpha}^+(t+1) - D_{x+y,\alpha}^+(t) \given Q(t) = Q, \tau > t \right] = {d^+_{x+y,\alpha}}'(t) \pm O \left( \frac{E(t)}{n-t} \right)$.
		
		\item\label{itm:D^- expected change}
		$\E \left[ D_{x+y,\alpha}^-(t+1) - D_{x+y,\alpha}^-(t) \given Q(t) = Q, \tau > t \right] = {d_{x-y,\alpha}^-}'(t) \pm O \left( \frac{E(t)}{n-t} \right)$.
		
		\item\label{itm:Z expected change}
		$\E \left[ \tA_\alpha(t+1) - \tA_\alpha(t) \given Q(t) = Q, \tau > t \right] = \ta_{\alpha}'(t) \pm O \left( \frac{E(t)}{n-t} \right)$.
	\end{enumerate}
\end{clm}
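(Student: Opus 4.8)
The plan rests on the fact that each of these random variables is \emph{monotone non-increasing} along the process: a row-safe (column-safe, plus-safe, minus-safe) position never becomes safe again, and an occupied plus-diagonal never becomes unoccupied. Hence the one-step change equals $-1$ times the number of objects ``killed'' by the new queen $X_{t+1}$, and by linearity of expectation it suffices to sum, over the relevant objects, the probability that $X_{t+1}$ kills it; since $\{\tau>t\}\in\mathcal{F}_t$, the estimates of Claims \ref{clm:row-col prob} and \ref{clm:diagonal occupation probability} apply verbatim. I carry out \ref{itm:R expected change} in detail. A row-safe $p=(x',y)\in\alpha_n$ is killed precisely when $X_{t+1}$ lies in its column, its plus-diagonal, or its minus-diagonal; by row-safety these three lines are unoccupied in $Q(t)$ and the two diagonals meet $\alpha_n$, and the three killing events overlap only at the outcome $X_{t+1}=p$, which has probability at most $1/A_\alpha(t)=O(M^2/(n-t)^2)$ by Claim \ref{clm:A_alpha lower bound}. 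So Claims \ref{clm:row-col prob} and \ref{clm:diagonal occupation probability} give
\[
\Prob[p\text{ killed}\mid Q(t),\ \tau>t] = \frac{1}{n-t} + \frac{M\delta^+(\alpha)}{n-M\delta^+(\alpha)t} + \frac{M\delta^-(\alpha)}{n-M\delta^-(\alpha)t} \pm O\!\left(\frac{ME(t)}{(n-t)^2}\right),
\]
the $X_{t+1}=p$ correction being absorbed since $M\ll E(t)$.

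Summing over the $R_{y,\alpha}(t)$ row-safe positions and writing $R_{y,\alpha}(t)=r_{y,\alpha}(t)\pm E(t)$ (valid as $\tau>t$), the main term is $r_{y,\alpha}(t)$ times the bracketed rate, which equals $-r'_{y,\alpha}(t)$ --- the ODE-matching step, verified by logarithmically differentiating $r_{y,\alpha}(t)=L^r_{y,\alpha}(1-t/n)(1-M\delta^+(\alpha)t/n)(1-M\delta^-(\alpha)t/n)$. For the error I use that (by sub-uniformity of the diagonal marginals) each of the three terms in the rate is at most $1/(n-t)$, that $r_{y,\alpha}(t)=O((n-t)/M)$, and that $A_\alpha(t)=\Omega((n-t)^2/M^2)$; with $M=\Theta(n^{0.1})$, $n-t\ge n^{1-1/K^2}$ and $M\ll E(t)=n/(M^{5/4}(1-t/n)^K)$ this collapses $E(t)\cdot O(1/(n-t)) + O((n-t)/M)\cdot O(ME(t)/(n-t)^2)$ to $O(E(t)/(n-t))$, as claimed. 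The cases \ref{itm:C expected change}, \ref{itm:D^+ expected change}, \ref{itm:D^- expected change} are identical after replacing ``column, plus-diagonal, minus-diagonal'' by the triple of lines left unoccupied by the relevant safety notion (e.g.\ the row, column, and minus-diagonal for a plus-safe position), and the corresponding trajectory --- now carrying a squared factor in the row/column direction --- again solves the matching linear ODE; there $d^{\pm}(t)=O((n-t)^2/(nM))$, so the error control is even easier.

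The one case needing a different argument is \ref{itm:Z expected change}: summing Claim \ref{clm:diagonal occupation probability} over the $\approx n/M$ plus-diagonals meeting $\alpha_n$ would inflate the error, so instead one observes that $Z_\alpha(t)-Z_\alpha(t+1)$ is the indicator that the plus-diagonal of $X_{t+1}$ meets $\alpha_n$. Any $\beta\in I_M$ in the same plus-strip as $\alpha$ has $\beta_n$ meeting exactly the same plus-diagonals as $\alpha_n$ up to $O(1)$ boundary diagonals, while a $\beta$ in a different plus-strip shares only $O(1)$ diagonals with $\alpha_n$; since $X_{t+1}\in(Y_{t+1})_n$, this indicator equals the indicator of $\{Y_{t+1}\in P^+_\alpha\}$ up to an $O(1)$-diagonal discrepancy, each such diagonal being occupied with probability $O(1/(n-t))$. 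As $\sum_{\beta\in P^+_\alpha}\delta(\beta)=\delta^+(\alpha)=-z'_\alpha(t)$ and $1/(n-t)=O(E(t)/(n-t))$ (note $E(t)\to\infty$), we get $\E[Z_\alpha(t)-Z_\alpha(t+1)\mid Q(t),\ \tau>t]=\delta^+(\alpha)\pm O(E(t)/(n-t))$. The main obstacle throughout is the bookkeeping: confirming that every error --- the per-object errors from Claims \ref{clm:row-col prob} and \ref{clm:diagonal occupation probability}, the $X_{t+1}=p$ overlap, the half-squares, and the boundary diagonals for $Z_\alpha$ --- is genuinely $O(E(t)/(n-t))$, which hinges on the carefully chosen exponents hidden in $M$, $T$, and $E(t)$.
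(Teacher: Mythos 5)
Your proposal is correct. For the first four assertions it is essentially the paper's own proof: you write the one-step decrement as a sum, over the currently safe positions, of the probability that $X_{t+1}$ occupies one of the unoccupied lines through that position, feed in Claims \ref{clm:row-col prob} and \ref{clm:diagonal occupation probability} line by line, note that the killing events overlap only on $\{X_{t+1}=p\}$ (negligible), substitute $X(t)=x(t)\pm E(t)$, and check the logarithmic-derivative identity $x'(t)=-x(t)\cdot(\text{sum of the rates})$; your error accounting (each rate at most $1/(n-t)$ by sub-uniform diagonal marginals, $R_{y,\alpha}(t)=O((n-t)/M)$, per-position error $O(ME(t)/(n-t)^2)$) is the same as the paper's. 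Where you genuinely diverge is the last assertion, for $Z_\alpha$: the paper treats it uniformly with the others, writing $\E\left[ Z_\alpha(t+1)-Z_\alpha(t)\given \tau>t\right]=-\sum_{c\in\mZ_\alpha(t)}\Prob\left[c\text{ becomes occupied}\right]$ and applying Claim \ref{clm:diagonal occupation probability} to each of the $Z_\alpha(t)=z_\alpha(t)\pm E(t)$ unoccupied diagonals, whereas you identify $Z_\alpha(t)-Z_\alpha(t+1)$ with the indicator that the plus-diagonal of $X_{t+1}$ meets $\alpha_n$, which up to $O(1)$ boundary diagonals (each hit with probability $O(1/(n-t))=O(E(t)/(n-t))$, since $E(t)\geq 1$) is the indicator that $Y_{t+1}$ falls in the plus-strip of $\alpha$, with expectation exactly $\delta^+(\alpha)=-z_\alpha'(t)$. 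Your motivation for the detour is legitimate: multiplying the per-diagonal error $O(ME(t)/(n-t)^2)$ by the roughly $n/M$ unoccupied diagonals costs an extra factor of order $n/(n-t)$ relative to the stated $O(E(t)/(n-t))$, and your indicator argument avoids this loss altogether; what the paper's route buys is brevity and parallelism with the other four items, while yours gives the claimed error bound for $Z_\alpha$ directly. The only implicit steps in your write-up (that $E(t)\leq (n-t)/M$ and $M\leq E(t)$, used to absorb the cross terms and the $\{X_{t+1}=p\}$ correction) do hold once $K$ is chosen sufficiently large, exactly as the paper also implicitly assumes.
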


\begin{proof}
	All five assertion follow from Claims \ref{clm:row-col prob} and \ref{clm:diagonal occupation probability}. We first prove \ref{itm:R expected change}. For notational conciseness we write $\E'$ and $\Prob'$, respectively, for expectations and probabilities conditioned on $Q(t)=Q \cap \tau > t$.
	
	Conditioning on a particular $Q(t)$ we have, by definition:
	\[
	\E \left[ R_{y,\alpha}(t+1) - R_{y,\alpha}(t) \given Q(t) \right] = - \sum_{(c,r) \in \mR_{y,\alpha}(t)} \Prob \left[ (c,r) \notin \mR_{y,\alpha}(t+1) \given Q(t) \right].
	\]
	Let $(c,r) \in \mR_{y,\alpha}(t)$. By definition, $(c,r) \notin \mR_{y,\alpha}(t+1)$ if and only if column $c$, plus-diagonal $r+c$, or minus-diagonal $r-c$ is occupied at time $t+1$. By Claims \ref{clm:row-col prob} and \ref{clm:diagonal occupation probability} if $\tau > t$ then the respective probabilities of these events are
	\[
	\frac{1}{n-t} \pm O \left( \frac{ME(t)}{(n-t)^2} \right), \quad \frac{M \delta^+(\alpha)}{n \left( 1 - M\delta^+(\alpha) \frac{t}{n} \right)} \pm O \left( \frac{ME(t)}{(n-t)^2} \right),
	\]
	and
	\[
	\frac{M \delta^-(\alpha)}{n \left( 1 - M\delta^-(\alpha) \frac{t}{n} \right)} \pm O \left( \frac{ME(t)}{(n-t)^2} \right).
	\]
	Additionally, more than one of these events occurs if and only if $X_{t+1} = (c,r)$. By \cref{obs:A_alpha asymptotic} $\Prob' [X_{t+1} = (c,r)] = O \left( \delta(\alpha)M^2/(n-t)^2 \right) = O \left( 1/(n-t)^2 \right)$. Therefore:
	\begin{align*}
	\Prob' \left[ (c,r) \notin \mR_{y,\alpha}(t+1) \right] = \frac{1}{n-t} + \frac{M \delta^+(\alpha)}{n \left( 1 - M\delta^+(\alpha) \frac{t}{n} \right)} + \frac{M \delta^-(\alpha)}{n \left( 1 - M\delta^-(\alpha) \frac{t}{n} \right)} \pm O \left( \frac{ME(t)}{(n-t)^2} \right).
	\end{align*}
	Hence:
	\begin{align*}
	\E' & \left[ R_{y,\alpha}(t+1) - R_{y,\alpha}(t) \right]\\
	& = - R_{y,\alpha}(t) \left( \frac{1}{n-t} + \frac{M \delta^+(\alpha)}{n \left( 1 - M\delta^+(\alpha) \frac{t}{n} \right)} + \frac{M \delta^-(\alpha)}{n \left( 1 - M\delta^-(\alpha) \frac{t}{n} \right)} \right) \pm O \left( \frac{R_{y,\alpha}(t) ME(t)}{(n-t)^2} \right).
	\end{align*}
	We note that if $\tau > t$ then $R_{y,\alpha}(t) \leq 2(n-t)/M$. Thus:
	\begin{align*}
	\E' & \left[ R_{y,\alpha}(t+1) - R_{y,\alpha}(t)\right]\\
	& = - R_{y,\alpha}(t) \left( \frac{1}{n-t} + \frac{M \delta^+(\alpha)}{n \left( 1 - M\delta^+(\alpha) \frac{t}{n} \right)} + \frac{M \delta^-(\alpha)}{n \left( 1 - M\delta^-(\alpha) \frac{t}{n} \right)} \right) \pm O \left( \frac{E(t)}{n-t} \right).
	\end{align*}
	Conditioning on $\tau > t$ implies $R_{y,\alpha}(t) = r_{y,\alpha}(t) \pm E(t)$. Therefore:
	\begin{align*}
	R_{y,\alpha}(t) & \left( \frac{1}{n-t} + \frac{M \delta^+(\alpha)}{n \left( 1 - M\delta^+(\alpha) \frac{t}{n} \right)} + \frac{M \delta^-(\alpha)}{n \left( 1 - M\delta^-(\alpha) \frac{t}{n} \right)} \right)\\
	& = \left(r_{y,\alpha}(t) \pm E(t) \right) \left( \frac{1}{n-t} + \frac{M \delta^+(\alpha)}{n \left( 1 - M\delta^+(\alpha) \frac{t}{n} \right)} + \frac{M \delta^-(\alpha)}{n \left( 1 - M\delta^-(\alpha) \frac{t}{n} \right)} \right).
	\end{align*}
	Because $\delta$ has sub-uniform diagonal marginals, $M \delta^+(\alpha), M \delta^-(\alpha) \leq 1$. Thus:
	\begin{align*}
	R_{y,\alpha}(t) & \left( \frac{1}{n-t} + \frac{M \delta^+(\alpha)}{n \left( 1 - M\delta^+(\alpha) \frac{t}{n} \right)} + \frac{M \delta^-(\alpha)}{n \left( 1 - M\delta^-(\alpha) \frac{t}{n} \right)} \right)\\
	& = r_{y,\alpha}(t) \left( \frac{1}{n-t} + \frac{M \delta^+(\alpha)}{n \left( 1 - M\delta^+(\alpha) \frac{t}{n} \right)} + \frac{M \delta^-(\alpha)}{n \left( 1 - M\delta^-(\alpha) \frac{t}{n} \right)} \right) \pm \frac{3 E(t)}{n-t}.
	\end{align*}
	Finally, we observe that
	\[
	r'_{y,\alpha}(t) = - r_{y,\alpha}(t) \left( \frac{1}{n-t} + \frac{M \delta^+(\alpha)}{n \left( 1 - M\delta^+(\alpha) \frac{t}{n} \right)} + \frac{M \delta^-(\alpha)}{n \left( 1 - M\delta^-(\alpha) \frac{t}{n} \right)} \right).
	\]
	Therefore:
	\begin{align*}
	\E' \left[ R_{y,\alpha}(t+1) - R_{y,\alpha}(t) \right] = r'_{y,\alpha}(t) \pm O \left( \frac{E(t)}{n-t} \right),
	\end{align*}
	proving \ref{itm:R expected change}. The proof of \ref{itm:C expected change} follows similarly.
	
	We prove \ref{itm:D^+ expected change} with a similar argument. By definition:
	\[
	\E \left[ D^+_{x+y,\alpha}(t+1) - D^-_{x-y,\alpha}(t) \given Q(t) \right] = - \sum_{(c,r) \in \mD^+_{x+y,\alpha}(t)} \Prob \left[ (c,r) \notin \mD^+_{x+y,\alpha}(t+1) \given Q(t) \right].
	\]
	For every $(c,r) \in \mD^+_{x+y,\alpha}(t)$, the event $(c,r) \notin \mD^+_{x+y,\alpha}(t+1)$ occurs if and only if $X_{t+1}$ is in column $c$, row $r$, or minus-diagonal $r-c$. By Claims \ref{clm:row-col prob} and \ref{clm:diagonal occupation probability} if $\tau > t$ then the probability of this occurrence is
	\[
	\frac{2}{n-t} + \frac{M \delta^-(\alpha)}{n \left( 1 - M\delta^-(\alpha) \frac{t}{n} \right)} \pm O \left( \frac{ME(t)}{(n-t)^2} \right).
	\]
	Additionally, if $\tau > t$ then $D^+_{x+y,\alpha}(t) = d^+_{x+y,\alpha}(t) \pm E(t)$. Therefore:
	\begin{align*}
	\E' & \left[ D^+_{x+y,\alpha}(t+1) - D^-_{x-y,\alpha}(t) \right]\\
	& = - \left( d^+_{x+y,\alpha}(t) \pm E(t) \right) \left( \frac{2}{n-t} + \frac{M \delta^-(\alpha)}{n \left( 1 - M\delta^-(\alpha) \frac{t}{n} \right)} \pm O \left( \frac{ME(t)}{(n-t)^2} \right) \right)\\
	& = {d^+_{x+y,\alpha}}'(t) \pm O \left( \frac{E(t)}{n-t} \right),
	\end{align*}
	proving \ref{itm:D^+ expected change}.  A proof of \ref{itm:D^- expected change} is obtained by interchanging the roles of plus- and minus-diagonals.
	
	Finally, we prove \ref{itm:Z expected change}. By definition:
	\begin{align*}
	\E' & \left[ \tA_\alpha(t+1) - \tA_\alpha(t) \right] = \E' \left[ \frac{M}{n-t-1}A_\alpha(t+1) - \frac{M}{n-t}A_\alpha(t) \right]\\
	& = \E' \left[ \left(\frac{M}{n-t} + \frac{M}{(n-t)^2} \pm O \left( \frac{M}{(n-t)^3} \right)\right)A_\alpha(t+1) - \frac{M}{n-t}A_\alpha(t) \right]\\
	& = \E' \left[ \frac{M}{n-t} \left( A_\alpha(t+1) - A_\alpha(t) \right) + \frac{M}{(n-t)^2} A_\alpha(t+1) \right] \pm O \left( \frac{M(n/M)^2}{(n-t)^2} \right)\\
	& \stackrel{\text{\cref{lem:one step bound}}}{=} \frac{M}{n-t} \E' \left[ A_\alpha(t+1) - A_\alpha(t) \right] + \frac{M}{(n-t)^2} \left( A_\alpha(t) \pm O \left( \frac{n}{M} \right) \right) \pm O \left( \frac{1}{M(1-t/n)^2} \right)\\
	& \stackrel{t<\tau}{=} \frac{M}{n-t} \E' \left[ A_\alpha(t+1) - A_\alpha(t) \right] + \frac{\ta_\alpha(t)}{n-t} \pm O \left( \frac{E(t)}{n-t} \right).
	\end{align*}
	
	We now estimate the expected change to $A_\alpha(t)$. By definition:
	\[
	\E' \left[ A_\alpha(t+1) - A_\alpha(t) \right] = - \sum_{(x,y) \in \mA_{\alpha}(t)} \Prob' \left[ (x,y) \notin \mA_{\alpha}(t+1) \right].
	\]
	For every $(x,y) \in \mA_\alpha(t)$ the event $(x,y) \notin \mA_\alpha(t+1)$ occurs if and only if $X_{t+1}$ is in column $x$, row $y$, plus-diagonal $x+y$, or minus-diagonal $y-x$. By \cref{clm:row-col prob,clm:diagonal occupation probability} if $\tau > t$ then the probability of this event is
	\[
	\frac{2}{n-t} + \frac{M \delta^-(\alpha)}{n \left( 1 - M\delta^-(\alpha) \frac{t}{n} \right)} + \frac{M \delta^+(\alpha)}{n \left( 1 - M\delta^+(\alpha) \frac{t}{n} \right)} \pm O \left( \frac{ME(t)}{(n-t)^2} \right).
	\]
	Additionally, if $\tau > t$ then by \cref{obs:A_alpha asymptotic} $A_\alpha(t) = \left( 1 \pm O (ME(t) / (n-t)) \right) (n-t) \ta(t)/M$. Hence
	\begin{align*}
	\E' & \left[ A_\alpha(t+1) - A_\alpha(t) \right]\\
	& = - \frac{(n-t)\ta(t)}{M} \left( \frac{2}{n-t} + \frac{M \delta^-(\alpha)}{n \left( 1 - M\delta^-(\alpha) \frac{t}{n} \right)} + \frac{M \delta^+(\alpha)}{n \left( 1 - M\delta^+(\alpha) \frac{t}{n} \right)} \right) \pm O \left( \frac{E(t) \ta(t)}{n-t} \right).
	\end{align*}
	
	This implies that
	\begin{align*}
	\E' & \left[ \tA_\alpha(t+1) - \tA_\alpha(t) \right]\\
	& = - \ta(t) \left( \frac{2}{n-t} + \frac{M \delta^-(\alpha)}{n \left( 1 - M\delta^-(\alpha) \frac{t}{n} \right)} + \frac{M \delta^+(\alpha)}{n \left( 1 - M\delta^+(\alpha) \frac{t}{n} \right)} \right) + \frac{\ta_\alpha(t)}{n-t} \pm O \left( \frac{E(t)}{n-t} \right)\\
	& = \ta'(t) \pm O \left( \frac{E(t)}{n-t} \right),
	\end{align*}
	as desired.
\end{proof}

Next, we estimate the one-step changes of the trajectory functions.

\begin{clm}
	The following hold for every $(x,y) \in [n]^2$, $\alpha \in I_M$, and $0 \leq t < T$:
	\begin{enumerate}
		\item\label{itm:r expected change}
		$r_{y,\alpha}(t+1) - r_{y,\alpha}(t) = r'_{y,\alpha}(t) \pm \frac{E(t)}{n-t}$.
		
		\item\label{itm:c expected change}
		$c_{x,\alpha}(t+1) - c_{x,\alpha}(t) = c'_{x,\alpha}(t) \pm \frac{E(t)}{n-t}$.
		
		\item\label{itm:d^+ expected change}
		$d_{x+y,\alpha}^+(t+1) - d_{x+y,\alpha}^+(t) = {d^+_{x+y,\alpha}}'(t) \pm \frac{E(t)}{n-t}$.
		
		\item\label{itm:d^- expected change}
		$d_{x-y,\alpha}(t+1) - d_{x-y,\alpha}^-(t) = {d_{x-y,\alpha}^-}'(t) \pm \frac{E(t)}{n-t}$.
		
		\item\label{itm:z expected change}
		$\ta_\alpha(t+1) - \ta_\alpha(t) = \ta_{\alpha}'(t) \pm \frac{E(t)}{n-t}$.
		
		\item\label{itm:epsilon expected change}
		$E(t+1) - E(t) = \frac{K E(t)}{n-t} \pm \frac{E(t)}{n-t}$.
	\end{enumerate}
\end{clm}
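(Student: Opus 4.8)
The plan is to deduce all six identities from one elementary fact: if $f$ is twice continuously differentiable on $[0,T]$, then Taylor's theorem with Lagrange remainder gives $f(t+1)-f(t) = f'(t) + \frac{1}{2}f''(\zeta)$ for some $\zeta \in (t,t+1)$, so it suffices in each case to bound $\max_{[t,t+1]}|f''|$ by $O\!\left( E(t)/(n-t) \right)$. The only smallness facts this uses are that on $[0,T]$ the quantity $1-t/n$ never drops below $1-T/n = \lfloor n^{1-1/K^2} \rfloor/n$, which for $n$ large exceeds $2/n$ and $K(K+1)2^{K+1}/n$ and --- since $M = \lfloor n^{0.1}\rfloor N$, so that $M^{1/4}/n \to 0$ --- also dominates $M^{1/4}/n$. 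For \ref{itm:z expected change} there is in fact nothing to prove, since $z_\alpha(t) = \frac{n}{M}\left( 1 - M\delta^+(\alpha)\frac{t}{n} \right)$ is affine in $t$, so that $z_\alpha(t+1) - z_\alpha(t) = z_\alpha'(t)$ exactly.

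For \ref{itm:r expected change}--\ref{itm:d^- expected change} I would observe that each trajectory has the form $L \cdot \prod_i (1 - a_i t/n)$ with at most three factors, where $0 \le a_i \le 1$ (using that $M\delta^+(\alpha), M\delta^-(\alpha) \le 1$, which holds since $\delta$ is a queenon) and $L$ is one of the line-counts $L^r_{y,\alpha}, L^c_{x,\alpha}, L^+_{x+y,\alpha}, L^-_{x-y,\alpha}$, each of which is $O(n/M)$ because row $y$, column $x$, or a diagonal meets $\alpha_n$ in $O(n/M)$ positions. Differentiating twice, every surviving term of $f''$ is $L$ times a product of two factors of the form $a_i/n$ and of the remaining affine factors, which all lie in $[0,1]$ throughout $[0,T]$; hence $\max_{[0,T]}|f''| = O(L/n^2) = O(1/(nM))$, uniformly in $(x,y)$ and $\alpha$. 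Since $E(t)/(n-t) = M^{-5/4}(1-t/n)^{-K-1} \ge M^{-5/4}$ while $1/(nM) = M^{-5/4}\cdot (M^{1/4}/n) = o(M^{-5/4})$, the term $\frac{1}{2}|f''(\zeta)|$ is at most $E(t)/(n-t)$ for $n$ large, which is precisely the claimed estimate.

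For \ref{itm:epsilon expected change} a direct computation gives $E'(t) = \frac{K}{M^{5/4}}(1-t/n)^{-K-1} = K\,E(t)/(n-t)$, so the assertion reduces to $|E(t+1) - E(t) - E'(t)| \le E(t)/(n-t)$. Here $E''(t) = \frac{K(K+1)}{M^{5/4}n}(1-t/n)^{-K-2}$, and on $[t,t+1] \subseteq [0,T]$ one has $1 - s/n \ge \frac{1}{2}(1-t/n)$ because $1-t/n \ge 2/n$; hence $\max_{[t,t+1]}|E''| = O\!\left( \frac{K^2}{M^{5/4}n}(1-t/n)^{-K-2} \right)$, which is at most $M^{-5/4}(1-t/n)^{-K-1} = E(t)/(n-t)$ once $n-t \ge K(K+1)2^{K+1}$ --- and this holds for all $t \le T$ when $n$ is large, since $n-T = \lfloor n^{1-1/K^2}\rfloor \to \infty$. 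Applying Taylor's theorem to $E$ then finishes the argument. The computation is entirely routine; the one point deserving care --- the nearest thing to an obstacle --- is that $1-t/n$ is permitted to shrink to roughly $n^{-1/K^2}$ near the end of the random phase, so one must check that every crude second-derivative bound is genuinely absorbed by $E(t)/(n-t)$ throughout $[0,T]$. This is precisely what forces the choices $T = n - \lfloor n^{1-1/K^2}\rfloor$, $K$ large, and the exponent $5/4$ in the definition of $E$.
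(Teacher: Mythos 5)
Your proposal is correct and follows essentially the same route as the paper: Taylor's theorem with Lagrange remainder, noting $z_\alpha$ is affine, bounding the second derivatives of the product-form trajectories by $O(1/(nM))$ (which is dominated by $E(t)/(n-t)\geq M^{-5/4}$ since $M=\lfloor n^{0.1}\rfloor N$), and computing $E'$, $E''$ explicitly for the error term. Your uniform treatment of \ref{itm:r expected change}--\ref{itm:d^- expected change} via the generic form $L\prod_i(1-a_it/n)$ is just a streamlined version of the paper's explicit computation for $r_{y,\alpha}$.
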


\begin{proof}
	Each of assertions follows from Taylor's theorem. For every $x \in [0,T]$:
	\[
	E'(x) = \frac{K E(x)}{n(1-x/n)}, \quad E''(x) = \frac{K(K+1) E(x)}{n^2(1-x/n)^2} \leq \frac{ 2E(x)}{n(1-x/n)}.
	\]
	By Taylor's theorem for every $0 \leq t \leq T-1$ there exists some $\zeta \in (t,t+1)$ such that
	\[
	E(t+1) - E(t) = E'(t) + \frac{1}{2}E''(\zeta) = \frac{K E(t)}{n(1-t/n)} \pm \frac{ E(t)}{n-t},
	\]
	as desired.
	
	For the remaining assertions it suffices to show that if $f$ is one of the functions $r_{y,\alpha}$, $c_{x,\alpha}$, $d^+_{x+y,\alpha}$, $d^-_{x-y,\alpha}$, or $\ta_\alpha$ then for every $0 \leq t \leq T-1$ and every $\zeta \in (t,t+1)$ it holds that
	\begin{equation}\label{eq:second derivative bound}
	|f''(\zeta)| \leq \frac{2E(t)}{n-t}.
	\end{equation}
	This follows from direct computation. We will demonstrate this for $r_{y,\alpha}$. The other calculations are similar.
	
	For every $\zeta \in [0,T]$ it holds that
	\begin{align*}
	r_{y,\alpha}''(\zeta) = \frac{2 M L_{y,\alpha}^r\delta^-(\alpha)}{n^2} \left( 1 - M \delta^+(\alpha) \frac{\zeta}{n} \right) & + \frac{2 M L_{y,\alpha}^r\delta^+(\alpha)}{n^2} \left( 1 - M \delta^-(\alpha) \frac{\zeta}{n} \right)\\
	& + \frac{2 M^2 L_{y,\alpha}^r\delta^+(\alpha) \delta^-(\alpha)}{n^2} \left( 1 - \frac{\zeta}{n} \right).
	\end{align*}
	Since $L_{y,\alpha}^r \leq 2n/M$ and $\delta^+(\alpha),\delta^-(\alpha) \leq 1/M$, for every $0 \leq t < T$:
	\[
	|r_{y,\alpha}''(\zeta)| \leq \frac{12}{nM} \leq \frac{E(t)}{n(1-t/n)},
	\]
	and \eqref{eq:second derivative bound} holds.
\end{proof}

We are ready to show that the random variables are supermartingales.

\begin{proof}[Proof of Lemma \ref{lem:supermartingale condition}]
	Let $X$ be one of the random variables and let $* \in \{+,-\}$. Let $\sigma = 1$ if $*=+$ and $\sigma=-1$ if $*=-$. Condition on $Q(1),\ldots,Q(t)$. If $\tau \leq t$ then, by definition, $X^*(t+1) - X^*(t) = 0$. On the other hand, by the previous two claims:
	\begin{align*}
	\E & \left[ X^*(t+1) - X^*(t) \given Q(1),\ldots,Q(t), \tau > t \right]\\
	& = \sigma \left( \E \left[ X(t+1) - X(t) \given Q(t), \tau > t \right] - \left( x(t+1) - x(t) \right) \right) - \frac{1}{2}\left( E(t+1) - E(t) \right)\\
	& = \sigma \left( \left(x'(t) \pm O \left(\frac{E(t)}{n-t}\right) \right) - \left( x'(t) \pm O \left(\frac{E(t)}{n-t}\right) \right) \right) - \frac{1}{2} \left( \frac{K E(t)}{n-t} \pm \frac{E(t)}{n-t} \right)\\
	& = - \frac{K E(t)}{2(n-t)} \pm O \left(\frac{E(t)}{n-t}\right) \leq 0,
	\end{align*}
	where the last inequality holds provided the constant $K$ was chosen to be large enough.
\end{proof}

We are ready to prove Proposition \ref{prop:tau inf whp}.

\begin{proof}[Proof of Proposition \ref{prop:tau inf whp}]
	We observe that $\tau < \infty$ only if there exists some $0 \leq t \leq T$, $\alpha \in I_M$, and $(x,y) \in [n]^2$ such that for $X$ one of $R_{y,\alpha},C_{x,\alpha}D^+_{x+y,\alpha},D^-_{x-y,\alpha}$, or $\tA_\alpha$ either $X^+(t) > E(t)/2$ or $X^-(t) > E(t)/2$.
	
	Let $X$ be one of the sequences of random variables above. By Lemma \ref{lem:supermartingale condition}, both $X^+$ and $X^-$ are supermartingales. Furthermore, by Lemma \ref{lem:one step bound}, $X^+$ and $X^-$ change by at most $O(1/(1-t/n))$ in time step $t$. Therefore, by Theorem \ref{thm:azuma}, for every $0 \leq t \leq T$:
	\[
	\Prob \left[ X^+(t) > \frac{1}{2}E(t) \right], \Prob \left[ X^-(t) > \frac{1}{2}E(t) \right] \leq \exp \left( - \Omega \left( \frac{E(t)^2 (1-t/n)^2 }{T} \right) \right) = \exp \left( - \Omega \left( n^{0.75} \right) \right).
	\]
	By applying a union bound to the polynomially many random variables and times $0 \leq t \leq T$ we conclude that $\Prob \left[ \tau < \infty \right] = \exp \left( - \Omega \left( n^{0.75} \right) \right)$, as desired.
\end{proof}

We now show that w.h.p.\ $Q(T)$ approximates $\delta$. In the next claim $N$ (which we assume is larger than $\varepsilon^{-2}$) is the constant appearing in the definition of $\delta$.

\begin{clm}\label{clm:Q(T) approximates delta}
	For every $\alpha \in I_N$ it holds that
	\[
	\Prob \left[ \left||\alpha_n \cap Q(T)| - \delta(\alpha)n \right| > \varepsilon^5 n \right] \leq \exp \left( - \Omega \left( n^{0.75} \right) \right).
	\]
\end{clm}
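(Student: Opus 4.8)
The plan is to reduce the claim to two facts already established: that Algorithm~\ref{alg:random} w.h.p.\ never aborts (Proposition~\ref{prop:tau inf whp}), and that the i.i.d.\ choices $Y_1,\dots,Y_T$ concentrate, which is elementary. Fix $\alpha\in I_N$. Since $N$ divides $M$, the lattice $I_M$ refines $I_N$ up to measure zero, so $\delta(\alpha)=\sum_{\beta\in I_M,\,\beta\subseteq\alpha}\delta(\beta)$. Passing from this continuous containment to the discrete relations $\beta_n\subseteq\alpha_n$ and $\beta_n\cap\alpha_n\neq\emptyset$ changes the sum only through the $I_M$-cells $\beta$ lying within $O(1/M)$ of $\partial\alpha$, of which there are $O(M/N)$, each of $\delta$-measure $O(1/M^2)$ (using that $\delta$ has bounded density, Observation~\ref{obs:delta properties}\ref{itm:everywhere positive density}). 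Hence both $\sum_{\beta:\,\beta_n\subseteq\alpha_n}\delta(\beta)$ and $\sum_{\beta:\,\beta_n\cap\alpha_n\neq\emptyset}\delta(\beta)$ equal $\delta(\alpha)\pm O(1/(NM))$, and since $N\ge\varepsilon^{-2}$ and $M=\Theta(n^{0.1}N)$ this error is $O(\varepsilon^{4}n^{-0.1})=o(1)$.

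First I would dispose of the abort event. On $\{\tau=\infty\}$, which by Proposition~\ref{prop:tau inf whp} has probability $1-\exp(-\Omega(n^{0.75}))$, Claim~\ref{clm:A_alpha lower bound} gives $A_{Y_t}(t-1)>0$ for every $t\le T$, so the algorithm places a queen at every step and $X_t\in\mA_{Y_t}(t-1)\subseteq(Y_t)_n$. Distinct steps produce distinct queens, so on this event
\[
U\;:=\;\sum_{t=1}^{T}\mathbbm{1}\!\left[(Y_t)_n\subseteq\alpha_n\right]\;\le\;|\alpha_n\cap Q(T)|\;\le\;\sum_{t=1}^{T}\mathbbm{1}\!\left[(Y_t)_n\cap\alpha_n\neq\emptyset\right]\;=:\;V,
\]
where the upper bound in fact holds unconditionally, since aborted steps contribute $X_t=*$.

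Next I would apply concentration to $U$ and $V$, which are sums of $T$ i.i.d.\ $\{0,1\}$ variables depending only on $Y_1,\dots,Y_T$. By the first paragraph together with $n-T=O(n^{1-1/K^2})$, their means satisfy $\E U=T\sum_{\beta:\,\beta_n\subseteq\alpha_n}\delta(\beta)=\delta(\alpha)n\pm o(\varepsilon^{5}n)$ and likewise $\E V=\delta(\alpha)n\pm o(\varepsilon^{5}n)$; in particular $|\E U-\delta(\alpha)n|,\,|\E V-\delta(\alpha)n|<\varepsilon^{5}n/4$ for $n$ large. Applying Theorem~\ref{thm:azuma} to the martingales $U-\E U$ and $V-\E V$ and their negatives (increments bounded by $1$), with deviation $\varepsilon^{5}n/2$ and $T\le n$, gives
\[
\Prob\!\left[|U-\E U|>\tfrac{\varepsilon^{5}n}{2}\right],\ \Prob\!\left[|V-\E V|>\tfrac{\varepsilon^{5}n}{2}\right]\;\le\;\exp\!\left(-\Omega(\varepsilon^{10}n)\right)\;\le\;\exp\!\left(-\Omega(n^{0.75})\right).
\]

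Finally I would combine: off the union of $\{\tau<\infty\}$, $\{|U-\E U|>\varepsilon^{5}n/2\}$ and $\{|V-\E V|>\varepsilon^{5}n/2\}$ — a set of probability $\exp(-\Omega(n^{0.75}))$ — we have $\E U-\varepsilon^{5}n/2\le|\alpha_n\cap Q(T)|\le\E V+\varepsilon^{5}n/2$, which lies in $(\delta(\alpha)n-\varepsilon^{5}n,\ \delta(\alpha)n+\varepsilon^{5}n)$, as claimed. There is no genuinely hard step here: the substantive work — that the algorithm does not abort — was done in Proposition~\ref{prop:tau inf whp}, and the one point requiring care is the $I_M$-versus-$I_N$ boundary bookkeeping of the first paragraph, ensuring all discrepancies are $o(\varepsilon^{5}n)$.
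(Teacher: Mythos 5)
Your proposal is correct and follows essentially the same route as the paper: condition on the event $\tau=\infty$ from Proposition \ref{prop:tau inf whp} (so the algorithm never aborts), observe that the relevant count is governed by the i.i.d.\ choices $Y_1,\ldots,Y_T$ whose success probability is $\delta(\alpha)$ up to negligible error, apply a Chernoff/Azuma bound, and finish with a union bound. The only difference is cosmetic: the paper uses that $I_M$ refines $I_N$ to write $|\alpha_n\cap Q(T)|$ exactly as a $\mathrm{Bin}(T,\delta(\alpha))$ variable on the no-abort event, whereas you sandwich it between two i.i.d.\ sums and account explicitly for the $O(1/(NM))$ boundary bookkeeping, which is a harmless elaboration.
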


\begin{proof}
	Observe that since (by definition) $N$ divides $M$, the partition $I_M$ is a refinement of $I_N$. For $\alpha \in I_N$, let $W_\alpha$ be the number of times $1 \leq t \leq T$ such that $Y_t \subseteq \alpha$. Then $W_\alpha$ is distributed binomially with parameters $T,\delta(\alpha)$. Therefore, by Chernoff's inequality:
	\[
	\Prob \left[ |W_\alpha - \E W_\alpha| > \varepsilon^5 n / 2 \right] = \exp \left( - \Omega (n) \right).
	\]
	Since $|\E W_\alpha - n\delta(\alpha)| = o(n)$:
	\begin{equation}\label{eq:W_alpha Chernoff}
	\Prob \left[ |W_\alpha - \delta(\alpha) n  > \varepsilon^5 n | \right] = \exp \left( - \Omega (n) \right).
	\end{equation}
	If $\tau = \infty$ then Algorithm \ref{alg:random} did not abort in which case $|\alpha_n \cap Q(T)| = W_\alpha$. Therefore, by a union bound:
	\begin{align*}
	\Prob \left[ \left||\alpha_n \cap Q(T)| - \delta(\alpha)n \right| > \varepsilon^5 n \right] \leq \Prob \left[ \tau < \infty \right] + \Prob \left[ \left|W_\alpha - \delta(\alpha)n \right| > \varepsilon^5 n \right]\\
	\stackrel{\text{\eqref{eq:W_alpha Chernoff} and Proposition \ref{prop:tau inf whp}}}{\leq} \exp \left( - \Omega \left( n^{0.75} \right) \right),
	\end{align*}
	as claimed.
\end{proof}

We conclude the section by calculating the entropy of Algorithm \ref{alg:random}.

\begin{clm}\label{clm:one step entropy}
	Let $0 \leq t < T$. Then
	\begin{align*}
	H (X_{t+1} \given X_1,X_2,\ldots,X_{t}) = & 2 \log(n-t) - D^M(\delta) + \sum_{ \alpha \in J_M } \delta^+(\alpha) \log(1 - M\delta^+(\alpha)t/n)\\
	& + \sum_{\alpha \in J_M} \delta^-(\alpha) \log(1 - M\delta^-(\alpha)t/n) \pm O \left( \frac{ME(t)}{n-t} \right).
	\end{align*}
\end{clm}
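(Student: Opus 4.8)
The plan is to mirror the upper‑bound entropy computation, now for Algorithm \ref{alg:random}, using crucially that the site $Y_{t+1}\in I_M$ drawn in step $t+1$ is, by construction, independent of the algorithm's internal randomness and hence of $X_1,\ldots,X_t$. By the chain rule,
\[
H(X_{t+1}\given X_1,\ldots,X_t)=H(Y_{t+1})+H(X_{t+1}\given X_1,\ldots,X_t,Y_{t+1}),
\]
and $H(Y_{t+1})=-\sum_{\alpha\in I_M}\delta(\alpha)\log\delta(\alpha)$. Splitting $\log\delta(\alpha)=\log(\delta(\alpha)/|\alpha|)+\log|\alpha|$ and using that $\delta$ is a step queenon, hence a permuton with uniform marginals (so $\sum_{\alpha\in T_M}\delta(\alpha)=O(1/M)$, while $|\alpha|=1/(2M^2)$ for $\alpha\in S_M$), gives $H(Y_{t+1})=-D^M(\delta)+\log(2M^2)\pm O(\log(n)/M)$.

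For the second summand I would first handle the abort case. The event $\{\tau\le t\}$ is determined by $X_1,\ldots,X_t$ and has probability $\exp(-\Omega(n^{0.75}))$ by Proposition \ref{prop:tau inf whp}; since every conditional entropy here is at most $\log(n^2+1)$, its contribution is far smaller than the claimed error. On $\{\tau>t\}$, Claim \ref{clm:A_alpha lower bound} ensures $A_\alpha(t)>0$ for all $\alpha\in I_M$, so the algorithm does not abort and, conditioned on an on‑track history $Q(t)$ and on $Y_{t+1}=\alpha$, the queen $X_{t+1}$ is uniform on $\mA_\alpha(t)$ and contributes entropy $\log A_\alpha(t)$. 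Thus $H(X_{t+1}\given X_1,\ldots,X_t,Y_{t+1})$ reduces to $\sum_{\alpha\in I_M}\delta(\alpha)\log A_\alpha(t)$, with each $A_\alpha(t)$ pinned to its trajectory, modulo the negligible bad‑event correction. For $\alpha\in S_M$, combining Claim \ref{clm:A_alpha lower bound} with $|\alpha_n|=\tfrac{n^2}{2M^2}(1\pm O(M/n))$ yields
\[
\log A_\alpha(t)=\log\tfrac{n^2}{2M^2}+2\log(1-t/n)+\log\!\big(1-M\delta^+(\alpha)\tfrac{t}{n}\big)+\log\!\big(1-M\delta^-(\alpha)\tfrac{t}{n}\big)\pm O\!\Big(\tfrac{ME(t)}{n(1-t/n)^2}\Big);
\]
for $\alpha\in T_M$, the bounds $\tfrac{n^2}{40M^2}(1-\eta)^3(1-t/n)^2\le A_\alpha(t)\le n^2$ together with $\sum_{\alpha\in T_M}\delta(\alpha)=O(1/M)$ show the half‑squares contribute only $O(\log(n)/M)$ to the weighted sum.

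It then remains to reorganize and simplify. Grouping $\sum_{\alpha\in S_M}\delta(\alpha)\log(1-M\delta^{\pm}(\alpha)t/n)$ by diagonal strip — all squares in a plus‑strip $\beta\in J_M$ share $\delta^+(\alpha)=\delta^+(\beta)$, with combined $\delta$‑mass $\delta^+(\beta)$ minus the $O(1/M)$ half‑square mass, and $M\delta^{\pm}(\beta)\le1$ by sub‑uniformity keeps the logarithms $O(\log n)$ — converts these into $\sum_{\beta\in J_M}\delta^+(\beta)\log(1-M\delta^+(\beta)t/n)$ and its minus‑analogue, up to $O(\log(n)/M)$. Using $\sum_{\alpha\in S_M}\delta(\alpha)=1-O(1/M)$ and $2\log n+2\log(1-t/n)=2\log(n-t)$, the $2\log M$ and $\log 2$ contributed by $H(Y_{t+1})$ cancel those in $\log\tfrac{n^2}{2M^2}$, and what remains is precisely the expression of Claim \ref{clm:one step entropy}. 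The main obstacle is purely bookkeeping: verifying that every error collected — $O(\log n)\exp(-\Omega(n^{0.75}))$ from the truncation at $\tau$, $O(\log(n)/M)$ from half‑squares, $O(M/n)$ from replacing $|\alpha_n|$ by $|\alpha|n^2$, and the trajectory error of Claim \ref{clm:A_alpha lower bound} — is dominated by $O(ME(t)/(n(1-t/n)^2))$ throughout $0\le t<T$, which hinges on the definitions $M=\lfloor n^{0.1}\rfloor N$, $E(t)=n/(M^{5/4}(1-t/n)^K)$, and the freedom to take $K$ large.
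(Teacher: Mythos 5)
Your proposal is correct and follows essentially the same route as the paper's proof: split off the negligible $\{\tau\le t\}$ contribution via Proposition \ref{prop:tau inf whp}, apply the chain rule through the independent cell variable $Y_{t+1}$, plug in the trajectory estimate of Claim \ref{clm:A_alpha lower bound} for $A_\alpha(t)$ on squares while discarding the $O(\log(n)/M)$ half-square contribution, and regroup the diagonal terms over strips in $J_M$. The only differences (splitting $\log\delta(\alpha)$ inside $H(Y_{t+1})$ rather than at the end, and summing over $S_M$ instead of $I_M$ in the strip regrouping) are cosmetic and your error bookkeeping is consistent with the paper's.
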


\begin{proof}
	Let $0 \leq t < T$. By the law of total probability:
	\begin{align*}
	H (X_{t+1} \given X_1,\ldots,X_{t}) = H(X_{t+1} \given X_1,\ldots,X_{t}, \tau > t) \Prob [\tau > t] + H(X_{t+1} \given X_1,\ldots,X_{t}, \tau \leq t) \Prob [ \tau \leq t].
	\end{align*}
	By Proposition \ref{prop:tau inf whp} $\Prob [\tau \leq t] = \exp \left( - \Omega \left( n^{0.75} \right) \right)$. Additionally, $X_{t+1}$ is distributed among at most $n^2$ elements. Therefore:
	\[
	H(X_{t+1} \given X_1,\ldots,X_{t}, \tau \leq t) \Prob [ \tau \leq t] \leq \log(n^2) \exp \left( - \Omega \left( n^{0.75} \right) \right) = \exp \left( - \Omega \left( n^{0.75} \right) \right).
	\]
	Thus:
	\[
	H (X_{t+1} \given X_1,\ldots,X_{t}) = H(X_{t+1} \given X_1,\ldots,X_{t}, \tau > t) \pm \exp \left( - \Omega \left( n^{0.75} \right) \right).
	\]
	By the chain rule:
	\begin{align*}
	H (X_{t+1} \given X_1,\ldots,X_{t}, \tau > t) = H(Y_{t+1} \given X_1,\ldots,X_{t}, \tau > t) + H(X_{t+1} \given X_1,\ldots,X_{t}, Y_{t+1}, \tau > t).
	\end{align*}
	Recall that $Y_{t+1}$ is independent of $X_1,\ldots,X_t$ and the event $\tau > t$. By its definition:
	\[
	H(Y_{t+1} \given X_1,\ldots,X_{t}, \tau > t) = - \sum_{ \alpha \in I_M } \delta(\alpha) \log(\delta(\alpha)).
	\]
	By definition of $X_{t+1}$:
	\[
	H(X_{t+1} \given X_1,\ldots,X_{t}, Y_{t+1}, \tau > t) = \sum_{ \alpha \in I_M } \delta(\alpha) \log(A_\alpha(t)).
	\]
	By \cref{obs:A_alpha asymptotic} if $\tau > t$ then for every $\alpha \in I_M$:
	\[
	A_\alpha(t) = |\alpha_n| \left( 1 - \frac{t}{n} \right)^2 \left( 1 - M\delta^+(\alpha) \frac{t}{n} \right) \left( 1 - M \delta^-(\alpha) \frac{t}{n} \right) \left( 1 \pm O \left( \frac{ME(t)}{n-t} \right) \right).
	\]
	Thus:
	\begin{align*}
	&H(X_{t+1} \given X_1,\ldots,X_{t}, Y_{t+1}, \tau > t) =\\
	&\sum_{ \alpha \in I_M } \delta(\alpha) \left( \log(|\alpha_n|) + 2\log(1-t/n) + \log(1-M\delta^+(\alpha)t/n) + \log(1-M\delta^-(\alpha)t/n) \right)\\
	& \qquad \pm O \left( \frac{ME(t)}{n-t} \right)\\
	& = 2\log(1-t/n) + \sum_{ \alpha \in I_M } \delta(\alpha) \log(|\alpha_n|) + \sum_{ \alpha \in J_M} \delta^+(\alpha) \log(1 - M\delta^+(\alpha)t/n)\\
	& \qquad + \sum_{ \alpha \in J_M} \delta^-(\alpha) \log(1 - M\delta^-(\alpha)t/n) \pm O \left( \frac{ME(t)}{n-t} \right).
	\end{align*}
	Recall that $|\alpha|$ is the area of $\alpha$ and that for every $\alpha \in I_M$ there holds $|\alpha_n| = n^2 |\alpha| \left( 1 \pm O(M/n) \right)$. Thus
	\[
	\sum_{ \alpha \in I_M } \delta(\alpha) \log(|\alpha_n|) = 2\log(n) + \sum_{ \alpha \in I_M } \delta(\alpha) \log(|\alpha|) \pm O \left( \frac{M}{n} \right).
	\]
	Finally, we recall that by definition
	\[
	- \sum_{ \alpha \in I_M } \delta(\alpha) \log(\delta(\alpha)) + \sum_{ \alpha \in I_M } \delta(\alpha) \log(|\alpha|) = -D^M(\delta).
	\]
	Therefore:
	\begin{align*}
	H (X_{t+1} \given X_1,X_2,\ldots,X_{t}) = & 2 \log(n-t) - D^M(\delta) + \sum_{ \alpha \in J_M } \delta^+(\alpha) \log(1 - M\delta^+(\alpha)t/n)\\
	& + \sum_{\alpha \in J_M} \delta^-(\alpha) \log(1 - M\delta^-(\alpha)t/n) \pm O \left( \frac{ME(t)}{n-t} \right),
	\end{align*}
	as desired.
\end{proof}

In the statement of the next lemma $K$ is the constant used to define $T$.

\begin{lem}\label{lem:random phase entropy}
	It holds that
	\[
	H(X_1,X_2,\ldots,X_T) = n \left( H_q^M(\delta) + 2\log n - 1 \right) \pm n^{1-1/K^3}.
	\]
\end{lem}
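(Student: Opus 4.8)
The plan is to expand $H(X_1,\ldots,X_T)$ by the chain rule, substitute the one-step formula of Claim~\ref{clm:one step entropy}, and evaluate the resulting sums in closed form. Write $H(X_1,\ldots,X_T)=\sum_{t=0}^{T-1}H(X_{t+1}\given X_1,\ldots,X_t)$ and substitute. Summing the right-hand side of Claim~\ref{clm:one step entropy} over $t$ leaves four pieces to control: the ``volume'' term $\sum_{t=0}^{T-1}2\log(n-t)$, the constant term $\sum_{t=0}^{T-1}\bigl(-D^M(\delta)\bigr)=-TD^M(\delta)$, the two diagonal sums $\sum_{t=0}^{T-1}\sum_{\alpha\in J_M}\delta^{+}(\alpha)\log\bigl(1-M\delta^{+}(\alpha)t/n\bigr)$ and its minus-analogue, and the accumulated error $\sum_{t=0}^{T-1}O\bigl(ME(t)/(n(1-t/n)^2)\bigr)$.

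For the volume term, $\sum_{t=0}^{T-1}2\log(n-t)=2\log\bigl(n!/(n-T)!\bigr)$; Stirling gives $2\log(n!)=2n\log n-2n+O(\log n)$, and since $n-T=\lfloor n^{1-1/K^2}\rfloor$ we have $2\log((n-T)!)=O(n^{1-1/K^2}\log n)=o(n^{1-1/K^3})$. For the constant term, Observation~\ref{obs:delta properties} ($\delta$ has density bounded above and below by positive constants) gives $D^M(\delta)=O(1)$, so $-TD^M(\delta)=-nD^M(\delta)\pm O(n-T)=-nD^M(\delta)\pm o(n^{1-1/K^3})$. For a diagonal sum I would fix $\alpha\in J_M$ and apply Claim~\ref{clm:log integral} with $b=M\delta^{+}(\alpha)$, which lies in $(0,1]$ (upper bound from the sub-uniform diagonal marginals of $\delta$, positivity from Observation~\ref{obs:delta properties}), using $T\ge(1-1/e)n$ for $n$ large, to get
\[
\sum_{t=0}^{T-1}\delta^{+}(\alpha)\log\Bigl(1-M\delta^{+}(\alpha)\frac tn\Bigr)=\frac nM\Bigl(-(1-M\delta^{+}(\alpha))\log(1-M\delta^{+}(\alpha))-M\delta^{+}(\alpha)\Bigr)\pm\frac{2(n-T)|\log(1-T/n)|}{M}.
\]
Using $1-M\delta^{+}(\alpha)=M\distPlus{\delta}(\alpha)$ (because $\distPlus{\delta}(\alpha)=1/M-\delta^{+}(\alpha)$), summing over the $|J_M|=2M$ intervals, and applying $\sum_\alpha\delta^{+}(\alpha)=1$ together with $\sum_\alpha\distPlus{\delta}(\alpha)\log(M\distPlus{\delta}(\alpha))=D(\{\distPlus{\delta}(\alpha)\}_{\alpha\in J_M})-\log 2$, the double sum collapses to $-nD(\{\distPlus{\delta}(\alpha)\}_{\alpha\in J_M})+n\log 2-n$ up to an error $O(n^{1-1/K^2}\log n)=o(n^{1-1/K^3})$; the minus-diagonal sum is identical.

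For the accumulated error, substituting $E(t)=n/(M^{5/4}(1-t/n)^K)$ turns each summand into $O\bigl(M^{-1/4}(1-t/n)^{-(K+2)}\bigr)$; bounding $\sum_{t=0}^{T-1}(1-t/n)^{-(K+2)}=O\bigl(n^{K+2}(n-T)^{-(K+1)}\bigr)$ and using $M\ge\Omega(n^{0.1})$ and $n-T=\lfloor n^{1-1/K^2}\rfloor$ shows this is $O\bigl(n^{1+(K+1)/K^2-0.025}\bigr)$, which is at most $n^{1-1/K^3}$ once $K$, and then $n$, is large enough. Collecting all pieces,
\[
H(X_1,\ldots,X_T)=2n\log n-4n-nD^M(\delta)-nD(\{\distPlus{\delta}(\alpha)\}_{\alpha\in J_M})-nD(\{\distMinus{\delta}(\alpha)\}_{\alpha\in J_M})+2n\log 2\pm n^{1-1/K^3},
\]
and the main term equals $n\bigl(H_q^M(\delta)+2\log n-1\bigr)$ by the definition of $H_q^M$, as required.

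The delicate point is the error budget, not any single evaluation: the per-step error in Claim~\ref{clm:one step entropy} grows like $(1-t/n)^{-(K+2)}$ as $t\to T$, so summing it over $t$ up to $T=n-\lfloor n^{1-1/K^2}\rfloor$ yields a quantity that is polynomially large in $n$, and the entire arrangement of parameters (the choice $M\asymp n^{0.1}$, the stopping point $T$ kept $n^{1-1/K^2}$ short of $n$, and the $M^{-5/4}$ factor in $E(t)$) exists precisely to push every contribution strictly below the target precision $n^{1-1/K^3}$. Making this fit requires choosing $K$ sufficiently large and tracking the exponents carefully; everything else---Stirling, the closed form of Claim~\ref{clm:log integral}, and the identity relating $\delta^{+},\delta^{-}$ to $\distPlus{\delta},\distMinus{\delta}$---is routine bookkeeping.
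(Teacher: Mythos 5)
Your proposal is correct and follows essentially the same route as the paper: chain rule, substitution of Claim \ref{clm:one step entropy}, evaluation of the diagonal sums via Claim \ref{clm:log integral} with $b=M\delta^{\pm}(\alpha)$ and the identity $1-M\delta^{\pm}(\alpha)=M\overline{\delta}^{\pm}(\alpha)$, and an error budget dominated by the $\sum_t M E(t)/(n(1-t/n)^2)$ term, which is pushed below $n^{1-1/K^3}$ by the choices of $M$, $T$, and large $K$ exactly as in the paper (your handling of $\sum_t 2\log(n-t)$ by Stirling rather than by another application of Claim \ref{clm:log integral} is an immaterial difference).
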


\begin{proof}
	By the chain rule $H(X_1,X_2,\ldots,X_T) = \sum_{t=1}^T H(X_t \given X_1,\ldots,X_{t-1})$. By \cref{clm:log integral} for $\alpha \in J_M$ and $* \in \{+,-\}$:
	\begin{align*}
	\sum_{t=0}^{T-1} & \delta^*(\alpha) \log(1 - M\delta^*(\alpha)t/n)\\
	& = -\frac{n}{M} \left( (1-M\delta^*(\alpha) \log(1-M\delta^*(\alpha)) + M\delta^*(\alpha) \right) \pm \frac{1}{M} 3(n-T) \log(1-T/n)\\
	& = - \frac{n}{M} \left( M\overline{\delta}^*(\alpha) \log \left( M\overline{\delta}^*(\alpha) \right) + M\delta^*(\alpha) \right) \pm \frac{3}{M}(n-T) \log(1-T/n)\\
	& = - n \left( \overline{\delta}^*(\alpha) \log \left( \frac{\overline{\delta}^*(\alpha)}{1/(2M)} \right) - \overline{\delta}^*(\alpha) \log(2) + \delta^*(\alpha) \right) \pm \frac{3}{M}(n-T) \log(1-T/n).
	\end{align*}
	Therefore:
	\begin{align*}
	\sum_{t=0}^{T-1} \sum_{\alpha \in J_M} \delta^*(\alpha) \log(1 - M\delta^*(\alpha)t/n) = - n D( \{\overline{\delta}^*(\alpha) \}_{\alpha \in J_M}) + n\log(2) - n \pm 6(n-T)\log(1-T/n).
	\end{align*}
		
	Applying Claim \ref{clm:one step entropy}:
	\begin{align*}
	\sum_{t=1}^T H(X_t \given X_1,\ldots,X_{t-1})
	= & \sum_{t=0}^{T-1}2\log(n-t) - T D^M(\delta) - nD(\{ \distPlus{\delta}(\alpha) \}_{\alpha \in J_M}) - nD(\{ \distMinus{\delta}(\alpha) \}_{\alpha \in J_M} )\\
	& + 2n\log(2) - 2n \pm O \left( (n-T) |\log (1-T/n)| + \sum_{t=0}^{T-1} \frac{ME(t)}{n-t} \right)\\
	\stackrel{\text{Claim \ref{clm:log integral}}}{=} & n (H_q^M(\delta) + 2\log(n) - 1) \pm O \left( (n-T) |\log (1-T/n)| + \frac{nME(T)}{n-T} \right).
	\end{align*}
	Taking account the definition of $E(t)$ and $T$ we have $nME(T)/(n-T) \leq n-T = O(n^{1-1/K^2})$. Therefore:
	\begin{align*}
	\sum_{t=1}^T H(X_t \given X_1,\ldots,X_{t-1}) = n \left( H_q(\delta) + 2\log n - 1 \right) \pm n^{1-1/K^3},
	\end{align*}
	as claimed.
\end{proof}

\subsection{Absorbers}\label{sec:absorbers}

In this section we analyze Algorithm \ref{alg:absorption}. We wish to show that it is unlikely to abort. The next lemma provides a sufficient condition.

\begin{definition}\label{def:epsilon absorbing}
	Let $\ell > 0$. A partial $n$-queens configuration $Q$ is \termdefine{$\ell$-absorbing} if for every $(c,r) \in [n]^2$, it holds that $\left|\mB_Q(c,r)\right| \geq \ell$.
\end{definition}

The following is Lemma 4.2 in \cite{luria2021lower}.

\begin{lem}\label{lem:absorbing procedure works}
	Suppose $|Q(T)| = T$ and $Q(T)$ is $10 (n-T)$-absorbing. Then Algorithm \ref{alg:absorption} does not abort.
\end{lem}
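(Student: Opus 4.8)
The plan is to prove Lemma \ref{lem:absorbing procedure works} by induction on the step index $i$ of Algorithm \ref{alg:absorption}, showing that at every step the candidate set $\mB_i = \mB_{R(i-1)}(c_i,r_i)$ is nonempty. Write $k = n-T$; the hypothesis $|Q(T)| = T$ guarantees exactly $k$ uncovered rows and $k$ uncovered columns, so the matching $(c_1,r_1),\dots,(c_k,r_k)$ and hence the algorithm are well defined, and it suffices to treat $1 \le i \le k$. Since $R(0) = Q(T)$ is $10k$-absorbing, $|\mB_{R(0)}(c_i,r_i)| \ge 10k$, so the whole argument reduces to bounding, for a fixed $i$, how many of these $\ge 10k$ absorbers can fail to remain absorbers after the first $i-1$ absorption steps have been performed.

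Fix $(x,y) \in \mB_{R(0)}(c_i,r_i)$ and recall that being an absorber for $(c_i,r_i)$ means (1) $(c_i,y)$ and $(x,r_i)$ lie on no common diagonal, and (2) none of the four diagonals through $(c_i,y)$ or $(x,r_i)$ is occupied. Condition (1) is purely positional and so is insensitive to the evolution $R(0)\to R(1)\to\cdots$; moreover each step $j$ replaces $R(j-1)$ by $\bigl(R(j-1)\setminus\{(x_j,y_j)\}\bigr)\cup\{(x_j,r_j),(c_j,y_j)\}$, so removals only \emph{free} diagonals and can never cause a violation of (2). Hence $(x,y)$ can fail to be an absorber in $R(i-1)$ only if either (a) it was one of the removed queens $(x_j,y_j)$ for some $j<i$, or (b) one of its four relevant diagonals was newly occupied by an added queen $(x_j,r_j)$ or $(c_j,y_j)$ with $j<i$. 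Case (a) eliminates at most $i-1$ absorbers. For case (b): over the first $i-1$ steps at most $2(i-1)$ plus-diagonals and at most $2(i-1)$ minus-diagonals become newly occupied, and a fixed newly occupied plus-diagonal $d$ can serve as the plus-diagonal through $(c_i,y)$ for only one value of $y$ (namely $y=d-c_i$), hence for at most one queen $(x,y)\in R(0)$, and as the plus-diagonal through $(x,r_i)$ for only one value of $x$, hence at most one queen; the same holds for minus-diagonals. Thus case (b) eliminates at most $2\cdot 2(i-1) + 2\cdot 2(i-1) = 8(i-1)$ absorbers.

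Combining, at most $9(i-1) \le 9(k-1) < 10k$ of the original absorbers are lost, so at least one $(x,y) \in \mB_{R(0)}(c_i,r_i)$ survives; one then checks directly that such a surviving $(x,y)$ is genuinely an absorber in $R(i-1)$ — it is still a queen of $R(i-1)$, condition (1) is untouched, and its four relevant diagonals, being unoccupied in $R(0)$ and not among those newly occupied, are unoccupied in $R(i-1)$. Hence $\mB_i \neq \emptyset$, step $i$ succeeds, the induction closes, and Algorithm \ref{alg:absorption} never aborts. I do not expect a genuine obstacle here — the substance is the bookkeeping in case (b) — but the one point that needs care is to phrase everything as ``count the absorbers of $R(0)$ that survive,'' rather than trying to track the dynamically changing sets $\mB_{R(j)}(c_i,r_i)$, which would needlessly force one to reason about new absorbers created by the added queens.
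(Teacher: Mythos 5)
Your proposal is correct, and it is essentially the argument the paper relies on: the paper defers the proof to Lemma 4.2 of \cite{luria2021lower}, whose stated key observation is exactly your bookkeeping that each absorption step destroys at most $9$ absorbers of $\mB_{Q(T)}(c_i,r_i)$ (one by the removal, and at most $2+2$ for each of the two added queens via the at most one newly occupied plus- and minus-diagonal each), so that $9(k-1) < 10(n-T)$ leaves a surviving absorber at every step. Your framing in terms of counting surviving absorbers of $R(0)$ is just the natural implementation of that same observation, so there is nothing substantively different to flag.
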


For the proof we refer the reader to \cite{luria2021lower}. We mention only that the key observation is that for every $(c,r) \in [n]^2$, every step of Algorithm \ref{alg:absorption} ``destroys'' at most $9$ absorbers in $\mB_{Q(T)}(c,r)$.

The next lemma asserts that w.h.p.\ $Q(T)$ is $\Omega(n)$-absorbing. By Proposition \ref{prop:tau inf whp}, w.h.p.\ $n - |Q(T)| = n - T = o(n)$. It then follows from Lemma \ref{lem:absorbing procedure works} that Algorithm \ref{alg:absorption} succeeds in constructing an $n$-queens configuration.

\begin{lem}\label{lem:an abundance of absorbers}
	W.h.p.\ $Q(T)$ is $\Omega(n)$-absorbing.
\end{lem}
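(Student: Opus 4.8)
The plan is to fix a pair $(c,r) \in [n]^2$, show that with probability $1-\exp\left(-\Omega(n^{0.75})\right)$ the set $\mB_{Q(T)}(c,r)$ has size $\Omega(n)$ (with all implied constants depending only on $\gamma$ and $\varepsilon$), and then take a union bound over the $n^2$ choices of $(c,r)$. Since $n-T = o(n)$, Lemma~\ref{lem:absorbing procedure works} then guarantees that Algorithm~\ref{alg:absorption} does not abort. The starting observation is that a queen $(x,y)\in Q(T)$ is an absorber for $(c,r)$ as soon as it shares no diagonal with $(c,r)$ and the four ``probe diagonals'' through $(c,y)$ and through $(x,r)$ — those with indices $c+y$, $y-c$, $x+r$, $r-x$ — are all unoccupied in $Q(T)$. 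Fixing a small constant $\zeta>0$, let $G_{c,r}\subseteq[n]^2$ be the set of $(x,y)$ with $x,y\in[\zeta n,(1-\zeta)n]$, all four probe diagonals of length $\geq\zeta n$, and $(x,y)$ sharing no diagonal with $(c,r)$; a short computation gives $|G_{c,r}|=\Omega(n^2)$ uniformly in $(c,r)$. Define $F(t)=\bigl|\{(x,y)\in Q(t)\cap G_{c,r}: \text{all four probe diagonals unoccupied in }Q(t)\}\bigr|$, so that $|\mB_{Q(T)}(c,r)|\geq F(t)$ at $t=T$; the goal is $F(T)=\Omega(n)$ w.h.p.

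I would analyze $F(t)$ by the differential-equation method of Section~\ref{ssec:random analysis}. For the boundedness condition: adding a queen raises $F$ by at most $1$, and the single new plus-diagonal (resp.\ minus-diagonal) it occupies can ruin at most two counted absorbers, since a probe diagonal of the form $c+y$ or $x+r$ is pinned to one row or column and each row and column contains at most one queen; hence $|F(t+1)-F(t)|=O(1)$. For the trajectory, condition on $\tau>t$. Claim~\ref{clm:diagonal occupation probability} shows that every unoccupied probe diagonal incident to a region $\beta\in I_M$ is occupied at the next step with the \emph{generic} probability $\frac{M\delta^{\pm}(\beta)}{n(1-M\delta^{\pm}(\beta)t/n)}$ up to a negligible error, and Claim~\ref{clm:row-col prob} does the same for rows and columns, so
\[
\E\bigl[F(t+1)-F(t)\bigm|Q(t)\bigr] = p(t)-q(t)F(t)\ \pm\ (\text{negligible for }t\leq T),
\]
where $q(t)=O(1/n)$ (using Observation~\ref{obs:delta properties}\ref{itm:diagonals not full}, i.e.\ $M\delta^{\pm}(\beta)\leq\eta<1$ everywhere) and $p(t)$ is the probability that $X_{t+1}$ is a newly counted absorber. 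Solving $f'=p-qf$ with $f(0)=0$ yields $f(T)=\Omega(n)$ provided $p(t)=\Omega(1)$ uniformly for $t\leq T$. Transforming $F$ into super/sub-martingales (subtracting the trajectory and an error term and freezing at $\tau$, together with the stopping time of the auxiliary variables below) and applying Theorem~\ref{thm:azuma} then gives $F(T)\geq f(T)/2=\Omega(n)$ with probability $1-\exp\left(-\Omega(n^{0.75})\right)$.

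The crux, and the step I expect to be the main obstacle, is the uniform lower bound $p(t)=\Omega(1)$. Writing $p(t)=\sum_{\alpha\in I_M}\delta(\alpha)A'_\alpha(t)/A_\alpha(t)$, where $A'_\alpha(t)$ counts the available positions in $\alpha_n\cap G_{c,r}$ all of whose probe diagonals are unoccupied, this is the assertion that in every bulk region $\alpha$ a constant fraction of the available positions are ``good''. This is precisely the ``color-blindness'' phenomenon flagged in the proof overview of the upper bound: Observation~\ref{obs:delta properties}\ref{itm:diagonals not full} only provides $\eta<1$, not $\eta<1/2$, so a crude union bound over the two diagonal directions does not suffice, and one must rule out an adversarial alignment of the occupied plus-diagonals with the occupied minus-diagonals. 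To do this I would decompose $A'_\alpha(t)$ over rows: for $\alpha\in I_M$ let $U_\alpha(t)$ count the rows $y$ incident to $\alpha_n$ with row $y$, plus-diagonal $c+y$ and minus-diagonal $y-c$ all unoccupied, and for each such $y$ let $V_{y,\alpha}(t)$ count the positions $(x,y)\in\alpha_n$ that are row-safe and have plus-diagonal $x+r$ and minus-diagonal $r-x$ unoccupied; then $A'_\alpha(t)=\sum_y\mathbf{1}[y\text{ counted by }U_\alpha(t)]\,V_{y,\alpha}(t)$. Each of $U_\alpha$ and $V_{y,\alpha}$ changes by $O(1)$ per step (again a newly occupied diagonal of the form $c+y$ or $x+r$ meets a single row or column), and by Claims~\ref{clm:row-col prob} and~\ref{clm:diagonal occupation probability} their one-step expected changes match the derivatives of the natural trajectories $u_\alpha(t)=\Theta\bigl(\tfrac nM(1-\tfrac tn)\bigr)$ and $v_{y,\alpha}(t)=\Theta\bigl(\tfrac nM(1-\tfrac tn)\bigr)$, each factor of which stays bounded away from $0$ for $t\leq T$. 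A martingale argument parallel to Proposition~\ref{prop:tau inf whp}, union bounding over the polynomially many auxiliary variables attached to this fixed $(c,r)$, shows w.h.p.\ that $U_\alpha$ and $V_{y,\alpha}$ follow these trajectories; combined with Claim~\ref{clm:A_alpha lower bound} this gives $A'_\alpha(t)=\Omega(A_\alpha(t))$ for every bulk $\alpha$, hence $p(t)=\Omega(1)$. I expect the main labor to be in verifying the trajectory identities for $U_\alpha$ and $V_{y,\alpha}$ — in particular checking that the handful of coincidences among the probe diagonals and a position's own lines contribute negligibly and that each probe-diagonal range meets only $O(1)$ elements of $J_M$ (so that the relevant $\delta^{\pm}$ is essentially constant across the sum) — but this is routine once Claims~\ref{clm:row-col prob} and~\ref{clm:diagonal occupation probability} are in hand.
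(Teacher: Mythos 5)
Your argument is sound, but it is a genuinely different proof from the one in the paper. The paper never tracks the absorber count along the whole process: it builds all the absorbers in an initial window of length $T_R=\lfloor \varepsilon n/8\rfloor$, by coupling the first steps of Algorithm \ref{alg:random} with i.i.d.\ uniform labels $s_x$ so that $\tR\subseteq Q(T_R)\subseteq R$ for a binomial-type random set $R$, and then applies the bounded-differences inequality (Theorem \ref{thm:bounded differences}) to the i.i.d.\ labels to show that w.h.p.\ every target $(c,r)$ has $\geq Cn$ queens of $\tR$ whose four probe diagonals avoid $R$ entirely, so $Q(T_R)$ is $Cn$-absorbing (this is where the density lower bound $\delta\geq 0.6\rho$ from Observation \ref{obs:delta properties} enters); survival to time $T$ is then handled by a separate multiplicative supermartingale (Claim \ref{clm:Q(T) absorbing}), using only the destruction-rate bound $\leq 4\zeta/n$ from Claim \ref{clm:diagonal occupation probability}, exactly as in your destruction term. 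What you do differently is run a single differential-equation/supermartingale analysis of $F(t)$ over all of $[0,T]$, balancing an $\Omega(1)$ creation rate against the $O(\zeta/n)$ per-absorber destruction rate; the price is precisely the creation-rate bound $p(t)=\Omega(1)$, which the paper never needs (absorbers are only created while the board is nearly empty and i.i.d.\ concentration is clean), and which forces you to enlarge the tracked-variable system of Definition \ref{def:tau} with the target-specific statistics $U_\alpha$, $V_{y,\alpha}$ and to verify their trajectory and boundedness conditions. That extra work is routine given Claims \ref{clm:row-col prob}, \ref{clm:diagonal occupation probability} and \ref{clm:A_alpha lower bound} (the error tolerances work out as for the paper's own variables, and the ``adversarial alignment'' you worry about is exactly what tracking $V_{y,\alpha}$ rules out), but do patch two small points: exclude row $r$, column $c$, and rows with $L^r_{y,\alpha}=o(n/M)$ from $G_{c,r}$ and from the tracked counts (so the new queen's own diagonals never coincide with its probe diagonals, and so $V_{y,\alpha}$ is large enough to track relative to $E(t)$), and note that you only need $A'_\alpha=\Omega(A_\alpha)$ on a set of cells of constant $\delta$-measure, which the bulk restriction supplies without any density lower bound on $\delta$. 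In exchange, your route stays entirely within the Section \ref{ssec:random analysis} machinery (no coupling, no Theorem \ref{thm:bounded differences}) and yields the stronger statement that $Q(t)$ is $\Omega((1-o(1))t)$-absorbing throughout; both routes give failure probability $\exp(-\Omega(n^{0.75}))$ per target, ample for the union bound over the $n^2$ pairs, and both finish via Lemma \ref{lem:absorbing procedure works} since $n-T=o(n)$.
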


The intuition is that $Q(T)$ contains approximately $n$ queens, each occupying a single diagonal of each type. However, the grid $[n]^2$ contains approximately $2n$ diagonals of each type. Therefore, if one chooses a diagonal uniformly at random the probability that it is unoccupied is bounded away from $0$. If we fix $(c,r)$ and choose $(x,y) \in Q(T)$ uniformly at random, we might imagine that the (four) diagonals containing $(c,y)$ and $(x,r)$ are distributed uniformly at random, which would imply that with constant probability they are unoccupied, in which case $(x,y)$ is an absorber for $(c,r)$.

In order to prove Lemma \ref{lem:an abundance of absorbers} we couple the random process $\{Q(t)\}_{t=0}^T$ with a random set that is the union of binomial random subsets of $[n]^2$. Let $\{ s_x \}_{x \in [n]^2}$ be i.i.d.\ uniform random variables in $[0,1]$. Consider the following process: Let $\tQ(0) = \emptyset$. Define $Y_1,\ldots,Y_T \in I_M$ as in Algorithm \ref{alg:random}. Suppose we have constructed $\tQ(t-1)$. Let $\alpha = Y_t$. Then, let $X_t$ be the element of $\alpha_n \cap \mA_{\tQ(t-1)}$ minimizing $s_x$. If $\alpha_n \cap \mA_{\tQ(t-1)} = \emptyset$, abort and set $X_t=X_{t+1}=\ldots=X_T=*$. Clearly, $\{Q(t)\}_{t=0}^T$ and $\{\tQ(t)\}_{t=0}^T$ have identical distributions, so we may (and do) identify them.

Define $R \subseteq [n]^2$ as follows: Recall that for $x \in [n]^2$, $\alpha(x)$ is the $\alpha \in I_M$ such that $x \in \alpha_n$. Include $x$ in $R$ if $s_x < \varepsilon \delta(\alpha(x))M^2 / n$. Let $R' \subseteq R$ be the set of elements $x \in R$ that do not share a row, column, or diagonal with any other element of $R$. Let $\tR = \{ x \in R' : s_x < \varepsilon \delta(\alpha(x))M^2 / (20n) \}$. Clearly, $\tR$ is a partial $n$-queens configuration. We will show that w.h.p.\ (over the random variables $\{s_x\}_{x \in [n]^2}$) every partial configuration containing $\tR$ and contained in $R$ is $\Omega(n)$-absorbing. Furthermore, we will show that w.h.p.\ there exists some $0 \leq T_R \leq T$ such that $\tR \subseteq Q(T_R) \subseteq R$. This implies that $Q(T_R)$ is $\Omega(n)$-absorbing. Finally, we will show that w.h.p.\ a constant fraction of the absorbers in $Q(T_R)$ survive until the end of Algorithm \ref{alg:random}, which will imply Lemma \ref{lem:an abundance of absorbers}.

Let $T_R \coloneqq \lfloor \varepsilon n / 8 \rfloor$. In order to show that $Q(T_R) \subseteq R$ we first prove that $R'$ intersects every $\alpha \in I_M$ in many places. We will use the following concentration inequality, which is a special case of \cite[Theorem 1.10]{warnke2016method}.

\begin{thm}\label{thm:bounded differences}
	Let $X_1,\ldots,X_N$ be independent (but not necessarily identically distributed) random variables taking values in a finite set $\Lambda$. Let $p \in (0,1]$ satisfy ${max \{ \Prob [X_i = \eta] : \eta \in \Lambda, i \in [N] \}} \geq 1-p$. Assume that for $K>0$ the function $f:\Lambda^N \to \R$ satisfies the Lipschitz condition $\left| f(\omega) - f(\omega') \right| \leq K$ whenever $\omega,\omega' \in \Lambda^N$ differ by a single coordinate. Then, for all $\lambda \geq 0$:
	\[
	\Prob \left[ \left|f(X_1,\ldots,X_N) - \E \left[ f(X_1,\ldots,X_N) \right]\right| \geq \lambda \right] \leq 2 \exp \left( - \frac{\lambda^2}{2K^2 N p + 2K\lambda/3} \right).
	\]
\end{thm}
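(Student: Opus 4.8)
The statement is asserted to be a special case of \cite[Theorem 1.10]{warnke2016method}, so one option is to invoke that result directly: fixing for each $i$ a modal value $\eta_i$ of $X_i$ (so $\Prob[X_i\neq\eta_i]\le p$), the hypothesis is exactly that $f$ is globally $K$-Lipschitz while the $i$-th coordinate is ``active'' only with probability $\le p$, which is precisely the regime producing the Bernstein-type variance proxy $K^2 N p$ in place of the Azuma proxy $K^2 N$. I would, however, prefer to give a short self-contained derivation via the Doob martingale and Freedman's inequality.

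First I would record the Doob martingale $M_0=\E f$ and $M_j=\E[f(X_1,\dots,X_N)\mid X_1,\dots,X_j]$, with increments $\Delta_j=M_j-M_{j-1}$, so that $f(X)-\E f=\sum_{j=1}^N\Delta_j$ and, since $\Lambda$ is finite, $f$ is bounded and all these quantities are well defined. Put $h_j(x)=\E[f(X)\mid X_1,\dots,X_{j-1},X_j=x]$; the single-coordinate Lipschitz hypothesis gives $|h_j(x)-h_j(x')|\le K$ for all $x,x'$, hence $\sup_x h_j(x)-\inf_x h_j(x)\le K$. Since $\Delta_j=h_j(X_j)-\E_{X_j}[h_j(X_j)]$ is a mean-zero function taking values in an interval of length $\le K$, we get $|\Delta_j|\le K$.

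The heart of the argument is the conditional variance bound. Let $\eta_j$ be a modal value of $X_j$. Then
\[
\E[\Delta_j^2\mid X_1,\dots,X_{j-1}]=\mathrm{Var}\big(h_j(X_j)\big)\le \E\big[(h_j(X_j)-h_j(\eta_j))^2\big]\le K^2\,\Prob[X_j\neq\eta_j]\le K^2 p,
\]
because $h_j(X_j)-h_j(\eta_j)$ vanishes on the event $\{X_j=\eta_j\}$ and is at most $K$ in absolute value otherwise. Summing over $j$, the predictable quadratic variation $V:=\sum_{j=1}^N\E[\Delta_j^2\mid \mathcal F_{j-1}]$ is at most $NK^2 p$ \emph{deterministically}. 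Applying Freedman's martingale concentration inequality to $\sum_j\Delta_j$ and to $-\sum_j\Delta_j$, with increment bound $K$ and predictable variation bound $NK^2p$, yields
\[
\Prob\big[|f(X)-\E f|\ge t\big]\le 2\exp\!\left(-\frac{t^2}{2(NK^2p+Kt/3)}\right)=2\exp\!\left(-\frac{t^2}{2K^2Np+2Kt/3}\right),
\]
which is exactly the claimed bound.

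I do not anticipate a real obstacle: the only step requiring care is the conditional variance estimate, and its proof is the displayed three-line computation; everything else is the standard Doob martingale / Freedman machinery. (I read the hypothesis, as one must, as: for \emph{every} $i$, $\max_{\eta\in\Lambda}\Prob[X_i=\eta]\ge 1-p$.) If one prefers to avoid re-deriving Freedman-type constants, the statement can instead simply be quoted from \cite[Theorem 1.10]{warnke2016method} after the above choice of modal values $\{\eta_i\}$, the two presentations being equivalent up to the usual rephrasing of ``typically Lipschitz'' as ``globally Lipschitz with small per-coordinate influence.''
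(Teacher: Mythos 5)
Your proposal is correct, but it is worth noting that the paper does not prove this statement at all: it is quoted verbatim as a special case of Warnke's Theorem 1.10 on the method of typical bounded differences, and that citation is the entirety of the paper's ``proof.'' Your self-contained derivation is therefore a genuinely different (and more elementary) route. The argument is sound: the Doob martingale increments $\Delta_j=h_j(X_j)-\E[h_j(X_j)\mid\mathcal F_{j-1}]$ are bounded by $K$ because $h_j$ has oscillation at most $K$, and the key conditional variance bound $\E[\Delta_j^2\mid\mathcal F_{j-1}]\le K^2\,\Prob[X_j\neq\eta_j]\le K^2p$ (comparing to the modal value $\eta_j$ rather than to the mean) is exactly what converts the Azuma variance proxy $NK^2$ into the Bernstein-type proxy $NK^2p$; since this bound on the predictable quadratic variation holds deterministically, two applications of Freedman's inequality (to $\pm\sum_j\Delta_j$) give precisely the stated tail with the constants $2K^2Np+2Kt/3$. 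Your parenthetical reading of the hypothesis — that \emph{every} $i$ admits a value $\eta_i$ with $\Prob[X_i\neq\eta_i]\le p$ — is indeed the intended one (the literal ``max over $i$ and $\eta$'' reading would make the claim false, e.g.\ with one nearly constant coordinate and many uniform ones), and it is the reading under which the statement is the advertised special case of Warnke's theorem. What each approach buys: the citation is shorter and defers the Freedman-type machinery to a reference whose general theorem (typical Lipschitz conditions relative to a good event) is far stronger than needed here; your argument makes the special case transparent and keeps the constants visible, at the cost of invoking (or re-proving) Freedman's inequality.
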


We will need the following bounds on the probability that all values $s_{(x,y)}$, where $(x,y)$ ranges over a row, column, or diagonal, exceed a given threshold.

\begin{clm}\label{clm:product bounds}
	Let $(x,y) \in [n]^2$ and let $\varepsilon \geq \varepsilon_0 > 0$. The following hold:
	\begin{enumerate}
	\item\label{itm:row prod} $\prod_{a \in [n]} \left( 1 - \frac{\varepsilon_0 \delta(\alpha(a,y)) M^2}{n} \right) \geq 1 - \varepsilon_0$,

	\item\label{itm:col prod} $\prod_{a \in [n]} \left( 1 - \frac{\varepsilon_0 \delta(\alpha(x,a)) M^2}{n} \right) \geq 1 - \varepsilon_0$,

	\item\label{itm:plus diag prod} $\prod_{(a,b) \in [n]^2: a+b = x+y} \left( 1 - \frac{\varepsilon_0 \delta(\alpha(a,b)) M^2}{n} \right) \geq 1 - \varepsilon_0$,

	\item\label{itm:minus diag prod} $\prod_{(a,b) \in [n]^2: a-b = x-y} \left( 1 - \frac{\varepsilon_0 \delta(\alpha(a,b)) M^2}{n} \right) \geq 1 - \varepsilon_0$.
	\end{enumerate}
\end{clm}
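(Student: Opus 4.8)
\emph{Proof proposal.} The plan is, for each of the four products, to pass from the product to a sum via the Weierstrass inequality $\prod_i(1-x_i)\ge 1-\sum_i x_i$ (valid whenever every $x_i\in[0,1]$, by an easy induction) and then to bound the resulting sum using Claim~\ref{clm:line sums}. I would carry out item~\ref{itm:row prod} in full; items~\ref{itm:col prod}, \ref{itm:plus diag prod}, \ref{itm:minus diag prod} are obtained by the same argument after replacing Claim~\ref{clm:line sums}\ref{itm:row sum} with parts \ref{itm:column sum}, \ref{itm:plus diagonal sum}, \ref{itm:minus diagonal sum}, respectively, and using the symmetries between rows, columns, plus-, and minus-diagonals. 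First, since $\delta$ has density bounded above by a constant $G$ (Observation~\ref{obs:delta properties}\ref{itm:everywhere positive density}) and $|\alpha|\le 1/(2M^2)$ for every $\alpha\in I_M$, each factor satisfies $0\le \varepsilon_0\delta(\alpha(a,y))M^2/n\le \varepsilon G/(2n)\le 1$ once $n$ is large; here $\alpha(a,y)$ denotes the cell $\alpha^M(a,y)\in I_M$. Hence $\prod_{a\in[n]}\bigl(1-\varepsilon_0\delta(\alpha(a,y))M^2/n\bigr)\ge 1-\tfrac{\varepsilon_0 M^2}{n}\sum_{a\in[n]}\delta(\alpha(a,y))$, so it suffices to prove $\sum_{a\in[n]}\delta(\alpha(a,y))\le n/M^2$ for $n$ large.

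To bound this sum I would write $\sum_{a\in[n]}\delta(\alpha(a,y))=\sum_{\alpha\in I_M}\delta(\alpha)L_{y,\alpha}^r=\sum_{\alpha\in I_M}|\alpha_n|\cdot\frac{\delta(\alpha)L_{y,\alpha}^r}{|\alpha_n|}$. Every $\alpha\in I_M$ satisfies $|\alpha_n|=|\alpha|n^2\pm 8\lceil n/M\rceil\le \frac{n^2}{2M^2}\bigl(1+O(M/n)\bigr)$ (squares attain the extremal value; the half-squares of $T_M$ are smaller). Since $\delta$ is an $M$-step queenon and $n\ge M^2$ for $n$ large, Claim~\ref{clm:line sums}\ref{itm:row sum} applies with $N=M$ and yields $\sum_{\alpha\in I_M}\frac{\delta(\alpha)L_{y,\alpha}^r}{|\alpha_n|}=\frac1n\pm\frac{CMG}{n^2}=\frac1n\bigl(1+O(MG/n)\bigr)$. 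As all summands are nonnegative, $\sum_{a\in[n]}\delta(\alpha(a,y))\le \bigl(\max_{\alpha\in I_M}|\alpha_n|\bigr)\sum_{\alpha\in I_M}\frac{\delta(\alpha)L_{y,\alpha}^r}{|\alpha_n|}\le \frac{n^2}{2M^2}\cdot\frac1n\bigl(1+\oone\bigr)=\frac{n}{2M^2}\bigl(1+\oone\bigr)$, the error tending to $0$ because $M=\Theta(n^{0.1})$ and $G=\Theta(1)$. Thus $\frac{\varepsilon_0 M^2}{n}\sum_{a\in[n]}\delta(\alpha(a,y))=\frac{\varepsilon_0}{2}\bigl(1+\oone\bigr)\le\varepsilon_0$ for $n$ large (uniformly in $\varepsilon_0\le\varepsilon$, since the $\oone$ does not depend on $\varepsilon_0$), which proves item~\ref{itm:row prod}. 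For item~\ref{itm:plus diag prod} the same computation applies with $D_{x+y,\alpha}^+$ in place of $L_{y,\alpha}^r$: Claim~\ref{clm:line sums}\ref{itm:plus diagonal sum} gives $\sum_{\alpha\in I_M}\frac{\delta(\alpha)D_{x+y,\alpha}^+}{|\alpha_n|}=\frac{M\delta^+(\alpha^M(x,y))}{n}\pm\frac{CG}{Mn}$, and since $\delta$ has sub-uniform diagonal marginals $\delta^+(\alpha^M(x,y))\le 1/M$, so this sum is again $\le \frac1n(1+\oone)$ and the argument is unchanged; item~\ref{itm:minus diag prod} is symmetric.

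The only point that needs care is that the density $G$ of $\delta$ may be large, so bounding each $\delta(\alpha)$ crudely by $G|\alpha|$ would introduce a spurious factor $G$; the purpose of invoking Claim~\ref{clm:line sums} — ultimately the (sub-)uniform marginals of $\delta$ — is precisely to control $\sum_{\alpha}\delta(\alpha)L_{y,\alpha}^r$ and its diagonal analogues by $(1+\oone)\,n/(2M^2)$ independently of $G$. Everything else is bookkeeping: the choice $M=\Theta(n^{0.1})\ll n$ sends all error terms to $0$, the half-squares of $T_M$ are harmless because their $|\alpha_n|$ is even smaller than $n^2/(2M^2)$, and the Weierstrass step only requires each factor to lie in $[0,1]$. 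I do not expect a genuine obstacle here.
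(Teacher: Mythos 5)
Your proposal is correct and follows essentially the same route as the paper: both reduce the product to the sum $\sum_\alpha \delta(\alpha)L^r_{y,\alpha}$ (resp.\ its diagonal analogue), bound it via Claim \ref{clm:line sums} together with the uniform and sub-uniform marginals of $\delta$ (so that the maximal density $G_M$ never enters the main term), and dispatch \ref{itm:col prod}, \ref{itm:minus diag prod} by symmetry. The only cosmetic difference is the product-to-sum step — you use the Weierstrass inequality $\prod(1-x_i)\ge 1-\sum x_i$, while the paper uses $\log(1-z)\ge -z-z^2$ and exponentiates — and your version even handles the uniformity in $\varepsilon_0\le\varepsilon$ slightly more transparently.
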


\begin{proof}
	We will prove \ref{itm:row prod} and \ref{itm:plus diag prod}. \ref{itm:col prod} and \ref{itm:minus diag prod} follow similarly. We use the fact that for sufficiently small $z$, $\log(1-z) \geq -z-z^2$. This implies:
	\[
	\prod_{a \in [n]} \left( 1 - \frac{\varepsilon_0 \delta(\alpha(a,y)) M^2}{n} \right) \geq \exp \left( - \sum_{a \in [n]} \left(\frac{\varepsilon_0 \delta(\alpha(a,y)) M^2}{n} + \left(\frac{\varepsilon_0 \delta(\alpha(a,y)) M^2}{n}\right)^2 \right) \right).
	\]
	Because $\delta$ has uniform marginals, $\delta(\alpha) \leq 1/M$ for every $\alpha \in I_M$. Thus $\sum_{a \in [n]} (\varepsilon_0 \delta(\alpha(a,y)) M^2 / n)^2 \leq \varepsilon_0^2 M^2 / n = \oone$. Therefore:
	\begin{align*}
	\prod_{a \in [n]} \left( 1 - \frac{\varepsilon_0 \delta(\alpha(a,y)) M^2}{n} \right) & \geq (1-\oone) \exp \left( - \frac{\varepsilon_0 M^2}{n} \sum_{a \in [n]} \delta(\alpha(a,y)) \right)\\
	& = (1-\oone) \exp \left( - \frac{\varepsilon_0 M^2}{n} \sum_{ \alpha \in I_M } \delta(\alpha)L_{y,\alpha}^r \right)\\
	& \stackrel{\text{Claim \ref{clm:line sums} \ref{itm:row sum}}}{=} (1-\oone) \exp \left( - \frac{\varepsilon_0 M^2}{n} \times \frac{n}{2M^2} \right) \geq 1 - \varepsilon_0,
	\end{align*}
	proving \ref{itm:row prod}.
	
	We turn to \ref{itm:plus diag prod}, which is proved similarly.
	\begin{align*}
	\prod_{(a,b) \in [n]^2, a+b = x+y} \left( 1 - \frac{\varepsilon_0 \delta(\alpha(a,b)) M^2}{n} \right) & \geq (1 - \oone) \exp \left( - \frac{\varepsilon_0 M^2}{n} \sum_{ \alpha \in I_M } \delta(\alpha) L_{x+y,\alpha}^+ \right)\\
	& \stackrel{\text{Claim \ref{clm:line sums} \ref{itm:plus diagonal sum}}}{\geq} (1 - \oone) \exp \left( - \frac{\varepsilon_0 M^2}{n} \times \frac{\delta^+(\alpha) n}{2M} \right).
	\end{align*}
	$\delta$ has sub-uniform marginals, so $\delta^+(\alpha) \leq 1/M$. Hence:
	\begin{align*}
	\prod_{(a,b) \in [n]^2, a+b = x+y} \left( 1 - \frac{\varepsilon_0 \delta(\alpha(a,b)) M^2}{n} \right) \geq (1-\oone) \exp \left( - \frac{\varepsilon_0}{2} \right) \geq 1 - \varepsilon_0,
	\end{align*}
	proving \ref{itm:plus diag prod}.
\end{proof}

\begin{clm}\label{clm:R' intersection alpha_n}
	With probability $1 - \exp \left( - \Omega \left( n^{0.6} \right) \right)$ for every $\alpha \in I_M$ it holds that $\left| \alpha_n \cap R' \right| \geq \frac{7}{6} \delta(\alpha) T_R$.
\end{clm}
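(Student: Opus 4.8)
The plan is to estimate $|\alpha_n\cap R'|$ through its first two moments and then union-bound over the $O(M^2)$ cells of $I_M$. For $x\in[n]^2$ write $q_x=\varepsilon\delta(\alpha(x))M^2/n$, so that $x\in R$ independently with probability $q_x$, and note $q_x=O(\varepsilon/n)$ because $\delta$ has density $O(1)$ by Observation \ref{obs:delta properties} \ref{itm:everywhere positive density}. Fix $\alpha\in I_M$ and put $\mu_\alpha\coloneqq\E|\alpha_n\cap R|=|\alpha_n|\,\varepsilon\delta(\alpha)M^2/n$. Since $|\alpha_n|=|\alpha|n^2\pm O(n/M)=|\alpha|n^2(1\pm o(1))$ and $M^2|\alpha|\in\{1/4,1/2\}$, we get $\mu_\alpha=M^2|\alpha|\,\varepsilon\delta(\alpha)n(1\pm o(1))\ge\tfrac14\varepsilon\delta(\alpha)n(1-o(1))$; moreover, as $\delta$ has density at least $0.6\rho$, we have $\mu_\alpha=\Theta(n/M^2)=\Theta(n^{0.8})\to\infty$. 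Because $|\alpha_n\cap R|$ is a sum of independent indicators, Chernoff's inequality gives $|\alpha_n\cap R|\ge(1-\tfrac1{20})\mu_\alpha$ outside an event of probability $e^{-\Omega(\mu_\alpha)}$.

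Next I would lower-bound $\E|\alpha_n\cap R'|$. For any $x\in[n]^2$ the row, column, plus-diagonal and minus-diagonal through $x$ pairwise meet only at $x$, so the events $\{y\in R\}$ over the positions $y\ne x$ on these four lines are independent, and $\Prob[x\in R'\mid x\in R]$ equals the product of $(1-q_y)$ over those positions. Dividing the four full-line products by $(1-q_x)^3$ and applying Claim \ref{clm:product bounds} with $\varepsilon_0=\varepsilon$ (each full-line product is at least $1-\varepsilon$) gives $\Prob[x\in R'\mid x\in R]\ge(1-\varepsilon)^4\ge1-4\varepsilon$, hence $\E|\alpha_n\cap R'|\ge(1-4\varepsilon)\mu_\alpha$. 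The same products also give $\sum_{y\in\ell}q_y\le-\log(1-\varepsilon)\le2\varepsilon$ for every row, column, or diagonal $\ell$, so $\sum_y q_y\le8\varepsilon$ when $y$ ranges over all positions sharing a row, column, or diagonal with a fixed $x$.

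The crux is to control $V\coloneqq|\alpha_n\cap(R\setminus R')|$, the number of positions of $\alpha_n\cap R$ that share a line with some other position of $R$. I would bound $V\le B\coloneqq\sum_{x\in\alpha_n}\sum_y\mathbf 1[x\in R]\,\mathbf 1[y\in R]$, where the inner sum runs over positions $y\ne x$ sharing a line with $x$; then $\E B=\sum_x q_x\sum_y q_y\le8\varepsilon\mu_\alpha$. For the variance I would use independence of the indicators, so the covariance of two terms vanishes unless the index pairs intersect: the ``diagonal'' contribution is at most $\E B$; the contribution of two terms sharing their first coordinate $x\in\alpha_n$ is at most $\sum_{x\in\alpha_n}q_x\big(\sum_y q_y\big)^2=O(\varepsilon^2\mu_\alpha)$; and the contribution of two terms sharing the coordinate $y$, or with one term's first coordinate equal to the other's second, is controlled by the observation that for a \emph{fixed} position $y$ at most $O(n/M)$ positions of $\alpha_n$ lie on a line through $y$, so $\sum_{x\in\alpha_n\text{ on a line through }y}q_x=O(\varepsilon/M)$. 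Altogether $\operatorname{Var}(B)=O(\mu_\alpha)$, and Chebyshev's inequality yields $B\le\E B+\tfrac1{20}\mu_\alpha$ outside an event of probability $O(\mu_\alpha)/(\mu_\alpha/20)^2=O(1/\mu_\alpha)=O(M^2/n)=o(1/M^2)$.

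Combining, outside an event of probability $e^{-\Omega(\mu_\alpha)}+o(1/M^2)$,
\[
|\alpha_n\cap R'|\;\ge\;|\alpha_n\cap R|-B\;\ge\;\Big(\tfrac9{10}-8\varepsilon\Big)\mu_\alpha\;\ge\;\Big(\tfrac9{10}-8\varepsilon\Big)\tfrac14\varepsilon\delta(\alpha)n(1-o(1)),
\]
which for $\varepsilon$ sufficiently small and $n$ large exceeds $\tfrac7{48}\varepsilon\delta(\alpha)n\ge\tfrac76\delta(\alpha)\lfloor\tfrac18\varepsilon n\rfloor=\tfrac76\delta(\alpha)T_R$. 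A union bound over the $O(M^2)$ cells $\alpha\in I_M$ (using $\mu_\alpha\gg\log M$) finishes the proof. The step I expect to need the most care is the variance estimate for $B$: one must exploit that restricting the first coordinate of the pair to $\alpha_n$ saves a factor $\Theta(M)$ in precisely the cross-terms that would otherwise be too large, keeping $\operatorname{Var}(B)$ at the scale $\mu_\alpha$ so that Chebyshev's $O(1/\mu_\alpha)$ bound beats the $O(M^2)$ union bound. A plain bounded-differences argument applied directly to $|\alpha_n\cap R'|$ appears too weak here, since a single coordinate can change $|\alpha_n\cap R'|$ by as much as $\Theta(n/M)$.
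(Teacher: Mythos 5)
Your proof is correct, but it takes a genuinely different route from the paper's. The paper bounds $\E\left[|\alpha_n\cap R'|\right]$ exactly as you do (independence of the thresholds along the four lines through a position, plus Claim~\ref{clm:product bounds}), but then concentrates $|\alpha_n\cap R'|$ \emph{directly} using Theorem~\ref{thm:bounded differences}: changing a single $s_x$ changes $R$ by one element and hence $R'$ by $O(1)$ elements, and the typicality parameter $p\approx M/n$ in that inequality effectively replaces the number $n^2$ of variables by $n^2p=O(\varepsilon n)$ in the denominator of the exponent, giving a per-cell failure probability $\exp\left(-\Omega(n/M^4)\right)=\exp\left(-\Omega(n^{0.6})\right)$. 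You instead lower-bound $|\alpha_n\cap R|$ by Chernoff and upper-bound the collision count $B$ by Chebyshev after a second-moment computation (your variance bookkeeping, including the $\Theta(M)$ saving from restricting one index to $\alpha_n$, checks out, and the final numerics $(9/10-8\varepsilon)\tfrac14\varepsilon\delta(\alpha)n>\tfrac{7}{48}\varepsilon\delta(\alpha)n\geq\tfrac76\delta(\alpha)T_R$ are fine). What you buy is elementarity: no refined concentration inequality is needed. What you give up is the strength of the bound: Chebyshev yields only $O(1/\mu_\alpha)=O(M^2/n)$ per cell, so the overall guarantee is polynomially rather than stretched-exponentially small. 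This suffices for the claim as stated and for its downstream use (the entropy computation in Section~\ref{ssec:lower bound proof} only needs $1-\Prob[\mF]$ to be $o\!\left(n^{-1/K^3}/\log n\right)$, and $O(N^4n^{-0.6})$ qualifies), though the intermediate assertion there about $\Prob[\mF]$ being $1-\exp\left(-\Omega\left(n^{0.75}\right)\right)$ would have to be weakened if one substituted your argument. (Two harmless slips: the correction factor should be $(1-q_x)^4$ rather than $(1-q_x)^3$, which only helps; and your first-moment bound on $\E|\alpha_n\cap R'|$ is never used in your final chain.)

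One point to correct: your closing justification for avoiding bounded differences rests on a false premise. A single coordinate cannot change $|\alpha_n\cap R'|$ by $\Theta(n/M)$: each of the four lines through the altered position contains at most one element of $R'$ (two elements of $R$ on a common line are both excluded from $R'$), so the effect is at most $4$ or $5$ --- exactly the Lipschitz constant the paper uses. The genuine obstacle to a plain McDiarmid bound is different: with $n^2$ variables and deviation $\lambda=\Theta(n/M^2)$, the exponent $\lambda^2/n^2=\Theta(M^{-4})$ is useless. The paper circumvents this with the small-probability parameter $p$ in Theorem~\ref{thm:bounded differences}; you circumvent it with the second moment.
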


\begin{proof}
	Let $\alpha \in I_M$ and let $(x,y) \in \alpha_n$. By definition, $(x,y) \in R'$ if and only if $(x,y) \in R$ and $(a,b) \notin R$ for every $(a,b) \in [n]^2$ that shares a row, column, or diagonal with $(x,y)$. Because the elements of $R$ are chosen independently of each other:
	\begin{align*}
	\Prob & \left[ (x,y) \in R' \right] = \Prob \left[ (x,y) \in R \right] \left( \prod_{a \in [n], a \neq x} \Prob \left[ (a,y) \notin R \right]\right) \left( \prod_{ a \in [n], a \neq y } \Prob \left[ (x,a) \notin R \right] \right) \times\\
	& \left( \prod_{(a,b) \in [n]^2, a+b = x+y, (a,b) \neq (x,y)} \Prob \left[ (a,b) \notin R \right] \right) \left( \prod_{(a,b) \in [n]^2, a-b = x-y, (a,b) \neq (x,y)} \Prob \left[ (a,b) \notin R \right] \right).
	\end{align*}
	
	By definition, $\Prob [(x,y) \in R] = \varepsilon \delta(\alpha)M^2 / n$. Similarly, for every $(a,b) \in [n]^2$ we have $\Prob \left[ (a,b) \notin R \right] = 1 - \varepsilon \delta(\alpha{(a,b)}) M^2 / n$. Therefore, by Claim \ref{clm:product bounds}:
	\[
	\Prob \left[ (x,y) \in R' \right] \geq \frac{\varepsilon \delta(\alpha) M^2}{n} (1-\varepsilon)^4.
	\]
	Thus, since $|\alpha_n| \geq n^2/(5M^2)$ for every $\alpha$:
	\[
	\E \left[ |\alpha_n \cap R'| \right] \geq \frac{\varepsilon \delta(\alpha) M^2}{n} (1-\varepsilon)^4 | \alpha_n | \geq \frac{\varepsilon \delta(\alpha) n}{5} \geq \frac{7 \delta(\alpha) T_R}{5}.
	\]
	We observe that adding or removing an element from $R$ changes $R'$ by at most $4$ elements. Therefore, by Theorem \ref{thm:bounded differences}, with $\lambda = \E \left[ |\alpha_n \cap R'| \right] - \frac{7}{6} \delta(\alpha) T_R = \Omega \left( n / M^2 \right)$, $K = 4 = O(1)$, and $p = \max \{\varepsilon \delta(\alpha(x,y)) M^2/n : (x,y) \in [n]^2 \} = O(1/n)$:
	\begin{align*}
	\Prob \left[ |\alpha_n \cap R'| < \frac{7}{6} \delta(\alpha) T_R \right] & \leq 2 \exp \left( - \frac{\lambda^2}{ 2K^2n^2p + 2K \lambda/ 3 } \right)\\
	& = \exp \left( - \Omega \left( n/M^4 \right) \right) = \exp \left( - \Omega \left( n^{0.6} \right) \right).
	\end{align*}
	The claim follows by applying a union bound to the polynomially many elements of $I_M$.
\end{proof}

For $\alpha \in I_M$ let $W_\alpha$ be the number of $1 \leq t \leq T_R$ such that $Y_t = \alpha$.

\begin{clm}\label{clm:W_alpha}
	With probability $1 - \exp \left( - \Omega \left( n^{0.8} \right) \right)$ for every $\alpha \in I_M$, $W_\alpha = \left( 1 \pm \frac{1}{12} \right) \delta(\alpha) T_R$.
\end{clm}

\begin{proof}
	Observe that $W_\alpha$ is distributed binomially with parameters $T_R,\delta(\alpha)$. In particular $\E W_\alpha = T_R \delta(\alpha) = \Theta \left( n/M^2 \right) = \Theta \left( n^{0.8} \right)$. The claim follows by applying Chernoff's inequality and a union bound.
\end{proof}

\begin{clm}\label{clm:small tR}
	With probability $1 - \exp \left( - \Omega \left( n^{0.6} \right) \right)$ for every $\alpha \in I_M$ there are at most $\frac{1}{2} \delta(\alpha) T_R$ positions $(x,y) \in \alpha_n$ such that $s_{(x,y)} < \varepsilon \delta(\alpha)M^2/(20n)$.
\end{clm}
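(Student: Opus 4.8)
The plan is a direct Chernoff bound for each $\alpha\in I_M$, followed by a union bound over $I_M$, where the lower bound on the density of $\delta$ is what makes the individual failure probabilities super-polynomially small. (This claim is the upper-bound counterpart to Claims \ref{clm:R' intersection alpha_n} and \ref{clm:W_alpha}: together they will show $\widetilde R$ is a negligible part of $R'$ and of $Q(T_R)$.)

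Fix $\alpha\in I_M$ and set $p_\alpha \coloneqq \varepsilon\delta(\alpha)M^2/(20n)$; note $p_\alpha = \oone$ since $\delta(\alpha)\le 1/M$ (as $\delta$ is a permuton, hence has uniform marginals) and $M = \Theta(n^{0.1})$, so in particular $p_\alpha \le 1$ for large $n$. Let $B_\alpha$ be the number of positions $(x,y)\in\alpha_n$ with $s_{(x,y)} < p_\alpha$. Since the $\{s_x\}_{x\in[n]^2}$ are i.i.d.\ uniform on $[0,1]$, $B_\alpha$ is binomially distributed with parameters $|\alpha_n|$ and $p_\alpha$. Using $|\alpha_n| = |\alpha|n^2 \pm O(n/M) = (1+\oone)|\alpha|n^2$ together with $|\alpha|\le 1/(2M^2)$ gives
\[
\E[B_\alpha] = |\alpha_n|\, p_\alpha \le (1+\oone)\,\frac{n^2}{2M^2}\cdot\frac{\varepsilon\delta(\alpha)M^2}{20n} = (1+\oone)\,\frac{\varepsilon\delta(\alpha)n}{40}.
\]
On the other hand $\frac12\delta(\alpha)T_R \ge \frac12\delta(\alpha)\left(\frac{\varepsilon n}{8}-1\right) = (1-\oone)\frac{\varepsilon\delta(\alpha)n}{16}$, so for $n$ large $\frac12\delta(\alpha)T_R \ge 2\E[B_\alpha]$ (the multiplicative gap $\frac{1}{16}$ vs.\ $\frac{1}{40}$ is what matters). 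A standard multiplicative Chernoff bound then yields
\[
\Prob\left[B_\alpha > \frac12\delta(\alpha)T_R\right] \le \Prob\left[B_\alpha > 2\E[B_\alpha]\right] \le \exp\left(-\frac13\E[B_\alpha]\right) = \exp\left(-\Omega(\varepsilon\delta(\alpha)n)\right).
\]

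Finally I would union bound over the $|I_M|\le 4M^2 = O(n^{0.2})$ elements of $I_M$. The key input here is Observation \ref{obs:delta properties}\ref{itm:everywhere positive density}: since $\delta$ has density at least $0.6\rho$ everywhere, $\delta(\alpha)\ge 0.6\rho|\alpha| = \Omega(1/M^2) = \Omega(n^{-0.2})$, so each of the failure probabilities above is at most $\exp(-\Omega(n^{0.8}))$ (implied constant depending on $\varepsilon$ and $\gamma$), which comfortably dominates the $O(n^{0.2})$ loss from the union bound. The argument is routine; the only points needing care are the constant bookkeeping ensuring the multiplicative gap between $\E[B_\alpha]$ and $\frac12\delta(\alpha)T_R$ is bounded away from $1$ (so that Chernoff gives decay exponential in the mean), and the appeal to the positive-density property of $\delta$ to guarantee the Binomial mean is a positive power of $n$ — without the latter the union bound over $I_M$ would not close.
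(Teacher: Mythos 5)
Your proposal is correct and matches the paper's argument: the paper likewise observes that the count is binomial with parameters $|\alpha_n|$ and $\varepsilon\delta(\alpha)M^2/(20n)$, bounds its mean by a constant fraction (there $\tfrac{2}{5}$) of $\delta(\alpha)T_R$, and finishes with Chernoff's inequality and a union bound over $I_M$. Your additional bookkeeping (the explicit multiplicative gap and the use of $\delta(\alpha)=\Omega(M^{-2})$ to make the failure probability $\exp(-\Omega(n^{0.8}))$) just spells out what the paper leaves implicit.
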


\begin{proof}
	Let $\alpha \in I_M$ and let $S(\alpha) = |\{ (x,y) \in \alpha_n : s_{(x,y)} < \varepsilon\delta(\alpha)M^2/(20n) \}|$. Then $S(\alpha)$ is distributed binomially with parameters $|\alpha_n|,\varepsilon\delta(\alpha)M^2/(20n)$. Recall that $M = \Theta(n^{0.1})$ and that there holds $\delta(\alpha) = O(1/M^2)$. Therefore $\E S(\alpha) = |\alpha_n|\varepsilon \delta(\alpha)M^2/(20n) = \Theta \left( n^{0.8} \right)$. For every $\alpha$, $|\alpha_n| \leq n^2/(2M^2)+O(n/M)$. Hence $\E S(\alpha) \leq \frac{2}{5}\delta(\alpha)T_R$. The claim follows from Chernoff's inequality and a union bound.
\end{proof}

\begin{clm}
	With probability $1 - \exp \left( - \Omega \left( n^{0.6} \right) \right)$ it holds that $\tR \subseteq Q(T_R) \subseteq R$.
\end{clm}

\begin{proof}
	We first prove that w.h.p.\ $Q(T_R) \subseteq R$. We will show that if $Q(T_R) \nsubseteq R$ then there exists some $\alpha \in I_M$ such that $W_\alpha > |\alpha_n \cap R'|$. Indeed, suppose that $Q(T_R) \nsubseteq R$. Then there exists a minimal $t \leq T_R$ such that $X_t \notin R$. Let $x = X_t$ and $\alpha = \alpha(x)$. By definition of $R$, $s_x \geq \varepsilon \delta(\alpha) M^2 / n$. We claim that $\alpha_n \cap R' \subseteq Q(t-1)$. Let $y \in \alpha_n \cap R'$. It holds that $s_y < \varepsilon \delta(\alpha) M^2 / n \leq s_x$. By definition of the process, $s_x$ is smaller than $s_z$ for every $z \in \alpha_n \cap \mA_{Q(t-1)}$. Therefore, $y \notin \mA_{Q(t-1)}$. By definition of $R'$, $y$ is not threatened by any element of $R$. Since $Q(t-1) \subseteq R$ this means that $y$ is not threatened by any element of $Q(t-1)$. Therefore, since $y$ is unavailable at time $t-1$, it must be that $y \in Q(t-1)$. This means that $W_\alpha \geq |\alpha_n \cap R'| + 1 > |\alpha_n \cap R'|$.
	
	We have shown that if $Q(T_R) \nsubseteq R$ then there exists some $\alpha \in I_M$ such that $W_\alpha > |\alpha_n \cap R'|$. However, Claims \ref{clm:R' intersection alpha_n} and \ref{clm:W_alpha} imply that with probability $1 - \exp \left( - \Omega \left( n^{0.6} \right) \right)$ for every $\alpha \in I_M$:
	\[
	W_\alpha \leq \frac{13}{12} \delta(\alpha) T_R < \frac{7}{6} \delta(\alpha) T_R \leq |\alpha_n \cap R'|.
	\]
	Therefore $Q(T_R) \subseteq R$ with probability $1 - \exp \left( - \Omega \left( n^{0.6} \right) \right)$.
	
	We now show that w.h.p.\ $\tR \subseteq Q(T_R)$. If $\tR \nsubseteq Q(T_R)$ then there exists some $x \in \tR \setminus Q(T_R)$. Let $\alpha = \alpha(x)$. By definition, $s_x < \varepsilon \delta(\alpha) M^2 / (20n)$ and $x$ is not threatened by any element of $R$. Therefore, if $Q(T_R) \subseteq R$ then for every $0 \leq t \leq T_R$, $x \in \mA_{Q(t)}$. Since $x \notin Q(T_R)$ this means that for every $1 \leq t \leq T_R$ $x$ did not minimize $s_x$ among all elements of $\mA_{Q(t)} \cap \alpha_n$. Therefore there exist at least $W_\alpha$ elements $z \in \alpha_n$ such that $s_z < s_x < \varepsilon \delta(\alpha) M^2 / (20n)$.
	
	We have shown that if $\tR \nsubseteq Q(T_R)$ then either $Q(T_R) \nsubseteq R$ or there exists some $\alpha \in I_M$ such that $|\{ z \in \alpha_n : s_z < \varepsilon \delta(\alpha) M^2\!/ (20n) \}| \geq W_\alpha$. However, we have shown that $Q(T_R) \subseteq R$ with probability $1 - \exp \left( - \Omega \left( n^{0.6} \right) \right)$. Furthermore, by Claims \ref{clm:W_alpha} and \ref{clm:small tR} with probability $1 - \exp \left( - \Omega \left( n^{0.6} \right) \right)$ for every $\alpha \in I_M$:
	\[
	|\{ z \in \alpha_n : s_z < \varepsilon \delta(\alpha) M^2 / (20n) \}| \leq \frac{1}{2} \delta(\alpha) T_R < \frac{11}{12} \delta(\alpha)T_R \leq W_\alpha.
	\]
	Therefore $\tR \subseteq Q(T_R)$ with probability $1 - \exp \left( - \Omega \left( n^{0.6} \right) \right)$, as desired.
\end{proof}

Next, we show that w.h.p.\ $Q(T_R)$ is $\Omega(n)$-absorbing. Recall that by Property \ref{itm:everywhere positive density} $\delta$ has density bounded away from $0$. Let $\rho > 0$ be a lower bound on the density of $\delta$.

\begin{clm}\label{clm:Q(T_R) absorbing}
	Let $C = \varepsilon\rho/1000$. With probability $1 - \exp \left( - \Omega \left( n^{0.6} \right) \right)$ it holds that $Q(T_R)$ is $C n$-absorbing.
\end{clm}

\begin{proof}
	We will show that with probability $1 - \exp \left( - \Omega \left( n^{0.6} \right) \right)$ for every $(x,y) \in [n]^2$ there are at least $Cn$ queens $(a,b) \in \tR$ such that:
	\begin{itemize}
		\item $(a,b)$ and $(x,y)$ do not share a diagonal.
		
		\item The diagonals passing through $(x,b)$ and $(a,y)$ do not contain any elements of $R$.
	\end{itemize}
	If, as happens with probability $1 - \exp \left( - \Omega \left( n^{0.6} \right) \right)$, $\tR \subseteq Q(T_R) \subseteq R$, then every such position satisfies $(a,b) \in \mB_{Q(T_R)}(x,y)$. Hence $Q(T_R)$ is $Cn$-absorbing with probability $1 - \exp \left( - \Omega \left( n^{0.6} \right) \right)$.
	
	Let $(x,y) \in [n]^2$. Let $K_{(x,y)}$ be the number of queens $(a,b)$ satisfying the conditions above. We wish to apply Theorem \ref{thm:bounded differences} to $K_{(x,y)}$. We first show that $K_{(x,y)}$ can be expressed as a function of independent random variables. Let $\Lambda = \{0,1,2\}$. For $(a,b) \in [n]^2$, let
	\[
	S_{(a,b)} =
	\begin{cases}
	0 & s_{(a,b)} < \varepsilon \delta(\alpha(a,b)) M^2 / (20n)\\
	1 & s_{(a,b)} \in [\varepsilon \delta(\alpha(a,b)) M^2 / (20n), \varepsilon \delta(\alpha(a,b)) M^2 / n]\\
	2 & s_{(a,b)} > \varepsilon \delta(\alpha(a,b)) M^2 / n.
	\end{cases}
	\]
	Note that the sets $R$ and $\tR$, and hence the value of $K_{(x,y)}$, can be recovered from the random variables $\{ S_{(a,b)} \}_{(a,b) \in [n]^2}$. Hence, we may apply Theorem \ref{thm:bounded differences} together with a union bound over the $n^2$ positions. To do so it suffices to show the following:
	\begin{enumerate}
		\item\label{itm:K expectation} $\E \left[ K_{(x,y)} \right] \geq 2Cn$.
		
		\item\label{itm:K Lipschitz} If we change $R$ or $\tR$ by either removing or adding a queen then $K_{(x,y)}$ changes by at most $6$.
		
		\item\label{itm:K p bound} For $p = M / n$ and every $(a,b) \in [n]^2$ it holds that $\Prob \left[ S_{(a,b)} = 2 \right] \geq 1 - p$.
	\end{enumerate}
	Indeed, if these conditions hold then by Theorem \ref{thm:bounded differences}:
	\begin{align*}
	\Prob \left[ K_{(x,y)} \leq C n \right] \leq \Prob \left[ K_{(x,y)} \leq \E \left[ K_{(x,y)} \right] - C n \right] & \leq 2 \exp \left( - \frac{C^2n^2}{72 n^2 M/n + 12Cn/3} \right)\\
	& = \exp \left( - \Omega(n/M) \right) = \exp \left( - \Omega \left( n^{0.9} \right) \right).
	\end{align*}
	We begin with \ref{itm:K expectation}. Let $(a,b) \in [n]^2$ such that $(a,b)$ and $(x,y)$ do not share a diagonal, row, or column. By a calculation similar to the one in the proof of Claim \ref{clm:R' intersection alpha_n},
	\[
	\Prob \left[ (a,b) \in \tR \right] \geq \Prob [s_{(a,b)} < \varepsilon \delta(\alpha(a,b)) M^2 / (20n)] (1-\varepsilon^4) = \frac{(1-\varepsilon)^4 \varepsilon \delta(\alpha(a,b)) M^2}{20n}.
	\]
	By assumption $\delta(\alpha(a,b)) \geq \rho |\alpha(a,b)| \geq \rho/(4M^2)$. Thus:
	\[
	\Prob \left[ (a,b) \in \tR \right] \geq (1-\varepsilon)^4 \varepsilon \frac{\rho}{4M^2} \times \frac{M^2}{20n} = \frac{(1-\varepsilon)^4 \varepsilon \rho}{80n}.
	\]
	Now, by Claim \ref{clm:product bounds} the probability that the four diagonals incident to $(a,y)$ and $(x,b)$ do not contain elements of $R$ is $\geq (1-\varepsilon)^4$. Therefore:
	\[
	\E \left[ K_{(x,y)} \right] \geq (1-o(1)) n^2 \frac{(1-\varepsilon)^4 \varepsilon \rho}{80n} (1 - \varepsilon)^4 \geq  2Cn,
	\]
	proving \ref{itm:K expectation}.
	
	To see that \ref{itm:K Lipschitz} holds observe that adding a queen $(a,b)$ to $R$ or $\tR$ can increase $K_{(x,y)}$ by at most $1$. At the same time, $K_{(x,y)}$ can decrease by at most $6$, as there are at most $4$ queens $(c,r) \in \tR$ such that $(a,b)$ occupied a diagonal incident to $(c,y)$ or $(x,r)$, and at most $2$ queens in $\tR$ sharing a row or column with $(a,b)$.
	
	Finally, \ref{itm:K p bound} holds because every $\alpha \in I_M$ is contained in a diagonal of width $1/M$. Therefore, for every $(a,b) \in [n]^2$, it holds that
	\[
	\Prob \left[ S_{(a,b)} = 2 \right] = 1 - \frac{\varepsilon \delta(\alpha(a,b)) M^2}{n} \geq 1 - \frac{\varepsilon M^{-1} M^2}{n} \geq 1 - \frac{M}{n}.\qedhere
	\]
\end{proof}

We now show that w.h.p.\ a constant fraction of the absorbers in $Q(T_R)$ are also absorbers in $Q(T)$ (i.e., the outcome of Algorithm \ref{alg:random}). In the next claim, $\tau$ refers to the stopping time in Definition \ref{def:tau} and the constant $C$ is the same as in the statement of Claim \ref{clm:Q(T_R) absorbing}. Define $\zeta \coloneqq \eta / (1-\eta)$, where $\eta$ is the constant from \ref{itm:diagonals not full}.

\begin{clm}\label{clm:Q(T) absorbing}
	Suppose that $Q(T_R)$ is $Cn$-absorbing and that $\tau > T_R$. Then, for $D = e^{-15 \zeta} C$, with probability $1 - \exp \left( -\Omega\left(n^{0.75}\right) \right)$, $Q(T)$ is $Dn$-absorbing.
\end{clm}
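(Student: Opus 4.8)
The plan is to fix an arbitrary target square $(c,r)\in[n]^2$, track how many of the absorbers in $\mB_{Q(T_R)}(c,r)$ survive the random phase, show this number is at least $Dn$ with probability $1-\exp(-\Omega(n^{0.75}))$, and finish by a union bound over the $n^2$ choices of $(c,r)$. Throughout I would condition on the history $\mathcal F_{T_R}$ of Algorithm \ref{alg:random} up to time $T_R$; by hypothesis this history has $\tau>T_R$ and $Q(T_R)$ is $Cn$-absorbing, and since the conditioning event has probability $1-\exp(-\Omega(n^{0.75}))$ (Claim \ref{clm:Q(T_R) absorbing}, Proposition \ref{prop:tau inf whp}), for the relevant realizations $\Prob[\tau=\infty\mid\mathcal F_{T_R}]=1-\exp(-\Omega(n^{0.75}))$ is preserved. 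Writing $\mathcal B:=\mB_{Q(T_R)}(c,r)$, so $|\mathcal B|\ge Cn$, the key first observation is that because Algorithm \ref{alg:random} only ever \emph{adds} queens, each $(a,b)\in\mathcal B$ stays in $Q(T)$ and the fixed condition that $(c,b)$ and $(a,r)$ share no diagonal still holds; hence $(a,b)$ is an absorber for $(c,r)$ in $Q(T)$ iff none of the four diagonals through $(c,b)$ or $(a,r)$ — all unoccupied in $Q(T_R)$ since $(a,b)\in\mathcal B$ — becomes occupied during steps $T_R+1,\dots,T$. So it suffices to lower bound the number of such surviving absorbers.

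Next I would compute the survival probability of a single $(a,b)\in\mathcal B$. Let $d_1,\dots,d_4$ be its four diagonals, $d_i$ meeting $\alpha_i\in I_M$, and let $p_i\le\eta<1$ be the associated density $M\delta^+(\alpha_i)$ or $M\delta^-(\alpha_i)$ (bounded by the constant $\eta$ of Observation \ref{obs:delta properties}\ref{itm:diagonals not full}). On $\{\tau>t\}$, Claim \ref{clm:diagonal occupation probability} bounds the probability that a still-unoccupied $d_i$ gets occupied at step $t+1$ by $\tfrac{p_i}{n-p_it}+O\!\big(\tfrac{ME(t)}{(n-t)^2}\big)$, so (union bound over $i$) the probability that \emph{some} $d_i$ gets occupied at step $t+1$ is at most $\bar Q_t:=\sum_{i=1}^{4}\big(\tfrac{p_i}{n-p_it}+O(\tfrac{ME(t)}{(n-t)^2})\big)$. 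Running the process frozen at $\tau$ (as in Definition \ref{def:tau}) against a supermartingale of the form $\mathbf 1[\text{all }d_i\text{ unoccupied}]\prod_{s<t}(1-\bar Q_s)^{-1}$ would give
\[
\Prob[(a,b)\text{ survives}]\ \ge\ \prod_{t=T_R}^{T-1}(1-\bar Q_t)\ -\ \exp(-\Omega(n^{0.75})).
\]
Since $\bar Q_t=O(1/(n-t))=o(1)$, we have $\prod_t(1-\bar Q_t)\ge\exp\big(-(1+o(1))\sum_t\bar Q_t\big)$, and by Claim \ref{clm:log integral} the sum $\sum_{t=T_R}^{T-1}\tfrac{p_i}{n-p_it}$ equals $\log\tfrac{n-p_iT_R}{n-p_iT}+o(1)$ while the $E(t)$-terms sum to $o(1)$; using $n-p_iT_R\le n$ and $n-p_iT\ge n(1-\eta)$ (as $p_i\le\eta$ and $T/n=1-o(1)$) this gives $\sum_t\bar Q_t\le 4\log\tfrac1{1-\eta}+o(1)$, hence $\Prob[(a,b)\text{ survives}]\ge(1-\eta)^4(1-o(1))$. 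The essential point is that the four diagonals are combined \emph{multiplicatively} — one factor $1-\eta$ each — rather than by a union bound over them, which would only yield the useless $1-4\log\tfrac1{1-\eta}$. A short calculus check gives $(1-\eta)^4>e^{-15\zeta}$ with $\zeta=\eta/(1-\eta)$ (set $u=1-\eta$: $\tfrac{d}{du}\big(\tfrac{15(1-u)}u+4\log u\big)=\tfrac{4u-15}{u^2}<0$ on $(0,1]$, so the expression stays above its value $0$ at $u=1$), so summing over $\mathcal B$,
\[
\E[\#\text{survivors for }(c,r)]\ \ge\ Cn(1-\eta)^4(1-o(1))\ \ge\ (D+c_0)n
\]
for some constant $c_0=c_0(\eta)>0$.

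Finally I would upgrade this expectation bound to a high-probability bound and take the union bound over the $n^2$ targets. The structural input for concentration is that any single queen placed during steps $T_R+1,\dots,T$ lies on exactly one plus- and one minus-diagonal, so since $Q(T_R)$ has at most one queen per row and per column it can destroy at most four of the absorbers of $\mathcal B$; this $O(1)$ one-step Lipschitz property, fed into the Azuma-type martingale machinery of Section \ref{sec:lower bound} (again freezing at $\tau$, in the manner of the proof of Proposition \ref{prop:tau inf whp}; cf.\ the analogous step in \cite{luria2021lower}), would show the number of survivors for $(c,r)$ deviates from its expectation by more than $c_0n/2$ with probability $\exp(-\Omega(n^{0.75}))$; since the expectation exceeds $(D+c_0)n$, at least $Dn$ survive. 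A union bound over $(c,r)$ together with $\Prob[\tau=\infty]=1-\exp(-\Omega(n^{0.75}))$ then yields that $Q(T)$ is $Dn$-absorbing with the claimed probability.

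The parts I expect to require the most care are (i) making the supermartingale argument for the single-absorber survival probability clean in the presence of the conditioning on $\{\tau=\infty\}$ — this is precisely what the freezing at $\tau$ is for — and (ii) the concentration step: one must genuinely verify that the number of surviving absorbers for a fixed $(c,r)$ obeys an $O(1)$ bounded-difference estimate along the adaptively generated sequence of queens, which is where the ``at most four absorbers killed per queen'' observation (and, if one proceeds via a Doob martingale, a coupling of the two continuations of the process in the style of \cite{luria2021lower}) is needed. The expectation computation itself, once one commits to the multiplicative rather than union-bound treatment of the four diagonals, is the conceptual heart of the argument.
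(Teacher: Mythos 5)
Your overall framing (fix a target $(c,r)$, track which absorbers of $\mB_{Q(T_R)}(c,r)$ survive, use Claim \ref{clm:diagonal occupation probability} with freezing at $\tau$ for the per-step diagonal-occupation probabilities, and finish with a union bound over the $n^2$ targets) is the same as the paper's, and your expectation computation is sound: the multiplicative treatment of the four diagonals, the estimate $\sum_t p_i/(n-p_it)\le\log\frac{1}{1-\eta}+o(1)$, and the check $(1-\eta)^4>e^{-15\zeta}$ are all correct and do show that the expected number of survivors exceeds $Dn$ by $\Omega(n)$. The genuine gap is the concentration step. The observation that a single queen destroys at most four absorbers bounds the one-step change of the survivor count $C_t$ itself; it does \emph{not} bound the increments of the Doob martingale $\E[C_T \given \mathcal{F}_t]$, which is what an Azuma/bounded-differences argument around the expectation you computed would require. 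Since the process is adaptive, replacing the queen placed at step $t$ perturbs the law of the entire continuation, and an $O(1)$ (or even modestly growing) bound on these Doob increments needs a coupling of the two continuations that you invoke but do not construct; for this process (a uniform choice inside an independently sampled cell of $I_M$) such a coupling is not off-the-shelf, and nothing in Section \ref{sec:lower bound} supplies it --- Theorem \ref{thm:bounded differences} is only ever applied to functions of the genuinely independent variables $\{s_x\}$ (the sets $R$, $\tR$), never to quantities determined by the adaptive trajectory.

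The paper sidesteps this entirely: it never computes $\E[C_T]$, but works directly with the adapted, non-increasing count $C_t$ (frozen at $\tau$), proves the one-step drift bound $\E[C_{t+1}\given Q(t)]\ge\left(1-\frac{4\zeta}{n}\right)C_t$ --- exactly the estimate implicit in your per-absorber calculation --- floors it at $Dn$, passes to logarithms so that the increments are $O(1/n)$, and applies Theorem \ref{thm:azuma} to the resulting supermartingale; the additive slack ($5\zeta$ from the compensator plus $10\zeta$ from Azuma) is precisely why the claim is stated with $e^{-15\zeta}$ rather than your sharper $(1-\eta)^4\ge e^{-4\zeta}$. If you replace your Doob-martingale step with this direct one-sided argument --- which you can, since you already have both the $O(1)$ one-step Lipschitz bound for $C_t$ and the per-step drift --- the proof closes, and the expectation computation, while correct and a nice explanation of where the constant comes from, becomes unnecessary.
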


\begin{proof}
	Let $(x,y) \in [n]^2$. By assumption, $|\mB_{Q(T_R)}(x,y)| \geq Cn$. For $T_R \leq t \leq T$, let $\mC(t) = \mB_{Q(T_R)}(x,y) \cap \mB_{Q(t)}(x,y)$. We will use a martingale analysis to prove that
	\[
	\Prob \left[ |\mC(T)| < Dn \right] \leq \exp \left( - \Omega(n^{0.75}) \right).
	\]
	Since $|\mB_{Q(T)}(x,y)| \geq |\mC(T)|$ the result then follows from a union bound over the $n^2$ positions in $[n]^2$. We define the random variables $\{C_t\}_{t=T_R}^T$ as follows:
	\[
	C_t = \begin{cases}
	\left| \mC(t) \right| & \tau \geq t\\
	C_{t-1} & \tau < t.
	\end{cases}
	\]
	
	Observe that for every $T_R \leq t < T$ it holds that
	\begin{equation}\label{eq:absorber max one step change}
	|C_{t+1} - C_t| = O(1).
	\end{equation}
	This is because every queen added to a partial configuration can ``destroy'' at most $4$ absorbers for $(x,y)$. We will now show that for every $T_R \leq t < T$:
	\begin{equation}\label{eq:absorber one step change}
	\E \left[ C_{t+1} \given Q(t) \right] \geq \left( 1 - \frac{5\zeta}{ n } \right) C_t.
	\end{equation}
	Indeed, if $\tau \leq t$ then, by definition, $C_{t+1} = C_t$ and \eqref{eq:absorber one step change} holds. If $\tau > t$, then $C_{t+1} = \left| \mC(t+1) \right|$ and $C_t = \left| \mC(t) \right|$. Since $ \mC(t+1) \subseteq \mC(t)$, to prove \eqref{eq:absorber one step change} it suffices to show that for every $(a,b) \in \mC(t)$,
	\[
	\Prob \left[ (a,b) \notin \mC(t+1) \given \tau > t \right] \leq \frac{5\zeta}{n}.
	\]
	The event $(a,b) \notin \mC(t+1)$ occurs only if the queen added at time $t+1$ occupies one of the four diagonals containing $(x,b)$ or $(a,y)$. By Claim \ref{clm:diagonal occupation probability}, since $t<\tau$ for every diagonal the probability that it is occupied at time $t+1$ is $\leq (1+o(1)) \zeta/n$. By a union bound, the probability that one of the four diagonals incident to $(x,b)$ and $(a,y)$ is occupied is $\leq 5\zeta / n$. Thus $\Prob \left[ (a,b) \notin \mC(t+1) \given \tau > t \right] \leq {5\zeta}/{ n}$, as desired.
	
	Equation \eqref{eq:absorber one step change} suggests that $C_T \geq (1-5\zeta/n)^{T-T_R}C_{T_R} \geq Dn$. We will justify this heuristic with a martingale analysis. We first transform $\{C_t\}$ in order to apply Azuma's inequality (Theorem \ref{thm:azuma}). Define $C'_t = \max \{ C_t, D n \}$. It holds that
	\begin{equation}\label{eq:Y expected difference}
	\E \left[ C'_{t+1} \given Q(t) \right] \geq \left( 1 - \frac{5\zeta}{n} \right)C'_t.
	\end{equation}
	Indeed, if $C_t \leq D n$ then $C'_{t+1} = Dn = C'_t \geq \left( 1 - \frac{4 \zeta}{n} \right)C'_t$. Otherwise $C_t > D n$, implying $C'_t = C_t$. In this case
	\begin{align*}
	\E \left[ C'_{t+1} \given Q(t) \right] \geq \E \left[ C_{t+1} \given Q(t) \right] \stackrel{\eqref{eq:absorber one step change}}{\geq} \left( 1 - \frac{5\zeta}{n} \right)C_t = \left( 1 - \frac{5 \zeta}{n} \right)C'_t.
	\end{align*}
	For $T_R \leq t < T$ define $\tC_t \coloneqq C'_{t+1} / C'_t$. We note that $C'_T = C'_{T_R} \times \tC_{T_R} \times \tC_{T_R+1} \times \ldots \times \tC_{T-1}$ and hence
	\[
	\log (C'_T) = \log (C'_{T_R}) + \log (\tC_{T_R}) + \log (\tC_{T_R+1}) + \ldots + \log (\tC_{T-1}).
	\]
	It holds that
	\begin{align*}
	\E \left[ \log (\tC_{t}) \given Q(t-1) \right] & = \E \left[ \log \left( 1 + \frac{C'_{t+1} - C'_t}{C'_t} \right) \given Q(t-1) \right]\\
	& \geq \E \left[ \frac{C'_{t+1}-C'_t}{C'_t} - O \left( \left( \frac{C'_{t+1}-C'_t}{C'_t} \right)^2 \right) \given Q(t-1) \right].
	\end{align*}
	By definition, $C'_t = \Omega (n)$ and by \eqref{eq:absorber max one step change} $\left|C'_{t+1} - C'_t\right| = O(1)$. Therefore:
	\begin{equation}\label{eq:log Z submartingale}
	\E \left[ \log (\tC_{t}) \given Q(t-1) \right] \geq \E \left[ \frac{C'_{t+1}-C'_t}{C'_t} \given Q(t-1) \right] - O \left( \frac{1}{n^2} \right)
	\stackrel{\eqref{eq:Y expected difference}}{\geq} - \frac{5\zeta}{n} - O \left( \frac{1}{n^2} \right) \geq - \frac{6\zeta}{n}.
	\end{equation}
	Finally, we define, for $T_R \leq t \leq T$:
	\[
	\tZ_t = - \left( \log(C'_{T_R}) + \sum_{s=T_R}^{t-1} \left( \log(\tC_s) + \frac{6\zeta}{n} \right) \right).
	\]
	By \eqref{eq:log Z submartingale}, the sequence $\{\tZ_t\}_{t=T_R}^{T}$ is a supermartingale. Additionally, for every $T_R \leq t < T$:
	\begin{align*}
	\left| \tZ_{t+1} - \tZ_t \right| & \leq \left| \log(\tC_{t}) \right| + \frac{5\zeta}{n} = \left| \log \left( 1 + \frac{C'_{t+1}-C'_t}{C'_t} \right) \right| + O\left(\frac{1}{n}\right)\\
	& = \left|\log \left( 1 - O \left( \frac{1}{n} \right) \right)\right| + O\left(\frac{1}{n}\right) = O\left(\frac{1}{n}\right).
	\end{align*}
	Hence, by Theorem \ref{thm:azuma} (the Azuma--Hoeffding inequality) $\Prob \left[ \tZ_{T} > \tZ_{T_R} + 9\zeta  \right] = \exp \left( - \Omega(n) \right)$.	Rewriting the inequality for $e^{-\tZ_T}$ in place of $\tZ_T$ we obtain:
	\begin{equation}\label{eq:exp tZ bound}
	\Prob \left[ e^{-\tZ_{T}} <  \frac{C'_{T_R}}{e^{9\zeta}} \right] = \exp \left( - \Omega(n) \right).
	\end{equation}
	It holds that
	\begin{align*}
	\log(C'_T) = \log(C'_{T_R}) + \sum_{t=T_R}^{T-1} \log(\tC_t) = - \tZ_T - \sum_{t=T_R}^{T-1} \frac{6\zeta}{n} \geq - \tZ_T - 6\zeta.
	\end{align*}
	Thus $C'_T \geq e^{-\tZ_T}\!/e^{6\zeta}$.	Therefore:
	\[
	\Prob \left[ C'_T < \frac{C'_{T_R}}{e^{15\zeta}} \right] \leq \Prob \left[ e^{-\tZ_T} < \frac{C'_{T_R}}{e^{9\zeta}} \right] \stackrel{\eqref{eq:exp tZ bound}}{=} \exp \left( - \Omega(n) \right).
	\]
	Now, the events  $C'_T \geq C'_{T_R}e^{-15\zeta} \geq D n$ and $\tau = \infty$ imply that $|\mC_T| = C'_T \geq Dn$. Therefore,
	\[
	\Prob \left[ |\mC_T| < Dn \right] \leq \Prob [\tau<\infty] + \Prob\left[C'_T < \frac{C'_{T_R}}{e^{15\zeta}} \right] \stackrel{\text{Proposition \ref{prop:tau inf whp}}}{=} \exp \left( - \Omega \left(n^{0.75} \right) \right),
	\]
	proving the claim.
\end{proof}

\subsection{Proof of the lower bound}\label{sec:lower bound proof}

We are ready to prove the lower bound in Theorem \ref{thm:precise bounds}. By \ref{itm:delta close to gamma} we have $\dist{\delta}{\gamma} = O(\varepsilon^2)$. Therefore $B_n(\delta, \varepsilon/2) \subseteq B_n(\gamma,\varepsilon)$. Let $B$ be the set of $n$-queens configurations $q$ such that for every $\alpha \in I_N$ it holds that $|\alpha_n \cap q| = \delta(\alpha)n \pm 2\varepsilon^5 n$ (where $N$ is the constant used to define $\delta$). By Claim \ref{clm:config to queenon approximation} $B \subseteq B_n(\delta, \varepsilon/2)$.

We now show that Algorithm \ref{alg:random} followed by Algorithm \ref{alg:absorption} is likely to produce an element of $B$. Let $\mF$ be the event that
\begin{enumerate}
	\item Algorithm \ref{alg:random} does not abort,
	
	\item\label{itm:linear absorbing} $Q(T)$ is $\Omega(n)$-absorbing, and
	
	\item\label{itm:approximately delta} for every $\alpha \in I_N$, $|Q(T) \cap \alpha_n| = \delta(\alpha) n \pm \varepsilon^5 n$.
\end{enumerate}
By Proposition \ref{prop:tau inf whp} and Claims \ref{clm:Q(T) approximates delta}, \ref{clm:Q(T_R) absorbing}, and \ref{clm:Q(T) absorbing} we have $\Prob [\mF] = 1 - \exp\left( - \Omega \left(n^{0.6}\right) \right)$. Let $B'$ be the set of size-$T$ partial $n$-queens configurations satisfying \ref{itm:linear absorbing} and \ref{itm:approximately delta}. If $\mF$ holds then $(X_1,X_2,\ldots,X_T)$ is an ordered element of $B'$. Thus $H(X_1,X_2,\ldots,X_T \given \mF) \leq \log|B'| + \log(T!)$. We note that since each $X_t$ can take only $O(n^2)$ values, the crude bound $H(X_1,X_2,\ldots,X_T \given \mF^c) \leq n^2$ clearly holds. Hence, by the law of total probability:
\begin{align*}
H(X_1,X_2,\ldots,X_T \given \mF) & = \frac{H(X_1,\ldots,X_T) - H(X_1,X_2,\ldots,X_T \given \mF^c)(1-\Prob[\mF])}{\Prob[\mF]}\\
& \stackrel{\text{Lemma \ref{lem:random phase entropy}}}{=} n(H_q^M(\delta) + 2\log n - 1) \pm 2n^{1-1/K^3}.
\end{align*}
Therefore, recalling that $T = n(1-O(n^{-1/K^2}))$:
\begin{equation}\label{eq:partial configs B' lower bound}
|B'| \geq \frac{1}{T!} \left( \left( 1 - \oone \right) \frac{n^2e^{H_q^M(\delta)}}{e} \right)^n = \left( \left( 1 - \oone \right) ne^{H_q(\delta)} \right)^n,
\end{equation}
where the last equality follows from Stirling's approximation and Lemma \ref{lem:step approximation of entropy}. Let $q' \in B'$. By Lemma \ref{lem:absorbing procedure works} if Algorithm \ref{alg:absorption} is begun from $q'$ the result is an $n$-queens configuration $q$ satisfying $|q \Delta q'| \leq 3(n-T)$. We now show that $q \in B$.

\begin{clm}
	Let $q' \in B'$ and suppose that $q$ is an $n$-queens configuration satisfying $|q \Delta q'| \leq 3(n-T)$. Then $q \in B$.
\end{clm}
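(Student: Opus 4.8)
The plan is to unwind the definitions of $B$ and $B'$ and observe that changing $3(n-T)$ positions cannot perturb any of the local counts $|\alpha_n \cap q|$ by more than an additive $\varepsilon^5 n$, which is exactly the slack between the two families.

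First I would recall that since $q' \in B'$, the configuration $q'$ is in particular a partial $n$-queens configuration of size $T$ satisfying condition \ref{itm:approximately delta}: for every $\alpha \in I_N$ we have $|q' \cap \alpha_n| = \delta(\alpha)n \pm \varepsilon^5 n$. Next, fix $\alpha \in I_N$. Since $q \Delta q'$ contains every position that lies in exactly one of $q, q'$, and $|q \Delta q'| \leq 3(n-T)$, we get
\[
\left| |q \cap \alpha_n| - |q' \cap \alpha_n| \right| \leq |q \Delta q'| \leq 3(n-T).
\]
Now invoke the definition $T = n - \lfloor n^{1-1/K^2} \rfloor$, so that $n - T \leq n^{1-1/K^2}$. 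Because $1 - 1/K^2 < 1$ and $\varepsilon$ is a fixed constant, for all sufficiently large $n$ we have $3(n-T) \leq 3 n^{1-1/K^2} \leq \varepsilon^5 n$. Combining, for every $\alpha \in I_N$,
\[
|q \cap \alpha_n| = |q' \cap \alpha_n| \pm \varepsilon^5 n = \delta(\alpha)n \pm 2\varepsilon^5 n,
\]
which is precisely the condition defining $B$. Since $q$ is assumed to be an $n$-queens configuration, this gives $q \in B$.

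There is essentially no obstacle here: the only thing to be careful about is that the bound $3(n-T) \le \varepsilon^5 n$ is an asymptotic statement, so the claim should be read with the standing convention (from Section \ref{ssec:notation}) that $n$ is large enough; this is harmless since the surrounding argument only needs $q \in B$ for large $n$. One could also note explicitly that $B \subseteq B_n(\delta,\varepsilon/2) \subseteq B_n(\gamma,\varepsilon)$ by Claim \ref{clm:config to queenon approximation} and Observation \ref{obs:delta properties}\ref{itm:everywhere positive density}, so that this claim is exactly what is needed to conclude that the output of Algorithm \ref{alg:random} followed by Algorithm \ref{alg:absorption} lands in $B_n(\gamma,\varepsilon)$, but that inclusion was already established before the claim.
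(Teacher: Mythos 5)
Your proposal is correct and follows the same argument as the paper: compare $|q \cap \alpha_n|$ with $|q' \cap \alpha_n|$ via the symmetric difference bound, note $3(n-T) = O(n^{1-1/K^2}) \leq \varepsilon^5 n$ for large $n$, and absorb this into the slack to get $\delta(\alpha)n \pm 2\varepsilon^5 n$. No gaps.
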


\begin{proof}
	We need to show that for every $\alpha \in I_N$ it holds that $|\alpha_n \cap q| = \delta(\alpha) \pm 2\varepsilon^5 n$. Let $\alpha \in I_N$. Then:
	\begin{align*}
	|\alpha_n \cap q| & = |\alpha_n \cap q'| \pm |q \Delta q'| \stackrel{q' \in B'}{=} \delta(\alpha)n \pm \left( \varepsilon^5 n + 3(n-T) \right)\\
	& = \delta(\alpha) n \pm \left( \varepsilon^5 + O \left( n^{-1/K^2} \right) \right)n
	= \delta(\alpha) n \pm 2\varepsilon^5 n,
	\end{align*}
	as desired.
\end{proof}

We now consider the number of ways a given $n$-queens configuration may be obtained as a result of running Algorithm \ref{alg:absorption}.

\begin{clm}
	Let $q$ be an $n$-queens configuration. There are at most $n^{2(n-T)}$ partial configurations $q' \in B'$ such that $q$ can be obtained by beginning Algorithm \ref{alg:absorption} from $q'$.
\end{clm}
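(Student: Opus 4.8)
The plan is to recover $q'$ from $q$ by running Algorithm~\ref{alg:absorption} backwards, where the only information we allow ourselves beyond $q$ itself is the matching $(c_1,r_1),\ldots,(c_k,r_k)$ of $L_C$ to $L_R$ used in the run (here $k := n-T$). Recall that such a run produces $R(0)=q',R(1),\ldots,R(k)=q$ with
\[
R(i) = \bigl( R(i-1) \setminus \{(x_i,y_i)\} \bigr) \cup \{(x_i,r_i),(c_i,y_i)\}, \qquad (x_i,y_i)\in\mB_{R(i-1)}(c_i,r_i).
\]
First I would record the elementary observation that for each $0\le i\le k$ the set of rows covered by $R(i)$ is precisely the set of rows covered by $q'$ together with $\{r_1,\ldots,r_i\}$, and likewise for columns and $\{c_1,\ldots,c_i\}$: the queen $(x_i,y_i)$ deleted at step $i$ lies in row $y_i$, and one of the two queens inserted, namely $(c_i,y_i)$, also lies in row $y_i$, so the only net change to the covered rows is the addition of $r_i$. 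In particular rows $r_i,r_{i+1},\dots,r_k$ and columns $c_i,c_{i+1},\dots,c_k$ are all covered in $R(i)$.

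This makes the reverse step deterministic. Given $q$ and an ordered $k$-tuple $(c_1,r_1),\ldots,(c_k,r_k)\in([n]^2)^k$, set $\widetilde R(k)=q$ and, for $i=k,k-1,\ldots,1$, let $(\xi_i,r_i)$ be the unique queen of $\widetilde R(i)$ in row $r_i$ and $(c_i,\eta_i)$ the unique queen of $\widetilde R(i)$ in column $c_i$ (declaring failure if either fails to exist), and put $\widetilde R(i-1) := \bigl(\widetilde R(i)\setminus\{(\xi_i,r_i),(c_i,\eta_i)\}\bigr)\cup\{(\xi_i,\eta_i)\}$. If the run above started from $q'\in B'$ with this matching and produced $q$, then a downward induction on $i$ gives $\widetilde R(i)=R(i)$ for all $i$: indeed, by the covered-rows observation the unique queen of $R(i)$ in row $r_i$ is $(x_i,r_i)$ and the unique queen in column $c_i$ is $(c_i,y_i)$, so $(\xi_i,\eta_i)=(x_i,y_i)$ and the reverse step reproduces $R(i-1)$. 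Hence $\widetilde R(0)=q'$, i.e.\ $q'$ is uniquely determined by $q$ and the matching used.

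To conclude, for each $q'\in B'$ that can be obtained from $q$ by Algorithm~\ref{alg:absorption}, fix one witnessing run and let $\mathcal M(q')\in([n]^2)^k$ denote its matching. The previous paragraph shows that the map $q'\mapsto\mathcal M(q')$ is injective on this set of configurations, since $\widetilde R(0)$ is a function of $q$ and $\mathcal M(q')$ alone. Therefore the number of such $q'$ is at most $\bigl|([n]^2)^k\bigr| = n^{2k} = n^{2(n-T)}$. I do not anticipate a genuine obstacle here; the only point requiring care is the bookkeeping in the first paragraph, which is exactly what guarantees that the reverse step depends on $q$ and $(c_i,r_i)$ only (and not on $q'$), so that the matching really does encode $q'$.
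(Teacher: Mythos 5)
Your proof is correct. It is the same basic idea as the paper's -- run Algorithm \ref{alg:absorption} backwards from $q$ and bound the number of possible reversals -- but the counting is organized differently. The paper does not encode anything: it simply bounds the branching of the reverse process, noting that each reverse step removes two of the at most $n$ queens (at most $\binom{n}{2}$ choices) and adds one of the two positions determined by them (at most $2$ choices), giving $\bigl(2\binom{n}{2}\bigr)^{n-T} \leq n^{2(n-T)}$ with no structural analysis of the run. You instead build an injection $q' \mapsto \mathcal{M}(q')$ into $([n]^2)^{n-T}$ by proving that the reversal is \emph{deterministic} once $q$ and the matching $(c_1,r_1),\ldots,(c_k,r_k)$ are given; this requires the covered-rows/columns bookkeeping (row $r_i$ and column $c_i$ are uncovered in $R(i-1)$, so the queens in row $r_i$ and column $c_i$ of $R(i)$ are exactly $(x_i,r_i)$ and $(c_i,y_i)$), which you carry out correctly. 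The trade-off: your argument is slightly longer but conceptually sharper (the matching alone determines $q'$), while the paper's is shorter and needs nothing beyond ``each step adds two queens and removes one.'' One small slip: in your first paragraph the parenthetical ``rows $r_i,r_{i+1},\dots,r_k$ and columns $c_i,\dots,c_k$ are all covered in $R(i)$'' is backwards -- in $R(i)$ the rows $r_1,\ldots,r_i$ are covered and $r_{i+1},\ldots,r_k$ are not; what your induction actually uses is only that $r_i$ and $c_i$ are covered in $R(i)$ (by the freshly added queens) and uncovered in $R(i-1)$, both of which you do establish, so the error is cosmetic.
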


\begin{proof}
	At each step of Algorithm \ref{alg:absorption} two queens are added to the board and one is removed. Consider the number of ways to reverse this process, beginning from $q$. At each step we must remove two queens $(a,b)$ and $(c,d)$ from the board and add either $(a,d)$ or $(c,b)$. Since there are always at most $n$ queens on the board there are $\leq \binom{n}{2}$ choices for the queens to remove. There are then at most $2$ choices which queen to add. Since there are $n-T$ steps in Algorithm \ref{alg:absorption} there are at most $\left(\binom{n}{2}2\right)^{n-T} \leq n^{2(n-T)}$ ways to reverse it.
\end{proof}

Since Algorithm \ref{alg:absorption} maps every element of $B'$ to an element of $B$, and every element of $B$ can be obtained in this manner from at most $n^{2(n-T)}$ elements of $B'$ we conclude:
\[
|B_n(\gamma,\varepsilon)| \geq |B| \stackrel{\eqref{eq:partial configs B' lower bound}}{\geq} \frac{|B'|}{n^{2(n-T)}} \geq \left( \left( 1 - \oone \right) ne^{H_q(\delta)} \right)^n.
\]
Therefore $\liminf_{n\to\infty} |B_n(\gamma,\varepsilon)|^{1/n}\! / n \geq e^{H_q(\delta)}$. By \ref{itm:delta approximates entropy} there holds $H_q(\delta) > H_q(\gamma) - \varepsilon |H_q(\gamma)|$. This proves the lower bound in Theorem \ref{thm:precise bounds}.

\section{Explicit bounds on $H_q(\gamma^*)$}\label{sec:optimizing H_q}

\begin{rmk}
	This section relies on numerical calculations. Code verifying the calculations can be obtained by downloading the source of the arXiv submission at \url{\codeURL}.
\end{rmk}

In this section we prove Claim \ref{clm:explicit bounds on H_q}. We begin with the lower bound.

\begin{clm}
	$H_q(\gamma^*) \geq -\lowerBoundConstant$.
\end{clm}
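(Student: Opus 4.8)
The plan is to use the variational characterization of $\gamma^*$. By Lemma~\ref{lem:unique maximizer}, $H_q(\gamma^*) = \sup_{\gamma \in \Queenons} H_q(\gamma)$, so it suffices to exhibit one explicit queenon $\gamma$ with $H_q(\gamma) \geq -\lowerBoundConstant$; then $H_q(\gamma^*) \geq H_q(\gamma) \geq -\lowerBoundConstant$. I will take $\gamma$ to be an axis-aligned step permuton, i.e.\ a measure with constant density $G_{i,j}$ on each $\sigma_{i,j}^N$ for a suitable fixed $N$, chosen so that $\gamma$ is a step permuton with sub-uniform diagonal marginals, hence an element of $\tilde{\Queenons} \subseteq \Queenons$.

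First I would locate a good candidate numerically. Since $H_q$ is concave and the set of $N$-step permutons with sub-uniform diagonal marginals is a polytope in the variables $\{G_{i,j}\}$, maximizing $H_q$ over it is a finite-dimensional concave program; exploiting the dihedral symmetry of the square reduces the number of free parameters by a factor of roughly $8$. Having found an approximate optimizer, I would round it to a rational step permuton, perturbed slightly toward the uniform permuton so that the finitely many defining inequalities — $G_{i,j} \geq 0$, the row and column sums equal to $N$, and $\gamma^+(\beta), \gamma^-(\beta) \leq |\beta|$ for the $O(N)$ relevant intervals $\beta$ — all hold with a margin, and then verify feasibility exactly in rational arithmetic.

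Second, I would evaluate $H_q(\gamma)$ rigorously. For an axis-aligned step permuton all three terms are exactly computable: $D_{KL}(\gamma || \mU_\square) = \frac{1}{N^2}\sum_{i,j} G_{i,j}\log G_{i,j}$ is a finite sum, while the densities of $\gamma^+$ and $\gamma^-$ (and hence of $\distPlus{\gamma}$ and $\distMinus{\gamma}$) are piecewise linear on $[-1,1]$, so each of $D_{KL}(\distPlus{\gamma} || \mU_\interval)$ and $D_{KL}(\distMinus{\gamma} || \mU_\interval)$ is a finite sum of integrals $\int (mc+b)\log\bigl(2(mc+b)\bigr)\,dc$ over linear pieces, each with an elementary closed antiderivative (using the continuous convention $0\log 0 = 0$). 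Carrying this out in interval arithmetic produces a certified lower bound on $H_q(\gamma)$, which I would arrange to exceed $-\lowerBoundConstant$; this is what the accompanying code does.

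The main obstacle is the accuracy of the numerics rather than any conceptual difficulty: the target $-\lowerBoundConstant$ is only about $3\times 10^{-3}$ above the true optimum $\approx -1.942$, so the candidate must be genuinely close to optimal, which forces $N$ to be moderately large and the concave optimization to resolve the active diagonal-marginal constraints carefully. A secondary point is keeping the arithmetic rigorous — the logarithms entering the diagonal-entropy integrals must be bounded with verified error bars, and the inward perturbation must be large enough to absorb rounding yet small enough to waste only a negligible part of the slack in $H_q(\gamma)$. Both are routine but must be executed with care.
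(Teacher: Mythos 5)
Your proposal is correct and is essentially the paper's own argument: the paper likewise exhibits an explicit $N$-step queenon (a $12\times 12$ density matrix with row and column sums $12$ and all diagonal sums at most $12$), uses $H_q(\gamma^*)\geq H_q(\gamma)$ from the maximizing property of $\gamma^*$, and certifies $H_q(\gamma)>-\lowerBoundConstant$ by the accompanying code, exactly as you describe. Your additional remarks on how to find the candidate (finite-dimensional concave program with symmetry reduction) and on rigorous evaluation (exact piecewise-linear diagonal marginals, interval arithmetic) are sensible elaborations of the same route rather than a different proof.
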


\begin{proof}
	It suffices to exhibit an explicit queenon $\gamma$ such that $H_q(\gamma) \geq -\lowerBoundConstant$. Define the $12 \times 12$ matrix
	\[\small
	A = \frac{1}{100}
	\begin{bmatrix}
	59 & 76 & 95 & 113 & 125 & 132 & 132 & 125 & 113 & 95 & 76 & 59\\
	76 & 87 & 99 & 108 & 114 & 116 & 116 & 114 & 108 & 99 & 87 & 76\\
	95 & 99 & 100 & 102 & 102 & 102 & 102 & 102 & 102 & 100 & 99 & 95\\
	113 & 108 & 102 & 94 & 92 & 91 & 91 & 92 & 94 & 102 & 108 & 113\\
	125 & 114 & 102 & 92 & 85 & 82 & 82 & 85 & 92 & 102 & 114 & 125\\
	132 & 116 & 102 & 91 & 82 & 77 & 77 & 82 & 91 & 102 & 116 & 132\\
	132 & 116 & 102 & 91 & 82 & 77 & 77 & 82 & 91 & 102 & 116 & 132\\
	125 & 114 & 102 & 92 & 85 & 82 & 82 & 85 & 92 & 102 & 114 & 125\\
	113 & 108 & 102 & 94 & 92 & 91 & 91 & 92 & 94 & 102 & 108 & 113\\
	95 & 99 & 100 & 102 & 102 & 102 & 102 & 102 & 102 & 100 & 99 & 95\\
	76 & 87 & 99 & 108 & 114 & 116 & 116 & 114 & 108 & 99 & 87 & 76\\
	59 & 76 & 95 & 113 & 125 & 132 & 132 & 125 & 113 & 95 & 76 & 59
	\end{bmatrix}.
	\]
	The sum of each row and column of $A$ is $12$. Additionally, every diagonal in $A$ has sum $\leq 12$. (These assertions can be verified with the provided code.) Hence, by \cref{clm:step queenon sufficient condition} the measure $\gamma$ whose density function has constant value $A_{i,j}$ on each square $\sigma_{i,j}^{12}$ for $i,j \in [12]$ is a $12$-step queenon. It remains to verify that $H_q(\gamma) > -\lowerBoundConstant$, for which the reader is invited to use the provided code.
\end{proof}

\begin{figure}
	\includegraphics[width=\textwidth*1/3]{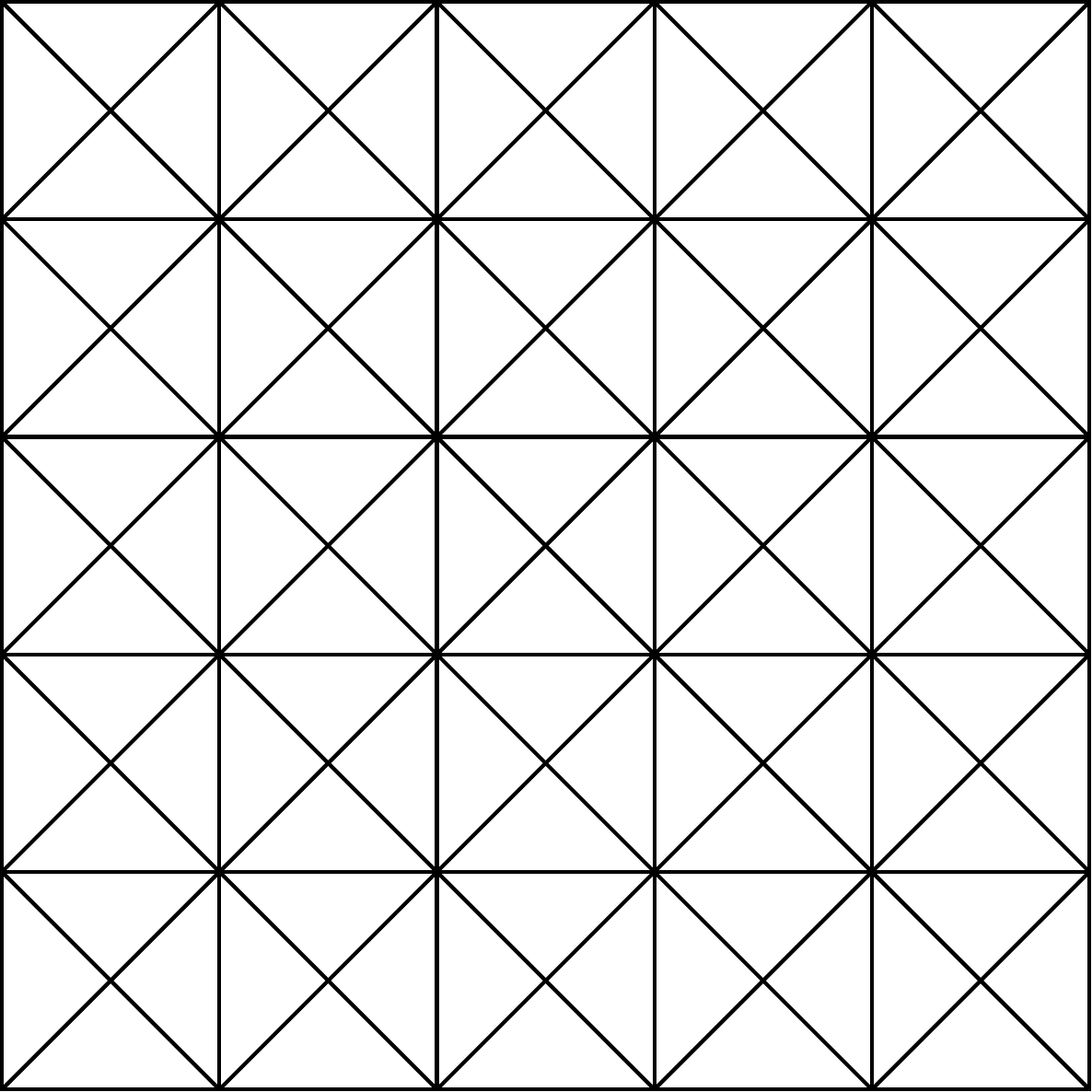}
	\caption{The division of $\interval^2$ into $K_N$, for $N=5$.}\label{fig:K_N}
	\centering
\end{figure}

We turn to the upper bound. One difficulty in bounding $H_q$ from above is that its domain is infinite-dimensional. Hence we seek a finite-dimensional approximation of $H_q$ that bounds it from above. We take the following approach: Let $N \in \N$. Let $K = K_N$ be the minimal mutual refinement of $I_N$ and $\{\sigma_{i,j}^N\}_{i,j\in[N]}$ (see Figure \ref{fig:K_N}). For $\gamma \in \Queenons$ let $\gamma_N$ be the measure on $\interval^2$ that has constant density on every $\alpha \in K$ and satisfies $\gamma_N(\alpha) = \gamma(\alpha)$. We do not claim that $\gamma_N$ is necessarily a queenon or even a permuton. However, it is the case that for every $i \in [N]$ there holds
\begin{equation*}
\sum_{j=1}^N \gamma_N(\sigma_{i,j}^N) = \sum_{j=1}^N \gamma_N(\sigma_{j,i}^N) = \frac{1}{N}
\end{equation*}
(i.e., if we partition $\interval^2$ into axis-parallel strips of width $1/N$ then $\gamma_N$ induces the uniform distribution both vertically and horizontally). Additionally, for every $\alpha \in J_N$ there holds:
\begin{equation*}
\gamma_N^+(\alpha) = \gamma^+(\alpha) \leq 1/N, \quad \gamma_N^-(\alpha) = \gamma^-(\alpha) \leq 1/N.
\end{equation*}
Thus, we may define the distributions $\distPlus{\gamma_N}$ and $\distMinus{\gamma_N}$ on $J_N$ in the natural way by setting $\distPlus{\gamma_N}(\alpha) = 1/N - \gamma_N^+(\alpha)$ and $\distMinus{\gamma_N}(\alpha) = 1/N - \gamma_N^-(\alpha)$. By concavity of the function $-x\log x$ and the definition of KL divergence, for every $\gamma \in \Queenons$:
\[
-D_{KL} (\gamma) \leq -D_{KL}(\gamma_N) = - \sum_{\alpha \in K} \gamma_N(\alpha) \log \left(\gamma_N(\alpha)\right) - 2\log(2N)
\]
and for $* \in \{+,-\}$:
\[
-D_{KL}(\overline{\gamma}^*) \leq -D( \overline{\gamma_N}^*) = - \sum_{\alpha \in J_N} \overline{\gamma_N}^*(\alpha) \log \left( \overline{\gamma_N}^*(\alpha) \right) - \log(2N).
\]

We now reformulate the problem as entropy maximization. This will allow us to bound $H_q(\gamma^*)$ using the Lagrangian dual function. Let $J_N^1$ and $J_N^2$ be two disjoint copies of $J_N$. Let $\Omega = \Omega_N \coloneqq K_N \cup J_N^1 \cup J_N^2$. Let $\mD = \mD_N \coloneqq (0,1/N)^{\Omega_N}$. Define the function $f = f_N : \mD_N \to \R$ by:
\[
f(x) = - \sum_{\alpha \in \Omega} x_\alpha \log(x_\alpha) - 4\log(2N) + 2\log(2) - 3.
\]
As observed, by concavity, $f(\gamma_N,\distPlus{\gamma_N},\distMinus{\gamma_N}) \geq H_q(\gamma)$ for every $\gamma \in \Queenons$.

To facilitate matrix notation we fix an identification of $\Omega$ with $[|\Omega|]$. For $x \in \mD$ we write $x = (\gamma_x,\gamma_{1,x},\gamma_{2,x})$ when we wish to access the three measures that comprise $x$.  Let $A$ be the $6N \times |\Omega|$ matrix and $b \in \R^{6N}$ such that $Ax = b$ if and only if $x \in \mD$ satisfies the linear equations
\begin{equation}\label{eq:A defining}
\begin{split}
&\forall i \in [N], \sum_{j=1}^N \gamma_x(\sigma_{i,j}^N) = \sum_{j=1}^N \gamma_x(\sigma_{j,i}^N) = \frac{1}{N},\\
&\forall \alpha \in J_N^1, \gamma_{1,x}(\alpha) = \frac{1}{N} - \gamma_x^+(\alpha),\\
&\forall \alpha \in J_N^2, \gamma_{2,x}(\alpha) = \frac{1}{N} - \gamma_x^-(\alpha).
\end{split}
\end{equation}
Note that for every $\gamma \in \Queenons$, $(\gamma_N,\distPlus{\gamma_N},\distMinus{\gamma_N})$ satisfies \eqref{eq:A defining}. Therefore the following concave optimization problem bounds $H_q(\gamma^*)$ from above:
\begin{equation*}
\begin{aligned}
& \underset{x\in\mD}{\text{maximize}}
& & f(x) \\
& \text{subject to:}
& & Ax = b.
\end{aligned}
\end{equation*}

We define the Lagrangian dual function $\mathcal{L} : \R^{6N} \to \R$ by:
\[
\mathcal{L}(y) = \sup_{x \in \mD} \left(f(x) + y^T(Ax-b) \right).
\]
Then, by definition, for every $y \in \R^{6N}$ there holds $\mathcal{L}(y) \geq f(\gamma^*_N, \distPlus{\gamma^*_N}, \distMinus{\gamma^*_-}) \geq H_q(\gamma^*)$.

The next claim provides an explicit form for $\mathcal{L}(y)$ for a large range of $y$. We denote the length-$4N^2$ all $1$s row vector by $\mathbbm{1}_{4N^2}$.

\begin{clm}\label{clm:explicit L_N}
	Let $N \in \N$ and let $y \in \R^{6N}$ satisfy $y^TA - \mathbbm{1}_{4N^2} \leq -\log(N) \mathbbm{1}_{4N^2}$. Define $\tilde{x} \in \mD$ by $\tilde{x}_\alpha = \exp ((y^TA)_\alpha - 1)$. Then $\mathcal{L}(y) = f(\tilde{x}) + y^T(A\tilde{x} - b)$.
\end{clm}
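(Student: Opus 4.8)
The plan is to exploit that, as a function of $x$, the Lagrangian objective $f(x)+y^T(Ax-b)$ is a sum of one–dimensional terms, one per coordinate $\alpha\in\Omega$, so that the supremum over the product domain $\mD=(0,1/N)^{\Omega}$ splits into a sum of one–dimensional suprema. First I would rewrite, for $x\in\mD$,
\[
f(x)+y^T(Ax-b)=\sum_{\alpha\in\Omega}\bigl(-x_\alpha\log x_\alpha+(y^TA)_\alpha x_\alpha\bigr)+\bigl(-4\log(2N)+2\log2-3-y^Tb\bigr),
\]
using $y^T(Ax)=(y^TA)x=\sum_\alpha(y^TA)_\alpha x_\alpha$. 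The bracketed quantity is a constant independent of $x$, so it suffices to maximize each summand $g_\alpha(s):=-s\log s+(y^TA)_\alpha s$ over $s\in(0,1/N)$ separately.

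Next I would analyze $g_\alpha$ on $(0,\infty)$: it is smooth with $g_\alpha'(s)=-\log s-1+(y^TA)_\alpha$ and $g_\alpha''(s)=-1/s<0$, hence strictly concave with unique global maximizer $s_\alpha^\star$ solving $\log s_\alpha^\star=(y^TA)_\alpha-1$, i.e.\ $s_\alpha^\star=\exp((y^TA)_\alpha-1)=\tilde x_\alpha$. This is exactly where the hypothesis enters: $y^TA-\mathbbm{1}_{4N^2}\le-\log(N)\mathbbm{1}_{4N^2}$ means $(y^TA)_\alpha-1\le-\log N$ for every $\alpha$, whence $\tilde x_\alpha=e^{(y^TA)_\alpha-1}\le 1/N$, so the unconstrained maximizer lies in $(0,1/N]$. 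Since $g_\alpha$ is continuous and strictly concave with global maximum at $\tilde x_\alpha\in(0,1/N]$, one gets $\sup_{s\in(0,1/N)}g_\alpha(s)=g_\alpha(\tilde x_\alpha)$ in all cases: attained at $\tilde x_\alpha$ when $\tilde x_\alpha<1/N$, and equal to $\lim_{s\uparrow 1/N}g_\alpha(s)=g_\alpha(1/N)=g_\alpha(\tilde x_\alpha)$ by continuity when $\tilde x_\alpha=1/N$.

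Finally I would reassemble the pieces: because $\mD=\prod_{\alpha\in\Omega}(0,1/N)$ and the objective is the separable sum above,
\[
\mathcal{L}(y)=\sum_{\alpha\in\Omega}\sup_{s\in(0,1/N)}g_\alpha(s)+\bigl(-4\log(2N)+2\log2-3-y^Tb\bigr)=\sum_{\alpha\in\Omega}g_\alpha(\tilde x_\alpha)+\bigl(-4\log(2N)+2\log2-3-y^Tb\bigr),
\]
and recognizing $\sum_\alpha(-\tilde x_\alpha\log\tilde x_\alpha)-4\log(2N)+2\log2-3=f(\tilde x)$ together with $\sum_\alpha(y^TA)_\alpha\tilde x_\alpha-y^Tb=y^T(A\tilde x-b)$ yields $\mathcal{L}(y)=f(\tilde x)+y^T(A\tilde x-b)$. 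The only subtlety—and the single, mild, obstacle—is the boundary case $\tilde x_\alpha=1/N$, where $\tilde x$ need not lie in the open cube $\mD$: there the argument must be phrased via the monotonicity/continuity of $g_\alpha$ on $(0,1/N)$ (so the \emph{supremum} over the open interval still equals $g_\alpha(\tilde x_\alpha)$) rather than by claiming the maximum is attained; note also that $\tilde x_\alpha=\exp((y^TA)_\alpha-1)>0$, so $f(\tilde x)$ is unconditionally well defined. Everything else is elementary single–variable calculus.
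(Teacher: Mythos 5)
Your argument is correct and is essentially the paper's proof: the paper likewise fixes $y$, notes that $g(x)=f(x)+y^T(Ax-b)$ is strictly concave on $\mD$, and checks that $\partial g/\partial x_\alpha=-\log(x_\alpha)-1+(y^TA)_\alpha$ vanishes at $\tilde{x}$, which is the same stationarity-plus-concavity computation you carry out coordinatewise via the separable decomposition. Your extra care about the boundary case $\tilde{x}_\alpha=1/N$ (where $\tilde{x}$ sits on the closure of the open cube and the supremum is reached only in the limit) is a valid refinement of a point the paper passes over silently, and it does not change the conclusion.
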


\begin{proof}
	We observe that if $y$ is fixed then $g(x) \coloneqq f(x) + y^T(Ax - b)$ is strictly concave on $\mD$. Therefore it suffices to show that $\nabla g(\tilde{x}) = 0$. Indeed, for every $\alpha \in \Omega$ we have $\frac{\partial g}{\partial x_\alpha}(x) = -\log(x_\alpha) - 1 + (y^TA)_\alpha$. By definition of $\tilde{x}$ this is zero when $x = \tilde{x}$.
\end{proof}

We are ready to prove the upper bound.

\begin{clm}
	$H_q(\gamma^*) < -\upperBoundConstant$.
\end{clm}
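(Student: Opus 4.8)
The plan is to apply Lagrangian duality to the finite-dimensional relaxation constructed above. Recall that for every $N \in \N$ and every $\gamma \in \Queenons$ we have $H_q(\gamma) \le f_N(\gamma_N, \distPlus{\gamma_N}, \distMinus{\gamma_N})$, that $(\gamma_N, \distPlus{\gamma_N}, \distMinus{\gamma_N})$ satisfies the linear system $Ax = b$ of \eqref{eq:A defining}, and hence that the weak-duality inequality $\mathcal{L}(y) \ge f(\gamma^*_N) \ge H_q(\gamma^*)$ holds for every $y \in \R^{6N}$. Therefore it suffices to exhibit, for a suitable concrete $N$, an explicit dual vector $y \in \R^{6N}$ satisfying the hypothesis of Claim \ref{clm:explicit L_N}, namely $y^TA - \mathbbm{1}_{4N^2} \le -\log(N)\,\mathbbm{1}_{4N^2}$, and for which the closed-form value $\mathcal{L}(y) = f(\tilde x) + y^T(A\tilde x - b)$, with $\tilde x_\alpha = \exp\!\big((y^TA)_\alpha - 1\big)$, is strictly less than $-\upperBoundConstant$.

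To produce such a $y$ I would first solve the (finite-dimensional, strictly concave) problem $\max_{x \in \mD} f(x)$ subject to $Ax = b$ numerically; a standard convex solver returns an approximate primal optimizer together with approximate KKT multipliers $y$. I would then round the entries of $y$ to rationals and, if necessary, decrease them slightly so that the feasibility condition $y^TA - \mathbbm{1}_{4N^2} \le -\log(N)\,\mathbbm{1}_{4N^2}$ holds with room to spare, recording the resulting vector $y$ in the accompanying code. With $y$ in hand the verification is purely arithmetic: form the vector $y^TA$, check the $4N^2$ inequalities of the feasibility condition, form $\tilde x$, and evaluate $f(\tilde x) + y^T(A\tilde x - b)$. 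Since $f$ and this expression involve only the elementary functions $x \log x$ and $\exp$, the computation can be carried out to rigorous precision (for instance with interval arithmetic), and as long as the computed upper bound lies safely below $-\upperBoundConstant$ we are done.

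The main point — really the only conceptual one — is that the duality bound is valid for \emph{any} feasible $y$, not merely the optimal one, so no primal--dual optimality certificate is required: all that must be verified rigorously is (i) the $4N^2$ linear inequalities asserting feasibility of $y$ and (ii) the single scalar inequality $\mathcal{L}(y) < -\upperBoundConstant$. The arithmetic is delegated to the provided code; choosing $N$ moderately large (so that the relaxation $f_N$ overestimates $H_q$ tightly enough) together with a near-optimal $y$ leaves a comfortable numerical margin. The chief obstacle in practice is bookkeeping: ensuring that $A$ and $b$ are assembled exactly as in \eqref{eq:A defining}, that the identification of $\Omega$ with $[|\Omega|]$ is used consistently throughout, and that the rounding of $y$ does not push any feasibility inequality across the threshold $-\log N$.
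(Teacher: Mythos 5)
Your proposal is correct and follows essentially the same route as the paper: the paper also invokes weak duality via Claim \ref{clm:explicit L_N}, fixes a concrete $N$ (namely $N=17$), and supplies an explicit dual vector $y$ in the accompanying code whose feasibility and value $\mathcal{L}(y) < -\upperBoundConstant$ are checked by direct computation. The only difference is presentational — you spell out how a near-optimal $y$ would be found and rounded, which the paper leaves implicit.
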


\begin{proof}
	Set $N=17$. It suffices to exhibit some $y \in \R^{6N}$ satisfying $\mathcal{L}(y) < -\upperBoundConstant$. For this we rely on Claim \ref{clm:explicit L_N} and computer calculation. The provided code contains a function that, given $y \in \R^{6N}$ satisfying the conditions of Claim \ref{clm:explicit L_N}, calculates $\mathcal{L}(y)$. The same file also contains an explicit vector $y$ satisfying these conditions and verifies that for this $y$ there holds $\mathcal{L}(y) < -\upperBoundConstant$.
\end{proof}

\section{Concluding remarks}\label{sec:conclusion}

\begin{itemize}
	\item This paper combined the entropy method and a randomized algorithm to determine the first and second order terms of $\log(\mQ(n))$. We wonder whether similar methods might succeed in obtaining more accurate estimates. More generally, for many classes of combinatorial designs (such as Steiner systems \cite{linial2013upper,keevash2018counting} and high-dimensional permutations \cite{linial2014upper,keevash2018existence}), denoting by $X(n)$ the number of order-$n$ objects, the first and second order terms of $\log(X(n))$ have been determined using similar methods. It would be very interesting to improve these estimates.
	
	\item Since the first version of this paper was published Nobel, Agrawal, and Boyd \cite{nobel2022computing} used sophisticated computational techniques to improve the bounds on the constant $\alpha$ in \cref{thm:main theorem first statement}. They showed that $\alpha \in [1.944000752, 1.944001082]$.
	
	\item The lower bounds for the number of Steiner systems and high-dimensional permutations were obtained using a random greedy algorithm to construct an approximate design. In contrast, the lower bound in our paper uses a more sophisticated algorithm. There is, of course, a very natural random greedy algorithm for the $n$-queens problem: beginning with an empty board, in each step add a queen to a position chosen uniformly at random from the available positions. As mentioned in the introduction, the asymmetry of the constraints makes this algorithm challenging to analyze. However, based on simulations, it is clear that this algorithm succeeds in placing almost $n$ queens and, furthermore, Algorithm \ref{alg:absorption} successfully completes the outcome. It is therefore worth asking if the lower bound on $\mQ(n)$ could conceivably be proved by a successful analysis of this algorithm. We believe this is not the case: Empirically, the outcomes of the random greedy algorithm do not approximate $\gamma^*$. This implies that they are contained in an atypical (and hence small) subset of the configurations.
	
	\item Let $X_n$ be the random variable equal to the number of pairs of $1$s sharing a diagonal in a uniformly random order-$n$ permutation matrix. This paper can be interpreted as studying $\Prob [X_n = 0]$. It would be interesting to understand the tails of $X_n$ more generally. For certain permutation parameters (most prominently ``pattern density'' \cite{kenyon2020permutations}) large deviations can be understood with the theory of permutons. However, $X_n$ is not continuous in the permuton topology. This suggests additional tools must be developed.
	
	\item The $n$-queens problem has many variations. Perhaps the best-known is the \textit{toroidal} or \textit{modular} problem, in which the diagonals wrap around the board. Let $\mT (n)$ be the number of toroidal $n$-queens configurations. P\'olya proved that $\mT (n) > 0$ if and only if $\gcd(n,6) = 1$ \cite{polya1921uber}. Using the entropy method, Luria showed that $\mT (n) \leq \left( (1+\oone) n/e^3 \right)^n$ \cite{luria2017new}. It is not difficult to show that the natural random greedy algorithm for constructing a toroidal $n$-queens configuration w.h.p.\ succeeds in placing $n-o(n)$ queens on the board (indeed, this is the source of the lower bound on $\mQ(n)$ in \cite{luria2021lower}). Furthermore, if the outcome of the process can w.h.p.\ be completed this would imply that $\mT (n) \geq \left( (1-\oone)n/e^3 \right)^n$. Unfortunately, the absorption method in this paper utilizes the fact that in a complete non-toroidal configuration only a fraction of the diagonals are occupied. This is not the case in the toroidal problem, indicating that a more complex absorption procedure is necessary. Indeed, subsequently to the publication of the first version of this paper, Bowtell and Keevash published a spectacular proof \cite{bowtell2021n} of this lower bound on $\mT (n)$. Their method combines randomized algebraic construction \cite{keevash2014existence} and iterative absorption \cite{glock2016existence}.
	
	\item It is also worth mentioning the semi-queens variant, in which queens attack along rows, columns, and plus-diagonals (but not minus-diagonals). It is plausible that the techniques in this paper can be applied to enumerate these configurations. Remarkably, an \textit{asymptotic} formula for the number of \textit{toroidal} semi-queens configurations was found using tools from analytic number theory \cite{eberhard2018additive}. We wonder if our bounds on $\mQ (n)$, or those of Bowtell and Keevash on $\mT (n)$, can be strengthened using this toolbox.
\end{itemize}

\section*{Acknowledgments}

I thank Zur Luria for bringing the problem to my attention and for fascinating discussions. I also wish to thank Donald Knuth and two anonymous referees for many suggestions that helped improve the manuscript. In particular, Don and one referee pointed out the necessity of \cref{lem:approximation}. The second referee found an error in \cref{sec:lower bound}, that was corrected by explicitly tracking $A_\alpha(t)$.

\appendix

\section{Notation table}\label{app:notation}
\renewcommand{\arraystretch}{1.3}

In this section we collect some of the notation used in the paper. We focus on those symbols that have a ``global'' scope, and omit some symbols that are used only within a particular section.

\subsection*{Sets and set collections}

We define the following sets and set collections. In the following, $N$ denotes a natural number.

\begin{tabular}{m{.03\textwidth}|p{.7\textwidth}}
	\hline
	$\mR$ & The collection of subsets of the plane with the form
	\[
	\{(x,y) \in \interval^2 : a_1 \leq x+y \leq b_1, a_2 \leq y-x \leq b_2\}
	\]
	for $a_1,a_2,b_1,b_2 \in [-1,1]$.\\
	\hline
	$\sigma_{i,j}^N$ & For $i,j \in [N]$ this is the square
	\[
	((i-1)/N - 1/2,i/N - 1/2) \times ((j-1)/N - 1/2,j/N - 1/2).
	\]\\
	\hline
	$I_N$ & The division of $\interval^2$ into sets of the form
	\[
	\left\{ (x,y) \in \interval^2 : - \frac{1}{N} \leq x+y + 1 - \frac{i}{N} \leq 0, - \frac{1}{N} \leq y-x + 1 - \frac{j}{N} \leq 0 \right\}
	\]
	for $i,j \in [2N]$ (see \cref{fig:lattice}).\\
	\hline
	$S_N$ & The collection of squares in $I_N$.\\
	\hline
	$T_N$ & The collection of half-squares in $I_N$.\\
	\hline
	$J_N$ & The division of $[-1,1]$ into intervals of the form ${[-1+(i-1)/N,-1+i/N]}$ for $i \in [2N]$.\\
	\hline
\end{tabular}

For $n,N \in \N$ and $\alpha \in I_N$ we define the following quantities and sets.

\begin{tabular}{m{.1\textwidth}|p{.8\textwidth}}
	\hline
	$|\alpha|$ & The area of $\alpha$ (which is $1/(2N^2)$ when $\alpha \in S_N$ and $1/(4N^2)$ when $\alpha \in T_N$).\\
	\hline
	$\alpha_n$ & The set of positions $(i,j) \in [n]^2$ such that among all $\beta \in I_N$ that satisfy $\beta \cap \sigma_{i,j}^n \neq \emptyset$, the one with the center-point that is minimal in the lexicographic order is $\alpha$ (see \cref{fig:lattice}).\\
	\hline
\end{tabular}

The following symbols are defined for all $n,N \in \N$, $x,y \in [n]$, and $\alpha \in I_N$. Whenever they are used $n$ is clear from context.

\begin{tabular}{m{.1\textwidth}|p{.8\textwidth}}
	\hline
	$L_{y,\alpha}^r$ & The number of positions in $\alpha_n$ and row $y$.\\
	\hline
	$L_{x,\alpha}^c$ & The number of positions in $\alpha_n$ and column $x$.\\
	\hline
	$L_{x+y,\alpha}^+$ & The number of positions in $\alpha_n$ and plus-diagonal $x+y$.\\
	\hline
	$L_{y-x,\alpha}^-$ & The number of positions in $\alpha_n$ and minus-diagonal $y-x$.\\
	\hline
	$\alpha^N(x,y)$ & The element $\alpha \in I_N$ such that $(x,y) \in \alpha_n$.\\
	\hline
\end{tabular}

\subsection*{Measures and queenons}

The following definitions relate to the space of queenons and, more generally, the space of Borel probability distributions on $\interval^2$.

\begin{tabular}{m{.1\textwidth}|p{.8\textwidth}}
	\hline
	$\mP$ & The set of Borel probability measures on $\interval^2$.\\
	\hline
	$\mU_\square$ & The uniform distribution on $\interval^2$.\\
	\hline
	$\mU_{[-1,1]}$ & The uniform distribution on $[-1,1]$.\\
	\hline
	$\dist{\cdot}{\cdot}$ & The metric on $\mP$ given by $\dist{\gamma_1}{\gamma_2} = \sup \{ |\gamma_1(\alpha)-\gamma_2(\alpha)| : \alpha \in \mR \}$ for $\gamma_1,\gamma_2 \in \mP$.\\
	\hline
	$\tilde{\Queenons}$ & The set of step queenons, as defined in \cref{def:queenons}.\\
	\hline
	$\Queenons$ & The set of queenons, as defined in \cref{def:queenons}\\
	\hline
	$\gamma_q$ & For an $n$-queens configuration $q \subseteq [n]^2$ this is the queenon with constant density $n$ on $\sigma_{i,j}^n$ for every $(i,j) \in q$ and density $0$ elsewhere.\\
	\hline
	$B_n(\gamma,\varepsilon)$ & For $n \in \N$, $\gamma \in \Queenons$, and $\varepsilon > 0$, this is the set of $n$-queens configurations $q$ such that $\dist{\gamma_q}{\gamma} < \varepsilon$.\\
	\hline
\end{tabular}

For a probability measure $\gamma \in \mP$, $N \in \N$, and $\alpha \in I_N$ we define the following measures and quantities (see \cref{def:interval measures}). In the next table only, $\lambda$ denotes the restriction to Borel sets of the Lebesgue measure on $[-1,1]$.

\begin{tabular}{m{.1\textwidth}|p{.8\textwidth}}
	\hline
	$\gamma^+$ & The pushforward of $\gamma$ under the map $(x,y) \mapsto x+y$.\\
	\hline
	$\gamma^-$ & The pushforward of $\gamma$ under the map $(x,y) \mapsto y-x$.\\
	\hline
	$\distPlus{\gamma}$ & $\lambda - \gamma^+$. Defined only if $\gamma$ has sub-uniform diagonal marginals.\\
	\hline
	$\distMinus{\gamma}$ & $\lambda - \gamma^-$. Defined only if $\gamma$ has sub-uniform diagonal marginals.\\
	\hline
	$\gamma^+(\alpha)$ & The quantity $\gamma^+(\beta)$ where $\beta$ is the (unique) interval in $J_N$ such that $\gamma(\alpha)$ contributes to $\gamma^+(\beta)$.\\
	\hline
	$\gamma^-(\alpha)$ & The quantity $\gamma^-(\beta)$ where $\beta$ is the (unique) interval in $J_N$ such that $\gamma(\alpha)$ contributes to $\gamma^-(\beta)$.\\
	\hline
	$\distPlus{\gamma}(\alpha)$ & The quantity $\distPlus{\gamma}(\beta)$ where $\beta$ is the (unique) interval in $J_N$ such that $\gamma(\alpha)$ contributes to $\gamma^+(\beta)$.\\
	\hline
	$\distMinus{\gamma}(\alpha)$ & The quantity $\distMinus{\gamma}(\beta)$ where $\beta$ is the (unique) interval in $J_N$ such that $\gamma(\alpha)$ contributes to $\gamma^-(\beta)$.\\
	\hline
\end{tabular}

\subsection*{Entropy functions}

Let $\mu$ be a Borel probability measure defined on either $\interval^2$ or $[-1,1]$. Let $\gamma \in \Queenons$ be a queenon. Let $(p_1,\ldots,p_n)$ be a finite probability distribution. Let $X$ and $Y$ be random variables defined on the same probability space. Let $N \in \N$. We define the following entropy and divergence functions.

\begin{tabular}{m{.15\textwidth}|p{.8\textwidth}}
	\hline
	$H(X)$ & The entropy of $X$.\\
	\hline
	$H(X|Y)$ & The conditional entropy of $X$ given $Y$.\\
	\hline
	$D_{KL}(\mu)$ & The Kullback--Leibler (KL) divergence of $\mu$ with respect to the uniform distribution.\\
	\hline
	$H_q(\gamma)$ & The Q-entropy of $\gamma$ (see \cref{def:Q entropy}).\\
	\hline
	$D(\{p_i\}_{i=1,\ldots,n})$ & The KL divergence of $\{p_i\}_{i=1,\ldots,n}$ with respect to the uniform distribution, i.e., $\sum_{i=1}^np_i\log(np_i)$.\\
	\hline
	$D^N(\gamma)$ & $\sum_{ \alpha \in I_N} \gamma(\alpha) \log(\gamma(\alpha) / |\alpha|)$.\\
	\hline
	$H_q^N(\gamma)$ & A discrete approximation of $H_q(\gamma)$ (see \cref{def:discrete Q entropy}).\\
	\hline
\end{tabular}

\bibliography{variational_queens}

\providecommand{\bysame}{\leavevmode\hbox to3em{\hrulefill}\thinspace}
\providecommand{\MR}{\relax\ifhmode\unskip\space\fi MR }
\providecommand{\MRhref}[2]{%
  \href{http://www.ams.org/mathscinet-getitem?mr=#1}{#2}
}
\providecommand{\href}[2]{#2}
\begin{thebibliography}{10}

\bibitem{bell2009survey}
Jordan Bell and Brett Stevens, \emph{A survey of known results and research
  areas for n-queens}, Discrete Mathematics \textbf{309} (2009), no.~1, 1--31.

\bibitem{beneliezer_et_al:LIPIcs.ITCS.2021.42}
Omri Ben-Eliezer, Eldar Fischer, Amit Levi, and Yuichi Yoshida, \emph{{Ordered
  Graph Limits and Their Applications}}, 12th Innovations in Theoretical
  Computer Science Conference (ITCS 2021) (Dagstuhl, Germany) (James~R. Lee,
  ed.), Leibniz International Proceedings in Informatics (LIPIcs), vol. 185,
  Schloss Dagstuhl--Leibniz-Zentrum f{\"u}r Informatik, 2021, pp.~42:1--42:20.

\bibitem{bogachev2007measure}
Vladimir~I Bogachev, \emph{Measure theory}, vol.~1, Springer Science \&
  Business Media, 2007.

\bibitem{bowtell2021n}
Candida Bowtell and Peter Keevash, \emph{The $ n $-queens problem}, arXiv
  preprint arXiv:2109.08083 (2021).

\bibitem{cooper2020quasirandom}
Jacob~W Cooper, Daniel Kr{\'a}l', Ander Lamaison, and Samuel Mohr,
  \emph{Quasirandom {Latin} squares}, Random Structures \& Algorithms
  \textbf{61} (2022), no.~2, 298--308.

\bibitem{eberhard2018additive}
Sean Eberhard, Freddie Manners, and Rudi Mrazovi{\'c}, \emph{Additive triples
  of bijections, or the toroidal semiqueens problem}, Journal of the European
  Mathematical Society \textbf{21} (2018), no.~2, 441--463.

\bibitem{garbe2020limits}
Frederik Garbe, Robert Hancock, Jan Hladk{\`y}, and Maryam Sharifzadeh,
  \emph{Limits of {Latin} squares}, arXiv preprint arXiv:2010.07854 (2020).

\bibitem{glebov2015finitely}
Roman Glebov, Andrzej Grzesik, Tereza Klimo{\v{s}}ov{\'a}, and Daniel Kr\'al',
  \emph{Finitely forcible graphons and permutons}, Journal of Combinatorial
  Theory, Series B \textbf{110} (2015), 112--135.

\bibitem{glock2016existence}
Stefan Glock, Daniela K{\"u}hn, Allan Lo, and Deryk Osthus, \emph{The existence
  of designs via iterative absorption: hypergraph {$F$}-designs for arbitrary
  {$F$}}, Memoirs of the American Mathematical Society (to appear).

\bibitem{hoppen2013limits}
Carlos Hoppen, Yoshiharu Kohayakawa, Carlos~Gustavo Moreira, Bal{\'a}zs
  R{\'a}th, and Rudini~Menezes Sampaio, \emph{Limits of permutation sequences},
  Journal of Combinatorial Theory, Series B \textbf{103} (2013), no.~1,
  93--113.

\bibitem{keevash2014existence}
Peter Keevash, \emph{The existence of designs}, arXiv preprint arXiv:1401.3665
  (2014).

\bibitem{keevash2018counting}
\bysame, \emph{Counting designs}, Journal of the European Mathematical Society
  \textbf{20} (2018), no.~4, 903--927.

\bibitem{keevash2018existence}
\bysame, \emph{The existence of designs {II}}, arXiv preprint arXiv:1802.05900
  (2018).

\bibitem{kenyon2020permutations}
Richard Kenyon, Daniel Kr{\'a}l', Charles Radin, and Peter Winkler,
  \emph{Permutations with fixed pattern densities}, Random Structures \&
  Algorithms \textbf{56} (2020), no.~1, 220--250.

\bibitem{kral2013quasirandom}
Daniel Kr\'al' and Oleg Pikhurko, \emph{Quasirandom permutations are
  characterized by 4-point densities}, Geometric and Functional Analysis
  \textbf{23} (2013), no.~2, 570--579.

\bibitem{linial2013upper}
Nathan Linial and Zur Luria, \emph{An upper bound on the number of {Steiner}
  triple systems}, Random Structures \& Algorithms \textbf{43} (2013), no.~4,
  399--406.

\bibitem{linial2014upper}
\bysame, \emph{An upper bound on the number of high-dimensional permutations},
  Combinatorica \textbf{34} (2014), no.~4, 471--486.

\bibitem{lovasz2012large}
L{\'a}szl{\'o} Lov{\'a}sz, \emph{Large networks and graph limits}, vol.~60,
  American Mathematical Soc., 2012.

\bibitem{luria2017new}
Zur Luria, \emph{New bounds on the number of n-queens configurations}, arXiv
  preprint arXiv:1705.05225 (2017).

\bibitem{luria2021lower}
Zur Luria and Michael Simkin, \emph{A lower bound for the n-queens problem},
  Proceedings of the 2022 Annual ACM-SIAM Symposium on Discrete Algorithms
  (SODA), SIAM, 2022, pp.~2185--2197.

\bibitem{mcdiarmid1998concentration}
Colin McDiarmid, \emph{Concentration}, 1998, pp.~195--248. \MR{1678578}

\bibitem{nobel2022computing}
Parth Nobel, Akshay Agrawal, and Stephen Boyd, \emph{Computing tighter bounds
  on the n-queens constant via {N}ewton’s method}, Optimization Letters
  (2022), 1--12.

\bibitem{parthasarathy2005probability}
Kalyanapuram~Rangachari Parthasarathy, \emph{Probability measures on metric
  spaces}, vol. 352, American Mathematical Soc., 2005.

\bibitem{polya1921uber}
George P{\'o}lya, \emph{Uber die “doppelt-periodischen” l{\"o}sungen des
  n-damen-problems}, W. Ahrens, Mathematische Unterhaltungen und Spiele
  \textbf{1} (1921), 364--374.

\bibitem{radhakrishnan1997entropy}
Jaikumar Radhakrishnan, \emph{An entropy proof of {Bregman's} theorem}, Journal
  of combinatorial theory, Series A \textbf{77} (1997), no.~1, 161--164.

\bibitem{rivin1994n}
Igor Rivin, Ilan Vardi, and Paul Zimmermann, \emph{The n-queens problem}, The
  American Mathematical Monthly \textbf{101} (1994), no.~7, 629--639.

\bibitem{rodl1985packing}
Vojt{\v{e}}ch R{\"o}dl, \emph{On a packing and covering problem}, European
  Journal of Combinatorics \textbf{6} (1985), no.~1, 69--78.

\bibitem{rudin1966real}
Walter Rudin, \emph{Real and complex analysis}, McGraw-Hill Book Co., New
  York-Toronto, Ont.-London, 1966. \MR{0210528}

\bibitem{oeis}
Neil J.~A. Sloane and The OEIS~Foundation Inc., \emph{The on-line encyclopedia
  of integer sequences}, 2020.

\bibitem{spencer1995asymptotic}
Joel Spencer, \emph{Asymptotic packing via a branching process}, Random
  Structures \& Algorithms \textbf{7} (1995), no.~2, 167--172.

\bibitem{warnke2016method}
Lutz Warnke, \emph{On the method of typical bounded differences},
  Combinatorics, Probability and Computing \textbf{25} (2016), no.~2, 269--299.

\bibitem{wormald1995differential}
Nicholas~C Wormald, \emph{Differential equations for random processes and
  random graphs}, The annals of applied probability \textbf{5} (1995), no.~4,
  1217--1235.

\bibitem{wormald1999differential}
\bysame, \emph{The differential equation method for random graph processes and
  greedy algorithms}, Lectures on approximation and randomized algorithms
  \textbf{73} (1999), 155.

\bibitem{zhang2009counting}
Cheng Zhang and Jianpeng Ma, \emph{Counting solutions for the {$N$}-queens and
  {Latin}-square problems by {Monte} {Carlo} simulations}, Physical Review E
  \textbf{79} (2009), no.~1, 016703.

\end{thebibliography}
\bibliographystyle{amsplain}

\end{document}